\newcommand{\stkout}[1]{\ifmmode\text{\sout{\ensuremath{#1}}}\else\sout{#1}\fi}
\newtheorem{lemma}{Lemma}[section]
\newtheorem{theorem}{Theorem}[section]
\newtheorem{proposition}{Proposition}[section]
\theoremstyle{definition}
\newtheorem{definition}{Definition}[section]
\newtheorem{assumption}{Assumption}[section]
\newtheorem{remark}{Remark}[section]
\numberwithin{theorem}{section}
\numberwithin{equation}{section}
\crefname{section}{Section}{Sections}
\crefname{appsec}{Appendix}{Appendices}
\crefname{subsection}{Section}{Sections}
\crefname{condition}{Condition}{Conditions}
\crefname{hypothesis}{Hypothesis}{Conditions}
\crefname{assumption}{Assumption}{Assumptions}
\crefname{lemma}{Lemma}{Lemmas}
\crefname{fact}{Fact}{Facts}
\Crefname{figure}{Figure}{Figures}
\newcommand{\vertiii}[1]{{\left\vert\kern-0.25ex\left\vert\kern-0.25ex\left\vert #1
    \right\vert\kern-0.25ex\right\vert\kern-0.25ex\right\vert}}
\newcommand{\rhoa}{\rho_a}
\newcommand{\rhob}{\rho_b}
\newcommand{\mcf}{\mathcal{F}}
\newcommand{\aaa}{{\mathcal{A}}}  
\newcommand{\mcc}{\mathcal{C}}
\newcommand{\mcp}{{\mathcal{P}}}
\newcommand{\mcq}{{\mathcal{Q}}}
\newcommand{\la}{\lambda}
\newcommand{\La}{\Lambda}
\newcommand{\RRN}{\mathds{R}^{N}}
\newcommand{\RR}{\mathds{R}}
\newcommand{\NN}{\mathds{N}}
\newcommand{\Rn}{{\mathds{R}^{n}}}
\DeclareMathOperator{\dv}{div}
\newcommand{\apprle}{\lesssim}
\newcommand{\tz}{\tilde{z}}
\def\Yint#1{\mathchoice
	{\YYint\displaystyle\textstyle{#1}}%
	{\YYint\textstyle\scriptstyle{#1}}%
	{\YYint\scriptstyle\scriptscriptstyle{#1}}%
	{\YYint\scriptscriptstyle\scriptscriptstyle{#1}}%
	\!\iint}
\def\YYint#1#2#3{{\setbox0=\hbox{$#1{#2#3}{\iint}$}
		\vcenter{\hbox{$#2#3$}}\kern-.50\wd0}}
\def\longdash{-\mkern-9.5mu-} 
\def\tiltlongdash{\rotatebox[origin=c]{18}{$\longdash$}}
\def\fiint{\Yint\tiltlongdash}
\def\XXint#1#2#3{{\setbox0=\hbox{$#1{#2#3}{\int}$}
		\vcenter{\hbox{$#2#3$}}\kern-.50\wd0}}
\definecolor{dmagenta}{rgb}{.5,.1,.3}
\definecolor{dblue}{rgb}{.0,.0,.5}
\definecolor{mblue}{rgb}{.0,.4,.7}
\definecolor{ddblue}{rgb}{.0,.0,.4}
\definecolor{dred}{rgb}{.9,.0,.0}
\definecolor{dgreen}{rgb}{.0,.5,.0}
\definecolor{Eeom}{rgb}{.0,.0,.5}
\definecolor{dbrown}{rgb}{.7,.0,.0}
\newcommand{\ttl}{}
\begin{document}
\title[Gradient higher integrability for parabolic multi phase]
{\ttl}

\begin{center}{\bf  \Large Gradient higher integrability for degenerate/ singular parabolic multi-phase problems }\medskip
\bigskip

{\small  Abhrojyoti Sen\footnote{sen@math.uni-frankurt.de}
}
 \bigskip

{\small
$^1$Goethe-Universit\"{a}t Frankfurt, Institut f\"{u}r Mathematik, Robert-Mayer-Str. 10,\\
D-60629 Frankfurt, Germany
 } 




\begin{abstract} 
This article establishes an interior gradient higher integrability result for weak solutions to parabolic multi-phase problems. The prototype equation for the parabolic multi-phase problem of $p$-Laplace type is given by
\[
u_t - \operatorname{div} \left(|\nabla u|^{p-2} \nabla u + a(z) |\nabla u|^{q-2} \nabla u + b(z) |\nabla u|^{s-2} \nabla u \right) = 0,
\]
where $\frac{2n}{n+2} < p \leq q \leq s < \infty$, and the coefficients $a(z)$ and $b(z)$ are non-negative Hölder continuous functions on $\Omega_T = \Omega \times (0, T)$, with $\Omega \subset \mathbb{R}^n$. We introduce a novel intrinsic scaling to address the problem in both the degenerate regime ($p \geq 2$) and the singular regime $\left(\frac{2n}{n+2} < p < 2\right),$ providing a unified framework.
Our approach involves proving uniform parabolic Sobolev-Poincaré inequalities, which are key to establishing reverse Hölder type inequalities, along with covering lemmas for the $p$, $(p,q)$, $(p,s)$, and $(p,q,s)$-phases without distinguishing between the regimes of $p$, $q$, and $s$. In the end, we also discuss the gradient higher integrability for general parabolic multi-phase problem involving a finite number of phases.
\end{abstract}
\end{center}
\keywords{Multi phase parabolic problems, parabolic Sobolev-Poincar\'{e} inequalities, intrinsic geometry, gradient regularity}
\subjclass[2020]{Primary: 35B65, 35D30, 35K55, 35K65, 35K67; Secondary: 35F20.}

\maketitle
\tableofcontents

\section{Introduction and main result}
\subsection{Overview} In this paper, we  are interested in studying the gradient  higher integrability for weak solutions to 
\begin{align}\label{main_eqn}
	u_t- \dv \aaa \left(z,\nabla u\right)=0\,\,\,\, \text{in}\,\,\,\,\Omega_T,
\end{align}
where $z=(x,t) \in \Omega_T:=\Omega\times (0, T)$ denotes a space-time cylinder, with $\Omega \subset \RR^n$ being a bounded domain for $n\geq 2$  and $\aaa$ satisfies the conditions outlined in \cref{definition_aaa} (see \cref{def n not}). Here, the function $u: \Omega_T \to \RR^N$ with $N\geq 1,$ where $\nabla u$ represents the spatial gradient of $u$ and $u_t$  denotes the time derivative. The local higher integrability of the gradient for weak solutions $u$ to the elliptic $p$-Laplace equation is a well-established result; it shows that if the weak solution $u$ is in $W^{1,p}_{loc}(\Omega),$ then $u$ belongs to $W^{1, p+\varepsilon}_{loc}(\Omega)$ for some $\varepsilon >0$ (see \cite{ME}). This phenomenon was first observed by Gehring \cite{GH} in the context of the Jacobian of a quasi-conformal mapping. The proof for elliptic equations and systems is based on the energy estimate- specifically, an estimate that has the structure of a reverse Poincar\'{e} inequality- and a Sobolev-Poincar\'{e} inequality that yields certain reverse H\"{o}lder type inequalities for $\nabla u.$ The final proof of higher integrability for $\nabla u$ follows from an application of Gehring's type lemma.
As it is well known by now, although the proof for the higher integrability of $\nabla u$ where $u$ is a weak solution to the parabolic $p$-Laplace equation broadly follows a similar strategy as in the elliptic case, there are certain genuine difficulties. One of the main difficulties is when $p\neq 2,$ the parabolic system exhibits a nonhomogeneous scaling, that is, a constant multiple (except $0$ and $1$) of a solution to the parabolic system no longer remains a solution. However, a scale-invariant reverse H\"{o}lder type inequality is crucial for establishing higher integrability. To overcome this difficulty, DiBenedetto-Friedman \cite{MR1230384, FD84, FD85, FD185} introduced scaled parabolic cylinders, known as intrinsic cylinders, whose scaling parameter depends on the local behaviour of the solution. The subsequent step in the proof involves demonstrating the existence of these intrinsic cylinders on a suitable superlevel set using a stopping time argument. The final step in establishing higher integrability combines these results with reverse H\"{o}lder inequalities derived for the intrinsic cylinders. The first result establishing the gradient higher integrability of weak solutions to the parabolic $p$-Laplace equation in the range $\frac{2n}{n+2}< p< \infty$ is due to Kinnunen-Lewis \cite{MR1749438}. Also, see \cite{KL very weak} for the gradient higher integrability of the very weak solutions.

In the following paragraphs, we will review some existing regularity results for elliptic and parabolic double-phase and multi-phase problems. The elliptic double-phase system reads as:

\begin{align}\label{elliptic double phase}
-\dv \left(|\nabla u|^{p-2}\nabla u+a(x)|\nabla u|^{q-2}\nabla u\right)=0\,\,\,\, \text{in}\,\,\,\, \Omega,
\end{align}
where $a(x)\in C^{\alpha}(\Omega)$ for some $\alpha \in (0,1].$ Esposito et. al. \cite{MR2076158} were the first to establish the gradient higher integrability of weak solutions to \eqref{elliptic double phase} under the conditions:

\begin{align*}
1<p<q< p+\frac{\alpha p}{n}
\end{align*}
with an additional assumption that $\nabla u \in L^q(\Omega).$ The related regularity results, including Harnack's inequality, H\"{o}lder estimates and Calder\'{o}n-Zygmund type estimates can be found in \cite{MR3348922, MR3447716, MR3985927, MR3294408}. Moreover, Colombo-Mingione \cite{CM15} established the gradient H\"{o}lder regularity under a sharp condition on $p$ and $q,$ i.e.,
\begin{align*}
    q\leq p+\alpha
\end{align*}
which is independent of $n.$

From a more technical point of view, a typical approach to addressing double phase problems is to first consider a fixed ball $B_R$ along with the condition 
\begin{align}\label{elliptic p-phase condition}
    \inf_{x\in B_R}\frac{a(x)}{R^{\alpha}} \leq M
\end{align}
for some threshold quantity $M.$ If the condition \eqref{elliptic p-phase condition} is satisfied, it indicates that the coefficient $a(x)$ remains sufficiently small in $B_R,$ placing us in the $p$-phase. Conversely, if \eqref{elliptic p-phase condition} does not hold, specifically if,
\begin{align*}
  \inf_{x\in B_R}\frac{a(x)}{R^{\alpha}}> M,   
\end{align*}
then the influence of $q$-growth in \eqref{elliptic double phase} dominates and we are in the $(p,q)$-phase.

The elliptic multi-phase equation reads as follows:
\begin{align*}
-\operatorname*{div}\left(|Du|^{p-2}Du + \sum_{i=1}^m a_i(x)|Du|^{p_i-2}Du\right)
 =-\operatorname*{div}\left(|F|^{p-2}F +\sum_{i=1}^m a_i(x)|F|^{p_i-2}F\right)
\end{align*}
in $\Omega$, where $1<p<p_1\leq \cdots \leq p_m$ and $0 \leq a_i(\cdot)\in C^{0,\alpha_i}(\overline{\Omega})$ with $\alpha_i \in (0,1]$ for all $i\in\{1,\cdots, m\}$.
Similarly to the assumptions made in the double-phase case, we impose the following condition:
$$
\frac{p_i}{p}\leq 1+\frac{\alpha_i}{n} \qquad \text{for all }i\in \{1,\cdots, m\}.
$$ 
De Filippis-Oh \cite{FO19} established that the gradient of a local minimizer of the multi-phase variational problem is H\"{o}lder continuous and satisfies intrinsic Morrey decay. Additionally, De Filippis \cite{DF21} proved Calderón-Zygmund type estimates, while Baasandorj-Byun-Oh \cite{BBO21} investigated gradient estimates in the Orlicz-type multi-phase equations. Moreover, Fang-R\u{a}dulescu-Zhang-Zhang \cite{FRZZ22} provided gradient estimates within the framework of Campanato spaces.

Although a substantial body of literature exists on the regularity theory for elliptic double-phase problems, the investigation of parabolic double-phase problems remains relatively sparse. Recent work by Chlebicka et al. \cite{MR3985549} and Singer \cite{MR3532237} has established the existence of weak solutions for parabolic double-phase problems. The first regularity result concerning the parabolic double phase problem is the local gradient higher integrability of weak solutions to the following equation:
\begin{align*}
    u_t-\operatorname*{div}\left(|Du|^{p-2}Du +  a(z)|Du|^{q-2}Du\right) =-\operatorname*{div}\left(|F|^{p-2}F +a(z)|F|^{q-2}F\right)
\end{align*} 
due to Kim-Kinnunen-Moring \cite{KKM}. Their result is established for the case $2\leq p\leq q<\infty.$ On the other hand, Kim-S\"{a}rki\"{o} \cite{KS2024} addressed the case where $\frac{2n}{n+2}<p<2.$ It is worth mentioning that H\"{a}st\"{o}-Ok \cite{MR4302665} studied the gradient higher integrability for parabolic $(p,q)$-growth problems with constant coefficients, specifically when $a(z)=a_0>0.$  In their work, Kim-Kinnunen-Moring \cite{KKM} introduced appropriate alternatives and two types of intrinsic cylinders. For the $p$-phase, they defined $p$-intrinsic cylinders as:
\begin{align*}
    Q^{\la}_{\varrho}(z_0)=B_{\varrho}(x_0)\times (t_0-\la^{2-p}\varrho^2, t_0+\la^{2-p}\varrho^2)
\end{align*}
and for $(p, q)$-phase, they considered the $(p,q)$-intrinsic cylinders given by: 
\begin{align*}
    G^{\la}_{\varrho}(z_0)=B_{\varrho}(x_0)\times \left(t_0-\frac{\la^2}{\la^p+a(z_0)\la^q}\varrho^2, t_0+\frac{\la^2}{\la^p+a(z_0)\la^q}\varrho^2\right)
\end{align*}
are considered for the $(p,q)$-phase. On the other hand, Kim-S\"{a}rki\"{o} \cite{KS2024} introduced different intrinsic cylinders for $p$-phase and $(p,q)$-phase:
\begin{align*}
    Q^{\la}_{\varrho}(z_0)=B_{\la^{\frac{p-2}{2}}\varrho}(x_0)\times (t_0-\varrho^2, t_0+\varrho^2)
\end{align*}
and 
\begin{align*}
    G^{\la}_{\varrho}(z_0)=B_{\la^{\frac{p-2}{2}}\varrho}(x_0)\times \left(t_0-\frac{\la^p}{\la^p+a(z_0)\la^q}\varrho^2, t_0+\frac{\la^p}{\la^p+a(z_0)\la^q}\varrho^2\right),
\end{align*}
respectively. A notable characteristic of these intrinsic cylinders is that, for the case $p\geq 2$, both types of intrinsic cylinder shrink in the time scale as $\la \to \infty.$ In contrast, for the case $p<2,$ the $p$-intrinsic cylinders shrink in the spatial scale as $\lambda \to \infty.$ Furthermore, in this latter scenario, since $p\leq q,$ the $(p,q)$-intrinsic cylinders shrink in both time and space scales as $\lambda$ approaches infinity.

The primary objective of this paper is to establish a gradient higher integrability result for bounded weak solutions to \eqref{main_eqn}, which is modeled after the following equation:

\begin{align*}
    u_t-\operatorname{div} \left(|\nabla u|^{p-2} \nabla u + a(z) |\nabla u|^{q-2} \nabla u + b(z) |\nabla u|^{s-2} \nabla u \right) = 0,\,\,\,\text{in}\,\,\, \Omega_T.
\end{align*}

In a recent paper by Kim-Oh \cite{KO2024}, the authors established gradient higher integrability for the case $2\leq p\leq q< \infty$ (the degenerate case). In this work, we focus on the singular regime, specifically for $\frac{2n}{n+2}<p\leq 2.$ We further assume that the non-negative coefficients $a(\cdot),b(\cdot)$ are $\alpha$- and $\beta$-H\"{o}lder continuous, respectively, for some $\alpha, \beta \in (0, 1]$ under conditions $q\leq p+\frac{2\alpha}{n+2}$ and $s\leq p+\frac{2\beta}{n+2}.$ In \cite{KO2024}, four types of intrinsic cylinders and alternatives were introduced to achieve the goal of gradient higher integrability. In this study, we propose the use of $\mu$-modified intrinsic cylinders (see \ref{not5}, \cref{Notation}) to facilitate our analysis. From the definition of $\mu$ provided in  \ref{not4}, \cref{Notation}, specifically,
$\frac{n}{n+2}<\mu\leq \min\left\{1, \frac{p}{2}\right\},$
it is evident that the cylinders shrink in both time and spatial scales simultaneously as $\la \to \infty.$ The introduction of these cylinders offers the significant advantage of enabling a unified treatment of both the singular case $\left(\frac{2n}{n+2}<p<2\right)$ and the degenerate ($p\geq 2$) case. The motivation for utilizing such intrinsic cylinders is inspired by the work of Adimurthi-Byun-Oh \cite{adi}, which presents a unified approach to achieving boundary gradient higher integrability for very weak solutions to parabolic systems with $p(z)$-growth \cite{MR2779582}.  Notably, similar results were previously established by B\"{o}gelein-Li \cite{BL14} for the case $p(z)\geq 2$ and by Li \cite{Li17} for the case $\frac{2n}{n+2}<p(z)\leq 2,$ but handled separately.
\vspace{.5cm}
\subsection{Definitions and Notations}\label{def n not}
In this section, we introduce some basic definitions and notations. We start with the definition of the parabolic metric.
\begin{definition}[Parabolic metric]
Given any two points $z_1=(x_1, t_1)$ and $z_2=(x_2, t_2) \in \RR^{n+1},$ we define the parabolic metric $d$ as 
\begin{align*}
d(z_1, z_2):=\max\left\{|x_1-x_2|, \sqrt{|t_1-t_2|}\right\}.
\end{align*}
\end{definition}
\vspace{.3cm}
A suitable adaptation of the above definition in the intrinsically scaled cylinders centered at $z_0\in \RR^{n+1}$ is as follows.
\begin{definition}[Scaled parabolic metric]\label{para metric}
Fix $z_0 \in \RR^{n+1}.$ Given any two points $z_1=(x_1, t_1)$ and $z_2=(x_2, t_2) \in \RR^{n+1}$ with $K\geq 1, \lambda\geq 1, \mu>0,$ we define the parabolic metric $d^{\la,\mu}_{z_0}$ as

\begin{align*}
\displaystyle
    d^{\la, \mu}_{z_0}(z_1,z_2 )=\left\{\begin{array}{l}
    \max\left\{\la^{1-\mu}|x_1-x_2|, \sqrt{\lambda^{p-2\mu}|t_1-t_2|}\right\},\\ 
    \qquad\qquad\qquad \text{if}\,\,\, K\lambda^p\geq a(z_0)\lambda^q \,\,\, \text{and}\,\,\,K\lambda^p \geq b(z_0)\lambda^s,\\
    \max\left\{\la^{1-\mu}|x_1-x_2|, \sqrt{(\lambda^p+a(z_0)\lambda^q)\lambda^{-2\mu}|t_1-t_2|}\right\},\\
    \qquad\qquad\qquad \text{if}\,\,\, K\lambda^p< a(z_0)\lambda^q \,\,\, \text{and}\,\,\,K\lambda^p\geq b(z_0)\lambda^s,\\
    \max\left\{\la^{1-\mu}|x_1-x_2|, \sqrt{(\lambda^p+b(z_0)\lambda^s)\lambda^{-2\mu}|t_1-t_2|}\right\},\\
    \qquad\qquad\qquad \text{if}\,\,\, K\lambda^p\geq a(z_0)\lambda^q \,\,\, \text{and}\,\,\,K\lambda^p< b(z_0)\lambda^s,\\
    \max\left\{\la^{1-\mu}|x_1-x_2|, \sqrt{\Lambda\lambda^{-2\mu}|t_1-t_2|}\right\},\\
    \qquad\qquad\qquad \text{if}\,\,\, K\lambda^p< a(z_0)\lambda^q \,\,\, \text{and}\,\,\,K\lambda^p< b(z_0)\lambda^s,
    \end{array}\right.
\end{align*}
where $\La:=\la^p+a(z_0)\la^q+b(z_0)\la^s.$   
\end{definition}
We fix the following radii.
\begin{definition}\label{choice of radii}
For any $\varrho>0,$ we fix two radii $\rho_a$ and $\rhob$ such that 
\begin{align*}
   0<\varrho\leq \rhoa< \rhob \leq 4\varrho\,\,\,\,\, \text{and}\,\,\,\, 2\varrho \leq \rhob \leq 4\varrho \,\,\,\text{with}\,\,\,0<h_0< \frac{\rhob^2-\rhoa^2}{4}
\end{align*}
be very small number.
\end{definition}
We set the following structural assumptions on $\aaa$ in \eqref{main_eqn}.
\begin{assumption}
	\label{definition_aaa}
	We assume $\aaa(z, \xi):\Omega_T\times \RR^{Nn} \to \RR^{Nn}$ with $N\geq 1$ is a Carath\'{e}odory vector field satisfying the following structural assumptions:
 \vspace{.3cm}
	\begin{itemize}
		\item[\normalfont{\textbf{A1.}}] \label{A1} 
  $\langle \aaa (z, \xi), \xi \rangle \geq \mcc_0 \left(|\xi|^p+a(z)|\xi|^q+b(z)|\xi|^s\right)$ and $|\aaa (z, \xi)| \leq \mcc_1\left(|\xi|^{p-1}+a(z)|\xi|^{q-1}+b(z)|\xi|^{s-1}\right)$ holds  for every $z\in \Omega_T$ and $\xi \in \RR^{Nn}$. Here, $\mcc_0, \mcc_1$ are positive constants with $\mcc_0 \leq \mcc_1 < \infty.$
		\vspace{.3cm}
		\item [\normalfont{\textbf{A2.}}]\label{A2}
  The coefficients $a:\Omega_T \to \RR^+$ is in $C^{\alpha, \frac{\alpha}{2}}(\Omega_T)$ for some $\alpha \in (0,1]$ and $b:\Omega_T \to \RR^+$ is in $C^{\beta, \frac{\beta}{2}}(\Omega_T)$ for some $\beta \in (0,1]$  and we impose the restriction 
		\begin{equation}\label{def_pq}
          \begin{aligned}
			\frac{2n}{n+2}<p \leq q \leq  p + \min\left\{\alpha\left(\frac{p}{2}- \frac{n}{n+2}\right),\frac{2\alpha}{n+2}\right\}<\infty,\\
            \frac{2n}{n+2}<p \leq s \leq  p + \min\left\{\beta\left(\frac{p}{2}- \frac{n}{n+2}\right),\frac{2\beta}{n+2}\right\}<\infty.
            \end{aligned}
		\end{equation}
		In particular, this means that $a, b \in L^{\infty}(\Omega_T)$ and \begin{align*}
		&|a(x_1,t_1)-a(x_2,t_2)|\leq [a]_{\alpha}\left(|x_1-x_2|^{\alpha}+|t_1-t_2|^{\frac{\alpha}{2}}\right), \\
  &|b(x_1,t_1)-b(x_2,t_2)|\leq [b]_{\beta}\left(|x_2-x_2|^{\beta}+|t_1-t_2|^{\frac{\beta}{2}}\right),
  \end{align*} for every $(x_1,t_1), (x_2,t_2) \in \Omega_T.$
	\end{itemize}
\end{assumption}
Let us first recall the definition of weak solution to \cref{main_eqn}:
\begin{definition}\label{weak solution}
	A function $u : \Omega\times (0, T) \to \RRN$ with 
	\begin{align*}
		u \in C_{loc}\left(0,T; L^2_{loc}\left(\Omega, \RRN\right)\right)\cap L^s_{loc}\left(0, T; W^{1, s}_{loc}\left(\Omega, \RRN\right)\right)
	\end{align*}
	is said to be a weak solution to \eqref{main_eqn} if for every compact subset $K\subset \Omega$ and for every subinterval $[t_1, t_2]\subset (0, T]$
	\begin{align*}
		\int_{t_1}^{t_2}\int_K -u\cdot \varphi_t \,dx\,dt+\int_{K}u\varphi dx\Big|_{t_1}^{t_2}+\int_{t_1}^{t_2} \int_K \aaa(z, \nabla u)\cdot \nabla \varphi \,dx \,dt=0
	\end{align*}
	holds for all nonnegative test function $\varphi \in C^{\infty}_0 (K \times(t_1, t_2), \RRN).$ 
\end{definition}
\vspace{.3cm}
\subsubsection{Notations}\label{Notation}
Here we collect the standard notation that will be used throughout the paper:
\begin{enumerate}[label=(\roman*),series=theoremconditions]
\item \label{not1} We shall denote $n$ to be the space dimension and by $z=(x,t)$ to be  a point in $ \RR^n\times (0,T)$. Moreover, we will always assume $n \geq 2$.   
\item\label{not2} A ball with center $x_0 \in \Rn$ and radius $r$ is denoted as
 \begin{align*}
     B_r(x_0)=\left\{x \in \Rn | \,\,\, |x-x_0|<r\right\}.
 \end{align*}
\item\label{general cylinder} In general, a parabolic cylinder centered at $z_0=(x_0, t_0)$ is denoted as 
\begin{align*}
    Q_{r, \varrho}(z_0)=B_r(x_0)\times \left(t_0-\varrho, t_0+\varrho\right):=B_r(x_0)\times l_{\varrho}(t_0).
\end{align*}
\item \label{not4} We shall fix a universal constant $\frac{n}{n+2} < \mu \leq \min\left\{1,\frac{p}{2}\right\}$ which is possible since we always assume $p > \frac{2n}{n+2}$ holds. 
\item\label{not5} We use four types of cylinders for $p,$ $(p, q),$ $(p,s)$ and $(p,q, s)$-phases. For $\la\geq 1,$ the notation $\mcp^{\lambda}_{\varrho}(z_0)$ is used to denote $p$-intrinsic cylinders at $z_0=(x_0, t_0),$
\begin{align*}
\mcp^{\lambda}_{\varrho}(z_0):=B_{\lambda^{-1+\mu}\varrho}(x_0)\times \left(t_0-\frac{\lambda^{2\mu}\varrho^2}{\lambda^p}, t_0+\frac{\lambda^{2\mu}\varrho^2}{\lambda^p}\right)
=:B^\la_{\varrho}(x_0)\times I^{\la, p}_{\varrho}(t_0).
\end{align*}
$\mcq^{\lambda}_{\varrho}(z_0)$ is used to denote $(p,q)$-intrinsic cylinders at $z_0=(x_0, t_0),$
\begin{align*}
\mcq^{\lambda}_{\varrho}(z_0):=B_{\lambda^{-1+\mu}\varrho}(x_0)\times \left(t_0-\frac{\lambda^{2\mu}\varrho^2}{\lambda^p+a(z_0)\lambda^q}, t_0+\frac{\lambda^{2\mu}\varrho^2}{\lambda^p+a(z_0)\lambda^q}\right)
=:B^\la_{\varrho}(x_0)\times I^{\la, (p,q)}_{\varrho}(t_0).    
\end{align*}
$\mathcal{S}^{\la}_{\varrho}(z_0)$ is used to denote $(p,s)$-intrinsic cylinders at $z_0=(x_0, t_0),$
\begin{align*}
\mathcal{S}^{\lambda}_{\varrho}(z_0):=B_{\lambda^{-1+\mu}\varrho}(x_0)\times \left(t_0-\frac{\lambda^{2\mu}\varrho^2}{\lambda^p+b(z_0)\lambda^s}, t_0+\frac{\lambda^{2\mu}\varrho^2}{\lambda^p+b(z_0)\lambda^s}\right)
=:B^\la_{\varrho}(x_0)\times I^{\la, (p,s)}_{\varrho}(t_0),    
\end{align*}
and $\mathcal{G}^{\la}_{\varrho}(z_0)$ is used to denote $(p,q,s)$-intrinsic cylinders at $z_0=(x_0, t_0),$
\begin{align*}
\mathcal{G}^{\lambda}_{\varrho}(z_0)&:=B_{\lambda^{-1+\mu}\varrho}(x_0)\times \left(t_0-\frac{\lambda^{2\mu}\varrho^2}{\lambda^p+a(z_0)\lambda^q+b(z_0)\la^s}, t_0+\frac{\lambda^{2\mu}\varrho^2}{\lambda^p+a(z_0)\lambda^q+b(z_0)\la^s}\right)\\
&=:B^\la_{\varrho}(x_0)\times I^{\la, (p,q, s)}_{\varrho}(t_0).    
\end{align*}
\item \label{not5.5} In the case $\la =1$, we denote 
\[
\mcp_\varrho(z_0) := B_{\varrho}(x_0) \times (t_0 - \varrho^2,t_0+\varrho^2) =: B_\varrho(x_0) \times I_\varrho(t_0).
\]
\item\label{not6} We shall use the notation $H(z,|\xi|)  := | \xi|^{p} + a(z) | \xi|^q+b(z)|\xi|^s$ for any $\xi \in \Rn.$
	\item\label{not7} Integration with respect to either space or time only will be denoted by a single integral $\int$ whereas integration on $\Omega\times (0, T)$ will be denoted by a double integral $\iint$. 
	\item\label{not9} The notation $a \lesssim b$ is shorthand for $a\leq c b$ where $c$ is a universal constant which depends on ``\texttt{data}$"$ where 
 \begin{align*}
     \text{\texttt{data}}:=\left(n, N, p, q, s, \mcc_0, \mcc_1, \alpha, \beta, [a]_{\alpha}, [b]_{\beta}, ||a||_{L^{\infty}(\Omega_T)}, ||b||_{L^{\infty}(\Omega_T)}, ||H(z, |\nabla u|)||_{L^1(\Omega_T)}\right).
 \end{align*}
 \item \label{integral avarage} We shall denote
 \begin{align*}
     (f)_{A}:= \frac{1}{|A|}\iint_A f dz= \fiint_{A} f dz
 \end{align*}
 as the integral average of $f$ over a measurable set $A \subset \Omega_T$ with $0<|A|<\infty.$
\end{enumerate}
\subsection{Main result} Now we state our main result.
\begin{theorem}
	\label{main_thm}
	Let $u$ be a weak solution of \eqref{main_eqn} with \cref{definition_aaa} in force and  $\mu$ be the constant defined in \cref{Notation} \ref{not4}. Then there exist positive constants $\varepsilon_0 = \varepsilon_0(\textnormal{\texttt{data}})\in (0,1)$ and $c=c(\textnormal{\texttt{data}})\geq 1$ such that the following holds:
	\begin{align*}
		\fiint_{\mcp_{r}(z_0)} H(z, |\nabla u|)^{1+\varepsilon}\, dz \leq c\left(\fiint_{\mcp_{2r}(z_0)}H(z, |\nabla u|)\,dz+1\right)^{1+\frac{\varepsilon s}{(n+2)\mu-n}},  
	\end{align*}
	for every $\mcp_{2r}(z_0)\subset \Omega \times (0, T)$ and any $\varepsilon \in (0, \varepsilon_0)$ where $\mcp_{(\cdot)}(z_0)$ is defined in \cref{Notation} \ref{not5.5}.
\end{theorem}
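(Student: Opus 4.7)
The plan is to follow the Kinnunen--Lewis intrinsic scaling scheme, but adapted to the unified $\mu$-cylinders of \eqref{mu1}--\eqref{mu2}. Fix $\mcp_{2r}(z_0)\subset\Omega_T$ and choose concentric radii $r \le \rhoa < \rhob \le 2r$ as in \cref{choice of radii}. Set
\[
\lambda_0 \df \left(\fiint_{\mcp_{2r}(z_0)} \bigl(H(z,|\nabla u|)+1\bigr)\, dz\right)^{\!\frac{1}{(n+2)\mu-n}},
\]
and for $\lambda\ge \lambda_0$ consider the superlevel set $E(\lambda)\df\{z\in\mcp_{\rhoa}(z_0):H(z,|\nabla u|)>\lambda^p+a(z)\lambda^q\}$. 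The exponent $(n+2)\mu-n$ is precisely the ``scaling deficit'' produced by the $\la^{-1+\mu}$ spatial dilation and $\la^{2\mu-p}$ (resp.\ $\la^{2\mu}/\La$) temporal dilation built into $\mcp^{\la}_\varrho$ and $\mcq^{\la}_\varrho$; this is what ultimately yields the exponent $1+\tfrac{\varepsilon q}{(n+2)\mu-n}$ on the right--hand side.

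Next I would perform the stopping--time argument. For each Lebesgue point $z\in E(\lambda)$ I shrink the radius $\varrho$ from a maximal value $\varrho_{\max}(z)\asymp \rhob-\rhoa$ down to $0$ and select the largest radius at which the intrinsic average
\[
\fiint_{\mcp^{\la}_{\varrho}(z)} H(\zeta,|\nabla u|)\, d\zeta \;=\; \lambda^p\qquad \text{or}\qquad \fiint_{\mcq^{\la}_{\varrho}(z)} H(\zeta,|\nabla u|)\, d\zeta \;=\; \lambda^p+a(z)\lambda^q,
\]
depending on which of the two alternatives $K\la^p\ge a(z)\la^q$ or $K\la^p<a(z)\la^q$ holds at the stopping radius. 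One then verifies that on the stopping cylinder the appropriate phase is truly effective: either the contribution of $a(z)|\nabla u|^{q}$ is negligible (and we are in $p$-phase), or the coefficient $a(z)$ is essentially constant on the cylinder by the H\"older continuity \textbf{A2}, together with \eqref{def_pq}, so we are in genuine $(p,q)$-phase. The compatibility of \eqref{def_pq} with the geometry of $\mcp^{\la}_\varrho$/$\mcq^{\la}_\varrho$ is the main technical obstacle, because one must check that $a$ does not oscillate more than a fixed fraction of $a(z_0)$ on the intrinsic cylinder; this is where the sharpness of the bound $q-p\le \min\{\alpha(p/2-n/(n+2)),2\alpha/(n+2)\}$ enters.

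On each stopping cylinder I then apply the phase--appropriate reverse H\"older inequality, namely \cref{LEM5.4} in $p$-phase and \cref{RHPQ} in $(p,q)$-phase, both built from the corresponding Sobolev--Poincar\'e inequalities (\cref{p_intrinsic poincare 2} and \cref{pq_intrinsic poincare}) combined with the Caccioppoli estimate \cref{general caccipoli}. After a Vitali covering of $E(\lambda)$ by such stopping cylinders, I pass to a pairwise disjoint subfamily $\{\mcp^{\la}_{\varrho_i}(z_i)\}\cup\{\mcq^{\la}_{\varrho_i}(z_i)\}$ whose enlargements cover $E(\lambda)$, and combine the per--cylinder reverse H\"older inequalities to obtain a level--set inequality of the form
\[
\iint_{E(\lambda)} H(z,|\nabla u|)\, dz \;\lesssim\; \lambda^{p-\theta}\iint_{E(c^{-1}\lambda)} H(z,|\nabla u|)^{\theta/p}\,dz + \text{tail},
\]
for some $\theta<p$ with the gain recorded by $(n+2)\mu-n$ in the denominator.

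Finally, I would multiply by $\lambda^{p\varepsilon-1}$, integrate in $\lambda$ on $(\lambda_0,\infty)$, and apply Fubini's theorem exactly as in \cref{LEM6.1} (the Gehring--type self--improvement lemma) to absorb the resulting $L^{p(1+\varepsilon)}$ term on the left once $\varepsilon<\varepsilon_0(\textnormal{\texttt{data}})$ is small enough. Together with the trivial control of the sub--level portion by $\lambda_0$, this produces the asserted inequality with the exponent $1+\tfrac{\varepsilon q}{(n+2)\mu-n}$ coming from how $\lambda_0$ was normalized. The genuinely delicate steps I expect are (i)~ensuring the stopping--time radii lie in the admissible range $\rhoa<\varrho<\rhob$, and (ii)~maintaining uniform constants when the stopping alternative switches between $p$-phase and $(p,q)$-phase along the Vitali family; the unified cylinder geometry of \eqref{mu1}--\eqref{mu2} and the scaled parabolic metric in \cref{para metric} are designed precisely to make this switching harmless.
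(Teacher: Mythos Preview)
Your proposal is essentially correct and follows the same route as the paper: stopping-time argument to produce intrinsic cylinders satisfying \cref{assmp1a} or \cref{assmpq1a}, the phase-appropriate reverse H\"older inequalities \cref{LEM5.4}/\cref{RHPQ}, a Vitali covering (\cref{vitali covering}), the level-set estimate \cref{LEM6.1}, and then multiplication by $\Lambda^{\varepsilon-1}$, Fubini, and \cref{iter_lemma}. Two small clarifications: \cref{LEM6.1} is the level-set form of the reverse H\"older estimate, not the Gehring/Fubini step itself (that argument is carried out separately in the proof of \cref{main_thm}), and the paper parametrizes the superlevel sets by a scalar $\Lambda$ (with $\Lambda=\lambda_{\tilde z}^p+a(\tilde z)\lambda_{\tilde z}^q$ determined pointwise) and inserts a truncation $(\cdot)_k$ before the Fubini computation to ensure the integrals are finite before letting $k\to\infty$.
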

\begin{remark}\label{remark1.1}
Note that in \cref{weak solution}, we assume that $|\nabla u|\in L^s(\Omega_T).$ However, a more natural assumption would be to consider $|\nabla u|\in L^p(\Omega_T)$ alongside the condition

\begin{align*}
    \iint_{\Omega_T}H(z, |\nabla u|)\,dz=\iint_{\Omega_T}\left(|\nabla u|^p+a(z)|\nabla u|^q+b(z)|\nabla u|^s\right) \,dz < \infty.
\end{align*}
In an upcoming work \cite{KOS2024}, we will employ a parabolic Lipschitz truncation method for the system \eqref{main_eqn} under the assumption $2\leq p\leq q<\infty.$ The Lipschitz truncation method for the singular case, including in the double-phase problem, remains an open question and could be explored using this new intrinsic scaling in future work.
\end{remark}
\begin{remark}
It is important to note that \cref{main_eqn} is homogeneous. The techniques developed in this work can also be applied to equations with a non-homogeneous term in divergence form. However, to simplify the exposition, we restrict our focus to the homogeneous equation.
\end{remark}
\subsubsection{Some comments on the proof} Before proceeding, we provide an outline of the proof of the main theorem. As mentioned earlier, the strategy to establish the higher integrability of gradients for parabolic equations typically involves two main steps: first, we prove reverse Hölder-type inequalities on intrinsic cylinders, and in the second step, we show the existence of such intrinsic cylinders in certain superlevel sets. The final proof can be completed by using a Vitali covering argument alongside Fubini's theorem. In the following, we describe each of these steps in more detail within our context.
\begin{itemize}
    \item[(i)] To analyze the multi-phase phenomenon described by the equation, we consider four types of intrinsic cylinders and appropriate alternatives, namely $p, (p,q), (p,s),$ $(p,q,s)$-phases, as introduced in \cite{KO2024}. We establish reverse Hölder inequalities for each type of intrinsic cylinder. A fundamental component in deriving these reverse Hölder inequalities is the parabolic Sobolev-Poincaré-type inequality, which we first need to establish for each phase. Our parabolic Sobolev-Poincar\'{e} inequalities hold for the range $\frac{2n}{n+2}<p\leq q\leq s<\infty.$  The primary tools for proving these inequalities include basic applications of Hölder's inequality and Young's inequality. Interestingly, our findings suggest that the parabolic Sobolev-Poincaré inequalities for the singular case can be viewed as a perturbation by $``\lambda"$ of the inequalities for the degenerate case (see \cref{p_intrinsic poincare 2} and \cref{pq_intrinsic poincare}).  This perspective gives a streamlined approach to several estimates, including those in the double-phase case.   Furthermore, we introduce a parameter $\mu$ to modify our intrinsic cylinders, allowing for a unified handling of the problem. In particular, when $\mu=1,$ we recover the intrinsic cylinders defined in \cite{KO2024}, and for $\mu=\frac{p}{2},$ we obtain similar intrinsic cylinders as those defined in \cite{KS2024} for the double-phase problem.
\item[(ii)] The next objective is to establish the existence of such intrinsic cylinders in a superlavel set and devise a precise covering argument that is applicable for all $\frac{2n}{n+2}<p\leq q\leq s<\infty.$ We derive a Vitali constant $\kappa$ that depends on the parameter $\mu.$ Unlike \cite{KKM, KS2024}, our  Vitali covering argument, requires addressing sixteen distinct cases. This is required to encompass all possible configurations of intrinsic cylinders, as outlined in \cite{KO2024}. To ensure that the exposition is self-contained and precise, we provide detailed and rigorous proofs throughout the discussion.
\end{itemize}
\subsection{Plan of the paper} The article is structured as follows: In \cref{sec2}, we record some known results.  \cref{sec3} is dedicated to proving the reverse H\"{o}lder inequalities for the $p, (p,q), (p,s), (p,q,s)$-phases. In \cref{sec4}, we establish our gradient higher integrability result, which is stated in \cref{main_thm}. In \cref{general multiphase sec}, we outline a blueprint for achieving the analogue of \cref{main_thm} for \eqref{general multiphase}.
Lastly, in \cref{appendix}, we provide a sketch of the proofs for \cref{ps_intrinsic poincare} and \cref{pqs_intrinsic poincare}.

\section{Auxiliary results} \label{sec2}
In this section, we collect some auxiliary lemmas which will be used throughout the paper.
The first result is the Gagliardo-Nirenberg inequality (see \cite[Lemma 4.1]{KKM} or \cite[Chapter I]{MR1230384}).
\begin{lemma}
	\label{g_n}
	Let $\varrho >0$ and $B_\varrho(x_0)\subset \mathbb{R}^n$ be a ball. For constants $\sigma,\xi ,r\in[1,\infty)$ and $\vartheta\in(0,1)$ satisfying $-\frac{n}{\sigma}\leq \vartheta(1-\frac{n}{\xi})-(1-\vartheta)\frac{n}{r}$  and for any $h\in W^{1,\xi}(B_\varrho(x_0))$,  there holds
	$$
	\fint_{B_\varrho(x_0)}\frac{\left| h\right|^\sigma}{\varrho^\sigma}\ dx\apprle_{(n,\xi)}\left(\fint_{B_\varrho(x_0)}\frac{\left| h\right|^{\xi}}{\varrho^\xi}+|\nabla h|^\xi\ dx\right)^\frac{\vartheta\sigma}{\xi}\left(\fint_{B_\varrho(x_0)}\frac{\left| h\right|^r}{\varrho^r}\ dx\right)^\frac{(1-\vartheta)\sigma}{r}.
	$$
\end{lemma}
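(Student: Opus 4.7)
The inequality is homogeneous of degree zero under the blow-up $\tilde h(y)\coloneqq h(x_0+\varrho y)/\varrho$, $y\in B_1(0)$: a direct change of variables shows that each of the three scale-corrected integral averages appearing in the statement is invariant under this rescaling. Thus my first step is to reduce to the case $\varrho=1$ and $x_0=0$; once the assertion is proved on the unit ball $B_1=B_1(0)$, the inequality on $B_\varrho(x_0)$ follows by undoing the rescaling. This reduction is what justifies the statement's awkward-looking factors $1/\varrho^\sigma$, $1/\varrho^s$, $1/\varrho^r$: they are precisely what makes the pure-gradient term $|\nabla h|^s$ scale-compatible with the $L^p$-type terms.

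On $B_1$ I would combine the Sobolev embedding with an interpolation H\"older inequality. Set $s^\ast\coloneqq ns/(n-s)$ when $s<n$; otherwise let $s^\ast$ be any sufficiently large finite exponent (either choice yields $W^{1,s}(B_1)\hookrightarrow L^{s^\ast}(B_1)$ with constant depending only on $n$ and $s$). Define $\sigma_0\in[r,s^\ast]$ by $\tfrac{1}{\sigma_0}=\tfrac{\vartheta}{s^\ast}+\tfrac{1-\vartheta}{r}$. The three-exponent H\"older inequality yields $\|\tilde h\|_{L^{\sigma_0}(B_1)}\leq \|\tilde h\|_{L^{s^\ast}(B_1)}^{\vartheta}\,\|\tilde h\|_{L^r(B_1)}^{1-\vartheta}$, and inserting the Sobolev embedding on the first factor produces exactly the target inequality with exponent $\sigma_0$ on the left. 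A short algebraic check (with $s^\ast=ns/(n-s)$) confirms $-n/\sigma_0=\vartheta(1-n/s)-(1-\vartheta)n/r$, which is the equality version of the hypothesis.

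Finally, the hypothesis $-n/\sigma\leq\vartheta(1-n/s)-(1-\vartheta)n/r$ is equivalent to $\sigma\leq\sigma_0$, so I close this gap via the trivial nested $L^p$-embedding on the bounded set $B_1$, namely $\|\tilde h\|_{L^\sigma(B_1)}\lesssim\|\tilde h\|_{L^{\sigma_0}(B_1)}$, with constant depending only on $n$. The main obstacle, and essentially the only delicate point, is the borderline case $s\geq n$: the canonical Sobolev conjugate $s^\ast$ is then $+\infty$, so I must choose a finite $s^\ast$ large enough that the relation defining $\sigma_0$ remains consistent with the hypothesis. This is possible precisely because $\vartheta<1$, which keeps $1/\sigma_0$ bounded away from $0$ as $s^\ast\to\infty$. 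Once this case is absorbed, rescaling back to $B_\varrho(x_0)$ delivers the stated inequality with a constant depending only on $n$ and $s$.
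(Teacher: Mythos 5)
Your rescaling to the unit ball and your argument in the subcritical case $s<n$ are sound: with $s^\ast=\tfrac{ns}{n-s}$ one indeed has $-\tfrac{n}{\sigma_0}=\vartheta(1-\tfrac ns)-(1-\vartheta)\tfrac nr$, the hypothesis is exactly $\sigma\le\sigma_0$, and Sobolev embedding, H\"older interpolation and Jensen's inequality give the claim. (For the record, the paper does not prove this lemma at all; it is quoted from \cite{KKM}*{Lemma 4.1} and \cite{MR1230384}*{Chapter I}, so there is no in-paper proof to compare routes with.)

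The genuine gap is your treatment of $s\ge n$, and the reason you offer is precisely the source of the failure. For every finite $s^\ast$ you have $\tfrac1{\sigma_0}=\tfrac{\vartheta}{s^\ast}+\tfrac{1-\vartheta}{r}>\tfrac{1-\vartheta}{r}$, so $\sigma_0<\tfrac{r}{1-\vartheta}$ no matter how large $s^\ast$ is taken: the fact that $1/\sigma_0$ stays bounded away from $0$ caps $\sigma_0$, it does not rescue you. On the other hand, the hypothesis rewritten as $\tfrac1\sigma\ge\vartheta\bigl(\tfrac1s-\tfrac1n\bigr)+\tfrac{1-\vartheta}{r}$ has a negative first term when $s>n$, so it admits $\sigma\ge\tfrac{r}{1-\vartheta}$. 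Concretely, for $n=2$, $s=4$, $r=2$, $\vartheta=\tfrac12$ the hypothesis allows $\sigma=8$ (with equality), while your $\sigma_0<4$ for every finite $s^\ast$; the closing step $\fint|h|^{\sigma}\le\bigl(\fint|h|^{\sigma_0}\bigr)^{\sigma/\sigma_0}$ is then unavailable because the Jensen embedding runs in the wrong direction. This regime is not peripheral here: the paper applies the lemma with $s$ equal to $p$, $\theta p$ or $q$, and $p,q$ may well exceed $n$. Nor is it fixable by replacing $L^{s^\ast}$ with the $L^\infty$ embedding for $s>n$: in the example above that route only yields $\fint|h|^8\lesssim A^{3/2}B$ with $A=\fint(|h|^4+|\nabla h|^4)$, $B=\fint|h|^2$, which is strictly weaker than the asserted $A\,B^{2}$ (take a bump of width $\delta$). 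The case $s\ge n$ genuinely requires the full multiplicative Gagliardo--Nirenberg inequality $\|h\|_{L^\sigma}\lesssim\|h\|_{W^{1,s}}^{\vartheta}\|h\|_{L^r}^{1-\vartheta}$ in its $s\ge n$ form (as in \cite{MR1230384}*{Chapter I}), followed by Jensen for the non-equality range of $\sigma$; your plain Sobolev-plus-H\"older scheme does not reach it.
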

Next, we record the standard iteration lemma (see \cite[Lemma 4.2]{KKM}):
\begin{lemma}
	\label{iter_lemma}
	Let $0< r< R<\infty$ be given and $h : [r, R] \to \RR$ be a non-negative and bounded function. Furthermore, let $\theta \in (0,1)$ and $A,B,\gamma_1,\gamma_2 \geq 0$ be fixed constants and 
	suppose that
	$$
	h(\varrho_1) \leq \theta h(\varrho_2) + \frac{A}{(\varrho_2-\varrho_1)^{\gamma_1}} + \frac{B}{(\varrho_2-\varrho_1)^{\gamma_2}},
	$$
	holds for all $r \leq \varrho_1 < \varrho_2 \leq R$, then the following conclusion holds:
	$$
	h(r) \apprle_{(\theta,\gamma_1,\gamma_2)} \frac{A}{(R-r)^{\gamma_1}} + \frac{B}{(R-r)^{\gamma_2}}.
	$$
\end{lemma}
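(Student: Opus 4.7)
The plan is to execute the classical iteration argument of Giaquinta--Giusti. I would fix a parameter $\tau \in (0,1)$ (to be chosen later) and define a geometric sequence of intermediate radii by $\varrho_0 \coloneqq r$ and
$$
\varrho_{i+1} \coloneqq \varrho_i + (1-\tau)\tau^i(R-r), \qquad i\geq 0,
$$
so that $\varrho_i = r + (1-\tau^i)(R-r) \in [r,R)$, $\varrho_i \nearrow R$, and $\varrho_{i+1} - \varrho_i = (1-\tau)\tau^i(R-r)$. Applying the hypothesis to each consecutive pair $(\varrho_1,\varrho_2) = (\varrho_i,\varrho_{i+1})$ yields
$$
h(\varrho_i) \leq \theta\, h(\varrho_{i+1}) + \frac{A(1-\tau)^{-\gamma_1}\tau^{-\gamma_1 i}}{(R-r)^{\gamma_1}} + \frac{B(1-\tau)^{-\gamma_2}\tau^{-\gamma_2 i}}{(R-r)^{\gamma_2}}.
$$

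Iterating this bound $k$ times (and telescoping the $\theta$-factors) produces
$$
h(r) \leq \theta^k\, h(\varrho_k) + \sum_{i=0}^{k-1}\theta^i\left[\frac{A(1-\tau)^{-\gamma_1}\tau^{-\gamma_1 i}}{(R-r)^{\gamma_1}} + \frac{B(1-\tau)^{-\gamma_2}\tau^{-\gamma_2 i}}{(R-r)^{\gamma_2}}\right].
$$
The decisive step is the choice of $\tau$: I would pick any $\tau \in (0,1)$ satisfying $\theta\,\tau^{-\gamma_j} < 1$ for both $j=1,2$, for instance $\tau \coloneqq \tfrac{1}{2}\bigl(1 + \theta^{1/\max(\gamma_1,\gamma_2,1)}\bigr)$. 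With this choice, each of the two geometric series $\sum_{i=0}^{\infty}(\theta\,\tau^{-\gamma_j})^i$ converges to a finite constant depending only on $\theta,\gamma_1,\gamma_2$.

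Finally, since $h$ is assumed bounded on $[r,R]$ and $\theta \in (0,1)$, the tail $\theta^k h(\varrho_k)$ tends to $0$ as $k \to \infty$, producing the claimed estimate with an implicit constant depending only on $(\theta,\gamma_1,\gamma_2)$. The argument is essentially routine; the only subtlety is that $\tau$ (and therefore the implicit constant) must be selected after $\theta$ and the $\gamma_j$ are fixed, which is precisely what is accounted for in the subscripted constant $\apprle_{(\theta,\gamma_1,\gamma_2)}$ in the statement. I do not foresee a genuine obstacle: the boundedness hypothesis on $h$ is invoked exactly once --- to kill the tail $\theta^k h(\varrho_k)$ in the limit --- and the whole argument is purely elementary, independent of the double phase structure of the underlying PDE.
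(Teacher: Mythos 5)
Your argument is correct and is exactly the standard Giaquinta--Giusti iteration that underlies this lemma; the paper itself offers no proof, simply citing \cite[Lemma 4.2]{KKM}, whose proof is the same geometric-radii iteration you describe. Your choice $\tau = \tfrac12\bigl(1+\theta^{1/\max(\gamma_1,\gamma_2,1)}\bigr)$ indeed guarantees $\theta\tau^{-\gamma_j}<1$ for $j=1,2$, and the boundedness of $h$ is used exactly where you say, so nothing is missing.
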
 
Below, we recall the energy estimate derived in \cite[Lemma 3.1]{KO2024}. Note that, Kim-Oh derived the energy estimate for $2\leq p\leq q<\infty.$ By using a Lipschitz truncation method as in \cite{KKS} the same energy estimate can be proved for the singular case, $\frac{2n}{n+2}<p\leq 2$ (see \cref{remark1.1}).

\vspace{.3cm}
To state the energy estimate, let us define appropriate cutoff functions. Assume $R_2, S_2>0$ with $R_1 \in [\frac{R_2}{2}, R_2)$ and $S_1 \in [\frac{S_2}{2^2}, S_2).$ Furthermore, let $0<h_0< \frac{S_2-S_1}{4}$ and $\eta \in C^{\infty}_c (B_{R_2}(x_0))$ and $\zeta \in C^{\infty}_c (l_{S_2-h_0}(t_0)).$ Moreover, $\eta$ and $\zeta$ satisfy the following:
\begin{align*}
&(i)\,\,\,\,\ 0\leq \eta \leq 1, \,\,\,\, \eta\equiv 1 \,\,\, \text{in}\,\,\, B_{R_1}(x_0),\,\,\,\, \text{and}\,\,\,\, ||\nabla \eta||_{\infty}\leq \frac{c}{R_2-R_1},\\
&(ii)\,\,\,\,\ 0\leq \zeta \leq 1, \,\,\,\, \zeta\equiv 1 \,\,\, \text{in}\,\,\, l_{S_1}(t_0),\,\,\,\, \text{and}\,\,\,\, ||\partial_t \zeta||_{\infty}\leq \frac{c}{S_2-S_1}.
\end{align*}
Then the energy estimate reads as follows. 
\begin{lemma}\label{general caccipoli} 
Let $u$ be a weak solution to \eqref{main_eqn}. Then there exists a constant $c=c(n, p, q,s, \mcc_0, \mcc_1)$ such that
\begin{align*}
    &\sup_{t \in l_{S_2}(t_0)}\fint_{B_{R_2}(x_0)}\frac{\left|u-(u)_{Q_{R_2, S_2}}\right|^2}{S_2} \eta^s(x)\zeta^2(t) dx+ \fiint_{Q_{R_2, S_2}(z_0)}H(z, |\nabla u|)\eta^s(x)
	\zeta^2(t)\,dx\,dt\\
 &\apprle
 \fiint_{Q_{R_2, S_2}(z_0)} H\left(z,\left|\frac{u-(u)_{Q_{R_2, S_2}}}{R_2-R_1}\right|\right) \zeta^2(t)\,dx\,dt
	 +  \fiint_{Q_{R_2, S_2}(z_0)}\frac{\left|u-(u)_{Q_{R_2, S_2}}\right|^2}{S_2-S_1}\eta^s(x)\zeta(t)\,dx\,dt
\end{align*}
holds for every $Q_{R_2, S_2}(z_0)\subset \Omega_T.$
\end{lemma}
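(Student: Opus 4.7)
The proof is the standard Caccioppoli computation adapted to the double-phase setting, with the crucial design choice being the exponent $q$ on the spatial cutoff $\eta$ (rather than $p$ or $1$). This power is dictated by the $q$-growth term in the flux and ensures that after applying Young's inequality to the boundary/cross terms, a non-negative power of $\eta$ remains on the absorbable side.

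First I would work with the Steklov-averaged weak formulation from \cref{steklov_weak solution}, testing it against $\varphi = ([u]_h - \bar u)\eta^{q}(x)\zeta^2(t)$, where $\bar u \df (u)_{Q_{R_2,S_2}}$. This is admissible since $\eta \in C^\infty_c(B_{R_2}(x_0))$ and $\zeta \in C^\infty_c(l_{S_2-h_0}(t_0))$. The temporal term $\int \partial_t [u]_h \cdot \varphi\, dx$ is handled via the identity $\partial_t [u]_h \cdot ([u]_h - \bar u) = \tfrac{1}{2}\partial_t |[u]_h - \bar u|^2$, followed by integration by parts in $t$, which transfers $\partial_t$ onto $\zeta^2$. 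Because $\zeta$ is compactly supported in $l_{S_2-h_0}(t_0)$, all boundary-in-time contributions vanish, and after passing $h\to 0$ one is left with
\begin{align*}
\tfrac{1}{2}\iint_{Q_{R_2,S_2}(z_0)} |u-\bar u|^2\, \eta^{q}\,\partial_t(\zeta^2)\,dz,
\end{align*}
which, using $\|\partial_t \zeta\|_\infty \lesssim 1/(S_2-S_1)$, is bounded by the last integral in the claim. To obtain the sup-in-time term on the left, I would repeat the same calculation on the truncated interval $(t_0-S_2,\tau)$ for arbitrary $\tau \in l_{S_2}(t_0)$ without integrating $\zeta^2$ by parts on the final slice, and then take $\sup_\tau$.

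For the elliptic part I would expand
\begin{align*}
\nabla \varphi = \nabla [u]_h\,\eta^{q}\zeta^2 + q\,([u]_h-\bar u)\,\eta^{q-1}\nabla\eta\,\zeta^2
\end{align*}
and invoke the structural assumption \textup{\textbf{A1}} of \cref{definition_aaa}. The principal term produces
$\langle \aaa(z,\nabla u),\nabla u\rangle \eta^{q}\zeta^2 \geq \mcc_0 \,H(z,|\nabla u|)\,\eta^{q}\zeta^2$,
which yields the second integral on the left-hand side. The cross term
$q\,\aaa(z,\nabla u)\cdot (u-\bar u)\,\eta^{q-1}\nabla\eta\,\zeta^2$
is controlled using the upper bound $|\aaa(z,\nabla u)|\leq \mcc_1(|\nabla u|^{p-1}+a(z)|\nabla u|^{q-1})$ and Young's inequality applied separately to the two pieces, with conjugate exponents $(p,\tfrac{p}{p-1})$ and $(q,\tfrac{q}{q-1})$. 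Using the elementary bounds $\eta^{(q-1)p'}\leq \eta^{q}$ (since $p\leq q$) and $\eta^{(q-1)q'}= \eta^{q}$, a small Young parameter $\varepsilon$ produces a term $\varepsilon\, H(z,|\nabla u|)\,\eta^{q}\zeta^2$ that is absorbed into the left-hand side, together with the companion term containing $|u-\bar u|^p/(R_2-R_1)^p + a(z)|u-\bar u|^q/(R_2-R_1)^q$, which is exactly the first right-hand integral in the claim.

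The main subtlety, as is typical in double-phase Caccioppoli estimates, is ensuring that the Young splitting preserves the double-phase weight $a(z)$ without generating spurious mixed $p/q$ cross terms. I would therefore split the cross term into its pure $p$-piece and its pure $a(z)q$-piece and apply Young's inequality to the latter after redistributing the weight as $a(z)=a(z)^{(q-1)/q}\cdot a(z)^{1/q}$, so that the $q$-phase contribution on both sides carries the correct factor of $a(z)$. Passing $h\to 0$ via standard Steklov-averaging convergence, and applying \cref{iter_lemma} if needed to handle the $\varepsilon$-absorption through the radii pair $(R_1,R_2)$, concludes the proof.
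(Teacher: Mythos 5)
Your proposal is correct and is essentially the standard Caccioppoli argument (Steklov averaging, testing with $(u-\bar u)\eta^q\zeta^2$, coercivity via \textbf{A1}, Young's inequality with the weight split $a(z)=a(z)^{(q-1)/q}a(z)^{1/q}$, and a truncated-in-time version for the sup term); the paper itself gives no proof but quotes the estimate from \cite[Theorem 2.4]{KKS} (cf.\ \cite[Lemma 2.3]{KKM}), and your argument matches the one indicated there and in the remark following the lemma. The only superfluous point is the appeal to \cref{iter_lemma}: the $\varepsilon$-term carries the same cutoffs $\eta^q\zeta^2$ as the left-hand side and is finite since $|\nabla u|\in L^q_{loc}$ and $a\in L^\infty$, so it is absorbed directly without iteration.
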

The next result is the parabolic Poincar\'{e} inequality which is derived in \cite[Lemma 3.3]{KO2024}.
\begin{lemma}\label{parabolic poincare for double phase}
    Let $u$ be a weak solution to \eqref{main_eqn}. Then there exists a constant $c=c(\textnormal{\texttt{data}})$ such that

    \begin{align*}
        \fiint_{Q_{r, \varrho}(z_0)}\left|\frac{u-(u)_{Q_{r, \varrho}(z_0)}}{r}\right|^{\theta m}\, dz \leq c \fiint_{Q_{r, \varrho}(z_0)}|\nabla u|^{\theta m} dz+ c\left(\frac{\varrho}{r^2}\fiint_{Q_{r, \varrho}(z_0)}\tilde{H}(z, |\nabla u|)\, dz\right)^{\theta m}
    \end{align*}
    for any $Q_{r, \varrho}(z_0)\subset \Omega_T$ defined in \cref{Notation} \ref{general cylinder} with $m\in (1, s]$ and $\theta \in (\frac{1}{m}, 1],$ where $$\tilde{H}(z, |\nabla u|):=|\nabla u|^{p-1}+a(z)|\nabla u|^{q-1}+b(z)|\nabla u|^{s-1}.$$
\end{lemma}
\section{Reverse H\"{o}lder inequalities}\label{sec3} 
In this section, we will establish the reverse H\"{o}lder inequalities for the $p, (p,q), (p,s), (p,q,s)$-phases.
From this point onward, we will set
\vspace{.3cm}
\begin{align}\label{new_k}
    K:=2+\frac{(2 [a]_{\alpha})^{\frac{n+2}{\alpha}}}{|B_1|}\iint_{\Omega_T}H(z, |\nabla u|)\,dz+\frac{(2 [b]_{\beta})^{\frac{n+2}{\beta}}}{|B_1|}\iint_{\Omega_T}H(z, |\nabla u|)\,dz.
\end{align}
\subsection{Reverse H\"older inequality for $p$-phase}
We start with defining the $p$-intrinsic cylinders. 
\begin{assumption}
Recalling \cref{choice of radii}, in the $p$-phase,  we assume that the following is satisfied:
\begin{enumerate}[label=(\roman*),series=theoremconditions]
    \item [\textbf{p1.}] \label{assmp1a} $K\lambda^p \geq a(z_0)\lambda^q,\,\,\, K\la^p\geq b(z_0)\la^s.$ \\
    \item [\textbf{p2.}] \label{assmp1b} $\fiint_{\mcp^{\lambda}_{\rho}(z_0)}(|\nabla u|^p+a(z)|\nabla u|^q+b(z)|\nabla u|^s) dz < \lambda^p\,\,\, \text{holds for all}\,\,\, \rho \in (\rho_a, \rho_b].$ \\
    \item [\textbf{p3.}] \label{assmp1c} $\fiint_{\mcp^{\lambda}_{\rho_a}(z_0)}(|\nabla u|^p+a(z)|\nabla u|^q+b(z)|\nabla u|^s) dz =\lambda^p.$ 
\end{enumerate}
\end{assumption}
We shall start by proving uniform parabolic Sobolev Poincar\'{e} inequalities in $p$-intrinsic cylinders for any $\frac{2n}{n+2}<p\leq q\leq s<\infty$. The following lemma is the first step towards this.
\begin{lemma}\label{p_intrinsic poincare 1}
Let $u$ be a weak solution to \eqref{main_eqn} and let the \cref{assmp1a} be in force. Then for any $\theta\in \left(\max\left\{\frac{s-1}{p}, \frac{1}{p}\right\}, 1\right],$ there exists a constant $c=c(\textnormal{\texttt{data}})$ such that the following estimates,
\begin{enumerate}[label=(\roman*),series=theoremconditions]
\item \label{1_lem7.3} 
\begin{align*}
\fiint_{\mcp^\la_{\rhob}(z_0)}\left|\frac{u-(u)_{\mcp^{\la}_{\rhob}(z_0)}}{\la^{-1+\mu}\rhob}\right|^{\theta p} dz &\leq c \fiint_{\mcp^{\la}_{\rhob}(z_0)}\left|\nabla u\right|^{\theta p}dz + c \la^{(2-p)\theta p} \left(\fiint_{\mcp^{\la}_{\rhob}(z_0)}|\nabla u|^{\theta p} dz\right)^{p-1}\\
&+c \la^{(2-q)\theta p} \left(\fiint_{\mcp^{\la}_{\rhob}(z_0)}|\nabla u|^{\theta p} dz\right)^{q-1}+c \la^{(2-s)\theta p} \left(\fiint_{\mcp^{\la}_{\rhob}(z_0)}|\nabla u|^{\theta p} dz\right)^{s-1},
\end{align*}
\item \label{2_lem7.3}
\begin{align*}
    \fiint_{\mcp^{\la}_{\rhob}(z_0)}a^{\theta}(z)\left|\frac{u-(u)_{\mcp^{\la}_{\rhob}(z_0)}}{\la^{-1+\mu}\rhob}\right|^{\theta q} dz &\leq c\la^{(p-q)\theta} \Bigg[ \fiint_{\mcp^{\la}_{\rhob}(z_0)}\left|\nabla u\right|^{\theta q}dz + \la^{(2-p)\theta q} \left(\fiint_{\mcp^{\la}_{\rhob}(z_0)}|\nabla u|^{\theta q} dz\right)^{p-1}\\
&+ \la^{(2-q)\theta q} \left(\fiint_{\mcp^{\la}_{\rhob}(z_0)}|\nabla u|^{\theta q} dz\right)^{q-1}\\
&+\la^{(2-s)\theta q} \left(\fiint_{\mcp^{\la}_{\rhob}(z_0)}|\nabla u|^{\theta q} dz\right)^{s-1}\Bigg],
\end{align*}
\item \label{3_lem7.3}
\begin{align*}
    \fiint_{\mcp^{\la}_{\rhob}(z_0)}a^{\theta}(z)\left|\frac{u-(u)_{\mcp^{\la}_{\rhob}(z_0)}}{\la^{-1+\mu}\rhob}\right|^{\theta q} dz &\leq c\la^{(p-q)\theta} \Bigg[ \fiint_{\mcp^{\la}_{\rhob}(z_0)}\left|\nabla u\right|^{\theta q}dz +  \la^{(2-p)\theta q} \left(\fiint_{\mcp^{\la}_{\rhob}(z_0)}|\nabla u|^{\theta p} dz\right)^{\frac{q(p-1)}{p}}\\
&+ \la^{(2-p)\theta q }\la^{(p-q)\theta}\left(\fiint_{\mcp^{\la}_{\rhob}(z_0)}\inf_{w\in \mcp^{\la}_{\rhob}(z_0)}a(w)^{\theta}|\nabla u|^{\theta q} dz\right)^{q-1}\\
&+ \la^{(2-p)\theta q}\la^{(p-s)\frac{\theta q}{s}}\left(\fiint_{\mcp^{\la}_{\rhob}(z_0)}\inf_{w\in \mcp^{\la}_{\rhob}(z_0)}b(w)^{\theta}|\nabla u|^{\theta s}\right)^{\frac{q(s-1)}{s}}\Bigg],
\end{align*}
\item \label{4_lem7.3}
\begin{align*}
    \fiint_{\mcp^{\la}_{\rhob}(z_0)}b^{\theta}(z)\left|\frac{u-(u)_{\mcp^{\la}_{\rhob}(z_0)}}{\la^{-1+\mu}\rhob}\right|^{\theta s} dz &\leq c\la^{(p-s)\theta} \Bigg[ \fiint_{\mcp^{\la}_{\rhob}(z_0)}\left|\nabla u\right|^{\theta s}dz + \la^{(2-p)\theta s} \left(\fiint_{\mcp^{\la}_{\rhob}(z_0)}|\nabla u|^{\theta s} dz\right)^{p-1}\\
&+ \la^{(2-q)\theta s} \left(\fiint_{\mcp^{\la}_{\rhob}(z_0)}|\nabla u|^{\theta s} dz\right)^{q-1}\\
&+ \la^{(2-s)\theta s}\left(\fiint_{\mcp^{\la}_{\rhob}(z_0)}|\nabla u|^{\theta s} dz\right)^{s-1}\Bigg],
\end{align*}
\item \label{5_lem7.3}
\begin{align*}
    \fiint_{\mcp^{\la}_{\rhob}(z_0)}b^{\theta}(z)\left|\frac{u-(u)_{\mcp^{\la}_{\rhob}(z_0)}}{\la^{-1+\mu}\rhob}\right|^{\theta s} dz &\leq c\la^{(p-s)\theta} \Bigg[ \fiint_{\mcp^{\la}_{\rhob}(z_0)}\left|\nabla u\right|^{\theta s}dz +  \la^{(2-p)\theta s} \left(\fiint_{\mcp^{\la}_{\rhob}(z_0)}|\nabla u|^{\theta p} dz\right)^{\frac{s(p-1)}{p}}\\
&+ \la^{(2-p)\theta s }\la^{(p-q)\frac{\theta s}{q}}\left(\fiint_{\mcp^{\la}_{\rhob}(z_0)}\inf_{w\in \mcp^{\la}_{\rhob}(z_0)}a(w)^{\theta}|\nabla u|^{\theta q} dz\right)^{\frac{s(q-1)}{q}}\\
&+ \la^{(2-p)\theta s }\la^{(p-s)\theta}\left(\fiint_{\mcp^{\la}_{\rhob}(z_0)}\inf_{w\in \mcp^{\la}_{\rhob}(z_0)}b(w)^{\theta}|\nabla u|^{\theta s} dz\right)^{s-1}\Bigg],
\end{align*}
\end{enumerate}
hold whenever $\mcp^{\la}_{\rhob}(z_0) \subset \Omega_T.$
\end{lemma}
\begin{proof} We split the proof into six steps.\\
\vspace{.2cm}
\noindent {\bf Step 1.} In this step, we prove the following estimate. \textbf{Claim:}
\begin{align}\label{claim_lem7.3}
  &\fiint_{\mcp^{\la}_{\rhob}(z_0)}\left(|\nabla u|^{p-1}+a(z)|\nabla u|^{q-1}+b(z)|\nabla u|^{s-1}\right) dz \leq \fiint_{\mcp^{\la}_{\rhob}(z_0)}|\nabla u|^{p-1}dz + c\la^{p-q} \fiint_{\mcp^{\la}_{\rhob}(z_0)}|\nabla u|^{q-1}dz \nonumber\\
   &+c\la^{p-s} \fiint_{\mcp^{\la}_{\rhob}(z_0)}|\nabla u|^{s-1}dz + c \left(\fiint_{\mcp^{\la}_{\rhob}(z_0)}|\nabla u|^{\theta p} dz\right)^{\frac{p-1}{\theta p}}.
\end{align}
Note that we only need to estimate the term involving $a(z)$ and $b(z)$ from the left hand side of \eqref{claim_lem7.3}. We shall begin with the term involving $a(z)$ and the estimation of the term involving $b(z)$ will be similar.
\begin{align*}
    \fiint_{\mcp^{\la}_{\rhob}(z_0)}a(z)|\nabla u|^{q-1}dz &\leq \fiint_{\mcp^{\la}_{\rhob}(z_0)}|a(z)-a(z_0)||\nabla u|^{q-1} dz + \fiint_{\mcp^{\la}_{\rhob}(z_0)}a(z_0)|\nabla u|^{q-1}dz\\
    & \leq\underbrace{c[a]_{\alpha} \max\left\{\la^{-1+\mu}\rhob, \la^{\frac{2\mu-p}{2}}\rhob\right\}^{\alpha}\fiint_{\mcp^{\la}_{\rhob}(z_0)}|\nabla u|^{q-1}dz}_{I}\\
    &+ K \la^{p-q}\fiint_{\mcp^{\la}_{\rhob}(z_0)}|\nabla u|^{q-1}dz.
\end{align*}
In the first inequality, we used H\"older regularity of $a(\cdot)$ and in the second inequality we used \textbf{p1} of \cref{assmp1a}. 

\noindent \textbf {Case $\max\left\{\la^{-1+\mu}\rhob, \la^{\frac{2\mu-p}{2}}\rhob\right\}^{\alpha}=\left(\la^{-1+\mu}\rhob\right)^{\alpha}$:} In this case, using H\"older's inequality we have
\begin{align}\label{EQUATION7.3}
I=c[a]_{\alpha}\left(\la^{-1+\mu}\rhob\right)^{\alpha}\fiint_{\mcp^{\la}_{\rhob}(z_0)}|\nabla u|^{q-1}dz\leq c[a]_{\alpha}\left(\la^{-1+\mu}\rhob\right)^{\alpha}\left(\fiint_{\mcp^{\la}_{\rhob}(z_0)}|\nabla u|^{\theta p}dz\right)^{\frac{q-1}{\theta p}}.
\end{align}
Note that from \textbf{p3} of \cref{assmp1c}, we have 
\begin{align*}
    \la^p < \frac{K \la^{n-(n+2)\mu} \la^p}{(2[a]_{\alpha})^{\frac{n+2}{\alpha}}\rhoa^{n+2}}.
\end{align*}
This immediately implies that
\begin{align}\label{Estimates on raddi with k}
    \rhoa^{\alpha}\la^{\alpha\left(\mu-\frac{n}{n+2}\right)}\leq \frac{K^{\frac{\alpha}{n+2}}}{2[a]_{\alpha}}\leq \frac{K}{2[a]_{\alpha}}.
\end{align}
Using the range of $q$ from \eqref{def_pq}, i.e.,
\begin{align*}
    q\leq p+\frac{2\alpha}{n+2},
\end{align*}
we have
\begin{align*}
   \rhoa^{\alpha}\la^{\left(\mu-\frac{n}{n+2}\right)\alpha}\la^q \leq \frac{K}{2[a]_{\alpha}}\la^p \la^{\frac{2\alpha}{n+2}} \implies \left(\la^{-1+\mu}\rhoa\right)^{\alpha} \leq \frac{K}{2[a]_{\alpha}}\la^{p-q}.
\end{align*}
Moreover, since $\rhob$ and $\rhoa$ are comparable, we deduce that
\begin{align}\label{EQUATION7.4}
    \left(\la^{-1+\mu}\rhob\right)^{\alpha}\leq 2^{\alpha}\frac{K}{2[a]_{\alpha}}\la^{p-q}.
\end{align}
Thus using \eqref{EQUATION7.4} in \eqref{EQUATION7.3} and for some $\upgamma>0$ to be chosen later, we find
\begin{align*}
    I &\leq c \la^{p-q} \left(\fiint_{\mcp^{\la}_{\rhob}(z_0)}|\nabla u|^{\theta p}dz\right)^{\frac{q-1}{\theta p}}=c\la^{p-q}\left(\fiint_{\mcp^{\la}_{\rhob}(z_0)}|\nabla u|^{\theta p} dz\right)^{\frac{\upgamma}{\theta p}}\left(\fiint_{\mcp^{\la}_{\rhob}(z_0)}|\nabla u|^{\theta p} dz\right)^{\frac{q-1-\upgamma}{\theta p}}\\
   &\overset{\text{H\"{o}lder's inequality}}{\leq} c\la^{p-q}\left(\fiint_{\mcp^{\la}_{\rhob}(z_0)}|\nabla u|^p dz\right)^{\frac{\upgamma}{p}} \left(\fiint_{\mcp^{\la}_{\rhob}(z_0)}|\nabla u|^{\theta p} dz\right)^{\frac{q-1-\upgamma}{\theta p}}.
\end{align*}
Now using the assumption \textbf{p2} of \cref{assmp1a} on $p$-intrinsic cylinders, we further have
\begin{align*}
    I \leq c \la^{p-q}\la^{\upgamma}\left(\fiint_{\mcp^{\la}_{\rhob}(z_0)}|\nabla u|^{\theta p} dz\right)^{\frac{q-1-\gamma}{\theta p}}.
\end{align*}
At this point, we choose $\upgamma=q-p>0,$ and finally find
\begin{align}\label{EQUATION7.5}
I=c[a]_{\alpha}\left(\la^{-1+\mu}\rhob\right)^{\alpha}\fiint_{\mcp^{\la}_{\rhob}(z_0)}|\nabla u|^{q-1}dz \leq c \left(\fiint_{\mcp^{\la}_{\rhob}(z_0)}|\nabla u|^{\theta p} dz\right)^{\frac{p-1}{\theta p}}.
\end{align}

\noindent \textbf{Case $\max\left\{\la^{-1+\mu}\rhob, \la^{\frac{2\mu-p}{2}}\rhob\right\}^{\alpha}=\left(\la^{\frac{2\mu-p}{2}}\rhob\right)^{\alpha}$ :} In this case,  proceeding as above and using the range of $q$ form \eqref{def_pq}, i.e.,
\begin{align*}
    q \leq p+\alpha \left(\frac{p}{2}-\frac{n}{n+2}\right),
\end{align*}
we arrive at the same conclusion as \eqref{EQUATION7.4}. Hence using this we again obtain \eqref{EQUATION7.5}.  By similar arguments presented above and using the appropriate range of $s$ for different cases, one can obtain
\begin{align}\label{EQQQ3.7}
    \fiint_{\mcp^{\la}_{\rhob}(z_0)}b(z)|\nabla u|^{s-1}\,dz \leq c \left(\fiint_{\mcp^{\la}_{\rhob}(z_0)}|\nabla u|^{\theta p}\, dz\right)^{\frac{p-1}{\theta p}}+K\la^{p-s}\fiint_{\mcp^{\la}_{\rhob}(z_0)}|\nabla u|^{s-1}\,dz
\end{align}
The proof of the claim \eqref{claim_lem7.3} is finished combining \eqref{EQUATION7.5}-\eqref{EQQQ3.7}.

\noindent {\bf Step 2.} Now choosing $m=p$ and applying the estimate from the previous step in \cref{parabolic poincare for double phase}, we obtain
\begin{align}\label{EQUATION7.7}
 \fiint_{\mcp^{\la}_{\rhob}(z_0)}\left|\frac{u-(u)_{\mcp^{\la}_{\rhob}(z_0)}}{\la^{-1+\mu}\rhob}\right|^{\theta p}\,dz &\leq c \fiint_{\mcp^{\la}_{\rhob}(z_0)}|\nabla u|^{\theta p} dz+ c\left(\la^{2-p}\fiint_{\mcp^{\la}_{\rhob}(z_0)}\tilde{H}(z, \nabla u)\, dz\right)^{\theta p}\nonumber \\
 &\leq c \fiint_{\mcp^{\la}_{\rhob}(z_0)}|\nabla u|^{\theta p} dz + c \la^{(2-p)\theta p}\left(\fiint_{\mcp^{\la}_{\rhob}(z_0)}|\nabla u|^{p-1}dz\right)^{\theta p}\nonumber\\
 &+ c \la^{(2-q)\theta p}\left(\fiint_{\mcp^{\la}_{\rhob}(z_0)}|\nabla u|^{q-1}dz\right)^{\theta p} + c \la^{(2-s)\theta p}\left(\fiint_{\mcp^{\la}_{\rhob}(z_0)}|\nabla u|^{s-1}dz\right)^{\theta p}\nonumber\\
 &+ c \la^{(2-p)\theta p}\left(\fiint_{\mcp^{\la}_{\rhob}(z_0)}|\nabla u|^{\theta p}dz\right)^{p-1}.
\end{align}
Note that an application of H\"older's inequality gives
\begin{align*}
 \left(\fiint_{\mcp^{\la}_{\rhob}(z_0)}|\nabla u|^{p-1}dz\right)^{\theta p} \apprle \left(\fiint_{\mcp^{\la}_{\rhob}(z_0)}|\nabla u|^{\theta p}dz\right)^{p-1}\,\,\, \text{for}\,\,\,\,\, \frac{p-1}{p} <\theta \leq 1 ,  
\end{align*}
\begin{align*}
 \left(\fiint_{\mcp^{\la}_{\rhob}(z_0)}|\nabla u|^{q-1}dz\right)^{\theta p} \apprle \left(\fiint_{\mcp^{\la}_{\rhob}(z_0)}|\nabla u|^{\theta p}dz\right)^{q-1}\,\,\, \text{for}\,\,\,\,\, \frac{q-1}{p} <\theta \leq 1, 
\end{align*}
and 
\begin{align*}
 \left(\fiint_{\mcp^{\la}_{\rhob}(z_0)}|\nabla u|^{s-1}dz\right)^{\theta p} \apprle \left(\fiint_{\mcp^{\la}_{\rhob}(z_0)}|\nabla u|^{\theta p}dz\right)^{s-1}\,\,\, \text{for}\,\,\,\,\, \frac{s-1}{p} <\theta \leq 1 .  
\end{align*}
Substituting the above estimates in \eqref{EQUATION7.7}, we get \ref{1_lem7.3}.

\vspace{.3cm}
\noindent {\bf Step 3.} In this step we prove \ref{2_lem7.3}.
\begin{align*}
     \fiint_{\mcp^{\la}_{\rhob}(z_0)}a(z)^{\theta}\left|\frac{u-(u)_{\mcp^{\la}_{\rhob}(z_0)}}{\la^{-1+\mu}\rhob}\right|^{\theta q} dz &\leq  \fiint_{\mcp^{\la}_{\rhob}(z_0)}|a(z)-a(z_0)|^{\theta}\left|\frac{u-(u)_{\mcp^{\la}_{\rhob}(z_0)}}{\la^{-1+\mu}\rhob}\right|^{\theta q} dz\\
     &+ \fiint_{\mcp^{\la}_{\rhob}(z_0)}a(z_0)^{\theta}\left|\frac{u-(u)_{\mcp^{\la}_{\rhob}(z_0)}}{\la^{-1+\mu}\rhob}\right|^{\theta q} dz.
\end{align*}

\vspace{.3cm}
Using H\"older regularity of $a(z)$ and \textbf{p1} of \cref{assmp1a}, we get
\begin{align}\label{EQUATION7.8}
    \fiint_{\mcp^{\la}_{\rhob}(z_0)}a^{\theta}(z)\left|\frac{u-(u)_{\mcp^{\la}_{\rhob}(z_0)}}{\la^{-1+\mu}\rhob}\right|^{\theta q} dz &\leq \max\left\{\la^{-1+\mu}\rhob, \la^{\frac{2\mu-p}{2}}\rhob\right\}^{\alpha \theta}\fiint_{\mcp^{\la}_{\rhob}(z_0)}\left|\frac{u-(u)_{\mcp^{\la}_{\rhob}(z_0)}}{\la^{-1+\mu}\rhob}\right|^{\theta q} dz\nonumber\\
    &+K \la^{(p-q)\theta}\fiint_{\mcp^{\la}_{\rhob}(z_0)}\left|\frac{u-(u)_{\mcp^{\la}_{\rhob}(z_0)}}{\la^{-1+\mu}\rhob}\right|^{\theta q} dz.
\end{align}

\vspace{.3cm}
Moreover, from {\bf Step 1} we have 
\begin{align}\label{EQUATION7.9}
    \max\left\{\la^{-1+\mu}\rhob, \la^{\frac{2\mu-p}{2}}\rhob\right\}^{\alpha \theta}\leq c \la^{(p-q)\theta}\,\,\,\, \text{for}\,\,\,\, \frac{2n}{n+2}<p< \infty.
\end{align}

\vspace{.3cm}
Now choosing $m=q$ in \cref{parabolic poincare for double phase} and using the estimate from {\bf Step 1} we get
\begin{align}\
&\fiint_{\mcp^{\la}_{\rhob}(z_0)}\left|\frac{u-(u)_{\mcp^{\la}_{\rhob}(z_0)}}{\la^{-1+\mu}\rhob}\right|^{\theta q} dz\nonumber\\
&\leq c\fiint_{\mcp^{\la}_{\rhob}(z_0)}|\nabla u|^{\theta q} dz+ c\left(\la^{2-p}\fiint_{\mcp^{\la}_{\rhob}(z_0)}\left(|\nabla u|^{p-1}+a(z)|\nabla u|^{q-1}+b(z)|\nabla u|^{s-1}\right) dz\right)^{\theta q}\label{EQUATION3.9}\\
&\leq c \fiint_{\mcp^{\la}_{\rhob}(z_0)}|\nabla u|^{\theta q} dz + c \la^{(2-p)\theta q}\left(\fiint_{\mcp^{\la}_{\rhob}(z_0)}|\nabla u|^{p-1}dz\right)^{\theta q}\nonumber\\
 &+ c \la^{(2-q)\theta q}\left(\fiint_{\mcp^{\la}_{\rhob}(z_0)}|\nabla u|^{q-1}dz\right)^{\theta q} +c \la^{(2-s)\theta q}\left(\fiint_{\mcp^{\la}_{\rhob}(z_0)}|\nabla u|^{s-1}dz\right)^{\theta q}\nonumber\\
 &+ c \la^{(2-p)\theta q}\left(\fiint_{\mcp^{\la}_{\rhob}(z_0)}|\nabla u|^{\theta p}dz\right)^{\frac{(p-1)q}{p}}\label{EQUATION7.10}.
\end{align}
Using the H\"older inequality in each term on the right hand side of the above inequality, we get
\begin{align*}
\left(\fiint_{\mcp^{\la}_{\rhob}(z_0)}|\nabla u|^{p-1}dz\right)^{\theta q} \apprle \left(\fiint_{\mcp^{\la}_{\rhob}(z_0)}|\nabla u|^{\theta q} dz\right)^{p-1}\,\,\,\, \text{for}\,\,\,\,\,\, \frac{p-1}{q}< \theta\leq 1,    
\end{align*}
\begin{align*}
 \left(\fiint_{\mcp^{\la}_{\rhob}(z_0)}|\nabla u|^{q-1}dz\right)^{\theta q} \apprle \left(\fiint_{\mcp^{\la}_{\rhob}(z_0)}|\nabla u|^{\theta q}\right)^{q-1} \,\,\,\, \text{for}\,\,\,\,\,\, \frac{q-1}{q}< \theta\leq 1,    
\end{align*}
\begin{align*}
 \left(\fiint_{\mcp^{\la}_{\rhob}(z_0)}|\nabla u|^{s-1}dz\right)^{\theta q} \apprle \left(\fiint_{\mcp^{\la}_{\rhob}(z_0)}|\nabla u|^{\theta q}\right)^{s-1} \,\,\,\, \text{for}\,\,\,\,\,\, \frac{s-1}{q}< \theta\leq 1,    
\end{align*}
and 
\begin{align*}
\left(\fiint_{\mcp^{\la}_{\rhob}(z_0)}|\nabla u|^{\theta p}dz\right)^{\frac{(p-1)q}{p}} \apprle \left(\fiint_{\mcp^{\la}_{\rhob}(z_0)}|\nabla u|^{\theta q}dz\right)^{p-1}.
\end{align*}
Note that the range of $\theta$ in the above two inequalities are admissible since $\theta > \frac{s-1}{p}\geq \frac{p-1}{q},$ $\theta> \frac{q-1}{p}\geq \frac{q-1}{q},$ and $\theta > \frac{s-1}{p}\geq \frac{s-1}{q}.$ Now applying the above estimates in \eqref{EQUATION7.10} we obtain
\begin{align}
 \fiint_{\mcp^{\la}_{\rhob}(z_0)}\left|\frac{u-(u)_{\mcp^{\la}_{\rhob}(z_0)}}{\la^{-1+\mu}\rhob}\right|^{\theta q} dz &\leq c \fiint_{\mcp^{\la}_{\rhob}(z_0)}\left|\nabla u\right|^{\theta q}dz + c \la^{(2-p)\theta q} \left(\fiint_{\mcp^{\la}_{\rhob}(z_0)}|\nabla u|^{\theta q} dz\right)^{p-1}\nonumber\\
&+c \la^{(2-q)\theta q} \left(\fiint_{\mcp^{\la}_{\rhob}(z_0)}|\nabla u|^{\theta q} dz\right)^{q-1}+c \la^{(2-s)\theta q} \left(\fiint_{\mcp^{\la}_{\rhob}(z_0)}|\nabla u|^{\theta q} dz\right)^{s-1}\label{EQUATION3.11}.  
\end{align}
Lastly, using \eqref{EQUATION7.9} and the above estimate in \eqref{EQUATION7.8}, we prove \ref{2_lem7.3}. 

\vspace{.5cm}
\noindent {\bf Step 4.} In this step we prove \ref{3_lem7.3}. First, we perform an estimate similar to \textbf{Step 1.} Indeed, we estimate
\begin{align*}
    \fiint_{\mcp^{\la}_{\rhob}(z_0)}\tilde{H}(z, \nabla u) dz &\leq \fiint_{\mcp^{\la}_{\rhob}(z_0)}|\nabla u|^{p-1} dz + \underbrace{\fiint_{\mcp^{\la}_{\rhob}(z_0)}|a(z)-\inf_{w\in \mcp^{\la}_{\rhob}(z_0)}a(w)||\nabla u|^{q-1} dz}_{I}\\
    &+\underbrace{\fiint_{\mcp^{\la}_{\rhob}(z_0)}|b(z)-\inf_{w\in \mcp^{\la}_{\rhob}(z_0)}b(w)||\nabla u|^{s-1} dz}_{II}+ \fiint_{\mcp^{\la}_{\rhob}(z_0)} \inf_{w\in \mcp^{\la}_{\rhob}(z_0)}a(w) |\nabla u|^{q-1} dz\\
    &+\fiint_{\mcp^{\la}_{\rhob}(z_0)} \inf_{w\in \mcp^{\la}_{\rhob}(z_0)}b(w) |\nabla u|^{s-1} dz.
\end{align*}

\vspace{.4cm}
As in \textbf{Step 1.}, we can estimate $I$  and $II$ as
\begin{align*}
    I \leq c\left(\fiint_{\mcp^{\la}_{\rhob}(z_0)}|\nabla u|^{\theta p} dz\right)^{\frac{p-1}{\theta p}}\,\,\, \text{and}\,\,\, \,\,II\leq c\left(\fiint_{\mcp^{\la}_{\rhob}(z_0)}|\nabla u|^{\theta p} dz\right)^{\frac{p-1}{\theta p}}.
\end{align*}
Hence, we have
\begin{align*}
    &\left(\la^{2-p}\fiint_{\mcp^{\la}_{\rhob}(z_0)}\left(|\nabla u|^{p-1}+a(z)|\nabla u|^{q-1}+b(z)|\nabla u|^{s-1}\right)dz\right)^{\theta q}\\
    &\leq \la^{(2-p)\theta q} \left(\fiint_{\mcp^{\la}_{\rhob}(z_0)}|\nabla u|^{p-1} dz\right)^{\theta q}+ \la^{(2-p)\theta q} \left(\fiint_{\mcp^{\la}_{\rhob}(z_0)} \inf_{w\in \mcp^{\la}_{\rhob}(z_0)}a(w) |\nabla u|^{q-1} dz\right)^{\theta q}\\
    &+ c\la^{(2-p)\theta q} \left(\fiint_{\mcp^{\la}_{\rhob}(z_0)}|\nabla u|^{\theta p} dz\right)^{\frac{(p-1)q}{p}}+c\la^{(2-p)\theta q}\left(\fiint_{\mcp^{\la}_{\rhob}(z_0)}\inf_{w\in \mcp^{\la}_{\rhob}(z_0)}b(w)|\nabla u|^{s-1}\,dz\right)^{\theta q}\\
    &\leq c\la^{(2-p)\theta q}\left(\fiint_{\mcp^{\la}_{\rhob}(z_0)}|\nabla u|^{\theta p} dz\right)^{\frac{q(p-1)}{p}}+c \la^{(2-p)\theta q}\left(\fiint_{\mcp^{\la}_{\rhob}(z_0)}\inf_{w\in \mcp^{\la}_{\rhob}(z_0)}a(w)^{\frac{\theta q}{q-1}}|\nabla u|^{\theta q} dz\right)^{q-1} \\
    &+\la^{(2-p)\theta q}\inf_{w\in \mcp^{\la}_{\rhob}(z_0)}b(w)^{\frac{\theta q}{s}}\left(\fiint_{\mcp^{\la}_{\rhob}(z_0)}\inf_{w\in \mcp^{\la}_{\rhob}(z_0)}b(w)^{\theta}|\nabla u|^{\theta s} dz\right)^{\frac{q(s-1)}{s}}\\
    &\leq c \la^{(2-p)\theta q}\left(\fiint_{\mcp^{\la}_{\rhob}(z_0)}|\nabla u|^{\theta p} dz\right)^{\frac{q(p-1)}{p}}+ \la^{(2-p)\theta q}\inf_{w\in \mcp^{\la}_{\rhob}(z_0)}a(w)^{\theta}\left(\fiint_{\mcp^{\la}_{\rhob}(z_0)}\inf_{w\in \mcp^{\la}_{\rhob}(z_0)}a(w)^{\theta}|\nabla u|^{\theta q} dz\right)^{q-1}\\
    &+\la^{(2-p)\theta q}\inf_{w\in \mcp^{\la}_{\rhob}(z_0)}b(w)^{\frac{\theta q}{s}}\left(\fiint_{\mcp^{\la}_{\rhob}(z_0)}\inf_{w\in \mcp^{\la}_{\rhob}(z_0)}b(w)^{\theta}|\nabla u|^{\theta s} dz\right)^{\frac{q(s-1)}{s}}.
\end{align*}
Therefore, applying the above inequality in \eqref{EQUATION3.9}, we obtain
\begin{align}
 \fiint_{\mcp^{\la}_{\rhob}(z_0)}\left|\frac{u-(u)_{\mcp^{\la}_{\rhob}(z_0)}}{\la^{-1+\mu}\rhob}\right|^{\theta q} dz &\leq c\fiint_{\mcp^{\la}_{\rhob}(z_0)}|\nabla u|^{\theta q} dz + c \la^{(2-p)\theta q}\left(\fiint_{\mcp^{\la}_{\rhob}(z_0)}|\nabla u|^{\theta p}\right)^{\frac{q(p-1)}{p}}\nonumber\\
 &+ c \la^{(2-p)\theta q}\la^{(p-q)\theta}\left(\fiint_{\mcp^{\la}_{\rhob}(z_0)}\inf_{w\in \mcp^{\la}_{\rhob}(z_0)}a(w)^{\theta}|\nabla u|^{\theta q}\right)^{q-1}\nonumber\\
 &+ c \la^{(2-p)\theta q}\la^{(p-s)\frac{\theta q}{s}}\left(\fiint_{\mcp^{\la}_{\rhob}(z_0)}\inf_{w\in \mcp^{\la}_{\rhob}(z_0)}b(w)^{\theta}|\nabla u|^{\theta s}\right)^{\frac{q(s-1)}{s}}\label{EQUATION3.12}.
\end{align}
Using the above estimate in \eqref{EQUATION7.8}, we get \ref{3_lem7.3}.

\vspace{.5cm}
\noindent {\bf Step 5.} In this step, we prove \ref{4_lem7.3}. Since the proof can be deduced from {\bf Step 3.}, here we omit the details. Indeed, we have 
\begin{align*}
    \fiint_{\mcp^{\la}_{\rhob}(z_0)}b(z)^{\theta}\left|\frac{u-(u)_{\mcp^{\la}_{\rhob}(z_0)}}{\la^{-1+\mu}\rhob}\right|^{\theta s}\, dz \apprle \la^{(p-s)\theta}\fiint_{\mcp^{\la}_{\rhob}(z_0)}\left|\frac{u-(u)_{\mcp^{\la}_{\rhob}(z_0)}}{\la^{-1+\mu}\rhob}\right|^{\theta s}
\end{align*}
 and following the same steps as previous we obtain 
 \begin{align}
 \fiint_{\mcp^{\la}_{\rhob}(z_0)}\left|\frac{u-(u)_{\mcp^{\la}_{\rhob}(z_0)}}{\la^{-1+\mu}\rhob}\right|^{\theta s} &\leq \fiint_{\mcp^{\la}_{\rhob}(z_0)}\left|\nabla u\right|^{\theta s}dz + \la^{(2-p)\theta s} \left(\fiint_{\mcp^{\la}_{\rhob}(z_0)}|\nabla u|^{\theta s} dz\right)^{p-1}\nonumber\\
&+ \la^{(2-q)\theta s} \left(\fiint_{\mcp^{\la}_{\rhob}(z_0)}|\nabla u|^{\theta s} dz\right)^{q-1}+ \la^{(2-s)\theta s}\left(\fiint_{\mcp^{\la}_{\rhob}(z_0)}|\nabla u|^{\theta s} dz\right)^{s-1},\label{EQQQQ3.15}   
 \end{align}
 and hence \ref{4_lem7.3}.

\vspace{.5cm}
\noindent{\bf Step 6.} In this step we provide the proof of \ref{5_lem7.3} which is similar to the arguments in {\bf Step 4.} Hence, we omit the details. Indeed, in this case we obtain
\begin{align}
\fiint_{\mcp^{\la}_{\rhob}(z_0)}\left|\frac{u-(u)_{\mcp^{\la}_{\rhob}(z_0)}}{\la^{-1+\mu}\rhob}\right|^{\theta s} dz &\leq c\fiint_{\mcp^{\la}_{\rhob}(z_0)}\left|\nabla u\right|^{\theta s}dz +  \la^{(2-p)\theta s} \left(\fiint_{\mcp^{\la}_{\rhob}(z_0)}|\nabla u|^{\theta p} dz\right)^{\frac{s(p-1)}{p}}\nonumber\\
&+ \la^{(2-p)\theta s }\la^{(p-q)\frac{\theta s}{q}}\left(\fiint_{\mcp^{\la}_{\rhob}(z_0)}\inf_{w\in \mcp^{\la}_{\rhob}(z_0)}a(w)^{\theta}|\nabla u|^{\theta q} dz\right)^{\frac{s(q-1)}{q}}\nonumber\\
&+ \la^{(2-p)\theta s }\la^{(p-s)\theta}\left(\fiint_{\mcp^{\la}_{\rhob}(z_0)}\inf_{w\in \mcp^{\la}_{\rhob}(z_0)}b(w)^{\theta}|\nabla u|^{\theta s} dz\right)^{s-1}\label{EQQQQ3.16}.   
\end{align}
This completes the proof.
\end{proof}
The next lemma shows that we can further estimate the right-hand sides of \ref{1_lem7.3},\ref{2_lem7.3}, \ref{3_lem7.3}, \ref{4_lem7.3} and \ref{5_lem7.3} of \cref{p_intrinsic poincare 1}.
\begin{lemma}\label{p_intrinsic poincare 2}
Let $u$ be a weak solution to \eqref{main_eqn} and let the \cref{assmp1a} be in force. Then for any $\theta\in \left(\max\left\{\frac{s-1}{p}, \frac{1}{p}\right\}, 1\right]$ and $\varepsilon \in (0,1),$ there exists a constant $c=c(\textnormal{\texttt{data}})$ such that the following estimates
\begin{enumerate}[label=(\roman*),series=theoremconditions]
\item \label{1_lem7.4} 
\begin{align*}
\fiint_{\mcp^\la_{\rhob}(z_0)}\left|\frac{u-(u)_{\mcp^{\la}_{\rhob}(z_0)}}{\la^{-1+\mu}\rhob}\right|^{\theta p} dz \leq c \fiint_{\mcp^{\la}_{\rhob}(z_0)}\left|\nabla u\right|^{\theta p}dz +\varepsilon \la^{\theta p},
\end{align*}
\item \label{2_lem7.4}
\begin{align*}
    \fiint_{\mcp^{\la}_{\rhob}(z_0)}\left|\frac{u-(u)_{\mcp^{\la}_{\rhob}(z_0)}}{\la^{-1+\mu}\rhob}\right|^{\theta q} dz &\leq c \fiint_{\mcp^{\la}_{\rhob}(z_0)}\left|\nabla u\right|^{\theta q}dz+ c\la^{(q-p)\theta}\left(\fiint_{\mcp^{\la}_{\rhob}(z_0)}a^{\theta}(z)|\nabla u|^{\theta q}dz\right)\\
    &+c \la^{(2-p)\theta q}\la^{(p-s)\frac{\theta q}{s}}\left(\fiint_{\mcp^{\la}_{\rhob}(z_0)}\inf_{w\in \mcp^{\la}_{\rhob}(z_0)}b(w)^{\theta}|\nabla u|^{\theta s}\,dz\right)^{\frac{q(s-1)}{s}}+ \varepsilon \la^{\theta q},
\end{align*}
\item \label{3_lem7.4}
\begin{align*}
    \fiint_{\mcp^{\la}_{\rhob}(z_0)}\left|\frac{u-(u)_{\mcp^{\la}_{\rhob}(z_0)}}{\la^{-1+\mu}\rhob}\right|^{\theta s} dz &\leq c \fiint_{\mcp^{\la}_{\rhob}(z_0)}\left|\nabla u\right|^{\theta s}dz+ c\la^{(s-p)\theta}\left(\fiint_{\mcp^{\la}_{\rhob}(z_0)}b^{\theta}(z)|\nabla u|^{\theta s}dz\right)\\
    &+c \la^{(2-p)\theta s}\la^{(p-q)\frac{\theta s}{q}}\left(\fiint_{\mcp^{\la}_{\rhob}(z_0)}\inf_{w\in \mcp^{\la}_{\rhob}(z_0)}a(w)^{\theta}|\nabla u|^{\theta q}\,dz\right)^{\frac{s(q-1)}{q}}+ \varepsilon \la^{\theta s},
\end{align*}
\end{enumerate}
hold whenever $\mcp^{\la}_{\rhob}(z_0)\subset \Omega_T.$
\end{lemma}
\begin{proof}
We start with the proof of \ref{1_lem7.4}. Let us consider the following two cases.

\vspace{.3cm}
\noindent {\bf Case $p\geq2$ :} This case is a simple application of H\"older's inequality and \cref{assmp1b}. Indeed, the second term in \ref{1_lem7.3} of \cref{p_intrinsic poincare 1} can be estimated as
\begin{align}\label{EQUATION7.11}
    \la^{(2-p)\theta p} \left(\fiint_{\mcp^{\la}_{\rhob}(z_0)}|\nabla u|^{\theta p} dz\right)^{p-1}&=\la^{(2-p)\theta p}\left(\fiint_{\mcp^{\la}_{\rhob}(z_0)}|\nabla u|^{\theta p} dz\right)^{p-2}\left(\fiint_{\mcp^{\la}_{\rhob}(z_0)}|\nabla u|^{\theta p} dz\right)\nonumber\\
    &\leq \la^{(2-p)\theta p} \left(\fiint_{\mcp^{\la}_{\rhob}(z_0)}|\nabla u|^{p} dz\right)^{(p-2)\theta}\left(\fiint_{\mcp^{\la}_{\rhob}(z_0)}|\nabla u|^{\theta p} dz\right)\nonumber\\
    &\leq \la^{(2-p)\theta p}\la^{(p-2)\theta p}\left(\fiint_{\mcp^{\la}_{\rhob}(z_0)}|\nabla u|^{\theta p} dz\right). 
\end{align}
The third and the fourth term in \ref{1_lem7.3} of \cref{p_intrinsic poincare 1} can be estimated similarly. Thus we obtained
\begin{align*}
    \fiint_{\mcp^\la_{\rhob}(z_0)}\left|\frac{u-(u)_{\mcp^{\la}_{\rhob}(z_0)}}{\la^{-1+\mu}\rhob}\right|^{\theta p} dz \leq c \left(\fiint_{\mcp^{\la}_{\rhob}(z_0)}|\nabla u|^{\theta p} dz\right). 
\end{align*}
\noindent {\bf Case $p<2$ :} In this case, we use Young's inequality with $\left(\frac{1}{2-p}, \frac{1}{p-1}\right).$ Note that, since $1<p<2,$ $\frac{1}{2-p}>1$ and $\frac{1}{p-1}>1.$ Applying Young's inequality in the second term in \ref{1_lem7.3} of \cref{p_intrinsic poincare 1}, we get
\begin{align}\label{EQUATION7.12}
  \la^{(2-p)\theta p} \left(\fiint_{\mcp^{\la}_{\rhob}(z_0)}|\nabla u|^{\theta p} dz\right)^{p-1} \leq \varepsilon \la^{\theta p}  + c(\varepsilon) \left(\fiint_{\mcp^{\la}_{\rhob}(z_0)}|\nabla u|^{\theta p} dz\right),
\end{align}
and similarly from the third and fourth term in \ref{1_lem7.3} of \cref{p_intrinsic poincare 1} we get
\begin{align}\label{EQUATION7.13}
  \la^{(2-q)\theta p} \left(\fiint_{\mcp^{\la}_{\rhob}(z_0)}|\nabla u|^{\theta p} dz\right)^{q-1} \leq \varepsilon \la^{\theta p}  + c(\varepsilon) \left(\fiint_{\mcp^{\la}_{\rhob}(z_0)}|\nabla u|^{\theta p} dz\right),  
\end{align}
and 
\begin{align}\label{EQUATIONNN7.13}
  \la^{(2-s)\theta p} \left(\fiint_{\mcp^{\la}_{\rhob}(z_0)}|\nabla u|^{\theta p} dz\right)^{s-1} \leq \varepsilon \la^{\theta p}  + c(\varepsilon) \left(\fiint_{\mcp^{\la}_{\rhob}(z_0)}|\nabla u|^{\theta p} dz\right).   
\end{align}
Note that here we assume that $s, q<2.$ In the case of $p<2$ and $s, q>2,$ we can estimate the above term as previous. Combining the estimates, \eqref{EQUATION7.11}, \eqref{EQUATION7.12}, \eqref{EQUATION7.13} and \eqref{EQUATIONNN7.13}  we obtain \ref{1_lem7.4}.

\noindent The proof of \ref{2_lem7.4} is similar.

\noindent{\bf Case $p\geq 2$ :} In this case, we use H\"older's inequality. The second term in right hand side in the expression \eqref{EQUATION3.12} can be estimated as
\begin{align*}
   \la^{(2-p)\theta q} \left(\fiint_{\mcp^{\la}_{\rhob}(z_0)}|\nabla u|^{\theta p} dz\right)^{\frac{q(p-1)}{p}}&=\la^{(2-p)\theta q} \left(\fiint_{\mcp^{\la}_{\rhob}(z_0)}|\nabla u|^{\theta p} dz\right)^{\frac{q(p-2)}{p}} \left(\fiint_{\mcp^{\la}_{\rhob}(z_0)}|\nabla u|^{\theta p} dz\right)^{\frac{q}{p}}\nonumber\\
   &\leq\la^{(2-p)\theta q} \left(\fiint_{\mcp^{\la}_{\rhob}(z_0)}|\nabla u|^{p} dz\right)^{\frac{\theta q(p-2)}{p}} \left(\fiint_{\mcp^{\la}_{\rhob}(z_0)}|\nabla u|^{\theta q} dz\right)\nonumber\\
   &\leq \la^{(2-p)\theta q} \la^{(p-2)\theta q}\left(\fiint_{\mcp^{\la}_{\rhob}(z_0)}|\nabla u|^{\theta q} dz\right).
\end{align*}
The third term in the right hand side of \eqref{EQUATION3.12} can be estimated similarly as
\begin{align*}
 &\la^{(2-p)\theta q}\la^{(p-q)\theta}\left(\fiint_{\mcp^{\la}_{\rhob}(z_0)}\inf_{w\in \mcp^{\la}_{\rhob}(z_0)}a(w)^{\theta}|\nabla u|^{\theta q}dz\right)^{q-1} \\
 &\leq \la^{(2-p)\theta q}\la^{(p-q)\theta}\left(\fiint_{\mcp^{\la}_{\rhob}(z_0)}a(z)^{\theta}|\nabla u|^{\theta q}dz\right)^{q-2}\left(\fiint_{\mcp^{\la}_{\rhob}(z_0)}a(z)^{\theta}|\nabla u|^{\theta q}dz\right)\\
 &\leq \la^{(2-p)\theta q}\la^{(p-q)\theta}\left(\fiint_{\mcp^{\la}_{\rhob}(z_0)}a(z)|\nabla u|^{q}dz\right)^{\theta(q-2)}\left(\fiint_{\mcp^{\la}_{\rhob}(z_0)}a(z)^{\theta}|\nabla u|^{\theta q}dz\right)\\
 & \leq \la^{(2-p)\theta q}\la^{(p-q)\theta} \la^{\theta p(q-2)}\left(\fiint_{\mcp^{\la}_{\rhob}(z_0)}a(z)^{\theta}|\nabla u|^{\theta q}dz\right)=\la^{(q-p)\theta}\left(\fiint_{\mcp^{\la}_{\rhob}(z_0)}a(z)^{\theta}|\nabla u|^{\theta q}dz\right).
\end{align*}
Hence combining all the estimates above, in the case $p\geq 2,$ we get
\begin{align}\label{EQUATION3.16}
    \fiint_{\mcp^{\la}_{\rhob}(z_0)}\left|\frac{u-(u)_{\mcp^{\la}_{\rhob}(z_0)}}{\la^{-1+\mu}\rhob}\right|^{\theta q} dz &\leq c\left(\fiint_{\mcp^{\la}_{\rhob}(z_0)}|\nabla u|^{\theta q} dz\right)+ c\la^{(q-p)\theta}\left(\fiint_{\mcp^{\la}_{\rhob}(z_0)}a(z)^{\theta}|\nabla u|^{\theta q}dz\right)\nonumber\\
    &+c \la^{(2-p)\theta q}\la^{(p-s)\frac{\theta q}{s}}\left(\fiint_{\mcp^{\la}_{\rhob}(z_0)}\inf_{w\in \mcp^{\la}_{\rhob}(z_0)}b(w)^{\theta}|\nabla u|^{\theta s}\right)^{\frac{q(s-1)}{s}}.
\end{align}

\noindent{\bf Case $p<2$ :} In this case, we use Young's inequality with $(\frac{1}{2-p}, \frac{1}{p-1})$. We estimate the second term in the right hand side of \eqref{EQUATION3.11} as
\begin{align}\label{EQUATION7.14}
    c \la^{(2-p)\theta q} \left(\fiint_{\mcp^{\la}_{\rhob}(z_0)}|\nabla u|^{\theta q} dz\right)^{p-1} \leq \varepsilon \la^{\theta q}+c(\varepsilon) \left(\fiint_{\mcp^{\la}_{\rhob}(z_0)}|\nabla u|^{\theta q} dz\right).
\end{align}
Similarly, from the third and the fourth term in the right hand side of \eqref{EQUATION3.11}, we have
\begin{align}\label{EQUATION7.15}
    \la^{(2-q)\theta q} \left(\fiint_{\mcp^{\la}_{\rhob}(z_0)}|\nabla u|^{\theta q} dz\right)^{q-1} \leq \varepsilon \la^{\theta q}+c(\varepsilon ) \left(\fiint_{\mcp^{\la}_{\rhob}(z_0)}|\nabla u|^{\theta q} dz\right)
\end{align}
and 
\begin{align}\label{EQUATIONNN7.15}
    \la^{(2-s)\theta q} \left(\fiint_{\mcp^{\la}_{\rhob}(z_0)}|\nabla u|^{\theta q} dz\right)^{s-1} \leq \varepsilon \la^{\theta q}+c(\varepsilon ) \left(\fiint_{\mcp^{\la}_{\rhob}(z_0)}|\nabla u|^{\theta q} dz\right).
\end{align}
Note that here we assumed $s, q<2.$ The case $p<2$ and $s, q>2$ can occur and in that case, we use Young's inequality for $p<2$ case and H\"older's inequality for $s, q>2$ case as previous. Thus combining the estimates \eqref{EQUATION3.16}, \eqref{EQUATION7.14}, \eqref{EQUATION7.15} and \eqref{EQUATIONNN7.15} we get \ref{2_lem7.4}.
Next, we show \ref{3_lem7.4}.

\vspace{.3cm}
\noindent {\bf Case $p\geq 2$ :} As previous, the second term of \eqref{EQQQQ3.16} can be estimated as
\begin{align*}
 \la^{(2-p)\theta s} \left(\fiint_{\mcp^{\la}_{\rhob}(z_0)}|\nabla u|^{\theta p} dz\right)^{\frac{s(p-1)}{p}} \leq \left(\fiint_{\mcp^{\la}_{\rhob}(z_0)}|\nabla u|^{\theta s}\, dz\right).   
\end{align*}
On the other hand, the fourth term can be estimates as
\begin{align*}
\la^{(2-p)\theta s }\la^{(p-s)\theta}\left(\fiint_{\mcp^{\la}_{\rhob}(z_0)}\inf_{w\in \mcp^{\la}_{\rhob}(z_0)}b(w)^{\theta}|\nabla u|^{\theta s} dz\right)^{s-1} \leq \la^{(s-p)\theta}\left(\fiint_{\mcp^{\la}_{\rhob}(z_0)}b^{\theta}(z)|\nabla u|^{\theta s}\, dz\right).   
\end{align*}
Therefore, in this case we get
\begin{align}
\fiint_{\mcp^{\la}_{\rhob}(z_0)}\left|\frac{u-(u)_{\mcp^{\la}_{\rhob}(z_0)}}{\la^{-1+\mu}\rhob}\right|^{\theta s} dz  &\leq c\fiint_{\mcp^{\la}_{\rhob}(z_0)}\left|\nabla u\right|^{\theta s}dz + c \la^{(s-p)\theta}\left(\fiint_{\mcp^{\la}_{\rhob}(z_0)}b^{\theta}(z)|\nabla u|^{\theta s}\, dz\right) \nonumber\\ 
&+\la^{(2-p)\theta s }\la^{(p-q)\frac{\theta s}{q}}\left(\fiint_{\mcp^{\la}_{\rhob}(z_0)}\inf_{w\in \mcp^{\la}_{\rhob}(z_0)}a(w)^{\theta}|\nabla u|^{\theta q} dz\right)^{\frac{s(q-1)}{q}}.\label{EQQQQ3.25}
\end{align}

\noindent{\bf Case $p<2$ :} In this case, we estimate the right hand side of \eqref{EQQQQ3.15} and get
\begin{align}\label{EQQQQ3.26}
\fiint_{\mcp^{\la}_{\rhob}(z_0)}\left|\frac{u-(u)_{\mcp^{\la}_{\rhob}(z_0)}}{\la^{-1+\mu}\rhob}\right|^{\theta s} dz \leq \varepsilon \la^{\theta s}+ c(\varepsilon) \left(\fiint_{\mcp^{\la}_{\rhob}(z_0)}|\nabla u|^{\theta s}\, dz\right).
\end{align}
Combining the estimates \eqref{EQQQQ3.25}-\eqref{EQQQQ3.26}, we obtain \ref{3_lem7.4}.
This completes the proof.
\end{proof}
Now we state the energy estimate \cref{general caccipoli} in terms of $p$-intrinsic cylinders. Before that let us define appropriate cutoff functions. We choose $R_2=\la^{-1+\mu}\rhob,$ $R_1=\la^{-1+\mu}\rhoa,$ $S_2=\la^{2\mu-p}\rhob^2,$ and $S_1=\la^{2\mu-p}\rhoa^2$ in \cref{general caccipoli}. Furthermore, we have the restriction $\frac{\rhob}{2}\leq \rhoa < \rhob.$

\vspace{.3cm}
\noindent \textbf{Cut-off functions for $p$-phase.} Let $0 < \varrho \leq \rhoa < \rhob \leq 4\varrho$ with the restriction mentioned above and we consider the following cutoff functions:
\begin{align*}
    &\eta:=\eta(x) \in C_c^{\infty}(B_{\la^{-1+\mu}\rhob}(x_0)), \quad  \eta \equiv 1 \,\,\text{on}\, B_{\la^{-1+\mu}\rhoa}(x_0), \quad 0\leq \eta \leq 1 \,\,\,\,\,\text{and}\,\,\,\,\,\, |\nabla \eta| \apprle \frac{1}{\la^{-1+\mu}(\rhob-\rhoa)}\\
    &\zeta:=\zeta(t) \in C^{\infty}_c(t_0-\la^{2\mu-p}(\rhob-h_0)^2, t_0+\la^{2\mu-p}(\rhob-h_0)^2),  \qquad \zeta \equiv 1 \,\,\text{on}\,(t_0-\la^{2\mu-p}\rhoa^2,t_0+\la^{2\mu-p}\rhoa^2), \\
  & 0\leq \zeta \leq 1\,\,\, \text{and}\,\,\, |\partial_t \zeta| \apprle \frac{1}{\la^{2\mu-p}(\rhob-\rhoa)^2}.
\end{align*}
Then the energy estimate for $p$-phase reads as:
\begin{lemma}\label{scl_energy_p}
 Let $u$ be a weak solution of \eqref{main_eqn}. Then we have the following energy estimate:
\begin{align*}
	&\lambda^{p-2} \sup_{I^{\lambda,p}_{\rhoa}(t_0)}\fint_{B^{\lambda}_{\rhoa}(x_0)}\left|\frac{u-(u)_{\mcp^{\lambda}_{\rhoa}}}{\lambda^{-1+\mu}\rhoa}\right|^2 dx + \fiint_{\mcp^{\lambda}_{\rhoa}(z_0)}H\left(z,|\nabla u|\right)\,dz\\
	&\apprle  
	 \fiint_{\mcp^{\lambda}_{\rhob}(z_0)} H\left(z,\left|\frac{u-(u)_{\mcp^{\lambda}_{\rhob}}}{\lambda^{-1+\mu}(\rhob-\rhoa)}\right|\right)\,dz + \lambda^{p-2} \fiint_{\mcp^{\lambda}_{\rhob}(z_0)}\left|\frac{u-(u)_{\mcp^{\lambda}_{\rhob}}}{\lambda^{-1+\mu}(\rhob-\rhoa)}\right|^2 \,dz.
\end{align*}   
\end{lemma}
\begin{lemma}\label{LEMMA7.5}
Let $u$ be a weak solution to \eqref{main_eqn} and let \cref{scl_energy_p} hold with the \cref{assmp1a} in force. Then there exists a constant $c=c(\textnormal{\texttt{data}})$ such that the following estimate,
\begin{align*}
\lambda^{p-2} \sup_{I^{\lambda,p}_{2\varrho}(t_0)}\fint_{B^{\lambda}_{2\varrho}(x_0)}\left|\frac{u-(u)_{\mcp^{\la}_{2\varrho}}}{\lambda^{-1+\mu}2\varrho}\right|^2 dx \leq c \lambda^p
\end{align*}
holds whenever $\mcp^{\la}_{\rhob}(z_0)\subset \Omega_T.$
\end{lemma}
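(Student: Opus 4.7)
The plan is to apply the $p$-intrinsic energy estimate \cref{scl_energy_p} with $\rho_a=2\varrho$ and $\rho_b=4\varrho$ (admissible since $\rho_a=\rho_b/2$ meets the cut-off constraint $\rho_a\in[\rho_b/2,\rho_b)$), and then bound each of the three resulting Poincar\'e-type terms on $\mcp^\la_{4\varrho}(z_0)$ by $c\la^p$. Because $\rho_b-\rho_a=2\varrho$ is comparable to $\rho_b$, the denominators $\la^{-1+\mu}(\rho_b-\rho_a)$ appearing in those terms can be replaced by $\la^{-1+\mu}\rho_b$ at the cost of universal constants, so that each integrand matches one of the Poincar\'e inequalities already established.

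For the unweighted $|u-(u)|^p$ term, \ref{1_lem7.4} of \cref{p_intrinsic poincare 2} with $\theta=1$ (valid as \eqref{def_pq} forces $q<p+1$) together with \textbf{p2} of \cref{assmp1b} applied at $s=4\varrho$ yields a bound $\lesssim\la^p$. For the weighted $a(z)|u-(u)|^q$ term, I would first combine \textbf{p1} of \cref{assmp1a} with the H\"older continuity of $a$---exactly the argument of Step~1 in the proof of \cref{p_intrinsic poincare 1} leading to \eqref{EQUATION7.4}---to derive the pointwise bound $a(z)\lesssim\la^{p-q}$ on $\mcp^\la_{4\varrho}(z_0)$. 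Factoring this out and invoking \ref{2_lem7.4} of \cref{p_intrinsic poincare 2} with a $\theta$ chosen so that $\theta q\leq p$ (so that the residual $\fiint|\nabla u|^{\theta q}$ is absorbed via H\"older into $(\fiint|\nabla u|^p)^{\theta q/p}\leq\la^{\theta q}$ by \textbf{p2}), together with $\fiint a|\nabla u|^q\leq\la^p$ also from \textbf{p2}, produces the bound $\lesssim\la^p$ for this term as well. Finally, for the $\la^{p-2}|u-(u)|^2$ term, when $p\geq 2$ Jensen's inequality bounds $\fiint|u-(u)|^2$ by a power of the already-controlled $L^p$-average, while when $p<2$ the bound follows from a Gagliardo--Nirenberg interpolation (\cref{g_n}) between $L^p$ and the Sobolev conjugate $L^{p^\ast}$ combined once more with \textbf{p2}; in both cases multiplication by $\la^{p-2}$ produces $\lesssim\la^p$.

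Summing the three bounds and dropping the non-negative gradient term on the left-hand side of \cref{scl_energy_p} gives the stated inequality. The genuine obstacle is the weighted $q$-term, because \textbf{p2} controls only the combined energy density $|\nabla u|^p+a(z)|\nabla u|^q$ rather than $|\nabla u|^q$ by itself. The resolution hinges on the pointwise coefficient estimate $a(z)\lesssim\la^{p-q}$ valid on the $p$-intrinsic cylinder, obtained from \textbf{p1} together with the H\"older regularity of $a$; this converts the $q$-growth of $a(z)|u-(u)|^q$ into $p$-scaling in $\la$, and combined with a careful choice of $\theta$ in \ref{2_lem7.4} of \cref{p_intrinsic poincare 2} ensures that every residual gradient integral that appears is ultimately controlled by \textbf{p2}.
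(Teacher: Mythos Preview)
Your strategy of applying \cref{scl_energy_p} once with fixed radii $\rho_a=2\varrho$, $\rho_b=4\varrho$ and bounding each right-hand term directly by $c\la^p$ does not close, and the paper's proof proceeds differently for a structural reason: it uses an iteration over radii together with \cref{iter_lemma}.

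The first gap is in your treatment of the weighted term $\fiint a(z)|u-(u)|^q$. After factoring out $a(z)\apprle\la^{p-q}$ you must bound $\fiint |u-(u)|^q/(\la^{-1+\mu}\rho_b)^q$ by $c\la^q$. Invoking \ref{2_lem7.4} of \cref{p_intrinsic poincare 2} with $\theta q\le p$ yields a bound only on the $\theta q$-moment, not the $q$-moment you need; taking $\theta=1$ instead produces the term $\fiint|\nabla u|^q$, which is \emph{not} controlled by \textbf{p2} (only $\fiint|\nabla u|^p$ and $\fiint a|\nabla u|^q$ are). There is no way to pass from the $\theta q$-moment to the $q$-moment here without a reverse H\"older inequality you do not yet possess.

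The second gap is in the $\la^{p-2}\fiint|u-(u)|^2$ term when $p<2$ (and in particular when $q<2$, which \eqref{def_pq} allows). The parabolic Poincar\'e \cref{parabolic poincare for double phase} only reaches exponents $\theta m\le q<2$, so it cannot produce an $L^2$ bound; and a slice-wise spatial Sobolev--Poincar\'e with the slice average $(u)_{B,t}$ still leaves the time oscillation $|(u)_{B,t}-(u)_{\mcp}|^2$ uncontrolled without reintroducing exactly the sup you are trying to estimate.

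The paper handles both obstructions simultaneously by applying \cref{g_n} with $r=2$: this converts the $q$-moment of $u-(u)$ into the $p$-moment times a power of the $L^2$-sup (see the estimate leading to \eqref{EQUATION3.23}), and likewise the $L^2$ space-time average into the $L^p$ data times $(\sup)^{1/2}$. One then obtains, for every $2\varrho\le\rho_1<\rho_2\le4\varrho$, an inequality of the form
\[
\sup_{\rho_1}\le \varepsilon\,\sup_{\rho_2}+c\Bigl(\tfrac{\rho_2}{\rho_2-\rho_1}\Bigr)^{\gamma}\la^2,
\]
and \cref{iter_lemma} absorbs the first term. The iteration is not cosmetic: it is precisely what compensates for the lack of direct control on $\fiint|\nabla u|^q$ and on the $L^2$-oscillation in the $p$-phase.
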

\begin{proof}
We will be estimating the terms on the right hand side of \cref{scl_energy_p}.  For any $2\varrho \leq \rho_1< \rho_2 \leq 4\varrho,$ from the energy estimate in \cref{scl_energy_p} we have
\begin{align}\label{EQUATION7.16}
   &\la^{p-2}\sup_{I^{\la, p}_{\rho_1}(t_0)}\fint_{B^{\la}_{\rho_1}(x_0)}\left|\frac{u-(u)_{\mcp^{\la}_{\rho_1}}}{\la^{-1+\mu}\rho_1}\right|^2 dx\nonumber\\
    &\leq \underbrace{c \left(\frac{\rho_2}{\rho_2-\rho_1}\right)^s\fiint_{\mcp^{\lambda}_{\rho_2}(z_0)} \left(\left|\frac{u-(u)_{\mcp^{\lambda}_{\rho_2}}}{\lambda^{-1+\mu}\rho_2}\right|^p + a(z)\left|\frac{u-(u)_{\mcp^{\lambda}_{\rho_2}}}{\lambda^{-1+\mu}\rho_2}\right|^q+b(z)\left|\frac{u-(u)_{\mcp^{\lambda}_{\rho_2}}}{\lambda^{-1+\mu}\rho_2}\right|^s\right)\,dz}_{I}\nonumber\\
    &+ \underbrace{c\la^{p-2}\left(\frac{\rho_2}{\rho_2-\rho_1}\right)^2\fiint_{\mcp^{\la}_{\rho_2}(z_0)}\left|\frac{u-(u)_{\mcp^{\la}_{\rho_2}}}{\la^{-1+\mu}\rho_2}\right|^2 \,dz}_{II}. 
\end{align}
\noindent \textbf{Estimate for I:} Applying \ref{1_lem7.4} of \cref{p_intrinsic poincare 2} with $\theta=1$ and \textbf{p2} of \cref{assmp1b}, we have
\begin{align} \label{EQUATION3.20}
\left(\frac{\rho_2}{\rho_2-\rho_1}\right)^s \fiint_{\mcp^{\la}_{\rho_2}(z_0)}\left|\frac{u-(u)_{\mcp^{\lambda}_{\rho_2}}}{\lambda^{-1+\mu}\rho_2}\right|^p dz \apprle \left(\frac{\rho_2}{\rho_2-\rho_1}\right)^s \la^p.
\end{align}
Now we estimate
\begin{align*}
    \fiint_{\mcp^{\la}_{\rho_2}(z_0)}a(z) \left|\frac{u-(u)_{\mcp^{\lambda}_{\rho_2}}}{\lambda^{-1+\mu}\rho_2}\right|^q dz &\apprle \fiint_{\mcp^{\la}_{\rho_2}(z_0)}|a(z)-\inf_{w\in \mcp^{\la}_{\rho_2}(z_0)}a(w)|\left|\frac{u-(u)_{\mcp^{\lambda}_{\rho_2}}}{\lambda^{-1+\mu}\rho_2}\right|^q dz \\
    &+ \fiint_{\mcp^{\la}_{\rho_2}(z_0)}\inf_{w\in \mcp^{\la}_{\rho_2}(z_0)}a(w)\left|\frac{u-(u)_{\mcp^{\lambda}_{\rho_2}}}{\lambda^{-1+\mu}\rho_2}\right|^q dz.
\end{align*}
Note that the second term of the above inequality can be estimated by using \ref{2_lem7.4} of \cref{p_intrinsic poincare 2} with $\theta=1$. Indeed,
\begin{align}\label{EQUATION3.21}
 \fiint_{\mcp^{\la}_{\rho_2}(z_0)}\inf_{w\in \mcp^{\la}_{\rho_2}(z_0)}a(w)\left|\frac{u-(u)_{\mcp^{\lambda}_{\rho_2}}}{\lambda^{-1+\mu}\rho_2}\right|^q dz &\apprle \fiint_{\mcp^{\la}_{\rho_2}(z_0)}a(z)|\nabla u|^q dz + \la^{q-p} a(z_0) \fiint_{\mcp^{\la}_{\rho_2}(z_0)}a(z)|\nabla u|^q dz\nonumber\\
 &+\la^{(2-p)q}\la^{(p-s)\frac{q}{s}}a(z_0)\left(\fiint_{\mcp^{\la}_{\rhob}(z_0)}b(z)|\nabla u|^s\, dz\right)^{\frac{q(s-1)}{s}}\nonumber\\
 &+ \varepsilon a(z_0)\la^q \nonumber\\
 &\overset{\cref{assmp1a}}{\apprle} \la^p.
\end{align}
Next, we note that
\begin{align}\label{EQUATION3.22}
  \fiint_{\mcp^{\la}_{\rho_2}(z_0)}|a(z)-\inf_{w\in \mcp^{\la}_{\rho_2}(z_0)}a(w)|\left|\frac{u-(u)_{\mcp^{\lambda}_{\rho_2}}}{\lambda^{-1+\mu}\rho_2}\right|^q dz \apprle\la^{p-q} \fiint_{\mcp^{\la}_{\rho_2}(z_0)}\left|\frac{u-(u)_{\mcp^{\lambda}_{\rho_2}}}{\lambda^{-1+\mu}\rho_2}\right|^q dz.
\end{align}
Now we use \cref{g_n} with $\sigma=q, \xi=p, r=2$ and $\vartheta=\frac{p}{q}.$ 
\begin{align}\label{EQUATION3.23}
    \fiint_{\mcp^{\la}_{\rho_2}(z_0)}\left|\frac{u-(u)_{\mcp^{\lambda}_{\rho_2}}}{\lambda^{-1+\mu}\rho_2}\right|^q dz &\apprle \left(\fiint_{\mcp^{\la}_{\rho_2}(z_0)}\left|\frac{u-(u)_{\mcp^{\lambda}_{\rho_2}}}{\lambda^{-1+\mu}\rho_2}\right|^p+|\nabla u|^p dz\right)\left(\sup_{I^{\la, p}_{\rho_2}(t_0)}\fint_{B^{\la}_{\rho_2}(x_0)}\left|\frac{u-(u)_{\mcp^{\la}_{\rho_2}}}{\la^{-1+\mu}\rho_2}\right|^2 dx\right)^{\frac{q-p}{2}}\nonumber\\
    &\overset{\eqref{EQUATION3.20}}{\apprle} \la^p \left(\sup_{I^{\la, p}_{\rho_2}(t_0)}\fint_{B^{\la}_{\rho_2}(x_0)}\left|\frac{u-(u)_{\mcp^{\la}_{\rho_2}}}{\la^{-1+\mu}\rho_2}\right|^2 dx\right)^{\frac{q-p}{2}}.
\end{align}
In a similar way, we estimate
\begin{align*}
 \fiint_{\mcp^{\la}_{\rho_2}(z_0)}b(z) \left|\frac{u-(u)_{\mcp^{\lambda}_{\rho_2}}}{\lambda^{-1+\mu}\rho_2}\right|^s dz &\apprle \fiint_{\mcp^{\la}_{\rho_2}(z_0)}|b(z)-\inf_{w\in \mcp^{\la}_{\rho_2}(z_0)}b(w)|\left|\frac{u-(u)_{\mcp^{\lambda}_{\rho_2}}}{\lambda^{-1+\mu}\rho_2}\right|^s dz \\
    &+ \fiint_{\mcp^{\la}_{\rho_2}(z_0)}\inf_{w\in \mcp^{\la}_{\rho_2}(z_0)}b(w)\left|\frac{u-(u)_{\mcp^{\lambda}_{\rho_2}}}{\lambda^{-1+\mu}\rho_2}\right|^s dz.   
\end{align*}
To estimate the second term of the above inequality, we apply \ref{3_lem7.4} of \cref{p_intrinsic poincare 2} with $\theta =1.$ Indeed,
\begin{align}\label{EQQQQ3.32}
    \fiint_{\mcp^{\la}_{\rho_2}(z_0)}\inf_{w\in \mcp^{\la}_{\rho_2}(z_0)}b(w)\left|\frac{u-(u)_{\mcp^{\lambda}_{\rho_2}}}{\lambda^{-1+\mu}\rho_2}\right|^s dz &\apprle \fiint_{\mcp^{\la}_{\rho_2}(z_0)}b(z)|\nabla u|^s dz + \la^{s-p} b(z_0) \fiint_{\mcp^{\la}_{\rho_2}(z_0)}b(z)|\nabla u|^s dz\nonumber\\
 &+\la^{(2-p)q}\la^{(p-q)\frac{s}{q}}b(z_0)\left(\fiint_{\mcp^{\la}_{\rhob}(z_0)}a(z)|\nabla u|^q\, dz\right)^{\frac{s(q-1)}{q}}\nonumber\\
 &+ \varepsilon b(z_0)\la^s \nonumber\\
 &\overset{\cref{assmp1a}}{\apprle} \la^p.
\end{align}
We also have,
\begin{align*}
\fiint_{\mcp^{\la}_{\rho_2}(z_0)}|b(z)-\inf_{w\in \mcp^{\la}_{\rho_2}(z_0)}b(w)|\left|\frac{u-(u)_{\mcp^{\lambda}_{\rho_2}}}{\lambda^{-1+\mu}\rho_2}\right|^s dz \leq c \la^{p-s}\fiint_{\mcp^{\la}_{\rho_2}(z_0)} \left|\frac{u-(u)_{\mcp^{\lambda}_{\rho_2}}}{\lambda^{-1+\mu}\rho_2}\right|^s\,dz.  
\end{align*}
Again, we use \cref{g_n} with $\sigma=s, \xi=p, r=2$ and $\vartheta=\frac{p}{s}$ to get
\begin{align}\label{EQQQQ3.33}
\fiint_{\mcp^{\la}_{\rho_2}(z_0)}\left|\frac{u-(u)_{\mcp^{\lambda}_{\rho_2}}}{\lambda^{-1+\mu}\rho_2}\right|^s dz \apprle \la^p \left(\sup_{I^{\la, p}_{\rho_2}(t_0)}\fint_{B^{\la}_{\rho_2}(x_0)}\left|\frac{u-(u)_{\mcp^{\la}_{\rho_2}}}{\la^{-1+\mu}\rho_2}\right|^2 dx\right)^{\frac{s-p}{2}}.     
\end{align}
Hence combining the estimates \eqref{EQUATION3.20}, \eqref{EQUATION3.21}, \eqref{EQUATION3.22}, \eqref{EQUATION3.23}, \eqref{EQQQQ3.32} and \eqref{EQQQQ3.33}, we get
\begin{align*}
    I \apprle \left(\frac{\rho_2}{\rho_2-\rho_1}\right)^s \Bigg(\la^p&+ \la^{2p-q}\left(\sup_{I^{\la, p}_{\rho_2}(t_0)}\fint_{B^{\la}_{\rho_2}(x_0)}\left|\frac{u-(u)_{\mcp^{\la}_{\rho_2}}}{\la^{-1+\mu}\rho_2}\right|^2 dx\right)^{\frac{q-p}{2}}\\
    &+\la^{2p-s}\left(\sup_{I^{\la, p}_{\rho_2}(t_0)}\fint_{B^{\la}_{\rho_2}(x_0)}\left|\frac{u-(u)_{\mcp^{\la}_{\rho_2}}}{\la^{-1+\mu}\rho_2}\right|^2 dx\right)^{\frac{s-p}{2}}\Bigg).
\end{align*}

\noindent \textbf{Estimate for II:} We shall make use of \cref{g_n} with $s=p, \sigma=2, \vartheta=\frac{1}{2}$ and $r=2.$ It is easy to see
\begin{align*}
    -\frac{n}{2} < \frac{1}{2} \left(1-\frac{n}{p}\right)-\left(1-\frac{1}{2}\right)\frac{n}{2} \,\,\,\, \text{if and only if}\,\,\,\, p>\frac{2n}{n+2}.
\end{align*}
This gives
\begin{align*}
II &\leq c\la^{p-2}\left(\frac{\rho_2}{\rho_2-\rho_1}\right)^2\fint_{I^{\la, p}_{\rho_2}(t_0)}\left(\fint_{B^{\la}_{\rho_2}(x_0)}\left(\left|\frac{u-(u)_{\mcp^{\la}_{\rho_2}}}{\la^{-1+\mu}\rho_2}\right|^p+|\nabla u|^p\right) dx\right)^{\frac{1}{p}} dt \\
&\times\left(\fint_{B^{\la}_{\rho_2}(x_0)}\left|\frac{u-(u)_{\mcp^{\la}_{\rho_2}}}{\la^{-1+\mu}\rho_2}\right|^2 dx\right)^{\frac{1}{2}}dt\\
&\leq c\la^{p-2}\left(\frac{\rho_2}{\rho_2-\rho_1}\right)^2\left(\fiint_{\mcp^{\la}_{\rho_2}(z_0)}\left|\frac{u-(u)_{\mcp^{\la}_{\rho_2}}}{\la^{-1+\mu}\rho_2}\right|^p dz+ \fiint_{\mcp^{\la}_{\rho_2}(z_0)}|\nabla u|^p dz\right)^{\frac{1}{p}}\\
&\times\left(\sup_{I^{\la, p}_{\rho_2}(t_0)}\fint_{B^{\la}_{\rho_2}(x_0)}\left|\frac{u-(u)_{\mcp^{\la}_{\rho_2}}}{\la^{-1+\mu}\rho_2}\right|^2 dx\right)^{\frac{1}{2}}.
\end{align*}
Using \ref{1_lem7.4} of \cref{p_intrinsic poincare 2} and \textbf{p2} of \cref{assmp1b} we further have
\begin{align*}
    II \apprle\la^{p-2}\left(\frac{\rho_2}{\rho_2-\rho_1}\right)^2 \la \left(\sup_{I^{\la, p}_{\rho_2}(t_0)}\fint_{B^{\la}_{\rho_2}(x_0)}\left|\frac{u-(u)_{\mcp^{\la}_{\rho_2}}}{\la^{-1+\mu}\rho_2}\right|^2 dx\right)^{\frac{1}{2}}. 
\end{align*}
Therefore combining the above estimates and applying Young's inequality,  from \eqref{EQUATION7.16} we get
\begin{align*}
  &\sup_{I^{\la, p}_{\rho_1}(t_0)}\fint_{B^{\la}_{\rho_1}(x_0)}\left|\frac{u-(u)_{\mcp^{\la}_{\rho_1}}}{\la^{-1+\mu}\rho_1}\right|^2 dx\\
  &\leq c \left(\frac{\rho_2}{\rho_2-\rho_1}\right)^s  \la^2
+ c\left(\frac{\rho_2}{\rho_2-\rho_1}\right)^2 \la \left(\sup_{I^{\la, p}_{\rho_2}(t_0)}\fint_{B^{\la}_{\rho_2}(x_0)}\left|\frac{u-(u)_{\mcp^{\la}_{\rho_2}}}{\la^{-1+\mu}\rho_2}\right|^2 dx\right)^{\frac{1}{2}}\\
  &+c\left( \frac{\rho_2}{\rho_2-\rho_1}\right)^s \la^{p-q+2}\left(\sup_{I^{\la, p}_{\rho_2}(t_0)}\fint_{B^{\la}_{\rho_2}(x_0)}\left|\frac{u-(u)_{\mcp^{\la}_{\rho_2}}}{\la^{-1+\mu}\rho_2}\right|^2 dx\right)^{\frac{q-p}{2}}\\
  &+c\left( \frac{\rho_2}{\rho_2-\rho_1}\right)^s \la^{p-s+2}\left(\sup_{I^{\la, p}_{\rho_2}(t_0)}\fint_{B^{\la}_{\rho_2}(x_0)}\left|\frac{u-(u)_{\mcp^{\la}_{\rho_2}}}{\la^{-1+\mu}\rho_2}\right|^2 dx\right)^{\frac{s-p}{2}}\\
  &\leq \varepsilon \left(\sup_{I^{\la, p}_{\rho_2}(t_0)}\fint_{B^{\la}_{\rho_2}(x_0)}\left|\frac{u-(u)_{\mcp^{\la}_{\rho_2}}}{\la^{-1+\mu}\rho_2}\right|^2 dx\right) + c\Bigg[\left(\frac{\rho_2}{\rho_2-\rho_1}\right)^q+\left(\frac{\rho_2}{\rho_2-\rho_1}\right)^4+ \left(\frac{\rho_2}{\rho_2-\rho_1}\right)^{\frac{2s}{p-q+2}}\\
  &+ \left(\frac{\rho_2}{\rho_2-\rho_1}\right)^{\frac{2s}{p-s+2}}\Bigg]\la^2.
\end{align*}
Finally, using \cref{iter_lemma}, we get the desired estimate on $2\varrho.$
\end{proof}
In the next lemma, we estimate the first term on the right-hand side of \cref{scl_energy_p}.
\begin{lemma}\label{LEM5.2}
	Let $u$ be a  weak solution to \eqref{main_eqn} and \cref{scl_energy_p} holds with the \cref{assmp1a} in force. Then there exist constants $c=c(\textnormal{\texttt{data}})$ and $\theta_0\in (0,1)$, depending only on $n,p,q, s$ such that for any $\theta, \varepsilon \in (\theta_0, 1)$,  we have
	\begin{align*}
		\fiint_{\mcp^{\lambda}_{2\varrho}(z_0)} H\left(z, \left|\frac{u-(u)_{\mcp^{\la}_{2\varrho}}}{\lambda^{-1+\mu}2\varrho}\right|\right)\,dz
		&\leq c\lambda^{(1-\theta)p} \fiint_{\mcp^{\lambda}_{2\varrho}(z_0)}\left(|\nabla u|^{\theta p}+a(z)^{\theta}|\nabla u|^{\theta q}+b(z)^{\theta}|\nabla u|^{\theta s}\right)dz+ \varepsilon \la^p
	\end{align*}
 whenever $\mcp^{\la}_{4\varrho}(z_0)\subset \Omega_T.$
\end{lemma}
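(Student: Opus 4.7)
The plan is to estimate the two left-hand side contributions $\fiint |v/\varrho'|^p$ and $\fiint a(z)|v/\varrho'|^q$ separately, where I write $v := u - (u)_{\mcp^\la_{2\varrho}}$ and $\varrho' := \la^{-1+\mu}(2\varrho)$. The main tool in both cases will be the spatial Gagliardo--Nirenberg inequality \cref{g_n} combined with the $L^\infty_t L^2_x$-type bound $\sup_{t \in I^{\la,p}_{2\varrho}(t_0)} \fint_{B^\la_{2\varrho}(x_0)} |v/\varrho'|^2\,dx \leq c \la^2$ supplied by \cref{LEMMA7.5}, followed by the parabolic Sobolev--Poincar\'{e} inequality \cref{p_intrinsic poincare 2}\,\ref{1_lem7.4} to exchange $|v/\varrho'|^{\theta p}$ for $|\nabla u|^{\theta p} + \varepsilon \la^{\theta p}$. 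A crucial preliminary observation, extracted from Step~1 of the proof of \cref{p_intrinsic poincare 1}, is that on a $p$-intrinsic cylinder the H\"{o}lder continuity of $a$ together with \textbf{p1} of \cref{assmp1a} yields the uniform pointwise bound $\sup_{z \in \mcp^\la_{2\varrho}(z_0)} a(z) \leq c K \la^{p-q}$. This is what will convert the $q$-scaling in the second term into $p$-scaling.

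For the $|v/\varrho'|^p$ contribution I would apply \cref{g_n} inside the spatial ball $B^\la_{2\varrho}(x_0)$ with parameters $(\sigma, s, r, \vartheta) = (p, \theta p, 2, \theta)$. The choice $\vartheta = \theta$ makes the exponent $\vartheta\sigma/s$ equal to $1$ so that after integrating in time the first factor becomes a clean cylinder average, while the admissibility condition reduces to $\theta \geq n/(n+2)$. Pulling the spatial $L^2$-factor out of the time integral as a supremum and invoking \cref{LEMMA7.5} produces
\[
\fiint_{\mcp^\la_{2\varrho}(z_0)} |v/\varrho'|^p\,dz \,\leq\, c \la^{(1-\theta)p} \fiint_{\mcp^\la_{2\varrho}(z_0)} \bigl( |v/\varrho'|^{\theta p} + |\nabla u|^{\theta p}\bigr)\,dz.
\]
Applying \cref{p_intrinsic poincare 2}\,\ref{1_lem7.4} to the remaining $|v/\varrho'|^{\theta p}$ term and using $\la^{(1-\theta)p} \cdot \varepsilon \la^{\theta p} = \varepsilon \la^p$ then finishes this first estimate.

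For the more delicate $a(z)|v/\varrho'|^q$ contribution I would first pull out the uniform bound $a(z) \leq c K \la^{p-q}$. The key trick is now to apply \cref{g_n} with $(\sigma, s, r, \vartheta) = (q, \theta p, 2, \theta p/q)$, taking $s = \theta p$ rather than the more obvious $s = \theta q$. This is essential for two reasons: the Sobolev term produced on the right is then $|\nabla u|^{\theta p}$, matching the target exponent, and the exponent $\vartheta\sigma/s$ is again $1$. The admissibility constraint becomes $\theta \geq qn/(p(n+2))$. Time integration extracts a factor $\la^{q-\theta p}$ from the $L^\infty_t L^2_x$ bound, and combined with the prefactor $\la^{p-q}$ the powers of $\la$ telescope precisely to $\la^{(1-\theta)p}$. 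A second application of \cref{p_intrinsic poincare 2}\,\ref{1_lem7.4} then closes the estimate in the form
\[
\fiint_{\mcp^\la_{2\varrho}(z_0)} a(z)|v/\varrho'|^q\,dz \,\leq\, c \la^{(1-\theta)p} \fiint_{\mcp^\la_{2\varrho}(z_0)} |\nabla u|^{\theta p}\,dz + c\varepsilon \la^p,
\]
which is majorised by the right-hand side of the lemma since $|\nabla u|^{\theta p} \leq |\nabla u|^{\theta p} + a(z)^\theta |\nabla u|^{\theta q}$.

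The main technical obstacle is the exact cancellation of the powers of $\la$ in the $q$-term: it works only because (i) the $p$-phase hypothesis \textbf{p1} plus H\"{o}lder regularity of $a$ force $a(z) \leq c K \la^{p-q}$ on the cylinder, and (ii) Gagliardo--Nirenberg is applied with the slightly non-standard parameter $s = \theta p$, which produces $\la^{q-\theta p}$ rather than $\la^{(1-\theta)q}$ and is what compensates the $\la^{p-q}$ coming from $a$. The threshold constant $\theta_0 := \max\bigl\{n/(n+2),\, q n/(p(n+2)),\, (q-1)/p,\, 1/p\bigr\}$ depends only on $n, p, q$ and is strictly less than $1$ because the sharp range \eqref{def_pq} forces $q$ to be close to $p$, so the range $\theta \in (\theta_0, 1)$ is non-empty.
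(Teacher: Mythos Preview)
Your proof is correct and in fact slightly simpler than the paper's. The treatment of the $p$-power term is identical to the paper's (same Gagliardo--Nirenberg parameters, same use of \cref{LEMMA7.5} and \cref{p_intrinsic poincare 2}\,\ref{1_lem7.4}). The difference is in the $a(z)|v/\varrho'|^q$ term: the paper first decomposes $a(z)=\inf_{\mcp^{\la}_{2\varrho}}a+\bigl(a(z)-\inf_{\mcp^{\la}_{2\varrho}}a\bigr)$ and treats the two pieces separately. For the infimum part it applies \cref{g_n} with $s=\theta q$ (not $\theta p$) and then \cref{p_intrinsic poincare 2}\,\ref{2_lem7.4}, which is what actually produces the $a(z)^{\theta}|\nabla u|^{\theta q}$ contribution appearing in the statement; for the oscillation part it uses exactly your argument, i.e.\ the bound $[a]_{\alpha}(\text{diam})^{\alpha}\le c\la^{p-q}$ and \cref{g_n} with $s=\theta p$. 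You instead observe that in the $p$-phase the \emph{full} coefficient satisfies $a(z)\le cK\la^{p-q}$ uniformly on the cylinder (combining \textbf{p1} with the oscillation bound), so the split is unnecessary and a single application of \cref{g_n} with $s=\theta p$ suffices. Your conclusion is then formally stronger: the right-hand side involves only $|\nabla u|^{\theta p}$, and the $a(z)^{\theta}|\nabla u|^{\theta q}$ term in the stated inequality is superfluous. Downstream (in \cref{LEM5.4}) either form leads to the same reverse H\"older inequality after Young's inequality, so nothing is lost. The paper's decomposition is perhaps written with the $(p,q)$-phase analogue in mind, where $a$ is \emph{not} small and a uniform bound of this type is unavailable; in the $p$-phase your shortcut is cleaner.
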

\begin{proof}
Let us first estimate the first term of $H(z, \cdot)$ on the left hand side. We shall make use of \cref{g_n} with $\sigma = p, \xi=\theta p, \vartheta=\theta$ and $r=2.$ It is easy to see that for $\theta \geq \frac{n}{n+2},$ the hypothesis of the \cref{g_n} satisfied. Indeed,
\begin{align*}
 -\frac{n}{p} \leq \theta \left(1-\frac{n}{\theta p}\right)-(1-\theta)\frac{n}{2} \,\,\, \text{if and only if}\,\,\, \theta \geq \frac{n}{n+2}.
\end{align*}            
Therefore an application of \cref{g_n} gives,
\begin{align*}
&\fiint_{\mcp^{\lambda}_{2\varrho}(z_0)} \left|\frac{u-(u)_{\mcp^{\la}_{2\varrho}}}{\lambda^{-1+\mu}2\varrho}\right|^p dz \\
&\apprle \fiint_{\mcp^{\lambda}_{2\varrho}(z_0)}\left(\left|\frac{u-(u)_{\mcp^{\la}_{2\varrho}}}{\lambda^{-1+\mu}2\varrho}\right|^{\theta p} + |\nabla u|^{\theta p} dz\right)\left(\sup_{I^{\lambda}_{2\varrho}(t_0)}\fint_{B^{\lambda}_{2\varrho}(x_0)}\left|\frac{u-(u)_{\mcp^{\la}_{2\varrho}}}{\lambda^{-1+\mu}2\varrho}\right|^2 dx\right)^{\frac{(1-\theta)p}{2}}\\
&\overset{\cref{p_intrinsic poincare 2}}{\apprle} \left(\fiint_{\mcp^{\lambda}_{2\varrho}(z_0)}|\nabla u|^{\theta p} dz+ \varepsilon \la^{\theta p}\right)\left(\sup_{I^{\lambda}_{2\varrho}(t_0)}\fint_{B^{\lambda}_{2\varrho}(x_0)}\left|\frac{u-(u)_{\mcp^{\la}_{2\varrho}}}{\lambda^{-1+\mu}2\varrho}\right|^2 dx\right)^{\frac{(1-\theta)p}{2}}\\
&\overset{\cref{LEMMA7.5}}{\apprle} \lambda^{(1-\theta)p} \left(\fiint_{\mcp^{\lambda}_{2\varrho}(z_0)}|\nabla u|^{\theta p} dz+ \varepsilon \la^{\theta p}\right).
\end{align*}
Thus, finally we get
\begin{align}\label{1st estimate}
\fiint_{\mcp^{\lambda}_{2\varrho}(z_0)} \left|\frac{u-(u)_{\mcp^{\la}_{2\varrho}}}{\lambda^{-1+\mu}2\varrho}\right|^p dz \apprle \la^{(1-\theta)p}\left(\fiint_{\mcp^{\lambda}_{2\varrho}(z_0)}|\nabla u|^{\theta p} dz\right)+ \varepsilon \la^p.
\end{align}
The second term of $H(z, \cdot)$ on the left hand side can be written as
\begin{align*}
    \fiint_{\mcp^{\la}_{2\varrho}(z_0)} a(z)\left|\frac{u-(u)_{\mcp^{\la}_{2\varrho}}}{\lambda^{-1+\mu}2\varrho}\right|^q dz &\leq \underbrace{\fiint_{\mcp^{\lambda}_{2\varrho}(z_0)} \inf_{w \in \mcp^{\lambda}_{2\varrho}(z_0)} a(w)\left|\frac{u-(u)_{\mcp^{\la}_{2\varrho}}}{\lambda^{-1+\mu}2\varrho}\right|^q dz}_{I}\\
    &+\underbrace{\fiint_{\mcp^{\la}_{2\varrho}(z_0)}c[a]_{\alpha}\max\left\{\left(\lambda^{-1+\mu}2\varrho\right)^{\alpha}, \left(\lambda^{\frac{2\mu-p}{2}}2\varrho\right)^{\alpha}\right\} \left|\frac{u-(u)_{\mcp^{\la}_{2\varrho}}}{\lambda^{-1+\mu}2\varrho}\right|^q dz}_{II}.
\end{align*}
\noindent\textbf{Estimate for I:} To get an estimate of $I,$ we use \cref{g_n} with $\sigma=q, \xi=\theta q, \vartheta=\theta$ and $r=2.$ One can easily check that the hypothesis of \cref{g_n} is satisfied. Indeed,
\begin{align*}
 -\frac{n}{q} \leq \theta \left(1-\frac{n}{\theta q}\right)-(1-\theta)\frac{n}{2}\,\,\,\,\, \text{if and only if}\,\,\,\, \theta \geq \frac{n}{2+n}.   
\end{align*} Now \cref{g_n} gives
\begin{align*}
    I &\leq c  \fiint_{\mcp^{\lambda}_{2\varrho}(z_0)} \left(\inf_{w \in \mcp^{\lambda}_{2\varrho}(z_0)} a(w)^{\theta}\left|\frac{u-(u)_{\mcp^{\la}_{2\varrho}}}{\lambda^{-1+\mu}2\varrho}\right|^{\theta q} + \inf_{w \in \mcp^{\lambda}_{2\varrho}(z_0)} a(w)^{\theta} |\nabla u|^{\theta q}\right)dz\\
 &\times \inf_{z \in \mcp^{\lambda}_{2\varrho}(z_0)} a(z)^{1-\theta} \left(\sup_{I^{\lambda}_{2\varrho}(t_0)}\fint_{B^{\lambda}_{2\varrho}(x_0)}\left|\frac{u-(u)_{\mcp^{\la}_{2\varrho}}}{\lambda^{-1+\mu}2\varrho}\right|^2 dx\right)^{\frac{(1-\theta)q}{2}}.
\end{align*}
Now to estimate the first term above, we use \cref{p_intrinsic poincare 2} \ref{2_lem7.4} to get
\begin{align}\label{EQUATION3.25}
 &\fiint_{\mcp^{\la}_{2\varrho}(z_0)}\inf_{w \in \mcp^{\lambda}_{2\varrho}(z_0)} a(w)^{\theta}\left|\frac{u-(u)_{\mcp^{\la}_{2\varrho}}}{\lambda^{-1+\mu}2\varrho}\right|^{\theta q} \apprle \fiint_{\mcp^{\la}_{2\varrho}(z_0)}\inf_{w \in \mcp^{\lambda}_{2\varrho}(z_0)} a(w)^{\theta} |\nabla u|^{\theta q}dz\nonumber \\
 &+ \la^{(q-p)\theta}\inf_{w \in \mcp^{\lambda}_{2\varrho}(z_0)} a(w)^{\theta}\fiint_{\mcp^{\la}_{2\varrho}(z_0)}a^{\theta}(z)|\nabla u|^{\theta q}dz\nonumber \\
 &+c\la^{(2-p)\theta q}\la^{(p-s)\frac{\theta q}{s}}\inf_{z \in \mcp^{\lambda}_{2\varrho}(z_0)} a(z)^{\theta}\left(\fiint_{\mcp^{\la}_{2\varrho}(z_0)}\inf_{z\in \mcp^{\la}_{2\varrho}(z_0)}b(z)^{\theta}|\nabla u|^{\theta s}\right)^{\frac{q(s-1)}{s}}\nonumber\\
 &+ \varepsilon \inf_{w \in \mcp^{\lambda}_{2\varrho}(z_0)} a(w)^{\theta} \la^{\theta q}. 
\end{align}
Therefore using \textbf{p1} and \textbf{p2} of \cref{assmp1a} in \eqref{EQUATION3.25}, we obtain
\begin{align}\label{EQUATION7.18}
  \fiint_{\mcp^{\la}_{2\varrho}(z_0)}\inf_{w \in \mcp^{\lambda}_{2\varrho}(z_0)} a(w)^{\theta}\left|\frac{u-(u)_{\mcp^{\la}_{2\varrho}}}{\lambda^{-1+\mu}2\varrho}\right|^{\theta q} \apprle \fiint_{\mcp^{\la}_{2\varrho}(z_0)}a^{\theta}(z)|\nabla u|^{\theta q}dz + \fiint_{\mcp^{\la}_{2\varrho}(z_0)}b(z)^{\theta}|\nabla u|^{\theta s}\,dz+\varepsilon \la^{\theta p}.  
\end{align}
The third term above can be estimated using \cref{LEMMA7.5} and \cref{assmp1a} as
\begin{align}\label{EQUATION7.19}
  \inf_{z \in \mcp^{\lambda}_{2\varrho}(z_0)} a(z)^{1-\theta} \left(\sup_{I^{\lambda}_{2\varrho}(t_0)}\fint_{B^{\lambda}_{2\varrho}(x_0)}\left|\frac{u-(u)_{\mcp^{\la}_{2\varrho}}}{\lambda^{-1+\mu}2\varrho}\right|^2 dx\right)^{\frac{(1-\theta)q}{2}} \leq c \la^{(p-q)(1-\theta)} \la^{(1-\theta)q}=\la^{(1-\theta)p}. 
\end{align}
Combining the estimates \eqref{EQUATION7.18}-\eqref{EQUATION7.19}, we obtain
\begin{align}\label{2nd estimate}
    I=&\fiint_{\mcp^{\lambda}_{2\varrho}(z_0)} \inf_{w \in \mcp^{\lambda}_{2\varrho}(z_0)} a(w)\left|\frac{u-(u)_{\mcp^{\la}_{2\varrho}}}{\lambda^{-1+\mu}2\varrho}\right|^q dz\nonumber \\
    &\leq c \la^{(1-\theta)p}\left[\fiint_{\mcp^{\la}_{2\varrho}(z_0)} a(z)^{\theta} |\nabla u|^{\theta q} dz+\fiint_{\mcp^{\la}_{2\varrho}(z_0)} b(z)^{\theta} |\nabla u|^{\theta s} dz\right]+ \varepsilon \la^{p}.
\end{align}
\noindent\textbf{Estimate for II:} As previous, using the definition of $\rhoa$ and $\rhob,$ we have
\begin{align*}
 [a]_{\alpha}\max\left\{\lambda^{\alpha(-1+\mu)} (2\varrho)^{\alpha}, \left(\lambda^{\frac{2\mu-p}{2}}2\varrho\right)^{\alpha}\right\} \leq c\la^{p-q}. 
\end{align*}
Now we use \cref{g_n} with $\sigma=q, \xi=\theta p, \vartheta=\frac{\theta p}{q}$ and $r=2.$ Note that
\begin{align*}
    -\frac{n}{q} \leq \frac{\theta p}{q}\left(1-\frac{n}{\theta p}\right)-\left(1-\frac{\theta p}{q}\right)\frac{n}{2}\, \,\,\,\, \text{if and only if}\,\,\,\,\, \theta \geq \frac{2p}{(n+2)q}.
\end{align*}

\vspace{.3cm}
\noindent Therefore, we find
\begin{align*}
\fiint_{\mcp^{\lambda}_{2\varrho}(z_0)} \left|\frac{u-(u)_{\mcp^{\la}_{2\varrho}}}{\lambda^{-1+\mu}2\varrho}\right|^p dz 
&\overset{\cref{g_n}}{\apprle} \fiint_{\mcp^{\lambda}_{2\varrho}(z_0)}\left(\left|\frac{u-(u)_{\mcp^{\la}_{2\varrho}}}{\lambda^{-1+\mu}2\varrho}\right|^{\theta p} + |\nabla u|^{\theta p} dz\right)\\
&\times\left(\sup_{I^{\lambda}_{2\varrho}(t_0)}\fint_{B^{\lambda}_{2\varrho}(x_0)}\left|\frac{u-(u)_{\mcp^{\la}_{2\varrho}}}{\lambda^{-1+\mu}2\varrho}\right|^2 dx\right)^{\frac{q-\theta p}{2}}\\
&\overset{\cref{p_intrinsic poincare 2}\,\, \text{and}\,\, \cref{LEMMA7.5}}{\apprle}\left(\fiint_{\mcp^{\la}_{2\varrho}(z_0)}|\nabla u|^{\theta p}dz + \varepsilon \la^{\theta p}\right)\la^{q-\theta p}.
\end{align*}

\vspace{.3cm}
\noindent Thus finally, we get
\begin{align}\label{3rd estimate}
    II=&\fiint_{\mcp^{\la}_{2\varrho}(z_0)}c[a]_{\alpha}\max\left\{\left(\lambda^{-1+\mu}2\varrho\right)^{\alpha}, \left(\lambda^{\frac{2\mu-p}{2}}2\varrho\right)^{\alpha}\right\} \left|\frac{u-(u)_{\mcp^{\la}_{2\varrho}}}{\lambda^{-1+\mu}2\varrho}\right|^q dz\nonumber\\ &\apprle \la^{p-q} \left(\fiint_{\mcp^{\la}_{2\varrho}(z_0)}|\nabla u|^{\theta p}dz + \varepsilon \la^{\theta p}\right) \la^{q-\theta p}
    =\la^{(1-\theta)p}\left(\fiint_{\mcp^{\la}_{2\varrho}(z_0)}|\nabla u|^{\theta p}dz\right) + \varepsilon \la^p.
\end{align}

\vspace{.4cm}
\noindent Similar analysis can be carried out for the third term in $H(z, \cdot)$ on the left hand side. Indeed,
\vspace{.3cm}
\begin{align*}
 \fiint_{\mcp^{\la}_{2\varrho}(z_0)} b(z)\left|\frac{u-(u)_{\mcp^{\la}_{2\varrho}}}{\lambda^{-1+\mu}2\varrho}\right|^s dz &\leq \underbrace{\fiint_{\mcp^{\lambda}_{2\varrho}(z_0)} \inf_{w \in \mcp^{\lambda}_{2\varrho}(z_0)} b(w)\left|\frac{u-(u)_{\mcp^{\la}_{2\varrho}}}{\lambda^{-1+\mu}2\varrho}\right|^s dz}_{III}\\
    &+\underbrace{\fiint_{\mcp^{\la}_{2\varrho}(z_0)}c[b]_{\beta}\max\left\{\left(\lambda^{-1+\mu}2\varrho\right)^{\beta}, \left(\lambda^{\frac{2\mu-p}{2}}2\varrho\right)^{\beta}\right\} \left|\frac{u-(u)_{\mcp^{\la}_{2\varrho}}}{\lambda^{-1+\mu}2\varrho}\right|^s dz}_{IV}.   
\end{align*}

\vspace{.4cm}
\noindent Again, using \cref{g_n} with $\sigma=s, \xi=\theta s, \vartheta=\theta$ and $r=2$ and \cref{p_intrinsic poincare 1} \ref{3_lem7.4}, we can estimate
\begin{align}\label{4th estimate}
    III \leq c \la^{(1-\theta)p}\left[\fiint_{\mcp^{\la}_{2\varrho}(z_0)} a(z)^{\theta} |\nabla u|^{\theta q} dz+\fiint_{\mcp^{\la}_{2\varrho}(z_0)} b(z)^{\theta} |\nabla u|^{\theta s} dz\right]+ \varepsilon \la^{p}
\end{align}

\vspace{.3cm}
\noindent On the other hand, using 
\begin{align*}
 [b]_{\beta}\max\left\{\lambda^{\alpha(-1+\mu)} (2\varrho)^{\beta}, \left(\lambda^{\frac{2\mu-p}{2}}2\varrho\right)^{\beta}\right\} \leq c\la^{p-s}   
\end{align*}
and \cref{g_n} with $\sigma=s, \xi=\theta p, \vartheta=\frac{\theta p}{s}$ and $r=2,$ we obtain
\begin{align}\label{5th estimate}
    IV \apprle \la^{(1-\theta)p}\left(\fiint_{\mcp^{\la}_{2\varrho}(z_0)}|\nabla u|^{\theta p}dz\right) + \varepsilon \la^p.
\end{align}
Combining the estimates \eqref{1st estimate}, \eqref{2nd estimate}, \eqref{3rd estimate}, \eqref{4th estimate} and \eqref{5th estimate}, we conclude the lemma.
\end{proof}
\begin{lemma}\label{LEM5.3}
Let $u$ be a weak solution to \eqref{main_eqn} and let \cref{scl_energy_p} hold with the \cref{assmp1a} in force. Then there exist constants $c=c(\textnormal{\texttt{data}})$ and $\theta_0 \in (0,1)$ depending on $n,p,q$ such that for any $\theta, \varepsilon \in (\theta_0,1),$ we have
\begin{align*}
\lambda^{p-2} \fiint_{\mcp^{\lambda}_{2\varrho}(z_0)}\left|\frac{u-(u)_{\mcp^{\la}_{2\varrho}}}{\lambda^{-1+\mu}2\varrho}\right|^2 dz \leq \varepsilon \lambda^p + c \left(\fiint_{\mcp^{\lambda}_{2\varrho}(z_0)}|\nabla u|^{\theta p}\right)^{\frac{1}{\theta}}    
\end{align*}
whenever $\mcp^{\la}_{4\varrho}(z_0)\subset \Omega_T.$
\end{lemma}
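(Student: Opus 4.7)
The plan is to follow the strategy of \cref{LEM5.2} but with the target exponent $\sigma=2$ in place of $\sigma=p$. I would invoke \cref{g_n} slice-wise in time with the choice $\sigma=2$, $s=\theta p$, $r=2$, and the interpolation weight $\vartheta:=\theta\min\{1,\tfrac{p}{2}\}\in(0,1)$. The Gagliardo--Nirenberg admissibility condition $-\tfrac{n}{\sigma}\le \vartheta(1-\tfrac{n}{s})-(1-\vartheta)\tfrac{n}{r}$ reduces to $\theta p\ge \tfrac{2n}{n+2}$, which holds for every $\theta$ sufficiently close to $1$ since $p>\tfrac{2n}{n+2}$ by assumption.

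After applying \cref{g_n} on each time slice, I would pull the $L^2$-type factor out as a supremum in $t$ and bound it by $c\la^{2(1-\vartheta)}$ via \cref{LEMMA7.5}. The complementary Sobolev factor carries the exponent $\tfrac{2\vartheta}{\theta p}$, which by construction of $\vartheta$ is at most $1$; H\"older's inequality in time then allows me to move the time integration inside this exponent. Using \ref{1_lem7.4} of \cref{p_intrinsic poincare 2} (with the same $\theta$) to dispose of the zero-order term in favour of $\fiint|\nabla u|^{\theta p}\,dz+\varepsilon\la^{\theta p}$, and finally multiplying by $\la^{p-2}$, I would arrive at
\begin{align*}
\la^{p-2}\fiint_{\mcp^{\la}_{2\varrho}(z_0)}\left|\frac{u-(u)_{\mcp^{\la}_{2\varrho}}}{\la^{-1+\mu}2\varrho}\right|^2 dz\le c\,\la^{p-2\vartheta}\,X^{\frac{2\vartheta}{\theta p}},
\end{align*}
where $X:=\fiint_{\mcp^{\la}_{2\varrho}(z_0)}|\nabla u|^{\theta p}\,dz+\varepsilon\la^{\theta p}$.

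The conclusion would then follow from Young's inequality with conjugate exponents $\bigl(\tfrac{p}{p-2\vartheta},\tfrac{p}{2\vartheta}\bigr)$. The choice of $\vartheta$ guarantees $\vartheta<\tfrac{p}{2}$, so both exponents lie in $(1,\infty)$, and a direct check gives $\tfrac{2\vartheta}{\theta p}\cdot\tfrac{p}{2\vartheta}=\tfrac{1}{\theta}$, which is exactly the exponent appearing on the right-hand side of the lemma. Splitting $X^{1/\theta}\le c\bigl((\fiint_{\mcp^{\la}_{2\varrho}(z_0)}|\nabla u|^{\theta p}\,dz)^{1/\theta}+\varepsilon^{1/\theta}\la^{p}\bigr)$ and relabelling $\varepsilon$ then yields the claim.

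The main obstacle I anticipate is calibrating $\vartheta$ so that the three constraints---Gagliardo--Nirenberg admissibility, H\"older exponent at most $1$ in time, and the Young pairing producing precisely the exponent $1/\theta$ on the gradient term---hold simultaneously and uniformly across the degenerate ($p\ge 2$) and singular ($\tfrac{2n}{n+2}<p<2$) regimes. The unified choice $\vartheta=\theta\min\{1,\tfrac{p}{2}\}$ threads this needle, and the corresponding lower bounds needed to keep $\theta p\ge \tfrac{2n}{n+2}$ and $\vartheta<\tfrac{p}{2}$ together dictate the threshold $\theta_0$ in the statement.
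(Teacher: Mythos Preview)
Your proposal is correct and follows essentially the same route as the paper: apply \cref{g_n} slice-wise with $\sigma=2$, $s=\theta p$, $r=2$; pull the $L^2$ factor out as a supremum and bound it by $c\la^2$ via \cref{LEMMA7.5}; control the Sobolev factor using \ref{1_lem7.4} of \cref{p_intrinsic poincare 2}; and conclude with Young's inequality. The only difference is the interpolation weight: the paper takes $\vartheta=\tfrac{\theta p}{2}$, which makes the Sobolev exponent $\tfrac{2\vartheta}{\theta p}=1$ and so avoids the extra H\"older step in time, whereas your unified choice $\vartheta=\theta\min\{1,\tfrac{p}{2}\}$ coincides with the paper's for $p<2$ but is actually more careful in the degenerate regime, since the paper's $\vartheta=\tfrac{\theta p}{2}$ falls outside $(0,1)$ once $p\ge 2$ and $\theta>2/p$.
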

\begin{proof}
First we use \cref{g_n} with $\sigma =2, \xi=\theta p, \vartheta=\frac{\theta p}{2}$ and $r=2$ and an application \cref{p_intrinsic poincare 2} gives
\begin{align*} 
\fiint_{\mcp^{\lambda}_{2\varrho}(z_0)}\left|\frac{u-(u)_{\mcp^{\la}_{2\varrho}}}{\lambda^{-1+\mu}(2\varrho)}\right|^2 dz &\apprle \left(\fiint_{\mcp^{\lambda}_{2\varrho}(z_0)} |\nabla u|^{\theta p}dz+ \varepsilon \la^{\theta p}\right) \left(\sup_{I^{\lambda}_{2\varrho}(t_0)}\fint_{B^{\lambda}_{2\varrho}(x_0)}\left|\frac{u-(u)_{\mcp^{\la}_{2\varrho}}}{\lambda^{-1+\mu}2\varrho}\right|^2 dx\right)^{\frac{2-\theta p}{2}}\\
&\overset{\cref{LEMMA7.5}}{\apprle} \lambda^{2-\theta p}\left(\fiint_{Q^{\lambda}_{2\varrho}(z_0)} |\nabla u|^{\theta p}dz+ \varepsilon \la^{\theta p}\right)\\
&=\la^{2-\theta p}\left(\fiint_{Q^{\lambda}_{2\varrho}(z_0)} |\nabla u|^{\theta p}dz\right)+ \varepsilon \la^2.
\end{align*}
Therefore, we have
\begin{align*}
  \lambda^{p-2} \fiint_{\mcp^{\lambda}_{2\varrho}(z_0)}\left|\frac{u-(u)_{\mcp^{\la}_{2\varrho}}}{\lambda^{-1+\mu}2\varrho}\right|^2 dz &\apprle\lambda^{p(1-\theta)}\left(\fiint_{\mcp^{\lambda}_{2\varrho}(z_0)} |\nabla u|^{\theta p}dz\right)+ \varepsilon \la^p\\
  &\apprle\left[\varepsilon \lambda^p + c(\varepsilon) \left(\fiint_{Q^{\lambda}_{2\varrho}(z_0)} |\nabla u|^{\theta p}dz\right)^{\frac{1}{\theta}}\right]+ \varepsilon \la^p.
\end{align*}
We used Young's inequality with $(\frac{1}{1-\theta}, \frac{1}{\theta})$ to obtain the last estimate. This completes the proof.
\end{proof}
Now we are ready to prove the reverse H\"{o}lder inequality for $p$-phase.
\begin{lemma}\label{LEM5.4}
Let $u$ be a weak solution to \eqref{main_eqn}. Moreover, we assume that the energy estimate given in \cref{scl_energy_p} and the \cref{assmp1a} on the $p$-intrinsic cylinders hold. Then there exist constants $c=c(\textnormal{\texttt{data}})$ and $\theta_0 \in (0,1)$ depending on $n,p,q$ such that for any $\theta \in (\theta_0, 1),$ we have
\begin{align*}
\fiint_{\mcp^{\lambda}_{\varrho}(z_0)}H(z, |\nabla u|)\, dz \leq c \left(\fiint_{\mcp^{\lambda}_{2\varrho}(z_0)}H(z, |\nabla u|)^{\theta}\,dz\right)^{\frac{1}{\theta}}
\end{align*}
whenever $\mcp^{\la}_{4\varrho}(z_0)\subset \Omega_T.$
\end{lemma}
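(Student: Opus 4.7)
My plan is to combine the scaled energy estimate (\cref{scl_energy_p}) with the Sobolev--Poincar\'e type bounds from \cref{LEM5.2,LEM5.3}, and then use condition \textbf{p3} of \cref{assmp1c} to close an absorption argument on $\la^p$.

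First, I apply \cref{scl_energy_p} on the radii $\rhoa, \rhob$ coming from \cref{assmp1a}, which yields
\begin{align*}
\fiint_{\mcp^{\la}_{\rhoa}(z_0)} H(z,|\nabla u|)\, dz \apprle T_1 + T_2,
\end{align*}
where $T_1$ gathers the $p$- and $q$-power terms in $u-(u)$ on $\mcp^{\la}_{\rhob}$ and $T_2$ is the weighted $L^2$ correction. Since $\rhob \in [2\varrho,4\varrho]$ the cylinders $\mcp^{\la}_{\rhob}$ and $\mcp^{\la}_{2\varrho}$ are of comparable measure, which allows me to invoke \cref{LEM5.2} to control $T_1$ and \cref{LEM5.3} to control $T_2$. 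This produces
\begin{align*}
\fiint_{\mcp^{\la}_{\rhoa}(z_0)} H\, dz \leq c\,\la^{(1-\theta)p} A + c\,\varepsilon\,\la^p + c\,B^{1/\theta},
\end{align*}
where I set $A := \fiint_{\mcp^{\la}_{2\varrho}(z_0)}\bigl(|\nabla u|^{\theta p} + a(z)^\theta |\nabla u|^{\theta q}\bigr)\, dz$ and $B := \fiint_{\mcp^{\la}_{2\varrho}(z_0)}|\nabla u|^{\theta p}\, dz$.

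Next, condition \textbf{p3} of \cref{assmp1c} identifies the LHS above with $\la^p$. Choosing $\varepsilon$ sufficiently small absorbs $c\varepsilon\la^p$ into the LHS, and Young's inequality with conjugate exponents $\bigl(\tfrac{1}{1-\theta},\tfrac{1}{\theta}\bigr)$ applied to $\la^{(1-\theta)p} A$ yields $\delta\la^p + c(\delta) A^{1/\theta}$; a second absorption then gives
\begin{align*}
\la^p \leq c\bigl(A^{1/\theta} + B^{1/\theta}\bigr).
\end{align*}
Using $B\leq A$ and the pointwise bound $|\nabla u|^{\theta p} + a(z)^\theta|\nabla u|^{\theta q} \leq 2\bigl(|\nabla u|^p + a(z)|\nabla u|^q\bigr)^{\theta} = 2H(z,|\nabla u|)^\theta$, valid for $0<\theta\leq 1$, I arrive at $\la^p \leq c\bigl(\fiint_{\mcp^{\la}_{2\varrho}(z_0)} H^\theta\, dz\bigr)^{1/\theta}$. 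Finally, $\mcp^{\la}_\varrho \subseteq \mcp^{\la}_{\rhoa}$ with $|\mcp^{\la}_{\rhoa}|/|\mcp^{\la}_\varrho|\leq 4^{n+2}$ gives $\fiint_{\mcp^{\la}_\varrho} H\, dz \leq c\,\fiint_{\mcp^{\la}_{\rhoa}} H\, dz = c\,\la^p$, which combined with the preceding inequality yields the claim.

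The main obstacle I anticipate is the selection of $\theta_0$: it has to be taken above every threshold that appeared in \cref{p_intrinsic poincare 1,p_intrinsic poincare 2,LEM5.2,LEM5.3} (in particular $\theta_0 > \max\bigl\{\tfrac{q-1}{p},\tfrac{1}{p},\tfrac{n}{n+2},\tfrac{2p}{(n+2)q}\bigr\}$), so that all intermediate Gagliardo--Nirenberg interpolations remain admissible. A secondary subtlety is the mismatch between $\mcp^{\la}_{\rhob}$ on the right-hand side of the energy estimate and $\mcp^{\la}_{2\varrho}$ in \cref{LEM5.2,LEM5.3}; this is benign thanks to $2\varrho\leq\rhob\leq 4\varrho$, but it does require a careful bookkeeping of the universal constants so that the successive absorptions of $\varepsilon\la^p$ and $\delta\la^p$ go through cleanly.
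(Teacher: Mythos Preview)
Your proposal is correct and follows essentially the same strategy as the paper: apply the scaled energy estimate, bound the right-hand side via \cref{LEM5.2} and \cref{LEM5.3}, use Young's inequality together with \textbf{p3} of \cref{assmp1c} to absorb the $\varepsilon\la^p$ terms, and then pass from $A$ to $H^\theta$ via the subadditivity of $t\mapsto t^\theta$. The only cosmetic difference is that the paper works directly with the choice $\rhoa=\varrho$, $\rhob=2\varrho$ (so that \cref{LEM5.2,LEM5.3} apply verbatim and the final inclusion step is unnecessary), whereas you keep general $\rhoa,\rhob$ and handle the resulting bookkeeping at the end; both routes lead to the same conclusion.
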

\begin{proof}
From the energy estimate \cref{scl_energy_p} and the definition of $\rhoa, \rhob,$ we have
\begin{align}\label{EQUATION7.22}
    \fiint_{\mcp^{\la}_{\varrho}(z_0)} H(z, |\nabla u|)\, dz \apprle \fiint_{\mcp^{\lambda}_{2\varrho}(z_0)} H\left(z, \left|\frac{u-(u)_{\mcp^{\lambda}_{2\varrho}}}{\lambda^{-1+\mu}2\varrho}\right|\right)\,dz
	 +\lambda^{p-2} \fiint_{\mcp^{\lambda}_{2\varrho}(z_0)}\left|\frac{u-(u)_{\mcp^{\lambda}_{2\varrho}}}{\lambda^{-1+\mu}2\varrho}\right|^2\,dz.
\end{align}
From \cref{LEM5.2}, we have
\begin{align*}
\fiint_{\mcp^{\lambda}_{2\varrho}(z_0)} H\Bigg(z, \left|\frac{u-(u)_{\mcp^{\lambda}_{2\varrho}}}{\lambda^{-1+\mu}2\varrho}\right|\Bigg)\,dz
\apprle \lambda^{(1-\theta)p} \fiint_{\mcp^{\lambda}_{2\varrho}(z_0)}\left(|\nabla u|^{\theta p}+a(z)^{\theta}|\nabla u|^{\theta q}+b(z)^{\theta}|\nabla u|^{\theta s}\right)\,dz+ \varepsilon \la^p\\
\overset{\text{Young's inequality}}{\leq} \varepsilon \la^p + c \left(\fiint_{\mcp^{\la}_{2\varrho}(z_0)}\left(|\nabla u|^{\theta p}+a(z)^{\theta}|\nabla u|^{\theta q}+b(z)|\nabla u|^{\theta s}\right)dz\right)^{\frac{1}{\theta}}\\
\leq \varepsilon \la^p + c \left(\fiint_{\mcp^{\la}_{2\varrho}(z_0)}\left(|\nabla u|^{p}+a(z)|\nabla u|^{q}+b(z)|\nabla u|^s\right)^{\theta}dz\right)^{\frac{1}{\theta}}.
\end{align*}
 On the other hand, from \cref{LEM5.3}, we obtain
\begin{align*}
 \lambda^{p-2} \fiint_{\mcp^{\lambda}_{2\varrho}(z_0)}\left|\frac{u-(u)_{\mcp^{\lambda}_{2\varrho}}}{\lambda^{-1+\mu}2\varrho}\right|^2\,dz &\leq \varepsilon \la^p + c \left(\fiint_{\mcp^{\lambda}_{2\varrho}(z_0)}|\nabla u|^{\theta p}\right)^{\frac{1}{\theta}}\\
 &\leq \varepsilon \la^p + c \left(\fiint_{\mcp^{\la}_{2\varrho}(z_0)}\left(|\nabla u|^{p}+a(z)|\nabla u|^{q}+b(z)|\nabla u|^{s}\right)^{\theta}dz\right)^{\frac{1}{\theta}}.
\end{align*}
Substituting the above estimates in \eqref{EQUATION7.22}, we get
\begin{align*}
   \fiint_{\mcp^{\la}_{\varrho}(z_0)} H\left(z, |\nabla u|\right)\, dz \leq \varepsilon \la^p + c \left(\fiint_{\mcp^{\la}_{2\varrho}(z_0)}H\left(z, |\nabla u|\right)^{\theta}dz\right)^{\frac{1}{\theta}}.
\end{align*}
The proof can be completed by absorbing $\varepsilon \la^p$ in the left-hand side, which is allowed because of \textbf{p3} of \cref{assmp1c}.
\end{proof}
\subsection{Reverse H\"{o}lder inequality for $(p,q)$-phase} Now we prove reverse H\"{o}lder inequality for $(p,q)$-phase. We start by defining the $(p,q)$-intrinsic cylinders. 
\begin{assumption}
In the $(p,q)$-phase, we assume that the following is satisfied:
\begin{enumerate}
    \item [\textbf{pq1.}] $K\lambda^p < a(z_0)\lambda^q,\,\,\, K\la^p\geq b(z_0)\la^s.$ \label{assmpq1a}\\
    \item [\textbf{pq2.}] $\fiint_{\mcq^{\lambda}_{\rho}(z_0)}(|\nabla u|^p+a(z)|\nabla u|^q+b(z)|\nabla u|^s) dz < \lambda^p+a(z_0)\la^q\,\,\, \text{holds for all} \,\,\,\rho \in (\rho_a, \rho_b].$ \label{assmpq1b}\\
    \item [\textbf{pq3.}] $\fiint_{\mcq^{\lambda}_{\rho_a}(z_0)}(|\nabla u|^p+a(z)|\nabla u|^q+b(z)|\nabla u|^s) dz =\lambda^p+a(z_0)\la^q.$ \label{assmpq1c}\\
    \item [\textbf{pq4.}] $\frac{a(z_0)}{2} \leq a(z) \leq 2a(z_0)\,\,\, \text{holds for every}\,\,\, z \in \mcq^{\lambda}_{\rhob}(z_0).$ \label{assmpq1d}
\end{enumerate}
\end{assumption}
The proof of the following lemma goes similarly as \cref{p_intrinsic poincare 1} and \cref{p_intrinsic poincare 2}. We provide detailed proof of the lemma as these estimates hold for the $(p,s)$ and $(p,q,s)$-phases also.
\begin{lemma}\label{pq_intrinsic poincare}
Let $u$ be a weak solution to \eqref{main_eqn} and the \cref{assmpq1a} is in force. Then for any $\theta\in \left(\max\left\{\frac{s-1}{p}, \frac{1}{p}\right\}, 1\right]$ and $\varepsilon \in (0,1),$ there exists a constant $c=c(\textnormal{\texttt{data}})$ such that the following estimates 
\begin{enumerate}[label=(\roman*),series=theoremconditions]
\item \label{1_lem7.10} 
\begin{align*}
\fiint_{\mcq^\la_{\rhob}(z_0)}\left|\frac{u-(u)_{\mcq^{\la}_{\rhob}(z_0)}}{\la^{-1+\mu}\rhob}\right|^{\theta p} dz \leq c \fiint_{\mcq^{\la}_{\rhob}(z_0)}\left|\nabla u\right|^{\theta p}dz +\varepsilon \la^{\theta p},
\end{align*}
\item \label{2_lem7.10}
\begin{align*}
    \fiint_{\mcq^{\la}_{\rhob}(z_0)}\left|\frac{u-(u)_{\mcq^{\la}_{\rhob}(z_0)}}{\la^{-1+\mu}\rhob}\right|^{\theta q} dz \leq c \fiint_{\mcq^{\la}_{\rhob}(z_0)}\left|\nabla u\right|^{\theta q}dz+ \varepsilon \la^{\theta q},
\end{align*}
\item \label{3_lem7.10}
\begin{align*}
    \fiint_{\mcq^{\la}_{\rhob}(z_0)}\left|\frac{u-(u)_{\mcq^{\la}_{\rhob}(z_0)}}{\la^{-1+\mu}\rhob}\right|^{\theta s} dz \leq c \fiint_{\mcq^{\la}_{\rhob}(z_0)}\left|\nabla u\right|^{\theta s}dz+ \varepsilon \la^{\theta s},
\end{align*}
\end{enumerate}
hold whenever $\mcq^{\la}_{\rhob}(z_0) \subset \Omega_T.$
\end{lemma}
\begin{proof}
    Using \cref{parabolic poincare for double phase} with $m=p$ and $(p, q)$-intrinsic cylinders, we get
    \begin{align*}
        \fiint_{\mcq^{\la}_{\rhob}(z_0)}\left|\frac{u-(u)_{\mcq^{\la}_{\rhob}}}{\la^{-1+\mu}\rhob}\right|^{\theta p} \apprle \fiint_{\mcq^{\la}_{\rhob}(z_0)}|\nabla u|^{\theta p}dz + \underbrace{\left(\frac{\la^2}{\la^p+a(z_0)\la^q}\fiint_{\mcq^{\la}_{\rhob}(z_0)}\tilde{H}(z, \nabla u)\, dz\right)^{\theta p} }_{I}.
    \end{align*}

\vspace{.3cm}   
\noindent\textbf{Case $p \geq 2$ :} The first term in $I$ can be estimated as
\begin{align*}
 &\left(\frac{\la^2}{\la^p+a(z_0)\la^q}\fiint_{\mcq^{\la}_{\rhob}(z_0)}|\nabla u|^{p-1} dz \right)^{\theta p}\\ &\apprle \left(\frac{\la^2}{\la^p+a(z_0)\la^q}\right)^{\theta p} \left(\fiint_{\mcq^{\la}_{\rhob}(z_0)}|\nabla u|^{\theta p} dz\right)^{p-2} \left(\fiint_{\mcq^{\la}_{\rhob}(z_0)}|\nabla u|^{\theta p} dz\right).
\end{align*}

\vspace{.3cm}
\noindent Next, using \textbf{pq2} of \cref{assmpq1b} we note that
\begin{align*}
 &\left(\frac{\la^2}{\la^p+a(z_0)\la^q}\right)^{\theta p} \left(\fiint_{\mcq^{\la}_{\rhob}(z_0)}|\nabla u|^{\theta p} dz\right)^{p-2}\\
 &\overset{\text{H\"{o}lder's inequality}}{\leq} \left(\frac{\la^2}{\la^p+a(z_0)\la^q}\right)^{\theta p} \left(\fiint_{\mcq^{\la}_{\rhob}(z_0)}|\nabla u|^{p} dz\right)^{(p-2)\theta} \\
 &\leq \left(\frac{\la^2}{\la^p+a(z_0)\la^q}\right)^{\theta p} \left(\la^p+a(z_0)\la^q\right)^{(p-2)\theta} \leq 1.
\end{align*}

\vspace{.3cm}
\noindent Now using \textbf{pq4} of \cref{assmpq1d} in the second term of $I,$ we have
\begin{align*}
 &\left(\frac{\la^2}{\la^p+a(z_0)\la^q}\fiint_{\mcq^{\la}_{\rhob}(z_0)}a(z)|\nabla u|^{q-1} dz \right)^{\theta p}\\
 &\apprle \left(\frac{\la^2 a(z_0)}{\la^p+a(z_0)\la^q}\right)^{\theta p}\left(\fiint_{\mcq^{\la}_{\rhob}(z_0)}|\nabla u|^{q-1} dz\right)^{\theta p}\\
 & \apprle \la^{(2-q)\theta p}\left(\fiint_{\mcq^{\la}_{\rhob}(z_0)}|\nabla u|^{q}dz\right)^{\frac{(q-2)\theta p}{q}} \left(\fiint_{\mcq^{\la}_{\rhob}(z_0)}|\nabla u|^{\theta p}dz\right) \\
 &\apprle \la^{(2-q)\theta p} \la^{(q-2)\theta p}\left(\fiint_{\mcq^{\la}_{\rhob}(z_0)}|\nabla u|^{\theta p}dz\right).
\end{align*}
Note that in the last inequality above, we used
\begin{align}\label{q-bound}
\fiint_{\mcq^{\la}_{\rhob}(z_0)}|\nabla u|^q dz \apprle \la^q
\end{align}
which can be deduced from {\bf pq2} and {\bf pq4} of \cref{assmpq1b}. 
To estimate the third term in $I,$ we use the estimate done in {\bf Step 1} of \cref{p_intrinsic poincare 1}. In particular, we using \eqref{EQQQ3.7}, we get
\begin{align*}  
\left(\frac{\la^2}{\la^p+a(z_0)\la^q}\fiint_{\mcq^{\la}_{\rhob}(z_0)}b(z)|\nabla u|^{s-1}\right)^{\theta p}&\leq \left(\frac{\la^2}{\la^p+a(z_0)\la^q}\right)^{\theta p}\left(\fiint_{\mcq^{\la}_{\rhob}(z_0)}|\nabla u|^{\theta p}\, dz\right)^{p-1}\\
&+c\left(\frac{\la^2}{\la^p+a(z_0)\la^q}\right)^{\theta p}\la^{(p-s)\theta p}\left(\fiint_{\mcq^{\la}_{\rhob}(z_0)}|\nabla u|^{\theta p}\,dz\right)^{s-1}
\end{align*}
Note that the first term of the above inequality can be estimated as previous and we get
\begin{align*}
 \left(\frac{\la^2}{\la^p+a(z_0)\la^q}\right)^{\theta p}\left(\fiint_{\mcq^{\la}_{\rhob}(z_0)}|\nabla u|^{\theta p}\, dz\right)^{p-1} \leq \left(\fiint_{\mcq^{\la}_{\rhob}(z_0)}|\nabla u|^{\theta p}\,dz\right)   
\end{align*}

\noindent The estimation of the second term is also similar. Indeed,
\begin{align*}
 &\left(\frac{\la^2}{\la^p+a(z_0)\la^q}\right)^{\theta p}\la^{(p-s)\theta p}\left(\fiint_{\mcq^{\la}_{\rhob}(z_0)}|\nabla u|^{\theta p}\,dz\right)^{s-1}\\
 &\leq\left(\frac{\la^2}{\la^p+a(z_0)\la^q}\right)^{\theta p}\la^{(p-s)\theta p} \left(\fiint_{\mcq^{\la}_{\rhob}(z_0)}|\nabla u|^{p}\, dz\right)^{(s-2)\theta}\left(\fiint_{\mcq^{\la}_{\rhob}(z_0)}|\nabla u|^{\theta p}\, dz\right)\\
 &\leq \la^{(2-p)\theta p}\la^{(p-s)\theta p}\left(\fiint_{\mcq^{\la}_{\rhob}(z_0)}|\nabla u|^q\,dz\right)^{\frac{\theta p(s-2)}{q}}\left(\fiint_{\mcq^{\la}_{\rhob}(z_0)}|\nabla u|^{\theta p}\, dz\right)\\
 &\overset{\eqref{q-bound}}{\leq} \la^{(2-p)\theta p}\la^{(p-s)\theta p}\la^{(s-2)\theta p}\leq \left(\fiint_{\mcq^{\la}_{\rhob}(z_0)}|\nabla u|^{\theta p}\, dz\right).
\end{align*}

\noindent Therefore, combining the estimates above we obtain
\begin{align*}
    I \apprle \left(\fiint_{\mcq^{\la}_{\rhob}(z_0)}|\nabla u|^{\theta p}dz\right).
\end{align*}
Hence, in this case, we obtain
\begin{align*}
\fiint_{\mcq^\la_{\rhob}(z_0)}\left|\frac{u-(u)_{\mcq^{\la}_{\rhob}(z_0)}}{\la^{-1+\mu}\rhob}\right|^{\theta p} dz \apprle \left(\fiint_{\mcq^{\la}_{\rhob}(z_0)}|\nabla u|^{\theta p}dz\right). 
\end{align*}

\vspace{.3cm}
\noindent \textbf{Case $p<2$ :} Note that $I$ can also be estimated as
\begin{align*}
    I &\leq \Bigg(\frac{\la^2}{\la^p+a(z_0)\la^q}\fiint_{\mcq^{\la}_{\rhob}(z_0)}|\nabla u|^{p-1}dz+ \frac{\la^2 a(z_0)}{\la^p+a(z_0)\la^q}\fiint_{\mcq^{\la}_{\rhob}(z_0)}|\nabla u|^{q-1}dz\\
    &+\frac{\la^2}{\la^p+a(z_0)\la^q}\fiint_{\mcq^{\la}_{\rhob}(z_0)}b(z)|\nabla u|^{s-1}\,dz\Bigg)^{\theta p}\\
    &\overset{\eqref{EQQQ3.7}}{\apprle} \la^{(2-p)\theta p}\left(\fiint_{\mcq^{\la}_{\rhob}(z_0)}|\nabla u|^{p-1} dz\right)^{\theta p}+ \la^{(2-q)\theta p}\left(\fiint_{\mcq^{\la}_{\rhob}(z_0)}|\nabla u|^{q-1}dz\right)^{\theta p}\\
    &+\la^{(2-s)\theta p}\left(\fiint_{\mcq^{\la}_{\rhob}(z_0)}|\nabla u|^{s-1}\right)^{\theta p}.
\end{align*}
Now as in \cref{p_intrinsic poincare 2}, we can use Young's inequality with $\left(\frac{1}{2-p}, \frac{1}{p-1}\right),$ $\left(\frac{1}{2-q}, \frac{1}{q-1}\right)$ and $\left(\frac{1}{2-s}, \frac{1}{s-1}\right)$ to get the desired estimate \ref{1_lem7.10}. Note that, in the above estimate, we assumed $\frac{2n}{n+2}<p\leq q \leq s\leq 2.$ The other possibilities, for instance, $q\geq 2$ or $s\geq 2$ can be handled as in the case $p\geq 2.$

\vspace{.5cm}
Now we shall prove \ref{2_lem7.10}. Again, we use \cref{parabolic poincare for double phase} with $m=q$ and $(p,q)$-intrinsic cylinders.
\begin{align*}
    \fiint_{\mcq^{\la}_{\rhob}(z_0)}\left|\frac{u-(u)_{\mcq^{\la}_{\rhob}}}{\la^{-1+\mu}\rhob}\right|^{\theta q} \leq c \fiint_{\mcq^{\la}_{\rhob}(z_0)}|\nabla u|^{\theta q} dz + c \underbrace{\left(\frac{\la^2}{\la^p+a(z_0)\la^q} \fiint_{\mcq^{\la}_{\rhob}(z_0)}\tilde{H}(z, \nabla u)\right)^{\theta q}}_{I}.
\end{align*}
We estimate $I$ from the above inequality by using \eqref{EQQQ3.7} and {\bf pq4} of \cref{assmpq1d}.
\begin{align}\label{EQUATION3.31}
    I &\apprle \la^{(2-p)\theta q} \left(\fiint_{\mcq^{\la}_{\rhob}(z_0)}|\nabla u|^{\theta q} dz\right)^{p-1}+ \la^{(2-q)\theta q}\left(\fiint_{\mcq^{\la}_{\rhob}(z_0)}|\nabla u|^{\theta q} dz\right)^{q-1}\nonumber\\
    &\left(\frac{\la^2}{\la^p+a(z_0)\la^q}\right)^{\theta q}\la^{(p-s)\theta q}\left(\fiint_{\mcq^{\la}_{\rhob}(z_0)}|\nabla u|^{s-1}\, dz\right)^{\theta q}+ \left(\frac{\la^2}{\la^p+a(z_0)\la^q}\right)^{\theta q}\left(\fiint_{\mcq^{\la}_{\rhob}(z_0)}|\nabla u|^{\theta p}\, dz\right)^{\frac{(p-1)q}{p}}
\end{align}

\vspace{.3cm}
\noindent \textbf{Case $p \geq 2$ :} In this case, the first term of \eqref{EQUATION3.31} can be estimated as
\begin{align*}
 \la^{(2-p)\theta q} \left(\fiint_{\mcq^{\la}_{\rhob}(z_0)}|\nabla u|^{\theta q} dz\right)^{p-1}&=\la^{(2-p)\theta q} \left(\fiint_{\mcq^{\la}_{\rhob}(z_0)}|\nabla u|^{\theta q} dz\right)^{p-2} \left(\fiint_{\mcq^{\la}_{\rhob}(z_0)}|\nabla u|^{\theta q} dz\right)\\
 &\leq c \la^{(2-p)\theta q}\left(\fiint_{\mcq^{\la}_{\rhob}(z_0)}|\nabla u|^{q} dz\right)^{\theta(p-2)}\left(\fiint_{\mcq^{\la}_{\rhob}(z_0)}|\nabla u|^{\theta q} dz\right)\\
 &\leq  c\la^{(2-p)\theta q} \la^{q\theta (p-2)}\left(\fiint_{\mcq^{\la}_{\rhob}(z_0)}|\nabla u|^{\theta q} dz\right).
\end{align*}
Similarly, the second term of \eqref{EQUATION3.31} can be estimated as
\begin{align*}
  \la^{(2-q)\theta q}\left(\fiint_{\mcq^{\la}_{\rhob}(z_0)}|\nabla u|^{\theta q} dz\right)^{q-1}&=\la^{(2-q)\theta q}\left(\fiint_{\mcq^{\la}_{\rhob}(z_0)}|\nabla u|^{\theta q} dz\right)^{q-2}\left(\fiint_{\mcq^{\la}_{\rhob}(z_0)}|\nabla u|^{\theta q} dz\right)\\
  &\leq c \la^{(2-q)\theta q}\left(\fiint_{\mcq^{\la}_{\rhob}(z_0)}|\nabla u|^{q} dz\right)^{\theta(q-2)}\left(\fiint_{\mcq^{\la}_{\rhob}(z_0)}|\nabla u|^{\theta q} dz\right)\\
  &\leq c \la^{(2-q)\theta q}\la^{(q-2)\theta q}\left(\fiint_{\mcq^{\la}_{\rhob}(z_0)}|\nabla u|^{\theta q} dz\right).
\end{align*}
Now we estimate the third term in \eqref{EQUATION3.31}.
\begin{align*}  
&\left(\frac{\la^2}{\la^p+a(z_0)\la^q}\right)^{\theta q}\la^{(p-s)\theta q}\left(\fiint_{\mcq^{\la}_{\rhob}(z_0)}|\nabla u|^{s-1}\, dz\right)^{\theta q}\\
&\leq \left(\frac{\la^2}{\la^p+a(z_0)\la^q}\right)^{\theta q}\la^{(p-s)\theta q} \left(\fiint_{\mcq^{\la}_{\rhob}(z_0)}|\nabla u|^{q}\,dz\right)^{\theta(s-2)}\left(\fiint_{\mcq^{\la}_{\rhob}(z_0)}|\nabla u|^{\theta q}\,dz\right)\\
&\overset{\eqref{q-bound}}{\apprle} \left(\frac{\la^2}{\la^p+a(z_0)\la^q}\right)^{\theta q}\la^{(p-s)\theta q} \la^{\theta q(s-2)}\left(\fiint_{\mcq^{\la}_{\rhob}(z_0)}|\nabla u|^{\theta q}\,dz\right) \apprle \left(\fiint_{\mcq^{\la}_{\rhob}(z_0)}|\nabla u|^{\theta q}\,dz\right) 
\end{align*}

\vspace{.3cm}
\noindent It remains to estimate the fourth term of \eqref{EQUATION3.31}. Although the the estimate is similar, we present it here.
\begin{align*}
&\left(\frac{\la^2}{\la^p+a(z_0)\la^q}\right)^{\theta q}\left(\fiint_{\mcq^{\la}_{\rhob}(z_0)}|\nabla u|^{\theta p}\, dz\right)^{\frac{(p-1)q}{p}}\\
& \leq \left(\frac{\la^2}{\la^p+a(z_0)\la^q}\right)^{\theta q} \left(\fiint_{\mcq^{\la}_{\rhob}(z_0)}|\nabla u|^p\,dz\right)^{ \frac{\theta q(p-2)}{p}}\left(\fiint_{\mcq^{\la}_{\rhob}(z_0)}|\nabla u|^{\theta p}\right)^{\frac{q}{p}}\\
&\leq \left(\frac{\la^2}{\la^p+a(z_0)\la^q}\right)^{\theta q}(\la^p+a(z_0)\la^q)^{\frac{\theta q(p-2)}{p}}\left(\fiint_{\mcq^{\la}_{\rhob}(z_0)}|\nabla u|^{\theta q}\right)\\
&=\left(\frac{\la^2}{(\la^p+a(z_0)\la^q)^{\frac{2}{p}}}\right)^{\theta q}\left(\fiint_{\mcq^{\la}_{\rhob}(z_0)}|\nabla u|^{\theta q}\, dz\right)\leq \left(\fiint_{\mcq^{\la}_{\rhob}(z_0)}|\nabla u|^{\theta q}\, dz\right).
\end{align*}
Therefore, combining all the estimates above, we finally get
\begin{align*}
    I \apprle \left(\fiint_{\mcq^{\la}_{\rhob}(z_0)}|\nabla u|^{\theta q}\, dz\right).
\end{align*}

\vspace{.3cm}
\noindent \textbf{Case $p<2$ :} We note that, using H\"{o}lders inequality and $\la^p < \la^p +a(z_0)\la^q,$ \eqref{EQUATION3.31} can be written as
\begin{align*}
&\la^{(2-p)\theta q} \left(\fiint_{\mcq^{\la}_{\rhob}(z_0)}|\nabla u|^{\theta q} dz\right)^{p-1}+ \la^{(2-q)\theta q}\left(\fiint_{\mcq^{\la}_{\rhob}(z_0)}|\nabla u|^{\theta q} dz\right)^{q-1}\nonumber\\
    &\la^{(2-p)\theta q}\la^{(p-s)\theta q}\left(\fiint_{\mcq^{\la}_{\rhob}(z_0)}|\nabla u|^{\theta q}\, dz\right)^{s-1}+ \la^{(2-p)\theta q}\left(\fiint_{\mcq^{\la}_{\rhob}(z_0)}|\nabla u|^{\theta q}\, dz\right)^{p-1}.    
\end{align*}
Now, we use Young's inequality as in \cref{p_intrinsic poincare 2} to get \ref{2_lem7.10}.

\vspace{.3cm}
\noindent Our next aim is to prove \ref{3_lem7.10}. Using \cref{parabolic poincare for double phase} with $m=s$ and $(p, q)$-intrinsic cylinders, we get
    \begin{align*}
        \fiint_{\mcq^{\la}_{\rhob}(z_0)}\left|\frac{u-(u)_{\mcq^{\la}_{\rhob}}}{\la^{-1+\mu}\rhob}\right|^{\theta s} \apprle \fiint_{\mcq^{\la}_{\rhob}(z_0)}|\nabla u|^{\theta s}dz + \underbrace{\left(\frac{\la^2}{\la^p+a(z_0)\la^q}\fiint_{\mcq^{\la}_{\rhob}(z_0)}\tilde{H}(z, \nabla u)\, dz\right)^{\theta s} }_{I}.
    \end{align*}

\vspace{.3cm}    
\noindent Using \eqref{EQQQ3.7} and {\bf pq4} of \cref{assmpq1d}, we can estimate $I:$
 \begin{align*}
     I &\apprle \left(\frac{\la^2}{\la^p+a(z_0)\la^q}\right)^{\theta s} \left(\fiint_{\mcq^{\la}_{\rhob}(z_0)}|\nabla u|^{p-1}\, dz\right)^{\theta s} + \left(\frac{2 \la^2 a(z_0)}{\la^p+a(z_0)\la^q}\right)^{\theta s}\left(\fiint_{\mcq^{\la}_{\rhob}(z_0)}|\nabla u|^{q-1}\,dz\right)^{\theta s}\\
     &+\left(\frac{\la^2}{\la^p+a(z_0)\la^q}\right)^{\theta s}\left(\fiint_{\mcq^{\la}_{\rhob}(z_0)}|\nabla u|^{\theta p}\,dz\right)^{\frac{s(p-1)}{p}}+ \left(\frac{\la^2}{\la^p+a(z_0)\la^q}\right)^{\theta s} \la^{(p-s)\theta s}\left(\fiint_{\mcq^{\la}_{\rhob}(z_0)}|\nabla u|^{s-1}\,dz\right)^{\theta s}
 \end{align*}

\vspace{.5cm}
\noindent\textbf{Case $p \geq 2$ :} We will estimate each of the above four terms carefully. Let us start with the first term.
\begin{align*}
&\left(\frac{\la^2}{\la^p+a(z_0)\la^q}\right)^{\theta s} \left(\fiint_{\mcq^{\la}_{\rhob}(z_0)}|\nabla u|^{p-1}\, dz\right)^{\theta s} \\
&\leq \left(\frac{\la^2}{\la^p+a(z_0)\la^q}\right)^{\theta s} \left(\fiint_{\mcq^{\la}_{\rhob}(z_0)}|\nabla u|^p\right)^{\frac{\theta s(p-2)}{p}}\left(\fiint_{\mcq^{\la}_{\rhob}(z_0)}|\nabla u|^{\theta p}\,dz\right)^{\frac{s}{p}}\\
&\overset{\cref{assmpq1b}}{\leq}\left(\frac{\la^2}{\la^p+a(z_0)\la^q}\right)^{\theta s} (\la^p+a(z_0)\la^q)^{\frac{\theta s(p-2)}{p}}\left(\fiint_{\mcq^{\la}_{\rhob}(z_0)}|\nabla u|^{\theta s}\,dz\right)\\
&\leq \left(\fiint_{\mcq^{\la}_{\rhob}(z_0)}|\nabla u|^{\theta s}\,dz\right).
\end{align*}
Now we estimate the second term.
\begin{align*}
&\left(\frac{2 \la^2 a(z_0)}{\la^p+a(z_0)\la^q}\right)^{\theta s}\left(\fiint_{\mcq^{\la}_{\rhob}(z_0)}|\nabla u|^{q-1}\,dz\right)^{\theta s}\\
&\leq \left(\frac{2 \la^2 a(z_0)}{\la^p+a(z_0)\la^q}\right)^{\theta s} \left(\fiint_{\mcq^{\la}_{\rhob}(z_0)}|\nabla u|^{q}\,dz\right)^{\frac{\theta s(q-2)}{q}}\left(\fiint_{\mcq^{\la}_{\rhob}(z_0)}|\nabla u|^{\theta q}\, dz\right)^{\frac{s}{q}}\\
&\overset{\eqref{q-bound}}{\leq} \la^{(2-q)\theta s} \la^{\theta s(q-2)}\left(\fiint_{\mcq^{\la}_{\rhob}(z_0)}|\nabla u|^{\theta s}\,dz\right).
\end{align*}

\vspace{.3cm}
\noindent The estimate on the third term similar and rather easy.
\begin{align*}
&\left(\frac{\la^2}{\la^p+a(z_0)\la^q}\right)^{\theta s}\left(\fiint_{\mcq^{\la}_{\rhob}(z_0)}|\nabla u|^{\theta p}\,dz\right)^{\frac{s(p-1)}{p}}\\
&\leq\left(\frac{\la^2}{\la^p+a(z_0)\la^q}\right)^{\theta s} \left(\fiint_{\mcq^{\la}_{\rhob}(z_0)}|\nabla u|^p\,dz\right)^{\frac{\theta s(p-2)}{p}}\left(\fiint_{\mcq^{\la}_{\rhob}(z_0)}|\nabla u|^{\theta p}\,dz\right)^{\frac{s}{p}}\\
&\overset{\cref{assmpq1b}}{\leq}\left(\frac{\la^2}{\la^p+a(z_0)\la^q}\right)^{\theta s} (\la^p+a(z_0)\la^q)^{\frac{\theta s(p-2)}{p}}\left(\fiint_{\mcq^{\la}_{\rhob}(z_0)}|\nabla u|^{\theta s}\,dz\right)\\
&\leq \left(\fiint_{\mcq^{\la}_{\rhob}(z_0)}|\nabla u|^{\theta s}\,dz\right).
\end{align*}

\vspace{.3cm}
\noindent We estimate the fourth term below.
\begin{align*}
&\left(\frac{\la^2}{\la^p+a(z_0)\la^q}\right)^{\theta s} \la^{(p-s)\theta s}\left(\fiint_{\mcq^{\la}_{\rhob}(z_0)}|\nabla u|^{s-1}\,dz\right)^{\theta s} \\
&\leq \left(\frac{\la^2}{\la^p+a(z_0)\la^q}\right)^{\theta s} \la^{(p-s)\theta s} \left(\fiint_{\mcq^{\la}_{\rhob}(z_0)}|\nabla u|^q\,dz\right)^{\frac{\theta s(s-2)}{q}}\left(\fiint_{\mcq^{\la}_{\rhob}(z_0)}|\nabla u|^{\theta q}\,dz\right)^{\frac{s}{q}}\\
&\overset{\eqref{q-bound}}{\leq} \la^{(2-p)\theta s}\la^{(p-s)\theta s}\la^{(s-2)\theta s}\left(\fiint_{\mcq^{\la}_{\rhob}(z_0)}|\nabla u|^{\theta s}\,dz\right)
\end{align*}

\vspace{.3cm}
\noindent Therefore, combining all the estimates above, we conclude for $p\geq 2$ case that,
\begin{align*}
    I \apprle \left(\fiint_{\mcq^{\la}_{\rhob}(z_0)}|\nabla u|^{\theta s}\,dz\right).
\end{align*}

\vspace{.5cm}
\noindent{\bf Case $p< 2:$} We notice that, using H\"{o}lder's inequality and $\la^p< \la^p+a(z_0)\la^q,$ $I$ can be estimates as
\begin{align*}
    I \leq 2 \la^{(2-p)\theta s}\left(\fiint_{\mcq^{\la}_{\rhob}(z_0)}|\nabla u|^{\theta s}\right)^{p-1}&+ \la^{(2-q)\theta s}\left(\fiint_{\mcq^{\la}_{\rhob}(z_0)}|\nabla u|^{\theta s}\,dz\right)^{q-1}\\
    &+ \la^{(2-s)\theta s}\left(\fiint_{\mcq^{\la}_{\rhob}(z_0)}|\nabla u|^{\theta s}\,dz\right)^{s-1}.
\end{align*}
Now using the young's inequality with the pairs $\left(\frac{1}{2-p}, \frac{1}{p-1}\right),\left(\frac{1}{2-q}, \frac{1}{q-1}\right), \left(\frac{1}{2-s}, \frac{1}{s-1}\right),$ we obtain
\begin{align*}
    I \apprle \left(\fiint_{\mcq^{\la}_{\rhob}(z_0)}|\nabla u|^{\theta s}\,dz\right)+ \varepsilon \la^{\theta s}.
\end{align*}
This completes the proof.
\end{proof}
Next, we state energy estimate \eqref{general caccipoli} for the $(p,q)$-phase. Let us consider the following cutoff functions.

\vspace{.5cm}
\noindent \textbf{Cut-off functions for $(p,q)$-phase.} On the other hand, for $(p,q)$-phase we use
\begin{align*}
&\eta:=\eta(x) \in C_c^{\infty}(B_{\la^{-1+\mu}\rhob}(x_0)), \quad  \eta \equiv 1 \,\,\text{on}\, B_{\la^{-1+\mu}\rhoa}(x_0), \quad 0\leq \eta \leq 1 \,\,\,\,\,\text{and}\,\,\,\,\,\, |\nabla \eta| \apprle \frac{1}{\la^{-1+\mu}(\rhob-\rhoa)}\\
&\zeta \in C^{\infty}_c\left(t_0-\frac{\la^{2\mu}}{\la^p+a(z_0)\la^q}(\rhob-h_0)^2, t_0+\frac{\la^{2\mu}}{\la^p+a(z_0)\la^q}(\rhob-h_0)^2\right),\,\,\,\,\, 0\leq \zeta \leq 1,\\
&\zeta \equiv 1 \,\,\text{on}\,\,\left(t_0-\tfrac{\la^{2\mu}}{\la^p + a(z_0)\la^q}\rhoa^2,t_0+\tfrac{\la^{2\mu}}{\la^p + a(z_0)\la^q}\rhoa^2\right),
\,\,\text{and}\,\, |\partial_t \zeta| \apprle \tfrac{1}{\tfrac{\la^{2\mu}}{\la^p + a(z_0)\la^q}(\rhob-\rhoa)^2}.
\end{align*}

\vspace{.3cm}
\noindent Then the energy estimate for reads is as follows.
\begin{lemma}\label{scl_energy_pq}
Let $u$ be a weak solution of \eqref{main_eqn}. Then we have the following energy estimate:
\begin{align*}
&(\lambda^{p-2} + a(z_0)\la^{q-2}) \sup_{I^{\lambda,(p,q)}_{\rhoa}(t_0)}\fint_{B^{\lambda}_{\rhoa}(x_0)}\left|\frac{u-(u)_{\mcq^{\lambda}_{\rhoa}}}{\lambda^{-1+\mu}\rhoa}\right|^2\,dx +\fiint_{\mcq^{\lambda}_{\rhoa}(z_0)}H\left(z, |\nabla u|\right) \,dz\\
&\apprle 
\fiint_{\mcq^{\lambda}_{\rhob}(z_0)} H\left(z, \left|\frac{u-(u)_{\mcq^{\lambda}_{\rhob}}}{\lambda^{-1+\mu}(\rhob-\rhoa)}\right|\right)\,dz + (\lambda^{p-2}+a(z_0)\la^{q-2}) \fiint_{\mcq^{\lambda}_{\rhob}(z_0)}\left|\frac{u-(u)_{\mcq^{\lambda}_{\rhob}}}{\lambda^{-1+\mu}(\rhob-\rhoa)}\right|^2 \,dz.
\end{align*}
\end{lemma}

\vspace{.3cm}
\noindent Next, we prove the following:
\begin{lemma}\label{LEMMA7.11}
Let $u$ be a weak solution to \eqref{main_eqn} and let \cref{scl_energy_pq} hold with the \cref{assmpq1a} in force. Then there exists a constant $c=c(\textnormal{\texttt{data}})$ such that the following estimate
\begin{align*}
\left(\lambda^{p-2}+a(z_0)\la^{q-2} \right)\sup_{I^{\lambda,(p,q)}_{2\varrho}(t_0)}\fint_{B^{\lambda}_{2\rho}(x_0)}\left|\frac{u-(u)_{\mcq^{\la}_{2\rho}}}{\lambda^{-1+\mu}2\varrho}\right|^2 dx \leq c \left(\lambda^p+a(z_0)\la^q\right)
\end{align*}
holds whenever $\mcq^{\la}_{\rhob}(z_0)\subset \Omega_T.$
\end{lemma}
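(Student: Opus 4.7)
The plan is to follow the iteration strategy from the proof of \cref{LEMMA7.5} (the $p$-phase analog), adapted to the $(p,q)$-intrinsic geometry. I set $\Lambda := \la^p + a(z_0)\la^q$ and $M := \la^{p-2} + a(z_0)\la^{q-2}$; since $\Lambda = \la^2 M$, the target estimate is equivalent to showing $S(2\varrho) \le c\Lambda$, where
\[
S(\rho) := M\sup_{I^{\la,(p,q)}_\rho(t_0)}\fint_{B^\la_\rho(x_0)}\left|\frac{u-(u)_{\mcq^\la_\rho}}{\la^{-1+\mu}\rho}\right|^2 dx.
\]
I fix $2\varrho \le \rho_1 < \rho_2 \le 4\varrho$ and apply \cref{scl_energy_pq} to express $S(\rho_1)$ in terms of gap-factor-weighted means of $v := (u - (u)_{\mcq^\la_{\rho_2}})/(\la^{-1+\mu}\rho_2)$ over $\mcq^\la_{\rho_2}(z_0)$, namely the $p$-mean, the $a$-weighted $q$-mean, and the $L^2$-mean.

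The $p$-mean is bounded by $c\Lambda$ via \ref{1_lem7.10} of \cref{pq_intrinsic poincare} combined with \textbf{pq2}. For the $q$-mean I split $a(z) = (a(z) - \inf_{\mcq^\la_{\rho_2}} a) + \inf_{\mcq^\la_{\rho_2}} a$; the $\inf a$ part is controlled through \ref{2_lem7.10} of \cref{pq_intrinsic poincare} together with \textbf{pq2} and \textbf{pq4}, while the oscillation part uses Hölder continuity of $a$. For the latter I need $\max\{(\la^{-1+\mu}\rho_2)^\alpha, (\sqrt{\la^{2\mu}/\Lambda}\rho_2)^\alpha\} \le c\la^{p-q}$, which mirrors Step~1 of the proof of \cref{p_intrinsic poincare 1}: \textbf{pq3} combined with the definition of $K$ in \eqref{new_k} reduces this to the sharp restriction on $q-p$ in \eqref{def_pq}. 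Combined with $\fiint|v|^q \le c\la^q$ (another consequence of \ref{2_lem7.10}, \textbf{pq2}, and \textbf{pq4}), the oscillation contribution is at most $c\la^p \le c\Lambda$.

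For the $L^2$-mean I apply \cref{g_n} with $\sigma = 2$, $s = q$, $\vartheta = 1/2$, $r = 2$, whose hypothesis is valid since $q > \tfrac{2n}{n+2}$. Together with $\fiint(|v|^q + |\nabla u|^q) \le c\la^q$, this yields $\fiint|v|^2 \lesssim \la(S(\rho_2)/M)^{1/2}$. Multiplying by $M$ and using Young's inequality, the $L^2$-contribution is bounded by $\varepsilon S(\rho_2) + c_\varepsilon (\rho_2/(\rho_2-\rho_1))^4 \Lambda$. Gathering the three pieces gives
\[
S(\rho_1) \le \varepsilon S(\rho_2) + c_\varepsilon\left[\left(\tfrac{\rho_2}{\rho_2-\rho_1}\right)^q + \left(\tfrac{\rho_2}{\rho_2-\rho_1}\right)^4\right]\Lambda,
\]
and an application of \cref{iter_lemma} with $\varepsilon$ small enough finishes the proof with $S(2\varrho) \le c\Lambda$.

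I expect the main obstacle to be selecting the correct Gagliardo–Nirenberg interpolation for the $L^2$-mean: the natural choice $s = p$ (mimicking the $p$-phase argument of \cref{LEMMA7.5}) produces a factor of $\Lambda^{1/p}$ that exceeds $\la$ in the $(p,q)$-phase regime, since by \textbf{pq1} we have $\Lambda \approx a(z_0)\la^q$, and the resulting iteration will not close. Replacing $s = p$ by $s = q$, and converting $\fiint|\nabla u|^q$ into the $a$-weighted integral through \textbf{pq4} so that \textbf{pq2} can be used, is what restores the clean factor $\la$ and makes the iteration viable. The other technical point, namely the Hölder-oscillation bound for $a$ on $(p,q)$-intrinsic cylinders, follows the same algebraic template as its $p$-phase counterpart, with the sharp range \eqref{def_pq} being precisely what is needed to absorb both the spatial and temporal branches of the $\max$.
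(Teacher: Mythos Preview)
Your proposal is correct and follows the same overall strategy as the paper: apply the energy estimate \cref{scl_energy_pq} on nested radii $2\varrho\le\rho_1<\rho_2\le 4\varrho$, bound the $p$- and $q$-means via \cref{pq_intrinsic poincare} with $\theta=1$, handle the $L^2$-term through \cref{g_n} with $s=q$, and close by Young's inequality plus \cref{iter_lemma}. Your diagnosis that $s=q$ (not $s=p$) is the correct interpolation parameter in the $(p,q)$-phase is exactly right and matches the paper.

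The one place where you take a longer route than the paper is the $a(z)$-weighted $q$-mean. You split $a(z)=(a(z)-\inf a)+\inf a$ and then invoke the H\"older-oscillation bound $\max\{(\la^{-1+\mu}\rho_2)^\alpha,(\la^{\mu}\Lambda^{-1/2}\rho_2)^\alpha\}\le c\la^{p-q}$, which you correctly derive from \textbf{pq3} and the definition of $K$ along the lines of Step~1 in \cref{p_intrinsic poincare 1}. This works, but it is unnecessary in the $(p,q)$-phase: the paper simply uses \textbf{pq4} to write $a(z)\le 2a(z_0)$, then applies \ref{2_lem7.10} of \cref{pq_intrinsic poincare} and converts $a(z_0)\fiint|\nabla u|^q$ back to $\fiint a(z)|\nabla u|^q$ (again by \textbf{pq4}) so that \textbf{pq2} applies. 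In other words, because \textbf{pq4} already pins $a(z)\approx a(z_0)$ on the whole cylinder, the oscillation splitting and the sharp $(p,q)$-gap bound are not needed here; they are only essential in the $p$-phase where no two-sided control of $a$ is available. Your argument is valid, just more laborious than required.
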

\begin{proof}
We will be estimating the terms on the right hand side of \cref{scl_energy_pq}.
For any $2\varrho \leq \rho_1< \rho_2\leq 4\varrho,$ we have
\begin{align}\label{EQUATION7.24}
   & \la^{p-2}+a(z_0)\la^{q-2}\sup_{I^{\la, (p,q)}_{\rho_1}(t_0)}\fint_{B^{\la}_{\rho_1}(x_0)}\left|\frac{u-(u)_{\mcq^{\la}_{\rho_1}}}{\la^{-1+\mu}\rho_1}\right|^2 dx\nonumber \\
    &\leq \underbrace{c \left(\frac{\rho_2}{\rho_2-\rho_1}\right)^s\fiint_{\mcq^{\lambda}_{\rho_2}(z_0)} \left(\left|\frac{u-(u)_{\mcq^{\lambda}_{\rho_2}}}{\lambda^{-1+\mu}\rho_2}\right|^p + a(z)\left|\frac{u-(u)_{\mcq^{\lambda}_{\rho_2}}}{\lambda^{-1+\mu}\rho_2}\right|^q+b(z)\left|\frac{u-(u)_{\mcq^{\lambda}_{\rho_2}}}{\lambda^{-1+\mu}\rho_2}\right|^s\right) \,dz}_{I}\nonumber\\
    &+ \underbrace{c\left(\la^{p-2}+a(z_0)\la^{q-2}\right)\left(\frac{\rho_2}{\rho_2-\rho_1}\right)^2\fiint_{\mcq^{\la}_{\rho_2}(z_0)}\left|\frac{u-(u)_{\mcq^{\la}_{\rho_2}}}{\la^{-1+\mu}\rho_2}\right|^2\,dz}_{II} 
\end{align}

\noindent\textbf{Estimate for I:} Applying \cref{pq_intrinsic poincare} with $\theta =1$ and using \textbf{pq2} of \cref{assmpq1b} and \textbf{pq4} \cref{assmpq1d}, we can estimate the first two terms:

\vspace{.3cm}
\begin{align*}
    I &\leq \left(\frac{\rho_2}{\rho_2-\rho_1}\right)^s\fiint_{\mcq^{\lambda}_{\rho_2}(z_0)} \left(\left|\frac{u-(u)_{\mcp^{\lambda}_{\rho_2}}}{\lambda^{-1+\mu}\rho_2}\right|^p + a(z)\left|\frac{u-(u)_{\mcq^{\lambda}_{\rho_2}}}{\lambda^{-1+\mu}\rho_2}\right|^q\right)\,dz\\
    &\leq \left(\frac{\rho_2}{\rho_2-\rho_1}\right)^s\fiint_{\mcq^{\lambda}_{\rho_2}(z_0)} \left(\left|\frac{u-(u)_{\mcp^{\lambda}_{\rho_2}}}{\lambda^{-1+\mu}\rho_2}\right|^p + 2a(z_0)\left|\frac{u-(u)_{\mcq^{\lambda}_{\rho_2}}}{\lambda^{-1+\mu}\rho_2}\right|^q\right)\,dz
    \apprle\left(\frac{\rho_2}{\rho_2-\rho_1}\right)^s(\la^p+a(z_0)\la^q).
\end{align*}

\vspace{.3cm}
\noindent Now we estimate the term involving the coefficient $b(z).$
\begin{align*}
&\left(\frac{\rho_2}{\rho_2-\rho_1}\right)^s \fiint_{\mcq^{\la}_{\rhob}(z_0)}b(z)\left|\frac{u-(u)_{\mcq^{\lambda}_{\rho_2}}}{\lambda^{-1+\mu}\rho_2}\right|^s \, dz\\
&\leq\left(\frac{\rho_2}{\rho_2-\rho_1}\right)^s \fiint_{\mcq^{\la}_{\rhob}(z_0)}|b(z)-\inf_{w\in \mcq^{\la}_{\rhob}(z_0)}b(w)|\left|\frac{u-(u)_{\mcq^{\lambda}_{\rho_2}}}{\lambda^{-1+\mu}\rho_2}\right|^s\,dz \\
&+ \left(\frac{\rho_2}{\rho_2-\rho_1}\right)^s \fiint_{\mcq^{\la}_{\rhob}(z_0)}\inf_{w\in \mcq^{\la}_{\rhob}(z_0)}b(w)\left|\frac{u-(u)_{\mcq^{\lambda}_{\rho_2}}}{\lambda^{-1+\mu}\rho_2}\right|^s\,dz
\end{align*}

\vspace{.3cm}
\noindent To get an estimate on the second term of the above inequality, we may use \ref{3_lem7.10} of \cref{pq_intrinsic poincare}.
\begin{align*}
&\left(\frac{\rho_2}{\rho_2-\rho_1}\right)^s \fiint_{\mcq^{\la}_{\rhob}(z_0)}\inf_{w\in \mcq^{\la}_{\rhob}(z_0)}b(w)\left|\frac{u-(u)_{\mcq^{\lambda}_{\rho_2}}}{\lambda^{-1+\mu}\rho_2}\right|^s\,dz \\
& \leq \left(\frac{\rho_2}{\rho_2-\rho_1}\right)^s\fiint_{\mcq^{\la}_{\rho_2}(z_0)}b(z)|\nabla u|^s\,dz+ \varepsilon \left(\frac{\rho_2}{\rho_2-\rho_1}\right)^s \inf_{w \in \mcq^{\la}_{\rhob}(z_0)}b(w)\la^s\\
& \leq \left(\frac{\rho_2}{\rho_2-\rho_1}\right)^s\left(\la^p+a(z_0)\la^q\right)+K \varepsilon \left(\frac{\rho_2}{\rho_2-\rho_1}\right)^s \la^p \apprle \left(\frac{\rho_2}{\rho_2-\rho_1}\right)^s\left(\la^p+a(z_0)\la^q\right).
\end{align*}

\vspace{.5cm}
\noindent Now we shall get an estimate on the first term using H\"{o}lder continuity of $b(z)$ and \cref{g_n} with $\sigma=s, \xi=p, r=2$ and $\vartheta=\frac{p}{s}.$
\begin{align*}
 &\left(\frac{\rho_2}{\rho_2-\rho_1}\right)^s \fiint_{\mcq^{\la}_{\rhob}(z_0)}|b(z)-\inf_{w\in \mcq^{\la}_{\rhob}(z_0)}b(w)|\left|\frac{u-(u)_{\mcq^{\lambda}_{\rho_2}}}{\lambda^{-1+\mu}\rho_2}\right|^s\,dz\\
 & \leq c\left(\frac{\rho_2}{\rho_2-\rho_1}\right)^s [b]_{\beta}\max\left\{\la^{-1+\mu}\rho_2, \frac{\la^\mu \rho_2}{(\la^p+a(z_0)\la^q)^{1/2}}\right\}^{\beta}\fiint_{\mcq^{\la}_{\rhob}(z_0)}\left|\frac{u-(u)_{\mcq^{\lambda}_{\rho_2}}}{\lambda^{-1+\mu}\rho_2}\right|^s\,dz\\
 &c\leq \left(\frac{\rho_2}{\rho_2-\rho_1}\right)^s[b]_{\beta}\max\left\{\la^{-1+\mu}\rho_2, \la^{\frac{2\mu-p}{2}}\rho_2\right\}^{\beta}\fiint_{\mcq^{\la}_{\rhob}(z_0)}\left|\frac{u-(u)_{\mcq^{\lambda}_{\rho_2}}}{\lambda^{-1+\mu}\rho_2}\right|^s\,dz\\
 &\apprle \left(\frac{\rho_2}{\rho_2-\rho_1}\right)^s \la^{p-s}\fiint_{\mcq^{\la}_{\rho_2}}\left(\left|\frac{u-(u)_{\mcq^{\lambda}_{\rho_2}}}{\lambda^{-1+\mu}\rho_2}\right|^p+ |\nabla u|^p\right)\,dz \left(\sup_{I^{\la, (p,q)}_{\rho_2}(t_0)}\fint_{B^{\la}_{\rho_2}(x_0)}\left|\frac{u-(u)_{\mcq^{\lambda}_{\rho_2}}}{\lambda^{-1+\mu}\rho_2}\right|^2\,dz\right)^{\frac{s-p}{2}}\\
& \apprle \left(\frac{\rho_2}{\rho_2-\rho_1}\right)^s \la^{p-s}(\la^p+a(z_0)\la^q)\left(\sup_{I^{\la, (p,q)}_{\rho_2}(t_0)}\fint_{B^{\la}_{\rho_2}(x_0)}\left|\frac{u-(u)_{\mcq^{\lambda}_{\rho_2}}}{\lambda^{-1+\mu}\rho_2}\right|^2\,dz\right)^{\frac{s-p}{2}}.
\end{align*}
Hence, combining all the estimates above, we get
\begin{align*}
    I \apprle \left(\frac{\rho_2}{\rho_2-\rho_1}\right)^s \left((\la^{p}+a(z_0)\la^q)+\la^{p-s}(\la^p+a(z_0)\la^q)\left(\sup_{I^{\la, (p,q)}_{\rho_2}(t_0)}\fint_{B^{\la}_{\rho_2}(x_0)}\left|\frac{u-(u)_{\mcq^{\lambda}_{\rho_2}}}{\lambda^{-1+\mu}\rho_2}\right|^2\,dz\right)^{\frac{s-p}{2}}\right)
\end{align*}
\noindent\textbf{Estimate for II:} Note that combined with \cref{LEMMA7.5}, it is enough to get an estimate for

\begin{align*}
II_{q}=\la^{q-2}\left(\frac{\rho_2}{\rho_2-\rho_1}\right)^2\fiint_{\mcq^{\la}_{\rho_2}(z_0)}\left|\frac{u-(u)_{\mcq^{\la}_{\rho_2}}}{\la^{-1+\mu}\rho_2}\right|^2 \,dz.   
\end{align*}

We shall make use of \cref{g_n} with $s=q, \sigma=2, \vartheta=\frac{1}{2}$ and $r=2.$ It is easy to see
\begin{align*}
    -\frac{n}{2} < \frac{1}{2} \left(1-\frac{n}{q}\right)-\left(1-\frac{1}{2}\right)\frac{n}{2} \,\,\,\, \text{if and only if}\,\,\,\, q>\frac{2n}{n+2}.
\end{align*}
This gives
\begin{align*}
II_{q} &\leq c\la^{q-2}\left(\frac{\rho_2}{\rho_2-\rho_1}\right)^2\fint_{I^{\la, (p,q)}_{\rho_2}(t_0)}\left(\fint_{B^{\la}_{\rho_2}(x_0)}\left(\left|\frac{u-(u)_{\mcq^{\la}_{\rho_2}}}{\la^{-1+\mu}\rho_2}\right|^q+|\nabla u|^q\right) dx\right)^{\frac{1}{q}}\\
&\times\left(\fint_{B^{\la}_{\rho_2}(x_0)}\left|\frac{u-(u)_{\mcq^{\la}_{\rho_2}}}{\la^{-1+\mu}\rho_2}\right|^2 dx\right)^{\frac{1}{2}}dt\\
&\leq c\la^{q-2}\left(\frac{\rho_2}{\rho_2-\rho_1}\right)^2\left(\fiint_{\mcq^{\la}_{\rho_2}(z_0)}\left|\frac{u-(u)_{\mcq^{\la}_{\rho_2}}}{\la^{-1+\mu}\rho_2}\right|^q dz+ \fiint_{\mcq^{\la}_{\rho_2}(z_0)}|\nabla u|^q dz\right)^{\frac{1}{q}}\\
&\times\left(\sup_{I^{\la, (p,q)}_{\rho_2}(t_0)}\fint_{B^{\la}_{\rho_2}(x_0)}\left|\frac{u-(u)_{\mcq^{\la}_{\rho_2}}}{\la^{-1+\mu}\rho_2}\right|^2 dx\right)^{\frac{1}{2}}.
\end{align*}
Note that from \textbf{pq4} of \cref{assmpq1a}, we have $\la^p+a(z_0)\la^q\leq ca(z_0)\la^q \leq c\la^q.$ Hence using this combined with \cref{pq_intrinsic poincare}, we further have
\begin{align*}
    II \apprle (\la^{p-2}+a(z_0)\la^{q-2})\left(\frac{\rho_2}{\rho_2-\rho_1}\right)^2 \la \left(\sup_{I^{\la, (p,q)}_{\rho_2}(t_0)}\fint_{B^{\la}_{\rho_2}(x_0)}\left|\frac{u-(u)_{\mcq^{\la}_{\rho_2}}}{\la^{-1+\mu}\rho_2}\right|^2 dx\right)^{\frac{1}{2}}.
\end{align*}
Therefore combining the above estimates and applying Young's inequality, from \eqref{EQUATION7.24}, we get
\begin{align*}
 & \sup_{I^{\la, (p,q)}_{\rho_1}(t_0)}\fint_{B^{\la}_{\rho_1}(x_0)}\left|\frac{u-(u)_{\mcq^{\la}_{\rho_1}}}{\la^{-1+\mu}\rho_1}\right|^2 dx\\
  &\leq c \left(\frac{\rho_2}{\rho_2-\rho_1}\right)^s  \la^2+ c\left(\frac{\rho_2}{\rho_2-\rho_1}\right)^2 \la \left(\sup_{I^{\la, (p,q)}_{\rho_2}(t_0)}\fint_{B^{\la}_{\rho_2}(x_0)}\left|\frac{u-(u)_{\mcq^{\la}_{\rho_2}}}{\la^{-1+\mu}\rho_2}\right|^2 dx\right)^{\frac{1}{2}}\\
  &+\left(\frac{\rho_2}{\rho_2-\rho_1}\right)^s \la^{2+p-s}\left(\sup_{I^{\la, (p,q)}_{\rho_2}(t_0)}\fint_{B^{\la}_{\rho_2}(x_0)}\left|\frac{u-(u)_{\mcq^{\la}_{\rho_2}}}{\la^{-1+\mu}\rho_2}\right|^2 dx\right)^{\frac{s-p}{2}}\\ 
  &\leq \varepsilon \left(\sup_{I^{\la, (p,q)}_{\rho_2}(t_0)}\fint_{B^{\la}_{\rho_2}(x_0)}\left|\frac{u-(u)_{\mcq^{\la}_{\rho_2}}}{\la^{-1+\mu}\rho_2}\right|^2 dx\right) + c\left[\left(\frac{\rho_2}{\rho_2-\rho_1}\right)^s+\left(\frac{\rho_2}{\rho_2-\rho_1}\right)^4+\left(\frac{\rho_2}{\rho_2-\rho_1}\right)^{\frac{2s}{p-s+2}}\right]\la^2.
\end{align*}
Finally using \cref{iter_lemma}, we get the desired estimate on $2\varrho.$ Thus we obtained
\begin{align*}
    \la^{p-2}+a(z_0)\la^{q-2}\sup_{I^{\la, (p,q)}_{2\varrho}(t_0)}\fint_{B^{\la}_{2\varrho}(x_0)}\left|\frac{u-(u)_{\mcq^{\la}_{2\varrho}}}{\la^{-1+\mu}2\varrho}\right|^2 dx \apprle\la^{p}+ a(z_0)\la^q.
\end{align*}
Hence combined with \cref{LEMMA7.5}, we conclude the proof.
\end{proof}
\begin{lemma}\label{LEM5.6}
Let $u$ be a weak solution to \eqref{main_eqn} and  let \cref{scl_energy_pq} hold with the \cref{assmpq1a} in force. Then there exists a constant $c=c(\textnormal{\texttt{data}})$ such that for any $\theta, \varepsilon \in \left(\theta_0, 1\right)$ for some $\theta_0\in (0,1),$ we have
\begin{align*}
\fiint_{\mcq^{\lambda}_{2\varrho}(z_0)} H\left(z, \left|\frac{u-(u)_{\mcq^{\la}_{2\varrho}}}{\la^{-1+\mu}2\varrho}\right|\right)dz
\leq c \left(\la^p+a(z_0)\la^q\right)^{1-\theta}\fiint_{\mcq^{\lambda}_{2\varrho}(z_0)}H\left(z, |\nabla u|\right)^{\theta}\,dz + \varepsilon (\la^p+a(z_0)\la^q)
\end{align*}
whenever $\mcq^{\la}_{4\varrho}(z_0)\subset \Omega_T.$
\end{lemma}
\begin{proof}
 The proof is similar to the $p$-intrinsic case and here we also have the advantage of the bound $a(z)\leq 2a(z_0).$ We shall proceed as in \cref{LEM5.2}. We will make use of \cref{g_n} with $\sigma=p, \xi=\theta p, \vartheta=\theta, r=2$ to estimate the first term in the left hand side. First, we note that
 \begin{align*}
     -\frac{n}{p}\leq \theta\left(1-\frac{n}{\theta p}\right)-(1-\theta)\frac{n}{2}\,\,\,\, \text{if and only if}\,\,\,\, \theta \geq \frac{n}{n+2}.
 \end{align*}
Then we have

\begin{align*}
 &\fiint_{\mcq^{\lambda}_{2\varrho}(z_0)} \left|\frac{u-(u)_{\mcq^{\la}_{\rhob}}}{\lambda^{-1+\mu}2\varrho}\right|^p dz \\
& \leq c\left(\fiint_{\mcq^{\la}_{2\varrho}(z_0)}\left|\frac{u-(u)_{\mcq^{\la}_{2\varrho}}}{\la^{-1+\mu}2\varrho}\right|^{\theta p} dz+\fiint_{\mcq^{\lambda}_{2\varrho}(z_0)}|\nabla u|^{\theta p} dz\right)\left(\sup_{I^{\lambda, (p,q)}_{2\varrho}(t_0)}\fint_{B^{\lambda}_{2\varrho}(x_0)}\left|\frac{u-(u)_{\mcq^{\la}_{2\varrho}}}{\lambda^{-1+\mu}2\varrho}\right|^2 dx\right)^{\frac{(1-\theta)p}{2}}. 
\end{align*}
Now using \cref{pq_intrinsic poincare} and \cref{LEMMA7.11} in the above inequality, we find

\begin{align}\label{EQUATION7.25}
  \fiint_{\mcq^{\lambda}_{2\varrho}(z_0)} &\left|\frac{u-(u)_{\mcq^{\la}_{\rhob}}}{\lambda^{-1+\mu}2\varrho}\right|^p\,dz \leq \la^{(1-\theta)p}\left(\fiint_{\mcq^{\la}_{2\varrho}(z_0)}|\nabla u|^{\theta p}\,dz\right)+ \varepsilon \la^{p}.   
\end{align}
Similarly using \cref{g_n} with $\sigma=q, \xi=\theta q, \vartheta=\theta, r=2$ we get

\begin{align*}
  \fiint_{\mcq^{\lambda}_{2\varrho}(z_0)} a(z)\left|\frac{u-(u)_{\mcq^{\la}_{\rhob}}}{\lambda^{-1+\mu}2\varrho}\right|^q dz &\leq 2a(z_0)\left|\frac{u-(u)_{\mcq^{\la}_{\rhob}}}{\lambda^{-1+\mu}2\varrho}\right|^q dz \\
 &\leq c\left(\fiint_{\mcq^{\la}_{2\varrho}(z_0)}a(z_0)^{\theta}\left|\frac{u-(u)_{\mcq^{\la}_{2\varrho}}}{\la^{-1+\mu}2\varrho}\right|^{\theta q} dz+\fiint_{\mcq^{\lambda}_{2\varrho}(z_0)}a(z_0)^{\theta}|\nabla u|^{\theta q} dz\right)\\
 &\times\left(\sup_{I^{\lambda, (p,q)}_{2\varrho}(t_0)}\fint_{B^{\lambda}_{2\varrho}(x_0)}a(z_0)^{1-\theta}\left|\frac{u-(u)_{\mcq^{\la}_{2\varrho}}}{\lambda^{-1+\mu}2\varrho}\right|^2 dx\right)^{\frac{(1-\theta)q}{2}}.  
\end{align*}
In the first inequality above, we used \textbf{pq4} of \cref{assmpq1d}. Now using \cref{pq_intrinsic poincare} and \cref{LEMMA7.11} in the above inequality, we find
\begin{align}\label{EQUATION7.26}
  \fiint_{\mcq^{\lambda}_{2\varrho}(z_0)} a(z)\left|\frac{u-(u)_{\mcq^{\la}_{\rhob}}}{\lambda^{-1+\mu}2\varrho}\right|^q dz \leq c (a(z_0)\la^q)^{1-\theta}\fiint_{\mcq^{\la}_{2\varrho}(z_0)}a^{\theta}(z_0)|\nabla u|^{\theta q} dz + \varepsilon a(z_0)\la^q.   
\end{align}
It remains to estimate the third term in the left hand side.

\begin{align*}
 \fiint_{\mcq^{\la}_{\rhob}(z_0)}b(z)\left|\frac{u-(u)_{\mcq^{\la}_{2\varrho}}}{\la^{-1+\mu}2\varrho}\right|^s\,dz &\leq \underbrace{\fiint_{\mcq^{\la}_{2\varrho}(z_0)}\inf_{w\in \mcq^{\la}_{2\varrho}}b(w)\left|\frac{u-(u)_{\mcq^{\la}_{2\varrho}}}{\la^{-1+\mu}2\varrho}\right|^s\,dz}_{I}\\
 &+\underbrace{c[b]_{\beta}\max\left\{\la^{-1+\mu}2\varrho, \frac{\la^\mu 2\varrho}{(\la^p+a(z_0)\la^q)^{1/2}}\right\}^{\beta}\fiint_{\mcq^{\la}_{2\varrho}(z_0)}\left|\frac{u-(u)_{\mcq^{\lambda}_{2\varrho}}}{\lambda^{-1+\mu}2\varrho}\right|^s\,dz}_{II}
\end{align*}
We estimate $I$ by using \cref{g_n} with $\sigma=s, \xi=\theta s, \vartheta =\theta, r=2.$

\begin{align*}
    I &\apprle \fiint_{\mcq^{\la}_{2\varrho}(z_0)}\inf_{w\in \mcq^{\la}_{2\varrho} }b(w)^{\theta}\left|\frac{u-(u)_{\mcq^{\la}_{2\varrho}}}{\la^{-1+\mu}2\varrho}\right|^{\theta s}\,dz+ \fiint_{\mcq^{\la}_{2\varrho}(z_0)}\inf_{w\in \mcq^{\la}_{2\varrho} }b(w)^{\theta} |\nabla u|^{\theta s}\,dz\\
    &\times \inf_{w\in \mcq^{\la}_{2\varrho} }b(w)^{1-\theta}\left(\sup_{I^{\la, (p,q)}_{2\varrho}}\fint_{B^{\la}_{2\varrho}(x_0)}\left|\frac{u-(u)_{\mcq^{\la}_{2\varrho}}}{\la^{-1+\mu}2\varrho}\right|^2\,dx\right)^{\frac{(1-\theta)s}{2}}\\
    &\overset{\cref{p_intrinsic poincare 2},\ref{3_lem7.10}\,\, \text{and}\,\,\cref{LEMMA7.11} }{\apprle} \inf_{w\in \mcq^{\la}_{2\varrho} }b(w)^{1-\theta}\la^{(1-\theta)s}\left(\fiint_{\mcq^{\la}_{2\varrho}(z_0)}\inf_{w\in \mcq^{\la}_{2\varrho} }b(w)^{\theta}|\nabla u|^{\theta s}\,dz+\varepsilon \inf_{w\in \mcq^{\la}_{2\varrho} }b(w)^{\theta} \la^{\theta s}\right)\\
    &\apprle \la^{(p-s)(1-\theta)}\la^{(1-\theta)s}\fiint_{\mcq^{\la}_{2\varrho}(z_0)} b(z)^{\theta}|\nabla u|^{\theta s}\,dz+ \varepsilon \inf_{w\in \mcq^{\la}_{2\varrho}}b(w)\la^s\\
    &\apprle \la^{p(1-\theta)}\fiint_{\mcq^{\la}_{2\varrho}(z_0)} b(z)^{\theta}|\nabla u|^{\theta s}\,dz+ \varepsilon \la^p.
\end{align*}
We use

\begin{align}\label{bounds on maximum radius}
\max\left\{\la^{-1+\mu}2\varrho, \frac{\la^\mu 2\varrho}{(\la^p+a(z_0)\la^q)^{1/2}}\right\}^{\beta} \leq \max\left\{\la^{-1+\mu}2\varrho, \la^{\frac{2\mu-p}{2}}2\varrho\right\}^{\beta}\leq \frac{K}{2[b]_{\beta}} \la^{p-s}    
\end{align}
and \cref{g_n} with $\sigma =s, \xi=\theta p, \vartheta=\frac{\theta p}{s}, r=2.$ Note that the inequality condition on the \cref{g_n} is satisfied with this choice, since
\begin{align*}
    \frac{ns}{(n+2)p}\leq \frac{n}{n+2}\left(1+\frac{2}{(n+2)p}\right)\leq \frac{n}{n+2}\left(1+\frac{1}{n}\right)=\frac{n+1}{n+2}<1,
\end{align*}
where we used $s\leq p+\frac{2}{n+2}$ and $p>\frac{2n}{n+2}.$

\begin{align*}
   II &\apprle \la^{p-s}\left(\fiint_{\mcq^{\la}_{2\varrho}(z_0)}\left|\frac{u-(u)_{\mcq^{\la}_{2\varrho}}}{\la^{-1+\mu}2\varrho}\right|^{\theta p}+|\nabla u|^{\theta p}, dz\right) \left(\sup_{I^{\la, (p,q)}_{2\varrho}}\fint_{B^{\la}_{2\varrho}(x_0)}\left|\frac{u-(u)_{\mcq^{\la}_{2\varrho}}}{\la^{-1+\mu}2\varrho}\right|^2\,dx\right)^{\frac{s-\theta p}{2}}\\
   &\overset{\cref{p_intrinsic poincare 2},\ref{1_lem7.10}\,\, \text{and}\,\,\cref{LEMMA7.11}}{\apprle} \la^{p-s}\left(\fiint_{\mcq^{\la}_{2\varrho}(z_0)}|\nabla u|^{\theta p}\,dz+\varepsilon \la^{\theta p}\right)\la^{s-\theta p} 
\end{align*}
Thus combining the above estimates, we obtain
\begin{align}\label{EQQQ3.46}
\fiint_{\mcq^{\la}_{\rhob}(z_0)}b(z)\left|\frac{u-(u)_{\mcq^{\la}_{2\varrho}}}{\la^{-1+\mu}2\varrho}\right|^s\,dz\apprle \la^{(1-\theta)p}\left(\fiint_{\mcq^{\la}_{2\varrho}(z_0)}|\nabla u|^{\theta p}\,dz+ \fiint_{\mcq^{\la}_{2\varrho}(z_0)}b(z)^{\theta}|\nabla u|^{\theta s}\,dz\right)+ \varepsilon \la^p.   
\end{align}
Hence combining \eqref{EQUATION7.25},\eqref{EQUATION7.26} and \eqref{EQQQ3.46}  with {\bf pq4} of \cref{assmpq1d}, we get 
\begin{align*}
&\fiint_{\mcq^{\lambda}_{2\varrho}(z_0)} H\left(z, \left|\frac{u-(u)_{\mcq^{\la}_{\rhob}}}{\lambda^{-1+\mu}2\varrho}\right|\right)dz\\
&\leq\la^{(1-\theta)p}+\left(a(z_0)\la^q\right)^{1-\theta}\fiint_{\mcq^{\la}_{2\varrho}(z_0)}\left(|\nabla u|^{\theta p}+a(z)^{\theta}|\nabla u|^{\theta q}+b(z)^{\theta}|\nabla u|^{\theta s}\right)dz + \varepsilon \left(\la^p+a(z_0)\la^q\right).
\end{align*}
Since $\theta<1$ and $1-\theta <1$ we can further estimate the right hand side of the above inequality and get the claim by using 
\[|a+b|^{\beta} \leq (1+\delta)^{\beta-1}|a|^{\beta}+\left(1+\frac{1}{\delta}\right)^{\beta-1}|b|^{\beta}\] for $1\leq \beta < \infty$ and arbitrary $a, b\in \RR$ and $\delta >0.$ This completes the proof.
\end{proof}
\begin{lemma}\label{LEM5.7}
Let $u$ be a weak solution to \eqref{main_eqn} and let \cref{scl_energy_pq} hold with \cref{assmpq1a} in force. Then there exist constants $c=c(\textnormal{\texttt{data}})$ and $\theta_0 \in (0, 1)$ depending on $n,p,q$ such that for any $\theta, \varepsilon \in (\theta_0,1),$ we have
\begin{align*}
\lambda^{p-2}+a(z_0)\lambda^{q-2}&\fiint_{\mcq^{\lambda}_{2\varrho}(z_0)}\left|\frac{u-(u)_{\mcq^{\la}_{2\varrho}}}{\lambda^{-1+\mu}2\varrho}\right|^2 dz \\
&\apprle \left[\varepsilon (\lambda^p+a(z_0)\lambda^q )+ c(\varepsilon) \left(\fiint_{\mcq^{\lambda}_{2\varrho}(z_0)}|\nabla u|^{\theta p}+a^{\theta}(z)|\nabla u|^{\theta q}+b(z)^{\theta}|\nabla u|^{\theta s}\,dz\right)^{\frac{1}{\theta}}\right]    
\end{align*}
whenever $\mcq^{\la}_{4\varrho}(z_0)\subset \Omega_T.$
\end{lemma}
\begin{proof}
Following the proof of \cref{LEM5.3} we have

\begin{align}\label{EQUATION7.27}
\lambda^{p-2}\fiint_{\mcq^{\lambda}_{2\varrho}(z_0)}\left|\frac{u-(u)_{\mcq^{\lambda}_{2\varrho}}}{\lambda^{-1+\mu} 2\varrho }\right|^2 dz
\apprle\left[\varepsilon \lambda^p + c(\varepsilon) \left(\fiint_{G^{\lambda}_{2\varrho}(z_0)}|\nabla u|^{\theta p}\right)^{\frac{1}{\theta}}\right].
\end{align}
We shall apply \cref{g_n} with $\sigma=2, \xi=\theta q, \vartheta=\frac{\theta q}{2}$ and $r=2.$ It can be easily checked that
\begin{align*}
    -\frac{n}{2} \leq \frac{\theta q}{2}\left(1-\frac{n}{\theta q}\right)-\left(1-\frac{\theta q}{2}\right)\frac{n}{2}\,\,\,\, \text{if and only if}\,\,\,\, \theta \geq \frac{n}{(n+2)q}.
\end{align*}
Then we have

  \begin{align*} 
&\fiint_{\mcq^{\lambda}_{2\varrho}(z_0)}\left|\frac{u-(u)_{\mcq^{\la}_{2\varrho}}}{\lambda^{-1+\mu}2\varrho}\right|^2 dz\\
&\leq c\left(\fiint_{\mcq^{\la}_{2\varrho}(z_0)}\left|\frac{u-(u)_{\mcq^{\la}_{2\varrho}}}{\la^{-1+\mu}2\varrho}\right|^{\theta q} dz+\fiint_{\mcq^{\lambda}_{2\varrho}(z_0)}|\nabla u|^{\theta q} dz\right)
\left(\sup_{I^{\lambda, (p,q)}_{2\varrho}(t_0)}\fint_{B^{\lambda}_{2\varrho}(x_0)}\left|\frac{u-(u)_{\mcq^{\la}_{2\varrho}}}{\lambda^{-1+\mu}2\varrho}\right|^2 dx\right)^{\frac{2-\theta q}{2}}. 
  \end{align*}
Moreover, applying \cref{pq_intrinsic poincare} and \cref{LEMMA7.11} we find
\begin{align*}
\fiint_{\mcq^{\lambda}_{2\varrho}(z_0)}\left|\frac{u-(u)_{\mcq^{\la}_{2\varrho}}}{\lambda^{-1+\mu}2\varrho}\right|^2 dz \leq c \la^{2-\theta q}\left(\fiint_{\mcq^{\la}_{2\varrho}(z_0)}|\nabla u|^{\theta q}\,dz\right)+\varepsilon \la^2.
\end{align*}
Hence
\begin{align*}
a(z_0)\lambda^{q-2}\fiint_{\mcq^{\lambda}_{2\varrho}(z_0)}\left|\frac{u-(u)_{\mcq^{\la}_{2\varrho}}}{\lambda^{-1+\mu}(2\varrho)}\right|^2 dz
&\apprle  a(z_0) \lambda^{(1-\theta) q} \left(\fiint_{\mcq^{\lambda}_{2\varrho}}|\nabla u|^{\theta q}dz\right)+ \varepsilon a(z_0)\la^q\\
&=c\left(a(z_0)\lambda^q\right)^{1-\theta}\left(\fiint_{\mcq^{\lambda}_{2\varrho}}a(z_0)^{\theta}|\nabla u|^{\theta q}dz\right)+ \varepsilon a(z_0)\la^q.
\end{align*}
Now using Young's inequality we get

\begin{align}\label{EQUATION7.28}
a(z_0)\lambda^{q-2}\fiint_{\mcq^{\lambda}_{2\varrho}(z_0)}\left|\frac{u-(u)_{\mcq^{\la}_{2\varrho}}}{\lambda^{-1+\mu}2\varrho}\right|^2 dz \apprle \left[\varepsilon a(z_0)\lambda^q + c(\varepsilon) \left(\fiint_{\mcq^{\lambda}_{2\varrho}}a(z_0)^{\theta}|\nabla u|^{\theta q}dz\right)^{\frac{1}{\theta}}\right].      
\end{align}
Therefore, adding \eqref{EQUATION7.27} and \eqref{EQUATION7.28}, we get the desired result.
\end{proof}

Now we prove the reverse H\"{o}lder inequality in the $(p,q)$-intrinsic case.
\begin{lemma}\label{RHPQ}
Let $u$ be a weak solution to \eqref{main_eqn}. Moreover, we assume that the energy estimate given in \cref{scl_energy_pq} and \cref{assmpq1a} hold for $(p,q)$-intrinsic cylinders. Then there exists a constant $c=c(\textnormal{\texttt{data}})$ and $\theta_0 \in (0,1)$ such that for any $\theta \in (\theta_0, 1),$ we have

\begin{align*}
\fiint_{\mcq^{\lambda}_{\varrho}(z_0)}H(z, \nabla u)\, dz \leq c \left(\fiint_{\mcq^{\lambda}_{2\varrho}(z_0)}H(z, \nabla u)^{\theta }dz\right)^{\frac{1}{\theta}}.
\end{align*}
\end{lemma}
\begin{proof}
From the energy estimate \cref{scl_energy_pq} and the definition of $\rhoa, \rhob,$ we have

\begin{align}\label{EQUATION7.29}
    \fiint_{\mcq^{\la}_{\varrho}(z_0)} H(z, \nabla u) dz \apprle \fiint_{\mcq^{\lambda}_{2\varrho}(z_0)} H\left(z, \left|\frac{u-(u)_{\mcq^{\lambda}_{2\varrho}}}{\lambda^{-1+\mu}2\varrho}\right| \right)\,dz\nonumber\\
	 +\lambda^{p-2}+a(z_0)\la^{q-2} \fiint_{\mcq^{\lambda}_{2\varrho}(z_0)}\left|\frac{u-(u)_{\mcq^{\lambda}_{2\varrho}}}{\lambda^{-1+\mu}2\varrho}\right|^2\,dz.
\end{align}
From \cref{LEM5.6}, we have

\begin{align*}
\fiint_{\mcq^{\lambda}_{2\varrho}(z_0)} H\Bigg(z, \left|\frac{u-(u)_{\mcq^{\lambda}_{2\varrho}}}{\lambda^{-1+\mu}2\varrho}\right|\Bigg)\,dz
&\apprle (\la^p+a(z_0)\la^q)^{1-\theta}\fiint_{\mcq^{\lambda}_{2\varrho}}H\left(z, |\nabla u|\right)^{\theta}dz + \varepsilon (\la^p+a(z_0)\la^q)\\
&\overset{\text{Young's inequality}}{\leq} \varepsilon (\la^p+a(z_0)\la^q) + c \left(\fiint_{\mcp^{\la}_{2\varrho}(z_0)}H\left(z, |\nabla u|\right)^{\theta}dz\right)^{\frac{1}{\theta}}.
\end{align*}
Moreover, from \cref{LEM5.7}, we obtain

\begin{align*}
 \lambda^{p-2}+a(z_0)\la^{q-2} \fiint_{\mcq^{\lambda}_{2\varrho}(z_0)}\left|\frac{u-(u)_{\mcq^{\lambda}_{2\varrho}}}{\lambda^{-1+\mu}2\varrho}\right|^2\,dz &\apprle\varepsilon (\lambda^p+a(z_0)\lambda^q )+ c \left(\fiint_{\mcq^{\lambda}_{2\varrho}(z_0)}H(z, \nabla u)^{\theta}\,dz\right)^{\frac{1}{\theta}}.
\end{align*}
Substituting the above estimates in \eqref{EQUATION7.29}, we get

\begin{align*}
   \fiint_{\mcq^{\la}_{\varrho}(z_0)} H\left(z, |\nabla u|\right) dz \leq \varepsilon (\la^p+a(z_0)\la^q) + c \left(\fiint_{\mcq^{\la}_{2\varrho}(z_0)}H(z, \nabla u)^{\theta}dz\right)^{\frac{1}{\theta}}.
\end{align*}
The proof can be completed by absorbing $\varepsilon (\la^p+a(z_0)\la^q)$ on the left-hand side, which is allowed due to \textbf{ pq3} of \cref{assmpq1c}.
\end{proof}
\subsection{Reverse H\"{o}lder inequality for $(p, s)$-phase} In this subsection, we prove the reverse H\"{o}lder inequality for $(p, s)$-phase.
\begin{assumption}
In the $(p,s)$-phase, we assume that the following is satisfied:
\begin{enumerate}
    \item [\textbf{ps1.}] $K\lambda^p \geq  a(z_0)\lambda^q,\,\,\, K\la^p< b(z_0)\la^s.$ \label{assmps1a}\\
    \item [\textbf{ps2.}] $\fiint_{\mathcal{S}^{\lambda}_{\rho}(z_0)}(|\nabla u|^p+a(z)|\nabla u|^q+b(z)|\nabla u|^s) dz < \lambda^p+b(z_0)\la^s\,\,\, \text{holds for all} \,\,\,\rho \in (\rho_a, \rho_b].$ \label{assmps1b}\\
    \item [\textbf{ps3.}] $\fiint_{\mathcal{S}^{\lambda}_{\rho}(z_0)}(|\nabla u|^p+a(z)|\nabla u|^q+b(z)|\nabla u|^s) dz =\lambda^p+b(z_0)\la^s.$ \label{assmps1c}\\
    \item [\textbf{ps4.}] $\frac{b(z_0)}{2} \leq b(z) \leq 2b(z_0)\,\,\, \text{holds for every}\,\,\, z \in \mathcal{S}^{\lambda}_{\rhob}(z_0).$ \label{assmps1d}
\end{enumerate}
\end{assumption}
We shall start with parabolic Sobolev-Poincar\'{e} inequalities for $(p,s)$-phase.
\begin{lemma}\label{ps_intrinsic poincare}
 Let $u$ be a weak solution to \eqref{main_eqn} and let \cref{assmps1a} be in force. Then for any $\theta\in \left(\max\left\{\frac{s-1}{p}, \frac{1}{p}\right\}, 1\right]$ and $\varepsilon \in (0,1),$ the estimates \ref{1_lem7.10}, \ref{2_lem7.10} and \ref{3_lem7.10} from \cref{pq_intrinsic poincare} hold whenever $\mathcal{S}^{\lambda}_{\rhob}(z_0) \subset \Omega_T.$
\end{lemma}
\begin{proof}
    We postpone the proof to \cref{appendix}.
\end{proof}
Now we state the energy estimate \eqref{general caccipoli} for the $(p,s)$-phase. We consider the following cutoff functions.

\vspace{.5cm}
\noindent \textbf{Cut-off functions for $(p,s)$-phase.} On the other hand, for $(p,s)$-phase we use
\begin{align*}
&\eta:=\eta(x) \in C_c^{\infty}(B_{\la^{-1+\mu}\rhob}(x_0)), \quad  \eta \equiv 1 \,\,\text{on}\, B_{\la^{-1+\mu}\rhoa}(x_0), \quad 0\leq \eta \leq 1 \,\,\,\,\,\text{and}\,\,\,\,\,\, |\nabla \eta| \apprle \frac{1}{\la^{-1+\mu}(\rhob-\rhoa)}\\
&\zeta \in C^{\infty}_c\left(t_0-\frac{\la^{2\mu}}{\la^p+b(z_0)\la^s}(\rhob-h_0)^2, t_0+\frac{\la^{2\mu}}{\la^p+b(z_0)\la^s}(\rhob-h_0)^2\right),\,\,\,\,\, 0\leq \zeta \leq 1,\\
&\zeta \equiv 1 \,\,\text{on}\,\,\left(t_0-\tfrac{\la^{2\mu}}{\la^p + b(z_0)\la^s}\rhoa^2,t_0+\tfrac{\la^{2\mu}}{\la^p + b(z_0)\la^s}\rhoa^2\right),
\,\,\text{and}\,\, |\partial_t \zeta| \apprle \tfrac{1}{\tfrac{\la^{2\mu}}{\la^p + b(z_0)\la^s}(\rhob-\rhoa)^2}.
\end{align*}
Then the energy estimate for the $(p,s)$-phase reads is as follows.
\begin{lemma}\label{scl_energy_ps}
Let $u$ be a weak solution of \eqref{main_eqn}. Then we have the following energy estimate:
\begin{align*}
&(\lambda^{p-2} + b(z_0)\la^{s-2}) \sup_{I^{\lambda,(p,s)}_{\rhoa}(t_0)}\fint_{B^{\lambda}_{\rhoa}(x_0)}\left|\frac{u-(u)_{\mathcal{S}^{\lambda}_{\rhoa}}}{\lambda^{-1+\mu}\rhoa}\right|^2\,dx +\fiint_{\mathcal{S}^{\lambda}_{\rhoa}(z_0)}H\left(z, |\nabla u|\right) \,dz\\
&\apprle 
\fiint_{\mathcal{S}^{\lambda}_{\rhob}(z_0)} H\left(z, \left|\frac{u-(u)_{\mathcal{S}^{\lambda}_{\rhob}}}{\lambda^{-1+\mu}(\rhob-\rhoa)}\right|\right)\,dz + (\lambda^{p-2}+b(z_0)\la^{s-2}) \fiint_{\mathcal{S}^{\lambda}_{\rhob}(z_0)}\left|\frac{u-(u)_{\mathcal{S}^{\lambda}_{\rhob}}}{\lambda^{-1+\mu}(\rhob-\rhoa)}\right|^2 \,dz.
\end{align*}
\end{lemma}

\vspace{.3cm}
\noindent We need the following result. The proof can be completed by an application of \cref{ps_intrinsic poincare}, \cref{g_n} and \cref{iter_lemma}. Due to its similarity to the $(p,q)$-phase, we omit the proof here.
\begin{lemma}\label{LEMMA3.16}
Let $u$ be a weak solution to \eqref{main_eqn} and let \cref{scl_energy_ps} hold with \cref{assmps1a} in force. Then there exists a constant $c=c(\textnormal{\texttt{data}})$ such that the following estimate
\begin{align*}
\left(\lambda^{p-2}+b(z_0)\la^{s-2} \right)\sup_{I^{\lambda,(p,s)}_{2\varrho}(t_0)}\fint_{B^{\lambda}_{2\rho}(x_0)}\left|\frac{u-(u)_{\mcq^{\la}_{2\rho}}}{\lambda^{-1+\mu}2\varrho}\right|^2 dx \leq c \left(\lambda^p+b(z_0)\la^s\right)
\end{align*}
holds whenever $\mathcal{S}^{\la}_{\rhob}(z_0)\subset \Omega_T.$
\end{lemma}
Now we prove the following lemma, which leads us to revese H\"{o}lder inequality in this case.
\begin{lemma}\label{LEMMA3.17}
Let $u$ be a weak solution to \eqref{main_eqn} and let \cref{scl_energy_ps} hold with \cref{assmps1a} in force. Then there exists a constant $c=c(\textnormal{\texttt{data}})$ such that for any $\theta, \varepsilon \in \left(\theta_0, 1\right)$ for some $\theta_0\in (0,1),$ we have
\begin{align*}
\fiint_{\mathcal{S}^{\lambda}_{2\varrho}(z_0)} H\left(z, \left|\frac{u-(u)_{\mathcal{S}^{\la}_{2\varrho}}}{\la^{-1+\mu}2\varrho}\right|\right)dz
\leq c \left(\la^p+b(z_0)\la^s\right)^{1-\theta}\fiint_{\mathcal{S}^{\lambda}_{2\varrho}(z_0)}H\left(z, |\nabla u|\right)^{\theta}\,dz + \varepsilon (\la^p+b(z_0)\la^s)
\end{align*}
whenever $\mathcal{S}^{\la}_{4\varrho}(z_0)\subset \Omega_T.$
\end{lemma}
\begin{proof}
 The proof is similar to the proof of \cref{LEM5.6}. The first term on the left-hand side of the statement can be estimated exactly the same way as done in \cref{LEM5.6}.

 \begin{align}\label{EQUATION3.50}
     \fiint_{\mathcal{S}^{\lambda}_{2\varrho}(z_0)} &\left|\frac{u-(u)_{\mathcal{S}^{\la}_{\rhob}}}{\lambda^{-1+\mu}2\varrho}\right|^p\,dz \leq \la^{(1-\theta)p}\left(\fiint_{\mathcal{S}^{\la}_{2\varrho}(z_0)}|\nabla u|^{\theta p}\,dz\right)+ \varepsilon \la^{p}.
 \end{align}
 For the third term, we use $b(z)\leq 2b(z_0)$ and \cref{g_n} with $\sigma=s, \xi=\theta s, \vartheta=\theta, r=2.$

 \begin{align}\label{EQUATION3.51} 
 \fiint_{\mathcal{S}^{\lambda}_{2\varrho}(z_0)} b(z)\left|\frac{u-(u)_{\mathcal{S}^{\la}_{\rhob}}}{\lambda^{-1+\mu}2\varrho}\right|^s dz \leq c (b(z_0)\la^s)^{1-\theta}\fiint_{\mathcal{S}^{\la}_{2\varrho}(z_0)}b^{\theta}(z_0)|\nabla u|^{\theta s} dz + \varepsilon b(z_0)\la^s.
 \end{align}
 We will show only the estimate of the second term below.

 \begin{align*}
\fiint_{\mathcal{S}^{\la}_{\rhob}(z_0)}a(z)\left|\frac{u-(u)_{\mathcal{S}^{\la}_{2\varrho}}}{\la^{-1+\mu}2\varrho}\right|^q\,dz &\leq \underbrace{\fiint_{\mathcal{S}^{\la}_{2\varrho}(z_0)}\inf_{w\in \mathcal{S}^{\la}_{2\varrho}}a(w)\left|\frac{u-(u)_{\mathcal{S}^{\la}_{2\varrho}}}{\la^{-1+\mu}2\varrho}\right|^q\,dz}_{I}\\
 &+\underbrace{c[b]_{\beta}\max\left\{\la^{-1+\mu}2\varrho, \frac{\la^\mu 2\varrho}{(\la^p+b(z_0)\la^s)^{1/2}}\right\}^{\alpha}\fiint_{\mathcal{S}^{\la}_{2\varrho}(z_0)}\left|\frac{u-(u)_{\mathcal{S}^{\lambda}_{2\varrho}}}{\lambda^{-1+\mu}2\varrho}\right|^q\,dz.}_{II}     
 \end{align*}

 \vspace{.3cm}
\noindent We estimate $I$ by using \cref{g_n} with $\sigma=q, \xi=\theta q, \vartheta =\theta, r=2.$

\begin{align*}
    I &\apprle \fiint_{\mathcal{S}^{\la}_{2\varrho}(z_0)}\inf_{w\in \mathcal{S}^{\la}_{2\varrho} }a(w)^{\theta}\left|\frac{u-(u)_{\mathcal{S}^{\la}_{2\varrho}}}{\la^{-1+\mu}2\varrho}\right|^{\theta q}\,dz+ \fiint_{\mathcal{S}^{\la}_{2\varrho}(z_0)}\inf_{w\in \mathcal{S}^{\la}_{2\varrho} }a(w)^{\theta} |\nabla u|^{\theta q}\,dz\\
    &\times \inf_{w\in \mathcal{S}^{\la}_{2\varrho}(z_0) }a(w)^{1-\theta}\left(\sup_{I^{\la, (p,s)}_{2\varrho}}\fint_{B^{\la}_{2\varrho}(x_0)}\left|\frac{u-(u)_{\mathcal{S}^{\la}_{2\varrho}}}{\la^{-1+\mu}2\varrho}\right|^2\,dx\right)^{\frac{(1-\theta)q}{2}}\\
    &\overset{\cref{p_intrinsic poincare 2},\ref{3_lem7.10}\,\, \text{and}\,\,\cref{LEMMA3.16} }{\apprle} \inf_{w\in \mathcal{S}^{\la}_{2\varrho}(z_0) }a(w)^{1-\theta}\la^{(1-\theta)q}\left(\fiint_{\mathcal{S}^{\la}_{2\varrho}(z_0)}\inf_{w\in \mathcal{S}^{\la}_{2\varrho} }a(w)^{\theta}|\nabla u|^{\theta q}\,dz+\varepsilon \inf_{w\in \mathcal{S}^{\la}_{2\varrho} }a(w)^{\theta} \la^{\theta q}\right)\\
    &\apprle \la^{(p-q)(1-\theta)}\la^{(1-\theta)q}\fiint_{\mathcal{S}^{\la}_{2\varrho}(z_0)} a(z)^{\theta}|\nabla u|^{\theta q}\,dz+ \varepsilon \inf_{w\in \mathcal{S}^{\la}_{2\varrho}}a(w)\la^q\\
    &\apprle \la^{p(1-\theta)}\fiint_{\mathcal{S}^{\la}_{2\varrho}(z_0)} a(z)^{\theta}|\nabla u|^{\theta q}\,dz+ \varepsilon \la^p.
\end{align*}
We again use
\begin{align*}
\max\left\{\la^{-1+\mu}2\varrho, \frac{\la^\mu 2\varrho}{(\la^p+b(z_0)\la^s)^{1/2}}\right\}^{\alpha} \leq \max\left\{\la^{-1+\mu}2\varrho, \la^{\frac{2\mu-p}{2}}2\varrho\right\}^{\alpha}\apprle \la^{p-q}    
\end{align*}
and \cref{g_n} with $\sigma =q, \xi=\theta p, \vartheta=\frac{\theta p}{q}, r=2.$ Note that the inequality condition on the \cref{g_n} is satisfied with this choice, since

\begin{align*}
    \frac{nq}{(n+2)p}\leq \frac{n}{n+2}\left(1+\frac{2}{(n+2)p}\right)\leq \frac{n}{n+2}\left(1+\frac{1}{n}\right)=\frac{n+1}{n+2}<1,
\end{align*}
where we used $q\leq p+\frac{2}{n+2}$ and $p>\frac{2n}{n+2}.$

\begin{align*}
   II &\apprle \la^{p-q}\left(\fiint_{\mathcal{S}^{\la}_{2\varrho}(z_0)}\left|\frac{u-(u)_{\mathcal{S}^{\la}_{2\varrho}}}{\la^{-1+\mu}2\varrho}\right|^{\theta p}+|\nabla u|^{\theta p}, dz\right) \left(\sup_{I^{\la, (p,s)}_{2\varrho}}\fint_{B^{\la}_{2\varrho}(x_0)}\left|\frac{u-(u)_{\mathcal{S}^{\la}_{2\varrho}}}{\la^{-1+\mu}2\varrho}\right|^2\,dx\right)^{\frac{q-\theta p}{2}}\\
   &\overset{\cref{p_intrinsic poincare 2},\ref{1_lem7.10}\,\, \text{and}\,\,\cref{LEMMA3.16}}{\apprle} \la^{p-q}\left(\fiint_{\mathcal{S}^{\la}_{2\varrho}(z_0)}|\nabla u|^{\theta p}\,dz+\varepsilon \la^{\theta p}\right)\la^{q-\theta p} 
\end{align*}
Thus combining the above estimates, we obtain

\begin{align}\label{EQUATION3.52}
\fiint_{\mathcal{S}^{\la}_{\rhob}(z_0)}a(z)\left|\frac{u-(u)_{\mathcal{S}^{\la}_{2\varrho}}}{\la^{-1+\mu}2\varrho}\right|^q\,dz\apprle \la^{(1-\theta)p}\left(\fiint_{\mathcal{S}^{\la}_{2\varrho}(z_0)}|\nabla u|^{\theta p}\,dz+ \fiint_{\mathcal{S}^{\la}_{2\varrho}(z_0)}a(z)^{\theta}|\nabla u|^{\theta q}\,dz\right)+ \varepsilon \la^p.   
\end{align}
Hence combining \eqref{EQUATION3.50},\eqref{EQUATION3.51} and \eqref{EQUATION3.52}  with {\bf ps4} of \cref{assmps1d}, we get

\begin{align*}
&\fiint_{\mathcal{S}^{\lambda}_{2\varrho}(z_0)} H\left(z, \left|\frac{u-(u)_{\mathcal{S}^{\la}_{\rhob}}}{\lambda^{-1+\mu}2\varrho}\right|\right)dz\\
&\leq\la^{(1-\theta)p}+\left(b(z_0)\la^q\right)^{1-\theta}\fiint_{\mathcal{S}^{\la}_{2\varrho}(z_0)}\left(|\nabla u|^{\theta p}+a(z)^{\theta}|\nabla u|^{\theta q}+b(z)^{\theta}|\nabla u|^{\theta s}\right)dz + \varepsilon \left(\la^p+b(z_0)\la^q\right).
\end{align*}
This completes the proof.
\end{proof}
\noindent We state the next lemma without proof.
\begin{lemma}\label{LEMMA3.18}
Let $u$ be a weak solution to \eqref{main_eqn} and let \cref{scl_energy_ps} hold with \cref{assmps1a} in force. Then there exist constants $c=c(\textnormal{\texttt{data}})$ and $\theta_0 \in (0, 1)$ depending on $n,p,q$ such that for any $\theta, \varepsilon \in (\theta_0,1),$ we have
\begin{align*}
\lambda^{p-2}+b(z_0)\lambda^{s-2}&\fiint_{\mathcal{S}^{\lambda}_{2\varrho}(z_0)}\left|\frac{u-(u)_{\mathcal{S}^{\la}_{2\varrho}}}{\lambda^{-1+\mu}2\varrho}\right|^2 dz \\
&\apprle \left[\varepsilon (\lambda^p+b(z_0)\lambda^s )+ c(\varepsilon) \left(\fiint_{\mathcal{S}^{\lambda}_{2\varrho}(z_0)}|\nabla u|^{\theta p}+a^{\theta}(z)|\nabla u|^{\theta q}+b(z)^{\theta}|\nabla u|^{\theta s}\,dz\right)^{\frac{1}{\theta}}\right]    
\end{align*}
whenever $\mathcal{S}^{\la}_{4\varrho}(z_0)\subset \Omega_T.$
\end{lemma}
Finally, the following lemma is the reverse H\"{o}lder inequality for $(p,s)$-phase.
\begin{lemma}\label{RHPS}
Let $u$ be a weak solution to \eqref{main_eqn}. Moreover, we assume that the energy estimate given in \cref{scl_energy_ps} and \cref{assmps1a} hold for $(p,s)$-intrinsic cylinders. Then there exists a constant $c=c(\textnormal{\texttt{data}})$ and $\theta_0 \in (0,1)$ such that for any $\theta \in (\theta_0, 1),$ we have
\begin{align*}
\fiint_{\mathcal{S}^{\lambda}_{\varrho}(z_0)}H(z, \nabla u)\, dz \leq c \left(\fiint_{\mathcal{S}^{\lambda}_{2\varrho}(z_0)}H(z, \nabla u)^{\theta }dz\right)^{\frac{1}{\theta}}.
\end{align*}
\end{lemma}
\begin{proof}
From the energy estimate \cref{scl_energy_ps} and the definition of $\rhoa, \rhob,$ we have
\begin{align}\label{EQUATION3.53}
    \fiint_{\mathcal{S}^{\la}_{\varrho}(z_0)} H(z, \nabla u) dz \apprle \fiint_{\mathcal{S}^{\lambda}_{2\varrho}(z_0)} H\left(z, \left|\frac{u-(u)_{\mathcal{S}^{\lambda}_{2\varrho}}}{\lambda^{-1+\mu}2\varrho}\right| \right)\,dz\nonumber\\
	 +(\lambda^{p-2}+b(z_0)\la^{s-2} )\fiint_{\mathcal{S}^{\lambda}_{2\varrho}(z_0)}\left|\frac{u-(u)_{\mathcal{S}^{\lambda}_{2\varrho}}}{\lambda^{-1+\mu}2\varrho}\right|^2\,dz.
\end{align}
From \cref{LEMMA3.17}, we have
\begin{align*}
&\fiint_{\mathcal{S}^{\lambda}_{2\varrho}(z_0)} H\Bigg(z, \left|\frac{u-(u)_{\mathcal{S}^{\lambda}_{2\varrho}}}{\lambda^{-1+\mu}2\varrho}\right|\Bigg)\,dz \\
&\apprle (\la^p+b(z_0)\la^s)^{1-\theta}\fiint_{\mathcal{S}^{\lambda}_{2\varrho}}H\left(z, |\nabla u|\right)^{\theta}dz + \varepsilon (\la^p+b(z_0)\la^s)\\
&\overset{\text{Young's inequality}}{\leq} \varepsilon (\la^p+b(z_0)\la^s) + c \left(\fiint_{\mathcal{S}^{\la}_{2\varrho}(z_0)}H\left(z, |\nabla u|\right)^{\theta}dz\right)^{\frac{1}{\theta}}.
\end{align*}
Moreover, from \cref{LEM5.7}, we obtain
\begin{align*}
 \lambda^{p-2}+a(z_0)\la^{s-2} \fiint_{\mathcal{S}^{\lambda}_{2\varrho}(z_0)}\left|\frac{u-(u)_{\mathcal{S}^{\lambda}_{2\varrho}}}{\lambda^{-1+\mu}2\varrho}\right|^2\,dz &\apprle\varepsilon (\lambda^p+b(z_0)\lambda^s )+ c \left(\fiint_{\mathcal{S}^{\lambda}_{2\varrho}(z_0)}H(z, \nabla u)^{\theta}\,dz\right)^{\frac{1}{\theta}}.
\end{align*}
Substituting the above estimates in \eqref{EQUATION3.53}, we get
\begin{align*}
   \fiint_{\mathcal{S}^{\la}_{\varrho}(z_0)} H\left(z, |\nabla u|\right) dz \leq \varepsilon (\la^p+b(z_0)\la^s) + c \left(\fiint_{\mathcal{S}^{\la}_{2\varrho}(z_0)}H(z, \nabla u)^{\theta}dz\right)^{\frac{1}{\theta}}.
\end{align*}
The proof can be completed by absorbing $\varepsilon (\la^p+b(z_0)\la^s)$ on the left-hand side, which is allowed because of \textbf{ps3} of \cref{assmps1c}.
\end{proof}
\subsection{Reverse H\"{o}lder inequality for $(p, q, s)$-phase} In this subsection, we prove the reverse H\"{o}lder inequality for $(p,q, s)$-phase.
\begin{assumption}
In the $(p,q,s)$-phase, we assume that the following is satisfied:
\begin{enumerate}
    \item [\textbf{pqs1.}] $K\lambda^p <  a(z_0)\lambda^q,\,\,\, K\la^p< b(z_0)\la^s.$ \label{assmpqs1a}\\
    \item [\textbf{pqs2.}] $\fiint_{\mathcal{G}^{\lambda}_{\rhob}(z_0)}(|\nabla u|^p+a(z)|\nabla u|^q+b(z)|\nabla u|^s) dz < \lambda^p+a(z_0)\la^p+b(z_0)\la^s\,\,\, \text{holds for all} \,\,\,\rho \in (\rho_a, \rho_b].$ \label{assmpqs1b}\\
    \item [\textbf{pqs3.}] $\fiint_{\mathcal{G}^{\lambda}_{\rhob}(z_0)}(|\nabla u|^p+a(z)|\nabla u|^q+b(z)|\nabla u|^s) dz =\lambda^p+a(z_0)\la^q+b(z_0)\la^s.$ \label{assmpqs1c}\\
    \item [\textbf{pqs4.}]$\frac{b(z_0)}{2} \leq b(z) \leq 2b(z_0)$\,\,\, and\,\,\,  $\frac{b(z_0)}{2} \leq b(z) \leq 2b(z_0)\,\,\, \text{holds for every}\,\,\, z \in \mathcal{G}^{\lambda}_{\rhob}(z_0).$ \label{assmpqs1d}
\end{enumerate}
\end{assumption}
We shall start with parabolic Sobolev-Poincar\'{e} inequalities for $(p,q, s)$-phases.
\begin{lemma}\label{pqs_intrinsic poincare}
 Let $u$ be a weak solution to \eqref{main_eqn} and the \cref{assmpqs1a} is in force. Then for any $\theta\in \left(\max\left\{\frac{s-1}{p}, \frac{1}{p}\right\}, 1\right]$ and $\varepsilon \in (0,1),$ the estimates \ref{1_lem7.10}, \ref{2_lem7.10} and \ref{3_lem7.10} from \cref{pq_intrinsic poincare} hold whenever $\mathcal{G}^{\lambda}_{\rhob}(z_0) \subset \Omega_T.$
\end{lemma}
\begin{proof}
    We postpone the proof to \cref{appendix}.
\end{proof}
Now we state the energy estimate \eqref{general caccipoli} for the $(p,s)$-phase. We consider the following cutoff functions.

\noindent \textbf{Cut-off functions for $(p,q,s)$-phase.} On the other hand, for $(p,q, s)$-phase we use
\begin{align*}
&\eta:=\eta(x) \in C_c^{\infty}(B_{\la^{-1+\mu}\rhob}(x_0)), \quad  \eta \equiv 1 \,\,\text{on}\, B_{\la^{-1+\mu}\rhoa}(x_0), \quad 0\leq \eta \leq 1 \,\,\,\,\,\text{and}\,\,\,\,\,\, |\nabla \eta| \apprle \frac{1}{\la^{-1+\mu}(\rhob-\rhoa)}\\
&\zeta \in C^{\infty}_c\left(t_0-\frac{\la^{2\mu}}{\la^p+a(z_0)\la^q+b(z_0)\la^s}(\rhob-h_0)^2, t_0+\frac{\la^{2\mu}}{\la^p+a(z_0)\la^q+b(z_0)\la^s}(\rhob-h_0)^2\right),\,\,\,\,\, 0\leq \zeta \leq 1,\\
&\zeta \equiv 1 \,\,\text{on}\,\,\left(t_0-\tfrac{\la^{2\mu}}{\la^p + a(z_0)\la^q+b(z_0)\la^s}\rhoa^2,t_0+\tfrac{\la^{2\mu}}{\la^p +a(z_0)\la^q+ b(z_0)\la^s}\rhoa^2\right),
\,\,\text{and}\,\, |\partial_t \zeta| \apprle \tfrac{1}{\tfrac{\la^{2\mu}}{\la^p +a(z_0)\la^q+ b(z_0)\la^s}(\rhob-\rhoa)^2}.
\end{align*}
Then the energy estimate for the $(p,q,s)$-phase reads is as follows.
\begin{lemma}\label{scl_energy_pqs}
Let $u$ be a weak solution of \eqref{main_eqn}. Then we have the following energy estimate:
\begin{align*}
&(\lambda^{p-2} +a(z_0)\la^{q-2}+ b(z_0)\la^{s-2}) \sup_{I^{\lambda,(p,q,s)}_{\rhoa}(t_0)}\fint_{B^{\lambda}_{\rhoa}(x_0)}\left|\frac{u-(u)_{\mathcal{G}^{\lambda}_{\rhoa}}}{\lambda^{-1+\mu}\rhoa}\right|^2\,dx +\fiint_{\mathcal{G}^{\lambda}_{\rhoa}(z_0)}H\left(z, |\nabla u|\right) \,dz\\
&\apprle 
\fiint_{\mathcal{G}^{\lambda}_{\rhob}(z_0)} H\left(z, \left|\frac{u-(u)_{\mathcal{G}^{\lambda}_{\rhob}}}{\lambda^{-1+\mu}(\rhob-\rhoa)}\right|\right)\,dz
+ (\lambda^{p-2}+a(z_0)\la^{q-2}+b(z_0)\la^{s-2}) \fiint_{\mathcal{G}^{\lambda}_{\rhob}(z_0)}\left|\frac{u-(u)_{\mathcal{G}^{\lambda}_{\rhob}}}{\lambda^{-1+\mu}(\rhob-\rhoa)}\right|^2 \,dz.
\end{align*}
\end{lemma}
We omit the proof of the next lemma as it can be completed using similar arguments as previous.
\begin{lemma}\label{LEMMA3.22}
Let $u$ be a weak solution to \eqref{main_eqn} and let \cref{scl_energy_pqs} hold with the \cref{assmpqs1a} in force. Then there exists a constant $c=c(\textnormal{\texttt{data}})$ such that the following estimate
\begin{align*}
\left(\lambda^{p-2}+a(z_0)\la^{q-2}+b(z_0)\la^{s-2} \right)\sup_{I^{\lambda,(p,s)}_{2\varrho}(t_0)}\fint_{B^{\lambda}_{2\rho}(x_0)}\left|\frac{u-(u)_{\mcq^{\la}_{2\rho}}}{\lambda^{-1+\mu}2\varrho}\right|^2 dx \leq c \left(\lambda^p+a(z_0)\la^q+b(z_0)\la^s\right)
\end{align*}
holds whenever $\mathcal{G}^{\la}_{\rhob}(z_0)\subset \Omega_T.$
\end{lemma}
Now we prove the following lemma, which leads us to reverse H\"{o}lder inequality in this case.
\begin{lemma}\label{LEMMA3.23}
Let $u$ be a weak solution to \eqref{main_eqn} and  let \cref{scl_energy_pqs} hold with the \cref{assmpqs1a} in force. Then there exists a constant $c=c(\textnormal{\texttt{data}})$ such that for any $\theta, \varepsilon \in \left(\theta_0, 1\right)$ for some $\theta_0\in (0,1),$ we have
\begin{align*}
\fiint_{\mathcal{G}^{\lambda}_{2\varrho}(z_0)} H\left(z, \left|\frac{u-(u)_{\mathcal{G}^{\la}_{2\varrho}}}{\la^{-1+\mu}2\varrho}\right|\right)dz
&\leq c \left(\la^p+a(z_0)\la^q+b(z_0)\la^s\right)^{1-\theta}\fiint_{\mathcal{G}^{\lambda}_{2\varrho}(z_0)}H\left(z, |\nabla u|\right)^{\theta}\,dz \\
&+ \varepsilon (\la^p+b(z_0)\la^q+a(z_0)\la^s)
\end{align*}
whenever $\mathcal{G}^{\la}_{4\varrho}(z_0)\subset \Omega_T.$
\end{lemma}
\begin{proof}
The first term on the right hand side of the statement is estimated as
\begin{align}\label{EQUATION3.54}
\fiint_{\mathcal{G}^{\lambda}_{2\varrho}(z_0)} &\left|\frac{u-(u)_{\mathcal{G}^{\la}_{\rhob}}}{\lambda^{-1+\mu}2\varrho}\right|^p\,dz \leq \la^{(1-\theta)p}\left(\fiint_{\mathcal{S}^{\la}_{2\varrho}(z_0)}|\nabla u|^{\theta p}\,dz\right)+ \varepsilon \la^{p}.    
\end{align}
Using {\bf pqs4} and \cref{g_n} with $\sigma=q, \xi=\theta q, \vartheta=\theta, r=2$, we get an estimate on the second term.
\begin{align}\label{EQUATION3.55}
\fiint_{\mathcal{G}^{\lambda}_{2\varrho}(z_0)} a(z)\left|\frac{u-(u)_{\mathcal{G}^{\la}_{\rhob}}}{\lambda^{-1+\mu}2\varrho}\right|^q dz \leq c (a(z_0)\la^q)^{1-\theta}\fiint_{\mathcal{G}^{\la}_{2\varrho}(z_0)}a^{\theta}(z_0)|\nabla u|^{\theta q} dz + \varepsilon a(z_0)\la^q.    
\end{align}
Lastly, using {\bf pqs4} and \cref{g_n} with $\sigma=s, \xi=\theta s, \vartheta=\theta, r=2,$ we estimate the third term. 
\begin{align}\label{EQUATION3.56}
 \fiint_{\mathcal{G}^{\lambda}_{2\varrho}(z_0)} b(z)\left|\frac{u-(u)_{\mathcal{G}^{\la}_{\rhob}}}{\lambda^{-1+\mu}2\varrho}\right|^s dz \leq c (b(z_0)\la^s)^{1-\theta}\fiint_{\mathcal{G}^{\la}_{2\varrho}(z_0)}b^{\theta}(z_0)|\nabla u|^{\theta s} dz + \varepsilon b(z_0)\la^s.   
\end{align}
Combining the estimates \eqref{EQUATION3.54}, \eqref{EQUATION3.55} and \eqref{EQUATION3.56}, we conclude the proof.
\end{proof}
Since the proofs of next two lemmas will be similar as previous, we only state them below.
\begin{lemma}\label{LEMMA3.24}
Let $u$ be a weak solution to \eqref{main_eqn} and \cref{scl_energy_pqs} holds with the \cref{assmpqs1a} is in force. Then there exist constants $c=c(\textnormal{\texttt{data}})$ and $\theta_0 \in (0, 1)$ depending on $n,p,q$ such that for any $\theta, \varepsilon \in (\theta_0,1),$ we have
\begin{align*}
(\lambda^{p-2}+a(z_0)\la^{q-2}+b(z_0)\lambda^{s-2})&\fiint_{\mathcal{G}^{\lambda}_{2\varrho}(z_0)}\left|\frac{u-(u)_{\mathcal{G}^{\la}_{2\varrho}}}{\lambda^{-1+\mu}2\varrho}\right|^2 dz \\
&\apprle \left[\varepsilon (\lambda^p+a(z_0)\la^q+b(z_0)\lambda^s )+ c(\varepsilon) \left(\fiint_{\mathcal{G}^{\lambda}_{2\varrho}(z_0)}H(z, |\nabla u|)^{\theta}\,dz\right)^{\frac{1}{\theta}}\right]    
\end{align*}
whenever $\mathcal{G}^{\la}_{4\varrho}(z_0)\subset \Omega_T.$
\end{lemma}
Applying the above \cref{LEMMA3.22}, \cref{LEMMA3.23} and \cref{LEMMA3.24}, we deduce the reverse H\"{o}lder inequality for $(p,q,s)$-phase.
\begin{lemma}\label{RHPQS}
Let $u$ be a weak solution to \eqref{main_eqn}. Moreover, we assume that the energy estimate given in \cref{scl_energy_pqs} and \cref{assmpqs1a} hold for $(p,q, s)$-intrinsic cylinders. Then there exists a constant $c=c(\textnormal{\texttt{data}})$ and $\theta_0 \in (0,1)$ such that for any $\theta \in (\theta_0, 1),$ we have
\begin{align*}
\fiint_{\mathcal{G}^{\lambda}_{\varrho}(z_0)}H(z, \nabla u)\, dz \leq c \left(\fiint_{\mathcal{G}^{\lambda}_{2\varrho}(z_0)}H(z, \nabla u)^{\theta }dz\right)^{\frac{1}{\theta}}.
\end{align*}
\end{lemma}
\section{Gradient higher integrability}\label{sec4}
In this section, we prove the gradient higher integrability for the solutions to \eqref{main_eqn}. In particular, we prove \cref{main_thm}. Let us recall the notations  \ref{not4}, \cref{Notation} i.e.,
\[\frac{n}{n+2}<\mu\leq \min\left\{1, \frac{p}{2}\right\},\] and 
\ref{not5.5}, \cref{Notation} with $2r,$ i.e.,
\[\mcp_{2r}(z_0):=B_{2r}(x_0)\times (t_0-4r^2, t_0+4r^2).\]

\subsection{Existence of intrinsic cylinders on superlevel sets}\label{STA} 
We first show the existence of $p$-intrinsic and $(p,q)$-intrinsic cylinders on the superlevel sets. Let

\begin{align}\label{def_la_0}
\lambda_0^{(n+2)\mu-n}=\fiint_{\mcp_{2r}(z_0)}H(z, |\nabla u|) dz + 1,\,\,\, \text{and}\,\,\,\,\,\, \Lambda_0=\lambda_0^p+\sup_{z\in \mcp_{2r}(z_0)}a(z)\lambda_0^q+\sup_{z\in \mcp_{2r}(z_0)}b(z)\lambda_0^s.   
\end{align} 
For $\uprho \in [r, 2r],$ we define the superlevel sets

\begin{equation}\label{defn_E_La}
\begin{aligned}
&E_{\Lambda}=\left\{z \in \Omega_T \,\, \Big|\,\, H\left(z, |\nabla u|\right) > \Lambda\right\}\,\,\,\,\text{and}\,\,\,\,\\
&E_{\Lambda, \uprho}=E_{\Lambda}\cap \mcp_\uprho (z_0):=\left\{z \in \mcp_\uprho(z_0)\,\,\Big|\,\,H\left(z, |\nabla u|\right)>\Lambda\right\}.
\end{aligned}  
\end{equation}
Let $0<r\leq r_1<r_2\leq 2r$ and

\begin{align}\label{EQQ6.1}
\Lambda> \left(\frac{4\kappa r}{r_2-r_1}\right)^{\frac{s(n+2)}{(n+2)\mu-n}}\Lambda_0\,\,\,\,\, \text{for some}\,\,\,\, \kappa >1.
\end{align}

Let $\lambda_{\tz}>0$ be such that $\Lambda=\lambda^p_{\tz}+a(\tz)\lambda^q_{\tz}+b(\tz)\lambda^s_{\tz}$ for any $\tz \in E_{\Lambda, r_1}.$ We claim that 
\begin{align}\label{EQQ6.2}
  \lambda_{\tz} > \left(\frac{4 \kappa r}{r_2-r_1}\right)^{\frac{n+2}{(n+2)\mu-n}}\lambda_0.  
\end{align}
On the contrary, let us assume 
\[\lambda_{\tz} \leq  \left(\frac{4 \kappa r}{r_2-r_1}\right)^{\frac{n+2}{(n+2)\mu-n}}\lambda_0.\] Then we find
\begin{align*}
    \Lambda&=\lambda^p_{\tz}+a(\tz)\lambda^q_{\tz}+b(\tz)\la^s_{\tz}\\
    &\leq \left(\frac{4 \kappa r}{r_2-r_1}\right)^{\frac{p(n+2)}{(n+2)\mu-n}}\lambda^p_0+a(\tz)\left(\frac{4 \kappa r}{r_2-r_1}\right)^{\frac{q(n+2)}{(n+2)\mu-n}}\lambda^q_0+b(\tz)\left(\frac{4 \kappa r}{r_2-r_1}\right)^{\frac{s(n+2)}{(n+2)\mu-n}}\la^s_0\\
    &\leq \left(\frac{4 \kappa r}{r_2-r_1}\right)^{\frac{s(n+2)}{(n+2)\mu-n}}\left(\lambda^p_0+a(\tz)\lambda^q_0+b(\tz)\la^s_0\right)\\
    &\leq\left(\frac{4 \kappa r}{r_2-r_1}\right)^{\frac{s(n+2)}{(n+2)\mu-n}}\Lambda_0,
\end{align*}
which is a contradiction to \eqref{EQQ6.1}. In the third inequality above, we used that $\frac{4\kappa r}{r_2-r_1}\geq 1,$ and $p\leq q\leq s.$ Therefore for $\uprho \in \left[\frac{r_2-r_1}{2\kappa}, r_2-r_1\right),$ we have

\begin{align*}
    \fiint_{\mcp^{\lambda_{\tz}}_\uprho(\tz)}H(z,|\nabla u|)\, dz&=\frac{1}{\lambda^{2\mu-p+(\mu-1)n}_{\tz} \uprho^{n+2}}\iint_{\mcp^{\lambda_{\tz}}_\uprho(\tz)}H(z,|\nabla u|)\, dz\nonumber\\
    &\leq \lambda^{p-2\mu+(1-\mu)n}_{\tz}\left(\frac{2r}{\uprho}\right)^{n+2}\fiint_{\mcp_{2r}(z_0)}H(z,|\nabla u|) dz\nonumber\\
    &\leq \left(\frac{4\kappa r}{r_2-r_1}\right)^{n+2}\lambda^{p-2\mu+(1-\mu)n}_{\tz}\fiint_{\mcp_{2r}(z_0)}H(z,|\nabla u|)\, dz.
\end{align*}
In the last inequality, we used the choice of $\uprho$ given above. Further estimating the right hand side, we get
\begin{align}\label{EQQ6.3}
\fiint_{\mcp^{\lambda_{\tz}}_\uprho(\tz)}H(z,|\nabla u|)\, dz  &\overset{\eqref{def_la_0}}{\leq} \left(\frac{4\kappa r}{r_2-r_1}\right)^{n+2}\lambda^{p-2\mu+(1-\mu)n}_{\tz} \lambda^{(n+2)\mu-n}_0\nonumber\\
&\overset{\eqref{EQQ6.2}}{<} \left(\frac{\lambda_{\tz}}{\lambda_0}\right)^{(n+2)\mu-n}\lambda^{p-2\mu+(1-\mu)n}_{\tz} \lambda^{(n+2)\mu-n}_0=\lambda^p_{\tz}.
\end{align}
Since $\tz \in E_{\Lambda, r_1}$ and $\Lambda=\lambda^p_{\tz}+a(\tz)\lambda^q_{\tz}+b(\tz)\la^s_{\tz}$ for any $\tz \in E_{\Lambda, r_1},$ we conclude that $\tz \in E_{\lambda^p_{\tz}, r_1}.$ Now for each $\tz \in E_{\Lambda, r_1},$ by Lebesgue differentiation theorem we have
\begin{align}\label{EQQ6.4}
\lim_{\uprho \to 0}\fiint_{\mcp^{\lambda_{\tz}}_{\uprho}(\tz)}|H(z,|\nabla u|)\,dz = |\nabla u(\tz)|^p+a(\tz)|\nabla u(\tz)|^q+b(\tz)|\nabla u(\tz)| > \lambda^p_{\tz}.
\end{align}
Now from \eqref{EQQ6.3} and \eqref{EQQ6.4} we conclude that there exists a radius $\uprho_{\tz} \in \left(0, \frac{r_2-r_1}{2\kappa}\right)$ such that 
\begin{align}\label{EQQ6.5}
\fiint_{\mcp^{\lambda_{\tz}}_{\uprho_{\tz}}(\tz)}H(z,|\nabla u|)\, dz =\lambda^p_{\tz}
\end{align}
and for every $\uprho \in \left(\uprho_{\tz}, \frac{r_2-r_1}{2\kappa}\right]$ we have
\begin{align}\label{EQQ6.6}
\fiint_{\mcp^{\lambda_{\tz}}_{\uprho}(\tz)}H(z,|\nabla u|)\, dz <\lambda^p_{\tz}.   
\end{align}
Now proceeding as \eqref{EQQ6.3} and using \eqref{EQQ6.5} we get
\begin{align*}
\lambda^p_{\tz}=\fiint_{\mcp^{\lambda_{\tz}}_{\uprho_{\tz}}(\tz)}H(\tz,|\nabla u|)\, dz&=\frac{1}{\uprho^{n+2}_{\tz} \lambda^{2\mu-p+(\mu-1)n}_{\tz}}\iint_{\mcp^{\lambda_{\tz}}_{\uprho_{\tz}}(\tz)}H(\tz,|\nabla u|)\, dz\\
&\leq \frac{(2r)^{n+2}}{\uprho^{n+2}_{\tz} \lambda^{2\mu-p+(\mu-1)n}_{\tz}}\fiint_{\mcp_{2r}(z_0)}H(\tz,|\nabla u|)\,dz\\
& \overset{\eqref{def_la_0}}{<} \left(\frac{2r}{\uprho_{\tz}}\right)^{n+2} \lambda^{p-2\mu+(1-\mu)n}_{\tz}\lambda^{(n+2)\mu-n}_0.
\end{align*}
This implies 
\begin{align}\label{EQUATION4.9}
 \lambda_{\tz} < \left(\frac{2r}{\uprho_{\tz}}\right)^{\frac{n+2}{(n+2)\mu-n}} \lambda_0.
\end{align}
Now for $K$ defined in \eqref{new_k}, we may have the following cases:
\begin{align}
    &K\la^p_{\tz}\geq a(\tz)\la^q_{\tz},\label{EQUATION4.10}\\
    &K\la^p_{\tz}< a(\tz)\la^q_{\tz},\label{EQUATION4.11}\\
    &K\la^p_{\tz}\geq b(\tz)\la^s_{\tz},\label{EQUATION4.12}\\
    &K\la^p_{\tz}< b(\tz)\la^s_{\tz},\label{EQUATION4.13}\\
    &a(\tz) \geq 2[a]_{\alpha}\max\left\{( \lambda^{-1+\mu}_{\tz} \uprho_{\tz})^{\alpha}, \left(\frac{\la^\mu_{\tz}}{\sqrt{\la^p_{\tz}+a(\tz)\la^q_{\tz}}}\uprho_{\tz}\right)^{\alpha}\right\},\label{EQUATION4.14}\\
    &a(\tz) < 2[a]_{\alpha}\max\left\{( \lambda^{-1+\mu}_{\tz} \uprho_{\tz})^{\alpha}, \left(\frac{\la^\mu_{\tz}}{\sqrt{\la^p_{\tz}+a(\tz)\la^q_{\tz}}}\uprho_{\tz}\right)^{\alpha}\right\},\label{EQUATION4.15}\\
    &b(\tz) \geq 2[b]_{\beta}\max\left\{( \lambda^{-1+\mu}_{\tz} \uprho_{\tz})^{\beta}, \left(\frac{\la^\mu_{\tz}}{\sqrt{\la^p_{\tz}+b(\tz)\la^s_{\tz}}}\uprho_{\tz}\right)^{\beta}\right\},\label{EQUATION4.16}\\
    &b(\tz) < 2[b]_{\beta}\max\left\{( \lambda^{-1+\mu}_{\tz} \uprho_{\tz})^{\beta}, \left(\frac{\la^\mu_{\tz}}{\sqrt{\la^p_{\tz}+b(\tz)\la^s_{\tz}}}\uprho_{\tz}\right)^{\beta}\right\},\label{EQUATIONN4.17}\\
    &a(\tz) \geq 2[a]_{\alpha}\max\left\{( \lambda^{-1+\mu}_{\tz} \uprho_{\tz})^{\alpha}, \left(\frac{\la^\mu_{\tz}}{\sqrt{\la^p_{\tz}+a(\tz)\la^q_{\tz}+b(\tz)\la^s_{tz}}}\uprho_{\tz}\right)^{\alpha}\right\},\label{EQUATION4.18}\\
    &a(\tz) < 2[a]_{\alpha}\max\left\{( \lambda^{-1+\mu}_{\tz} \uprho_{\tz})^{\alpha}, \left(\frac{\la^\mu_{\tz}}{\sqrt{\la^p_{\tz}+a(\tz)\la^q_{\tz}+b(\tz)\la^s_{tz}}}\uprho_{\tz}\right)^{\alpha}\right\},\label{EQUATION4.19}\\
    &b(\tz) \geq 2[b]_{\beta}\max\left\{( \lambda^{-1+\mu}_{\tz} \uprho_{\tz})^{\beta}, \left(\frac{\la^\mu_{\tz}}{\sqrt{\la^p_{\tz}+a(\tz)\la^q_{\tz}+b(\tz)\la^s_{\tz}}}\uprho_{\tz}\right)^{\beta}\right\},\label{EQUATION4.20}\\
    &b(\tz) <2[b]_{\beta}\max\left\{( \lambda^{-1+\mu}_{\tz} \uprho_{\tz})^{\beta}, \left(\frac{\la^\mu_{\tz}}{\sqrt{\la^p_{\tz}+a(\tz)\la^q_{\tz}+b(\tz)\la^s_{\tz}}}\uprho_{\tz}\right)^{\beta}\right\}.\label{EQUATION4.21}
\end{align}
We consider the following five cases:
\begin{enumerate}[label=(\roman*),series=theoremconditions]
\item \label{1_exists_cylin}  \eqref{EQUATION4.10} and \eqref{EQUATION4.12} hold.
\item \label{2_exists_cylin}  \eqref{EQUATION4.11}, \eqref{EQUATION4.12} and \eqref{EQUATION4.14} hold.
\item \label{3_exists_cylin} \eqref{EQUATION4.10}, \eqref{EQUATION4.13} and \eqref{EQUATION4.16} hold.
\item \label{4_exists_cylin}   \eqref{EQUATION4.11},  \eqref{EQUATION4.13},  \eqref{EQUATION4.18} and  \eqref{EQUATION4.20} hold.
\item \label{5_exists_cylin} Otherwise.
\end{enumerate}
In the case of \ref{1_exists_cylin}, the existence of $p$-intrinsic cylinders defined in \cref{assmp1a} follows directly by setting $\tz=z_0,$ $\uprho_{\tz}=\rhoa, \frac{r_2-r_1}{2\kappa}=\rhob$ in \eqref{EQQ6.5} and \eqref{EQQ6.6}.

We will show that \ref{2_exists_cylin} implies the existence of $(p, q)$- cylinders defined in \cref{assmpq1a}. From \eqref{EQUATION4.11} of \ref{2_exists_cylin}, clearly we have $a(\tz)>0.$ Then by using \eqref{EQQ6.6} and $\mcq^{\lambda_{\tz}}_{\uprho}(\tz) \subset \mcp^{\lambda_{\tz}}_{\uprho}(\tz),$ for every $\uprho \in (\uprho_{\tz}, r_2-r_1)$ we have

\begin{align*}
    \fiint_{\mcq^{\lambda_{\tz}}_{\uprho}(\tz)}H(z, |\nabla u|)\, dz \leq \frac{\lambda^p_{\tz}+a(\tz)\lambda^q_{\tz}}{\lambda^p_{\tz}}\fiint_{\mcp^{\lambda_{\tz}}_{\uprho}(\tz)}H(z, |\nabla u|)\, dz < \lambda^p_{\tz}+a(\tz)\lambda^q_{\tz}.
\end{align*}
Recalling that $\tz \in E_{\Lambda, r_1}$ and again using Lebesgue differentiation theorem we find that there exists a $\tilde{\uprho}_{\tz,q} \in (0, \uprho_{\tz})$ such that
$\fiint_{\mcq^{\lambda_{\tz}}_{\tilde{\uprho}_{\tz,q}}(\tz)}H(z, |\nabla u|)\,dz =\lambda^p_{\tz}+a(\tz)\lambda^q_{\tz}$
and $\fiint_{\mcq^{\lambda_{\tz}}_{\uprho}(\tz)}H(z, |\nabla u|)\,dz <\lambda^p_{\tz}+a(\tz)\lambda^q_{\tz}$ for any $\uprho \in \left(\tilde{\uprho}_{\tz,q} , r_2-r_1\right).$ Moreover, by \eqref{EQQ6.5}

\begin{align*}
    \fiint_{\mcq^{\la_{\tz}}_{\uprho_{\tz}}(\tz)}H(z, |\nabla u|)\, dz \leq \frac{\la^p_{\tz}+a(\tz)\la^q_{\tz}}{\la^p_{\tz}}\fiint_{\mcp^{\la_{\tz}}_{\uprho_{\tz}}(\tz)}H(z, |\nabla u|)\,dz=\la^p_{\tz}+a(\tz)\la^q_{\tz}.
\end{align*}
Hence, we proved \textbf{pq2}-\textbf{pq3} of \cref{assmpq1a} by replacing $\tz=z_0, \rhoa=\tilde{\uprho}_{\tz, q}$ and $\uprho_{\tz}=\rhob.$ Now we shall show \textbf{pq4} of \cref{assmpq1d}. From \eqref{EQUATION4.14} of \ref{2_exists_cylin}, we have 
\begin{align*}
    &2[a]_{\alpha}\max\left\{( \lambda^{-1+\mu}_{\tz} \uprho_{\tz})^{\alpha}, \left(\frac{\la^\mu_{\tz}}{\sqrt{\la^p_{\tz}+a(\tz)\la^q_{\tz}}}\uprho_{\tz}\right)^{\alpha} \right\}\\
    &< a(\tz)
    \leq \inf_{z\in\mcq^{\la_{\tz}}_{\uprho_{\tz}}(\tz)}a(z)+[a]_{\alpha}\max\left\{( \lambda^{-1+\mu}_{\tz} \uprho_{\tz})^{\alpha}, \left(\frac{\la^\mu_{\tz}}{\sqrt{\la^p_{\tz}+a(\tz)\la^q_{\tz}}}\uprho_{\tz}\right)^{\alpha} \right\}.
\end{align*}
From the above inequality, we get
\begin{align*}
 [a]_{\alpha}\max\left\{( \lambda^{-1+\mu}_{\tz} \uprho_{\tz})^{\alpha}, \left(\frac{\la^\mu_{\tz}}{\sqrt{\la^p_{\tz}+a(\tz)\la^q_{\tz}}}\uprho_{\tz}\right)^{\alpha} \right\} \leq \inf_{z\in\mcq^{\la_{\tz}}_{\uprho_{\tz}}(\tz)}a(z).   
\end{align*}
Using these we conclude that 
\begin{align*}
    \sup_{z \in \mcq^{\la_{\tz}}_{\uprho_{\tz}}(\tz)}a(z) \leq 2 \inf_{z\in \mcq^{\la_{\tz}}_{\uprho_{\tz}}(\tz)}a(z).
\end{align*}
Hence we obtain
\begin{align*}
    \inf_{z \in \mcq^{\la_{\tz}}_{\uprho_{\tz}}(\tz)} a(z)\geq \frac{a(\tz)}{2}\,\,\, \text{and}\,\,\, \sup_{z \in \mcq^{\la_{\tz}}_{\uprho_{\tz}}(\tz)} a(z)\leq 2 a(\tz).
\end{align*}
This implies 
\begin{align*}
\frac{a(\tz)}{2}\leq a(z)\leq 2a(\tz)\,\,\,\, \text{for any}\,\,\,\,  z \in \mcq^{\la_{\tz}}_{\uprho_{\tz}}(\tz).
\end{align*}

Now we shall show that \ref{3_exists_cylin} implies the existence of $(p, s)$-cylinders defined in \cref{assmps1a}. From \eqref{EQUATION4.13} of \ref{3_exists_cylin}, clearly we have $b(\tz)>0.$ Then by using \eqref{EQQ6.6} and $\mathcal{S}^{\lambda_{\tz}}_{\uprho}(\tz) \subset \mcp^{\lambda_{\tz}}_{\uprho}(\tz),$ for every $\uprho \in (\uprho_{\tz}, r_2-r_1)$ we have
\begin{align*}
    \fiint_{\mathcal{S}^{\lambda_{\tz}}_{\uprho}(\tz)}H(z, |\nabla u|)\, dz \leq \frac{\lambda^p_{\tz}+b(\tz)\lambda^s_{\tz}}{\lambda^p_{\tz}}\fiint_{\mcp^{\lambda_{\tz}}_{\uprho}(\tz)}H(z, |\nabla u|)\, dz < \lambda^p_{\tz}+b(\tz)\lambda^s_{\tz}.
\end{align*}
We recall that $\tz \in E_{\Lambda, r_1}$ and again using Lebesgue differentiation theorem we find that there exists a $\tilde{\uprho}_{\tz,s} \in (0, \uprho_{\tz})$ such that
$\fiint_{\mathcal{S}^{\lambda_{\tz}}_{\tilde{\uprho}_{\tz,s}}(\tz)}H(z, |\nabla u|)\,dz =\lambda^p_{\tz}+b(\tz)\lambda^s_{\tz}$
and $\fiint_{\mathcal{S}^{\lambda_{\tz}}_{\uprho}(\tz)}H(z, |\nabla u|)\,dz <\lambda^p_{\tz}+b(\tz)\lambda^s_{\tz}$ for any $\uprho \in \left(\tilde{\uprho}_{\tz,s} , r_2-r_1\right).$ Moreover, using \eqref{EQQ6.5}, we have 
\begin{align*}    \fiint_{\mathcal{S}^{\la_{\tz}}_{\uprho_{\tz}}(\tz)}H(z, |\nabla u|)\, dz \leq \frac{\la^p_{\tz}+b(\tz)\la^s_{\tz}}{\la^p_{\tz}}\fiint_{\mcp^{\la_{\tz}}_{\uprho_{\tz}}(\tz)}H(z, |\nabla u|)\,dz=\la^p_{\tz}+b(\tz)\la^s_{\tz}.
\end{align*}
Hence, we proved \textbf{ps2}-\textbf{ps3} of \cref{assmps1a} by replacing $\tz=z_0, \rhoa=\tilde{\uprho}_{\tz, s}$ and $\uprho_{\tz}=\rhob.$ Now we shall show \textbf{ps4} of \cref{assmps1d}. From \eqref{EQUATION4.16} of \ref{3_exists_cylin}, we have 
\begin{align*}
    &2[b]_{\beta}\max\left\{( \lambda^{-1+\mu}_{\tz} \uprho_{\tz})^{\beta}, \left(\frac{\la^\mu_{\tz}}{\sqrt{\la^p_{\tz}+b(\tz)\la^s_{\tz}}}\uprho_{\tz}\right)^{\beta} \right\}\\
    &< b(\tz)
    \leq \inf_{z\in\mathcal{S}^{\la_{\tz}}_{\uprho_{\tz}}(\tz)}b(z)+[b]_{\beta}\max\left\{( \lambda^{-1+\mu}_{\tz} \uprho_{\tz})^{\beta}, \left(\frac{\la^\mu_{\tz}}{\sqrt{\la^p_{\tz}+b(\tz)\la^s_{\tz}}}\uprho_{\tz}\right)^{\beta} \right\}.
\end{align*}
From the above inequality, we get
\begin{align*}
 [b]_{\beta}\max\left\{( \lambda^{-1+\mu}_{\tz} \uprho_{\tz})^{\beta}, \left(\frac{\la^\mu_{\tz}}{\sqrt{\la^p_{\tz}+b(\tz)\la^s_{\tz}}}\uprho_{\tz}\right)^{\beta} \right\} \leq \inf_{z\in\mathcal{S}^{\la_{\tz}}_{\uprho_{\tz}}(\tz)}b(z).   
\end{align*}
Using these we conclude that 
\begin{align*}
    \sup_{z \in \mathcal{S}^{\la_{\tz}}_{\uprho_{\tz}}(\tz)}b(z) \leq 2 \inf_{z\in \mathcal{S}^{\la_{\tz}}_{\uprho_{\tz}}(\tz)}b(z).
\end{align*}
Hence we obtain
\begin{align*}
    \inf_{z \in \mathcal{S}^{\la_{\tz}}_{\uprho_{\tz}}(\tz)} b(z)\geq \frac{b(\tz)}{2}\,\,\, \text{and}\,\,\, \sup_{z \in \mathcal{S}^{\la_{\tz}}_{\uprho_{\tz}}(\tz)} b(z)\leq 2 b(\tz).
\end{align*}
This implies 
\begin{align*}
\frac{b(\tz)}{2}\leq b(z)\leq 2b(\tz)\,\,\,\, \text{for any}\,\,\,\,  z \in \mathcal{S}^{\la_{\tz}}_{\uprho_{\tz}}(\tz).
\end{align*}

Now we show that \ref{4_exists_cylin} implies the existence of $(p,q,s)$-cylinders defined in \cref{assmpqs1a}. Since $\mathcal{G}^{\la_{\tz}}_{\uprho}(\tz)\subset \mathcal{Q}^{\la_{\tz}}_{\uprho}(\tz)\cup \mathcal{S}^{\la_{\tz}}_{\uprho}(\tz)\subset \mathcal{P}^{\la_{\tz}}_{\uprho}(\tz),$ for every $\uprho \in \left(\min\{\tilde{\rho}_{\tz, q}, \tilde{\rho}_{\tz, s}\}, r_2-r_1\right),$ we have
\begin{align*}
    \fiint_{\mathcal{G}^{\la_{\tz}}_{\uprho}(\tz)}H(z, |\nabla u|)\,dz < \la^p_{\tz}+a(\tz)\la^q_{\tz}+b(\tz)\la^s_{\tz}
\end{align*}
Furthermore, for $\tz \in E_{\Lambda, r_1}$ and again using Lebesgue differentiation theorem we find that there exists a $\tilde{\uprho}_{\tz,q, s} \in (0, \min\{\tilde{\rho}_{\tz, q}, \tilde{\rho}_{\tz, s}\})$ such that
$\fiint_{\mathcal{G}^{\lambda_{\tz}}_{\tilde{\uprho}_{\tz,q, s}}(\tz)}H(z, |\nabla u|)\,dz =\lambda^p_{\tz}+a(\tz)\la^q_{\tz}+b(\tz)\lambda^s_{\tz}$
and $\fiint_{\mathcal{G}^{\lambda_{\tz}}_{\uprho}(\tz)}H(z, |\nabla u|)\,dz <\lambda^p_{\tz}+a(\tz)\la^q_{\tz}+b(\tz)\lambda^s_{\tz}$ for any $\uprho \in \left(\tilde{\uprho}_{\tz,q, s} , r_2-r_1\right).$ Moreover, using \eqref{EQQ6.5}, we have 
\begin{align*}    \fiint_{\mathcal{G}^{\la_{\tz}}_{\uprho_{\tz}}(\tz)}H(z, |\nabla u|)\, dz \leq \frac{\la^p_{\tz}+a(\tz)\la^q_{\tz}+b(\tz)\la^s_{\tz}}{\la^p_{\tz}}\fiint_{\mcp^{\la_{\tz}}_{\uprho_{\tz}}(\tz)}H(z, |\nabla u|)\,dz=\la^p_{\tz}+a(\tz)\la^q_{\tz}+b(\tz)\la^s_{\tz}.
\end{align*}
Hence, we proved \textbf{pqs2}-\textbf{pqs3} of \cref{assmpqs1a} by replacing $\tz=z_0, \rhoa=\tilde{\uprho}_{\tz, q,s}$ and $\uprho_{\tz}=\rhob.$ \textbf{pqs4} of \cref{assmpqs1d} can be proved similarly as previous.

Next, we will show that \ref{5_exists_cylin} cannot hold. We need to show three cases: \eqref{EQUATION4.11} and \eqref{EQUATION4.15} cannot coexist, \eqref{EQUATION4.13} and \eqref{EQUATIONN4.17} cannot coexist, and \eqref{EQUATION4.11}, \eqref{EQUATION4.13}, \eqref{EQUATION4.21} and \eqref{EQUATION4.19} cannot coexist. We start with the first case. We consider the following two subcases.

\noindent{\bf Case $\max\left\{( \lambda^{-1+\mu}_{\tz} \uprho_{\tz})^{\alpha}, \left(\frac{\la^\mu_{\tz}}{\sqrt{\la^p_{\tz}+a(\tz)\la^q_{\tz}}}\uprho_{\tz}\right)^{\alpha} \right\}=( \lambda^{-1+\mu}_{\tz} \uprho_{\tz})^{\alpha}$ :} In this case,
we have
\begin{align}\label{EQUATION8.8}
    K\la^p_{\tz}\leq 2[a]_{\alpha}( \lambda^{-1+\mu}_{\tz} \uprho_{\tz})^{\alpha} \la^q_{\tz}.
\end{align}
Note that from \eqref{EQQ6.5}, we have
\begin{align*}
\la^p_{\tz}=\fiint_{\mcp^{\la_{\tz}}_{\uprho_{\tz}}(\tz)}|\nabla u|^p+a(z)|\nabla u|^q dz \leq \frac{\la^{n-(n+2)\mu}_{\tz}\la^p_{\tz}}{\uprho^{n+2}_{\tz}|B_1|}\iint_{\Omega_T}|\nabla u|^p+a(z)|\nabla u|^q dz <\frac{K\la^{n-(n+2)\mu}_{\tz}\la^p_{\tz}}{\uprho^{n+2}_{\tz}\left(2[a]_{\alpha}\right)^{\frac{n+2}{\alpha}}}.
\end{align*}
This gives
\begin{align*}
    \uprho^{\alpha}_{\tz}< \la^{\alpha\left(\frac{n}{n+2}-\mu\right)}_{\tz}\frac{K^{\frac{\alpha}{n+2}}}{2[a]_{\alpha}}\leq \la^{\alpha\left(\frac{n}{n+2}-\mu\right)}_{\tz} \frac{K}{2[a]_{\alpha}}.
\end{align*}
Using the restriction on $q$ from \eqref{def_pq}, we get
\begin{align*}
&\uprho^{\alpha}_{\tz}\la^q_{\tz}< \la^{\alpha\left(\frac{n}{n+2}-\mu\right)}\la^p_{\tz}\la^{\frac{2\alpha}{n+2}}_{\tz}\frac{K}{2[a]_{\alpha}}=\la^{\alpha(1-\mu)}_{\tz}\la^p_{\tz}\frac{K}{2[a]_{\alpha}}\\
&\implies \left(\la^{-1+\mu}_{\tz}\uprho_{\tz}\right)^{\alpha}< \la^{p-q}_{\tz}\frac{K}{2[a]_{\alpha}}.
\end{align*}
Substituting the above estimate in \eqref{EQUATION8.8}, we obtain
\begin{align*}
K\la^p_{\tz}\leq 2[a]_{\alpha}( \lambda^{-1+\mu}_{\tz} \uprho_{\tz})^{\alpha} \la^q_{\tz} < 2[a]_{\alpha} \la^{p-q}_{\tz}\frac{K}{2[a]_{\alpha}}=K\la^p_{\tz}    
\end{align*}
which is a contradiction.

\noindent{\bf Case $\max\left\{( \lambda^{-1+\mu}_{\tz} \uprho_{\tz})^{\alpha}, \left(\frac{\la^\mu_{\tz}}{\sqrt{\la^p_{\tz}+a(\tz)\la^q_{\tz}}}\uprho_{\tz}\right)^{\alpha} \right\}=\left(\frac{\la^\mu_{\tz}}{\sqrt{\La}}\uprho_{\tz}\right)^{\alpha}$ :} In this case, we have
\begin{align*}
    K\la^p_{\tz} \leq 2[a]_{\alpha}\left(\frac{\la^\mu_{\tz}}{\sqrt{\la^p_{\tz}+a(\tz)\la^q_{\tz}}}\uprho_{\tz}\right)^{\alpha} \la^q_{\tz}<  2[a]_{\alpha}\left(\la^{\frac{2\mu-p}{2}}_{\tz}\uprho_{\tz}\right)^{\alpha}\la^q_{\tz}.
\end{align*}
Again using the restriction on $q$ from \eqref{def_pq} and \eqref{EQQ6.5}, we arrive at a contradiction as before. Thus, \eqref{EQUATION4.11} and \eqref{EQUATION4.15} cannot hold together. The non-coexistence of other possibilities can be shown in a similar manner.
\subsection{Vitali covering argument} \label{vitali_cover}
We start by denoting the following cylinders for any $z\in E_{\La, r_1},$
\begin{equation}\label{both intrinsic cylinders}
    Q^{\La}_z(z)=\begin{cases}
        \mcp^{\la_z}_{r_z}(z)\,\,\,\,\,\, \text{if}\,\,\, K\la^p_z \geq a(z) \la^q_z,\,\, K\la^p_z \geq b(z) \la^s_z\\
        \mcq^{\la_z}_{r_z}(z)\,\,\,\,\,\, \text{if}\,\,\, K\la^p_z < a(z) \la^q_z,\,\, K\la^p_z \geq b(z) \la^s_z\\
        \mathcal{S}^{\la_z}_{r_z}(z)\,\,\,\,\,\, \text{if}\,\,\, K\la^p_z \geq a(z) \la^q_z,\,\, K\la^p_z < b(z) \la^s_z\\
        \mathcal{G}^{\la_z}_{r_z}(z)\,\,\,\,\,\, \text{if}\,\,\, K\la^p_z < a(z) \la^q_z,\,\, K\la^p_z < b(z) \la^s_z
    \end{cases}
\end{equation}
and for $v, w \in E_{\La, r_1},$ we denote $\La=\la^p_i+a(i)\la^q_i+b(i)\la^s_i$ where $i \in \{v, w\}.$
Let us consider the collection of cylinders $\mathcal{F}$ that covers $E_{\La, r_1},$ i.e.,
\begin{align*}
    \mathcal{F}:=\left\{Q^{\La}_w(w)\,\,\big|\,\, w \in E_{\La, r_1}\right\}.
\end{align*}
Setting $R=\frac{r_2-r_1}{2\kappa},$ consider the following subcollection 
\begin{align*}
    \mathcal{F}_j:=\left\{Q^{\La}_w(w) \in \mathcal{F}\,\, \Big|\,\,\,\,\, \frac{R}{2^j} < r_w < \frac{R}{2^{j-1}}\right\},\,\,\,\,\,\, j\in \NN.
\end{align*}
We construct disjoint subcollections $\mathcal{I}_j \subset \mathcal{F}_j$ for $j \in \NN$ as follows. Let $\mathcal{I}_1$ be the maximal disjoint  subcollection in $\mathcal{F}_1.$ We note that from
\begin{align*}
    \lim_{\La\to \infty}\La|E_{\La}|\leq \lim_{\La \to \infty}\iint_{E_{\La}}H(z, |\nabla u|)\, dz=0,
\end{align*}
we have $|E_{\La}|<\infty.$ Using $|E_{\La, r_1}|\leq |E_{\La}|< \infty$ and \eqref{EQUATION4.9}, we conclude that the number of cylinders in $\mathcal{I}_1$ is finite. Suppose we have already chosen $\mathcal{I}_j \subset \mcf_j$ for $j=1,..,k-1.$ Then we construct $\mathcal{I}_k$ as
\begin{align*}
    \mathcal{I}_k=\left\{Q^{\La}_w(w)\in \mcf_k \,\, \Big|\,\,Q^{\La}_w(w)\cap Q^{\La}_v(v)=\emptyset\,\,\, \text{for every}\,\,\, Q^{\La}_v(v) \in \cup_{j=1}^{k-1}\mathcal{I}_j \right\}. 
\end{align*}
Therefore, \[\mathcal{I}=\cup_{j=1}^{\infty}\mathcal{I}_j\]
would be the maximal collection of pairwise disjoint cylinders in $\mcf.$ Now to prove Vitali's covering lemma, we need to show:
\begin{itemize}
    \item[1.] For any $Q^{\La}_w(w) \in \mcf,$ there exists $Q^{\La}_v(v) \in \mathcal{I}$ such that $Q^{\La}_w(w) \cap Q^{\La}_v(v) \neq \emptyset.$
    \item [2.] There exists a universal constant $\kappa>1$ such that $Q^{\La}_w(w) \subset \kappa Q^{\La}_v(v).$
\end{itemize}
To show the first assertion, fix some $Q^{\La}_w (w) \in \mcf.$ Then $Q^{\La}_w(w) \in \mcf_j$ for some $j \in \NN.$ Using the maximality of $\mathcal{I}_j,$ we find that there exists $Q^{\La}_v(v) \in \cup_{i=1}^{j}\mathcal{I}_i$ such that $Q^{\La}_w(w)\cap Q^{\La}_v(v)\neq \emptyset.$ 
We note that by the definition of $\mathcal{I}_i,$ $i=1, 2,...,j,$ it can be shown that the radii are comparable, i.e.,
\begin{align*}
    r_w \leq 2r_v,\,\,\,\, \text{and}\,\,\, r_v \leq 2r_w.
\end{align*}
Now we prove the second assertion, i.e., $Q^{\La}_w(w)\subset \kappa Q^{\La}_v(v)$ in the rest of this section.
We consider the following 16 cases:
\begin{enumerate}[label=(\roman*),series=theoremconditions]
    \item \label{i} $Q^{\La}_w(w)=\mcp^{\la_w}_{r_w}(w)$ and $Q^{\La}_v(v)=\mcp^{\la_v}_{r_v}(v),$

     \item \label{ii} $Q^{\La}_w(w)=\mcp^{\la_w}_{r_w}(w)$ and $Q^{\La}_v(v)=\mcq^{\lambda_v}_{r_v}(v),$
     
    \item \label{iii} $Q^{\La}_w(w)=\mcp^{\la_w}_{r_w}(w)$ and $Q^{\La}_v(v)=\mathcal{S}^{\lambda_v}_{r_v}(v),$
    \item \label{iv} $Q^{\La}_w(w)=\mcp^{\la_w}_{r_w}(w)$ and $Q^{\La}_v(v)=\mathcal{G}^{\lambda_v}_{r_v}(v),$
    \item \label{v} $Q^{\La}_w(w)=\mcq^{\lambda_w}_{r_w}(w)$ and $Q^{\La}_v(v)=\mcq^{\lambda_v}_{r_v}(v),$
    \item \label{vi} $Q^{\La}_w(w)=\mcq^{\lambda_w}_{r_w}(w)$ and $Q^{\La}_v(v)=\mcp^{\lambda_v}_{r_v}(v),$
    \item \label{vii} $Q^{\La}_w(w)=\mcq^{\la_w}_{r_w}(w)$ and $Q^{\La}_v(v)=\mathcal{S}^{\lambda_v}_{r_v}(v),$
    \item \label{viii} $Q^{\La}_w(w)=\mcq^{\la_w}_{r_w}(w)$ and $Q^{\La}_v(v)=\mathcal{G}^{\lambda_v}_{r_v}(v),$
     \item \label{ix} $Q^{\La}_w(w)=\mathcal{S}^{\la_w}_{r_w}(w)$ and $Q^{\La}_v(v)=\mathcal{S}^{\la_v}_{r_v}(v),$
    \item \label{x} $Q^{\La}_w(w)=\mathcal{S}^{\lambda_w}_{r_w}(w)$ and $Q^{\La}_v(v)=\mcp^{\lambda_v}_{r_v}(v),$
    \item \label{xi} $Q^{\La}_w(w)=\mathcal{S}^{\lambda_w}_{r_w}(w)$ and $Q^{\La}_v(v)=\mcq^{\lambda_v}_{r_v}(v),$
    \item \label{xii} $Q^{\La}_w(w)=\mathcal{S}^{\la_w}_{r_w}(w)$ and $Q^{\La}_v(v)=\mathcal{G}^{\lambda_v}_{r_v}(v),$
    \item \label{xiii} $Q^{\La}_w(w)=\mathcal{G}^{\la_w}_{r_w}(w)$ and $Q^{\La}_v(v)=\mathcal{G}^{\lambda_v}_{r_v}(v),$
    \item \label{xiv} $Q^{\La}_w(w)=\mathcal{G}^{\la_w}_{r_w}(w)$ and $Q^{\La}_v(v)=\mcp^{\lambda_v}_{r_v}(v),$
    \item \label{xv} $Q^{\La}_w(w)=\mathcal{G}^{\la_w}_{r_w}(w)$ and $Q^{\La}_v(v)=\mcq^{\lambda_v}_{r_v}(v),$
    \item \label{xvi} $Q^{\La}_w(w)=\mathcal{G}^{\la_w}_{r_w}(w)$ and $Q^{\La}_v(v)=\mathcal{S}^{\lambda_v}_{r_v}(v).$
\end{enumerate}
First, we show that $\la_v$ and $\la_w$ are comparable in intrinsic cylinders. We note that it is enough to prove the comparison of $\la_v, \la_w$ for \ref{i}, \ref{v}, \ref{ix} and \ref{xiii} cases.  First, we consider the case \ref{i}. By the construction of Vitali covering, we already have $\mcp^{\la_w}_{r_w}(w)\cap \mcp^{\la_v}_{r_v}(v)\neq \emptyset$ and $\frac{1}{2}r_v \leq r_w \leq 2r_v.$

\vspace{.3cm}
\noindent\textbf{Claim :} $\la_v$ and $\la_w$ are comparable, i.e.,
\begin{align*}
    \frac{1}{K}\la_v \leq \la_w \leq K\la_v.
\end{align*}
Let $v=(x_v, t_v)$ and $w=(x_w, t_w).$ Using the H\"{o}lder continuity of the coefficient $a(z),$ we get
\begin{align*}
    |a(v)-a(w)| &\leq [a]_{\alpha} \max\left\{|x_v-x_w|, |t_v-t_w|^{\frac{1}{2}}\right\}^{\alpha}\\
    & \apprle[a]_{\alpha} \max\left\{2\left(\la^{-1+\mu}_v+\la^{-1+\mu}_w\right)r_w, 2\left(\la^{2\mu-p}_v+\la^{2\mu-p}_w\right)^{\frac{1}{2}}r_w\right\}^{\alpha}.
\end{align*}
Let $\la_w \leq \la_v$ and we claim that $\la_v \leq K \la_w.$ On the contrary, let $\la_v> K \la_w.$ Using the assumption $\la_w \leq \la_v,$ we have

\begin{align*}
    |a(v)-a(w)|\apprle [a]_{\alpha} \max\left\{2\la^{-1+\mu}_w r_w, 2\la^{\frac{2\mu-p}{2}}_w r_w\right\}^{\alpha}.
\end{align*}
Similarly, the H\"{o}lder continuity of $b(z)$ implies

\begin{align*}
|b(v)-b(w)|\apprle [b]_{\beta} \max\left\{2\la^{-1+\mu}_w r_w, 2\la^{\frac{2\mu-p}{2}}_w r_w\right\}^{\beta}.   
\end{align*}
We consider the following two cases.

\noindent\textbf{Case $\max\left\{\la^{-1+\mu}_w r_w, \la^{\frac{2\mu-p}{2}}_w r_w\right\}^{\alpha}=\left(\la^{-1+\mu}_wr_w\right)^{\alpha}$ :} This case corresponds to 
\begin{align*}
    |a(v)-a(w)| \apprle [a]_{\alpha} \left(2\la^{-1+\mu}_w r_w\right)^{\alpha}
\end{align*}
and following \eqref{Estimates on raddi with k} we further have
\begin{align*}
    r^{\alpha}_w\la^{\alpha\left(\mu-\frac{n}{n+2}\right)}_w \leq \frac{K}{2[a]_{\alpha}}
\end{align*}
and 
\begin{align*}
r^{\beta}_w\la^{\beta\left(\mu-\frac{n}{n+2}\right)}_w \leq \frac{K}{2[b]_{\beta}} .  
\end{align*}
From the definition of $\La,$ we have

\begin{align*}
    \La=\la^p_w+a(w)\la^q_w+b(w)\la^s_w &\leq \la^p_w+[a]_{\alpha} 2^{\alpha} \la^{(-1+\mu)\alpha}_w r^{\alpha}_w \la^q_w\\
    &+ a(v)\la^q_w +[b]_{\beta} \max\left\{2\la^{-1+\mu}_w r_w, 2\la^{\frac{2\mu-p}{2}}_w r_w\right\}^{\beta} \la^s_w+b(v)\la^s_w \\
    &\overset{\eqref{bounds on maximum radius}}{\leq} \la^p_w+a(v)\la^q_w+K\la^p_w+b(v)\la^s_w.
\end{align*}
Now substituting $\la_v > K\la_w$ and $\la_w \leq \la_v$ in the above estimate, we obtain,

\begin{align*}
    \La< \frac{\la^p_v}{K^p}+a(v)\la^q_v+\frac{K\la^p_v}{K^p}+b(v)\la^s_v&=\left(\frac{1}{K^p}+\frac{1}{K^{p-1}}\right)\la^p_v+a(v)\la^q_v+b(v)\la^s_v \\
    &\leq \frac{2}{K^{p-1}}\la^p_v+a(v)\la^q_v+b(v)\la^s_v< \La=\la^p_v+a(v)\la^q_v+b(v)\la^s_v
\end{align*}
since $K > 2^{\frac{1}{p-1}}.$ This gives a contradiction.

\noindent\textbf{Case $\max\left\{\la^{-1+\mu}_w r_w, \la^{\frac{2\mu-p}{2}}_w r_w\right\}^{\alpha}=\left(\la^{\frac{2\mu-p}{2}}_wr_w\right)^{\alpha}$ :} This case corresponds to 
\begin{align*}
    |a(v)-a(w)| \apprle [a]_{\alpha}\left(2\la^{\frac{2\mu-p}{2}}_w r_w\right)^{\alpha}
\end{align*}
and using the range of $q,$ i.e.,
\begin{align*}
    q \leq p+ \alpha \left(\frac{p}{2}-\frac{n}{n+2}\right),
\end{align*}
we arrive at the same conclusion.

Next, letting $\la_v \leq \la_w,$ in a similar way as above, we can show that $\la_w \leq K \la_v$ by interchanging the role of $v$ and $w$ above. The cases \ref{ii}, \ref{iii}, \ref{iv} follow from case \ref{i}.

In case \ref{v}, let $ \mcq^{\la_w}_{r_w}(w)\cap \mcq^{\la_v}_{r_v}(v)\neq \emptyset.$ We claim $\frac{1}{K}\la_v\leq \la_w \leq K\la_{v}.$ On the contrary, let us assume that $\la_w > K\la_{v}.$ Using \eqref{EQUATION4.10} and the H\"{o}lder regularity of $b(z),$ we obtain

\begin{align*}
    \La=\lambda^p_{w}+a(w)\lambda^q_{w}+b(w)\la^{s}_w < \frac{\la^p_v}{K^p}+\frac{2a(v)\la^q_v}{K^q}+\frac{\la^p_v}{2K^{p-1}}+b(v)\la^s_v\\
    < \left(\frac{1}{K^p}+\frac{1}{2K^{p-1}}\right)\la^p_v+a(v)\la^p_v+b(v)\la^s_v\\
    \leq \frac{1}{K^{p-1}}\la^p_v+a(v)\la^q_v+b(v)\la^s_v<\La
\end{align*}
which is a contradiction. Similarly, we can show that $\la_{v} \leq K\la_w.$ The cases \ref{vi}, \ref{vii} and \ref{viii} follow from the case \ref{v}.

Now we consider case \ref{ix}. Let $\tz \in \mathcal{S}^{\la_w}_{r_w}(w)\cap \mathcal{S}^{\la_v}_{r_v}(v).$ In this case, we can use $\frac{b(\tz)}{2}\leq b(w)\leq 2b(\tz)$ and H\"{o}lder regularity of $a(z)$ to conclude that $\frac{1}{K}\la_v \leq \la_w\leq K \la_v.$ Furthermore, the cases \ref{x}, \ref{xi} and \ref{xii} follow from \ref{ix}.

In the case \ref{xiii}, for $\tz \in \mathcal{G}^{\la_w}_{r_w}(w)\cap \mathcal{G}^{\la_v}_{r_v}(v),$ we can use $\frac{b(\tz)}{2}\leq b(w)\leq 2b(\tz)$ and $\frac{a(\tz)}{2}\leq a(w)\leq 2a(\tz)$ to conclude the same. The rest of the cases follows from \ref{xiii}.

\noindent{\bf Space inclusion.} Let $w=(w_0, t_w)$ and $v=(v_0, t_v).$  By considering the space part, we have
\begin{align*}
    Q^{\La}_w(w)=B_{\lambda^{-1+\mu}_w r_w}(w_0)\,\,\,\text{and}\,\,\, 
    \kappa Q^{\La}_v(v)=B_{\lambda^{-1+\mu}_v \kappa r_v}(v_0).
\end{align*}
First we note that 
\[B_{\lambda^{-1+\mu}_w r_w}(w_0)\cap B_{\lambda^{-1+\mu}_vr_v}(v_0)\neq \phi.\]
Let $x \in B_{\lambda^{-1+\mu}_w r_w}(w_0).$ Then
\[|x-v_0| \leq |x-w_0|+|w_0-v_0| \leq 2 \lambda^{-1+\mu}_w r_w+ \lambda^{-1+\mu}_v r_v.\]Using $K^{-1} \lambda_w \leq \lambda_v \leq K \lambda_w$ and $r_w \leq 2r_v,$ we conclude that the trailing inequality 
\begin{align*}
    2 \lambda^{-1+\mu}_w r_w+ \lambda^{-1+\mu}_v r_v \leq  \left(K^{1-\mu}+1\right) \lambda^{-1+\mu}_v r_v \leq \kappa\lambda^{-1+\mu}_v  r_v
\end{align*}
holds if 
\begin{align*}
 \kappa \geq 1+4 K^{1-\mu}.   
\end{align*}

\noindent{\bf Time inclusion.} Now we will show that the inclusion of the time intervals hold in each case.

\vspace{.3cm}
\noindent \underline{Situation \ref{i}:} $Q^{\La}_w(w)=\mcp^{\lambda_w}_{r_w}(w)$ and $Q^{\La}_v(v)=\mcp^{\lambda_v}_{r_v}(v).$ Considering the time part only, we have 
    \begin{align*}
    &Q^{\La}_w(w)=I^{\lambda_w, p}_{r_w}(t_w):=\left(t_w-\lambda^{2\mu-p}_w r_w^2, t_w+\lambda^{2\mu-p}_w r^2_w\right)\\
    &\kappa Q^{\La}_v(v)=I^{\lambda_v, p}_{\kappa r_v}(t_v):=\left(t_v-\lambda^{2\mu-p}_v \kappa r_v^2, t_v+\lambda^{2\mu-p}_v \kappa r_v^2\right).
    \end{align*}
    
    Let $t \in I^{\lambda_w, p}_{r_w}(t_w).$ Then
    \begin{align*}
        |t-t_v| \leq |t-t_w|+|t_v-t_w| \leq  2\lambda^{2\mu-p}_w r^2_w + \lambda^{2\mu-p}_vr^2_v &\leq 8 K^{p-2\mu} \lambda^{2\mu-p}_v r^2_v+ \lambda^{2\mu-p}_vr^2_v \\
        &\leq \kappa \lambda^{2\mu-p}_v r_v^2
    \end{align*}
    if 
   \begin{align*}
       \kappa \geq 1+8K^{p-2\mu}.
   \end{align*}

\vspace{.3cm}
\noindent \underline{Situation \ref{ii}:} $Q^{\La}_w(w)=\mcp^{\lambda_w}_{r_w}(w)$ and $Q^{\La}_v(v)=\mcq^{\lambda_v}_{r_v}(v).$ Considering the time intervals, 
\begin{align*}
  & Q^{\La}_w(w)=I^{\lambda_w, p}_{r_w}(t_w):=\left(t_w-\lambda^{2\mu-p}_w r^2_w, t_w+\lambda^{2\mu-p}_w r^2_w\right)\\
  &\kappa Q^{\La}_v(v)=I^{\lambda_v, (p,q)}_{\kappa r_v}(t_v)=\left(t_v-\frac{\lambda^{2\mu}_v \kappa r_v^2}{\la^p_v+a(v)\la^q_v}, t_v+\frac{\lambda^{2\mu}_v \kappa  r_v^2}{\la^p_v+a(v)\la^q_v}\right).
\end{align*}
Let $t \in I^{\lambda_w, p}_{r_w}(t_w).$ Then 
\begin{align*}
    |t-t_v| \leq |t-t_w|+|t_w-t_v| \leq 2\lambda^{2\mu-p}_w r^2_w+\frac{\lambda^{2\mu}_v}{\lambda^p_v+a(v)\lambda^q_v}r^2_v
\end{align*}
Note that, in this case we have $K \lambda^p_w\geq a(w) \lambda^q_w.$ This implies 
\begin{align*}
    \frac{1}{\lambda^p_w} \leq \frac{1+K}{\lambda^p_w+a(w)\lambda^q_w}.
\end{align*}
Therefore we have
\begin{align}\label{EQUATION4.17}
    \frac{2 \lambda^{2\mu}_w }{\lambda^p_w}r^2_w+\frac{\lambda^{2\mu}_v}{\lambda^p_v+a(v)\lambda^q_v}r^2_v &\leq \frac{2(1+K)\lambda^{2\mu}_w}{\lambda^p_w+a(w)\lambda^q_w}r^2_w+\frac{\lambda^{2\mu}_v}{\lambda^p_v+a(v)\lambda^q_v}r^2_v.
\end{align}
Now we claim
\begin{align*}
    \la^p_v+a(v)\la^q_v \apprle \la^p_w+a(w)\la^q_w.
\end{align*}
We use $\frac{1}{K}\la_w \leq \la_v\leq K \la_w$ and H\"{o}lder continuity of $a(z):$
\begin{align*}
    \la^p_v+a(v)\la^q_v&\leq K^p\la^p_w+|a(v)-a(w)|\la^q_v+a(w)K^q\la^q_w\\
    &\leq K^p\la^p_w+K\la^{p-q}_v\la^q_v+a(w)K^q\la^q_w\\
    &\leq (1+K)K^p\la^p_w+K^q a(w)\la^q_w\\
    &\leq (1+K)K^q\left(\la^p_w+a(w)\la^q_w\right).
\end{align*}
Now from \eqref{EQUATION4.17}, we get
\begin{align*}
    \frac{2 \lambda^{2\mu}_w }{\lambda^p_w}r^2_w+\frac{\lambda^{2\mu}_v}{\lambda^p_v+a(v)\lambda^q_v}r^2_v &\leq \frac{2(1+K)\lambda^{2\mu}_w}
    {\lambda^p_w+a(w)\lambda^q_w}r^2_w+\frac{\lambda^{2\mu}_v}{\lambda^p_v+a(v)\lambda^q_v}r^2_v\\
    &\leq \frac{2(1+K)^2K^q \la^{2\mu}_w}{\la^p_v+a(v)\la^q_v}r^2_w+\frac{\lambda^{2\mu}_v}{\lambda^p_v+a(v)\lambda^q_v}r^2_v\\
    &=\left(8(1+K)^2K^{q+2\mu}+1\right)\frac{\la^{2\mu}_v r^2_v}{\la^p_v+a(v)\la^q_v}
    \leq \kappa \frac{\la^{2\mu}_v r^2_v}{\la^p_v+a(v)\la^q_v}
\end{align*}
if 
\begin{align*}
    \kappa \geq \left(8(1+K)^2K^{q+2\mu}+1\right).
\end{align*}

\vspace{.3cm}
\noindent \underline{Situation \ref{iii}:} $Q^{\La}_w(w)=\mcp^{\lambda_w}_{r_w}(w)$ and $Q^{\La}_v(v)=\mathcal{S}^{\lambda_v}_{r_v}(v).$ \begin{align*}
  & Q^{\La}_w(w)=I^{\lambda_w, p}_{r_w}(t_w):=\left(t_w-\lambda^{2\mu-p}_w r^2_w, t_w+\lambda^{2\mu-p}_w r^2_w\right)\\
  &\kappa Q^{\La}_v(v)=I^{\lambda_v, (p,s)}_{\kappa r_v}(t_v)=\left(t_v-\frac{\lambda^{2\mu}_v \kappa r_v^2}{\la^p_v+b(v)\la^s_v}, t_v+\frac{\lambda^{2\mu}_v \kappa  r_v^2}{\la^p_v+b(v)\la^s_v}\right).
\end{align*}
Now using {\bf p1} of \cref{assmp1a} and following the same calculation as above, we find that $Q^{\La}_w(w)\subset \kappa Q^{\La}_v(v)$ if $k \geq \left(8(1+K)^2K^{s+2\mu}+1\right).$

\vspace{.3cm}
\noindent \underline{Situation \ref{iv}:} $Q^{\La}_w(w)=\mcp^{\lambda_w}_{r_w}(w)$ and $Q^{\La}_v(v)=\mathcal{G}^{\lambda_v}_{r_v}(v).$ \begin{align*}
& Q^{\La}_w(w)=I^{\lambda_w, p}_{r_w}(t_w):=\left(t_w-\lambda^{2\mu-p}_w r^2_w, t_w+\lambda^{2\mu-p}_w r^2_w\right)\\
&\kappa Q^{\La}_v(v)=I^{\lambda_v, (p,q,s)}_{\kappa r_v}(t_v)=\left(t_v-\frac{\lambda^{2\mu}_v \kappa r_v^2}{\la^p_v+a(v)\la^q_v+b(v)\la^s_v}, t_v+\frac{\lambda^{2\mu}_v \kappa  r_v^2}{\la^p_v+a(v)\la^q_v+b(v)\la^s_v}\right).
\end{align*} 
Following the same calculation as previous and using H\"{o}lder regularity of $a(z)$ and $b(z)$, we find that $Q^{\La}_w(w)\subset \kappa Q^{\La}_v(v)$ if $k \geq \left(8(2+2K)^2K^{s+2\mu}+1\right).$

\vspace{.3cm}
\noindent \underline{Situation \ref{v}:} $Q^{\La}_w(w)=\mcq^{\lambda_w}_{r_w}(w)$ and $Q^{\La}_v(v)=\mcq^{\lambda_v}_{r_v}(v).$ Considering the time part, we have 
    \begin{align*}
        &Q^{\La}_w(w)=I^{\lambda_w, (p,q)}_{r_w}(t_w)=\left(t_w-\frac{\lambda^{2\mu}_w}{\la^p_w+a(w)\la^q_w}r^2_w, t_w+\frac{\lambda^{2\mu}_w}{\la^p_w+a(w)\la^q_w}r^2_w\right)\\
        &\kappa Q^{\La}_v(v)=I^{\lambda_v, (p,q)}_{\kappa r_v}(t_v)=\left(t_v-\frac{\lambda^{2\mu}_v \kappa r_v^2}{\la^p_v+a(v)\la^q_v}, t_v+\frac{\lambda^{2\mu}_v \kappa r_v^2}{\la^p_v+a(v)\la^q_v}\right).
    \end{align*}
  Let $t \in I^{\lambda_w, (p,q)}_{r_w} (t_w).$ Then
  \begin{align*}
      |t-t_v| \leq |t-t_w|+|t_v-t_w| \leq \frac{2\la^{2\mu}_w}{\la^p_w+a(w)\la^q_w}r^2_w+\frac{\la^{2\mu}_v}{\la^p_v+a(v)\la^q_v}r^2_w 
  \end{align*} 
 Note that, in situation \ref{ii}, we already proved $\la^p_v+a(v)\la^q_v\leq (1+K)K^q (\la^p_w+a(w)\la^q_w)$ and we may use it here since $\mcq^{\la_w}_{r_w}(w)\subset \mcp^{\la_w}_{r_w}(w).$ This gives
 \begin{align*}
|t-t_v| \leq \frac{2\la^{2\mu}_w}{\la^p_w+a(w)\la^q_w}r^2_w+\frac{\la^{2\mu}_v}{\la^p_v+a(v)\la^q_v}r^2_v&=\left(8(1+K)K^{q+2\mu}+1\right)\frac{\la^{2\mu}_v}{\la^p_v+a(v)\la^q_v}r^2_v\\
&\leq \frac{\kappa\la^{2\mu}_v}{\la^p_v+a(v)\la^q_v}r^2_v
 \end{align*}
if $\kappa > \left(8(1+K)K^{q+2\mu}+1\right).$ 

\vspace{.3cm}
 \noindent \underline{Situation \ref{vi}:} $Q^{\La}_w(w)=\mcq^{\lambda_w}_{r_w}(w)$ and $Q^{\La}_v(v)=\mcp^{\lambda_v}_{r_v}(v).$ We consider the time intervals, i.e., 
  \begin{align*}
  &Q^{\La}_w(w)=I^{\lambda_w, (p,q)}_{r_w}(t_w)=\left(t_w-\frac{\lambda^{2\mu}_w}{\la^p_w+a(w)\la^q_w}r^2_w, t_w+\frac{\lambda^{2\mu}_w}{\la^p_w+a(w)\la^q_w}r^2_w\right),\\
  & \kappa Q^{\La}_v(v)=I^{\lambda_v, p}_{\kappa r_v}(t_v)=\left(t_v-\lambda^{2\mu-p}_v \kappa r_v^2, \lambda^{2\mu-p}_v \kappa r_v^2\right).
  \end{align*}
  Let $t \in I^{\lambda_w, (p,q)}_{r_w}(t_w).$ Then 
\begin{align*}
    |t-t_v| \leq |t-t_w|+|t_w-t_v| \leq \frac{2 \lambda^{2\mu}_w}{\la^p_w+a(w)\la^q_w}r^2_w+ \lambda^{2\mu-p}_v r^2_v.
\end{align*}
Now using $K \lambda^p_w \leq a(w)\lambda^q_w,$ we obtain
\begin{align*}
 \frac{2 \lambda^{2\mu}_w }{\lambda^p_w+a(w)\lambda^q_w}r^2_w+ \lambda^{2\mu-p}_v r^2_v  &\leq \frac{2 \lambda^{2\mu}_w}{(1+K)\lambda^p_w}r^2_w + \lambda^{2\mu-p}_v r^2_v\\
 &= \frac{2}{1+K} \lambda^{2\mu-p}_wr^2_w+ \lambda^{2\mu-p}_v r^2_v \\
 &\leq \frac{8}{1+K} K^{p-2\mu} \lambda^{2\mu-p}_vr^2_v+\lambda^{2\mu-p}_vr^2_v\\
 &\leq \kappa \lambda^{2\mu-p}_vr_v^2
\end{align*}
if 
\begin{align*}
    \kappa \geq \frac{8}{1+K}K^{p-2\mu}+1.
\end{align*}

\vspace{.3cm}
 \noindent \underline{Situation \ref{vii}:} $Q^{\La}_w(w)=\mcq^{\lambda_w}_{r_w}(w)$ and $Q^{\La}_v(v)=\mathcal{S}^{\lambda_v}_{r_v}(v).$ We consider the time intervals, i.e., 
  \begin{align*}
  &Q^{\La}_w(w)=I^{\lambda_w, (p,q)}_{r_w}(t_w)=\left(t_w-\frac{\lambda^{2\mu}_w}{\la^p_w+a(w)\la^q_w}r^2_w, t_w+\frac{\lambda^{2\mu}_w}{\la^p_w+a(w)\la^q_w}r^2_w\right),\\
  & \kappa Q^{\La}_v(v)=I^{\lambda_v, (p,s)}_{\kappa r_v}(t_v)=\left(t_v-\frac{\kappa\lambda^{2\mu}_v}{\la^p_v+b(v)\la^s_v}r^2_v, t_v+\frac{\kappa\lambda^{2\mu}_v}{\la^p_v+b(v)\la^s_v}r^2_v\right).
  \end{align*}
  Let $t \in I^{\lambda_w, (p,q)}_{r_w}(t_w).$ Then, using {\bf pq1} of \cref{assmpq1a} and {\bf ps1} of \cref{assmps1a}, we get
\begin{align*}
    |t-t_v| \leq |t-t_w|+|t_w-t_v| &\leq \frac{2 \lambda^{2\mu}_w}{\la^p_w+a(w)\la^q_w}r^2_w+ \frac{\lambda^{2\mu}_v}{\la^p_v+b(v)\la^s_v}r^2_v\\
    &\leq \frac{2\la^{2\mu}_w}{\la^p_w+b(w)\la^s_w} r^2_w+ \frac{\lambda^{2\mu}_v}{\la^p_v+b(v)\la^s_v} r^2_v.
\end{align*}
Again, following the calculations as above, we have $\la^p_v+b(v)\la^s_v\leq (1+K)K^s(\la^p_w+b(w)\la^s_w)$ and we conclude that $Q^{\La}_w(w)\subset \kappa Q^{\La}_v(v)$ if $k \geq 8(1+K)^2K^{s+2\mu}+1.$

\noindent \underline{Situation \ref{viii}:} $Q^{\La}_w(w)=\mcq^{\lambda_w}_{r_w}(w)$ and $Q^{\La}_v(v)=\mathcal{G}^{\lambda_v}_{r_v}(v).$ We consider the time intervals, i.e., 
  \begin{align*}
  &Q^{\La}_w(w)=I^{\lambda_w, (p,q)}_{r_w}(t_w)=\left(t_w-\frac{\lambda^{2\mu}_w}{\la^p_w+a(w)\la^q_w}r^2_w, t_w+\frac{\lambda^{2\mu}_w}{\la^p_w+a(w)\la^q_w}r^2_w\right),\\
  & \kappa Q^{\La}_v(v)=I^{\lambda_v, (p,q,s)}_{\kappa r_v}(t_v)=\left(t_v-\frac{\kappa\lambda^{2\mu}_v}{\la^p_v+a(v)\la^q_v+b(v)\la^s_v}r^2_v, t_v+\frac{\kappa\lambda^{2\mu}_v}{\la^p_v+a(v)\la^q_v+b(v)\la^s_v}r^2_v\right).
  \end{align*}
This situation is similar to situation \ref{iv} and $Q^{\La}_w(w)\subset \kappa Q^{\La}_v(v)$ if $k \geq \left(8(2+K)^2K^{s+2\mu}+1\right).$

\vspace{.2cm}
\noindent \underline{Situation \ref{ix}:} $Q^{\La}_w(w)=\mathcal{S}^{\lambda_w}_{r_w}(w)$ and $Q^{\La}_v(v)=\mathcal{S}^{\lambda_v}_{r_v}(v).$ We consider the time intervals, i.e., 
\begin{align*}
&Q^{\La}_w(w)=I^{\lambda_w, (p,s)}_{r_w}(t_w)=\left(t_w-\frac{\lambda^{2\mu}_w}{\la^p_w+b(w)\la^s_w}r^2_w, t_w+\frac{\lambda^{2\mu}_w}{\la^p_w+b(w)\la^s_w}r^2_w\right),\\
& \kappa Q^{\La}_v(v)=I^{\lambda_v, (p,s)}_{\kappa r_v}(t_v)=\left(t_v-\frac{\lambda^{2\mu}_v \kappa r_v^2}{\la^p_v+b(v)\la^s_v}, t_v+\frac{\lambda^{2\mu}_v \kappa  r_v^2}{\la^p_v+b(v)\la^s_v}\right).
\end{align*}
This situation is the same as the situation \ref{vii} with $\kappa \geq 8(1+K)^2K^{s+2\mu}+1.$
  
\vspace{.2cm}
\noindent \underline{Situation \ref{x}:} $Q^{\La}_w(w)=\mathcal{S}^{\lambda_w}_{r_w}(w)$ and $Q^{\La}_v(v)=\mathcal{P}^{\lambda_v}_{r_v}(v).$ We consider the time intervals, i.e., 
\begin{align*}
&Q^{\La}_w(w)=I^{\lambda_w, (p,s)}_{r_w}(t_w)=\left(t_w-\frac{\lambda^{2\mu}_w}{\la^p_w+b(w)\la^s_w}r^2_w, t_w+\frac{\lambda^{2\mu}_w}{\la^p_w+b(w)\la^s_w}r^2_w\right),\\
& \kappa Q^{\La}_v(v)=I^{\lambda_v, p}_{\kappa r_v}(t_v)=\left(t_v-\frac{\lambda^{2\mu}_v \kappa r_v^2}{\la^p_v}, t_v+\frac{\lambda^{2\mu}_v \kappa  r_v^2}{\la^p_v}\right).
\end{align*}
This situation is the same as the situation \ref{i} with $\kappa \geq 1+8K^{p-2\mu}.$

 \vspace{.2cm} 
\noindent \underline{Situation \ref{xi}:} $Q^{\La}_w(w)=\mathcal{S}^{\lambda_w}_{r_w}(w)$ and $Q^{\La}_v(v)=\mathcal{Q}^{\lambda_v}_{r_v}(v).$ We consider the time intervals, i.e., 
\begin{align*}
&Q^{\La}_w(w)=I^{\lambda_w, (p,s)}_{r_w}(t_w)=\left(t_w-\frac{\lambda^{2\mu}_w}{\la^p_w+b(w)\la^s_w}r^2_w, t_w+\frac{\lambda^{2\mu}_w}{\la^p_w+b(w)\la^s_w}r^2_w\right),\\
& \kappa Q^{\La}_v(v)=I^{\lambda_v, (p,q)}_{\kappa r_v}(t_v)=\left(t_v-\frac{\lambda^{2\mu}_v \kappa r_v^2}{\la^p_v+a(v)\la^q_v}, t_v+\frac{\lambda^{2\mu}_v \kappa  r_v^2}{\la^p_v+a(v)\la^q_v}\right).
\end{align*}
This situation is the same as the situation \ref{vii} with $\kappa \geq 8(1+K)^2K^{q+2\mu}+1.$

\vspace{.2cm}
\noindent \underline{Situation \ref{xii}:} $Q^{\La}_w(w)=\mathcal{S}^{\lambda_w}_{r_w}(w)$ and $Q^{\La}_v(v)=\mathcal{G}^{\lambda_v}_{r_v}(v).$ We consider the time intervals, i.e., 
\begin{align*}
&Q^{\La}_w(w)=I^{\lambda_w, (p,s)}_{r_w}(t_w)=\left(t_w-\frac{\lambda^{2\mu}_w}{\la^p_w+b(w)\la^s_w}r^2_w, t_w+\frac{\lambda^{2\mu}_w}{\la^p_w+b(w)\la^s_w}r^2_w\right),\\
& \kappa Q^{\La}_v(v)=I^{\lambda_v, (p,q,s)}_{\kappa r_v}(t_v)=\left(t_v-\frac{\lambda^{2\mu}_v \kappa r_v^2}{\la^p_v+a(v)\la^q_v+b(v)\la^s_v}, t_v+\frac{\lambda^{2\mu}_v \kappa  r_v^2}{\la^p_v+a(v)\la^q_v+b(v)\la^s_v}\right).
\end{align*}
This situation is same as situation \ref{viii} with $\kappa \geq 8(2+K)^2K^{s+2\mu}+1.$

\noindent \underline{Situation \ref{xiii}:} $Q^{\La}_w(w)=\mathcal{G}^{\lambda_w}_{r_w}(w)$ and $Q^{\La}_v(v)=\mathcal{G}^{\lambda_v}_{r_v}(v).$ We consider the time intervals, i.e., 
\begin{align*}
&Q^{\La}_w(w)=I^{\lambda_w, (p,q,s)}_{r_w}(t_w)=\left(t_w-\frac{\lambda^{2\mu}_w}{\la^p_w+a(w)\la^q_w+b(w)\la^s_w}r^2_w, t_w+\frac{\lambda^{2\mu}_w}{\la^p_w+a(w)\la^q_w+b(w)\la^s_w}r^2_w\right),\\
& \kappa Q^{\La}_v(v)=I^{\lambda_v, (p,q,s)}_{\kappa r_v}(t_v)=\left(t_v-\frac{\kappa\lambda^{2\mu}_v}{\la^p_v+a(v)\la^q_v+b(v)\la^s_v}r^2_v, t_v+\frac{\kappa\lambda^{2\mu}_v}{\la^p_v+a(v)\la^q_v+b(v)\la^s_v}r^2_v\right).
\end{align*}
This situation is similar to the situation \ref{v}. Indeed, using H\"{o}lder regularity of $a(z)$ and $b(z),$ one can show 
\begin{align*}
    \la^p_v+a(v)\la^q_v+b(v)\la^s_v\leq (2+K)K^s(\la^p_w+a(w)\la^q_w+b(w)\la^s_w)
\end{align*}
Following the calculations of situation \ref{v}, we find $Q^{\La}_w(w)\subset \kappa Q^{\La}_v(v)$ if $k\geq 8(2+K)K^{s+2\mu}+1.$

\noindent \underline{Situation \ref{xiv}:} $Q^{\La}_w(w)=\mathcal{G}^{\lambda_w}_{r_w}(w)$ and $Q^{\La}_v(v)=\mathcal{P}^{\lambda_v}_{r_v}(v).$ We consider the time intervals, i.e., 
  \begin{align*}
  &Q^{\La}_w(w)=I^{\lambda_w, (p,q,s)}_{r_w}(t_w)=\left(t_w-\frac{\lambda^{2\mu}_w}{\la^p_w+a(w)\la^q_w+b(w)\la^s_w}r^2_w, t_w+\frac{\lambda^{2\mu}_w}{\la^p_w+a(w)\la^q_w+b(w)\la^s_w}r^2_w\right),\\
  & \kappa Q^{\La}_v(v)=I^{\lambda_v, (p)}_{\kappa r_v}(t_v)=\left(t_v-\kappa\lambda^{2\mu-p}_v r^2_v, t_v+\kappa\lambda^{2\mu-p}_v r^2_v\right).
  \end{align*}
  This situation can be concluded from the situation \ref{i} with $\kappa \geq 1+8K^{p-2\mu}$
  
\noindent \underline{Situation \ref{xv}:} $Q^{\La}_w(w)=\mathcal{G}^{\lambda_w}_{r_w}(w)$ and $Q^{\La}_v(v)=\mathcal{Q}^{\lambda_v}_{r_v}(v).$ We consider the time intervals, i.e., 
  \begin{align*}
  &Q^{\La}_w(w)=I^{\lambda_w, (p,q,s)}_{r_w}(t_w)=\left(t_w-\frac{\lambda^{2\mu}_w}{\la^p_w+a(w)\la^q_w+b(w)\la^s_w}r^2_w, t_w+\frac{\lambda^{2\mu}_w}{\la^p_w+a(w)\la^q_w+b(w)\la^s_w}r^2_w\right),\\
  & \kappa Q^{\La}_v(v)=I^{\lambda_v, (p,q)}_{\kappa r_v}(t_v)=\left(t_v-\frac{\lambda^{2\mu}_v \kappa r_v^2}{\la^p_v+a(v)\la^q_v}, t_v+\frac{\lambda^{2\mu}_v \kappa  r_v^2}{\la^p_v+a(v)\la^q_v}\right).
  \end{align*}
  This situation is the same as situation \ref{v} with $\kappa \geq \left(8(1+K)K^{q+2\mu}+1\right). $

\noindent \underline{Situation \ref{xvi}:} $Q^{\La}_w(w)=\mathcal{G}^{\lambda_w}_{r_w}(w)$ and $Q^{\La}_v(v)=\mathcal{S}^{\lambda_v}_{r_v}(v).$ We consider the time intervals, i.e., 
  \begin{align*}
  &Q^{\La}_w(w)=I^{\lambda_w, (p,q,s)}_{r_w}(t_w)=\left(t_w-\frac{\lambda^{2\mu}_w}{\la^p_w+a(w)\la^q_w+b(w)\la^s_w}r^2_w, t_w+\frac{\lambda^{2\mu}_w}{\la^p_w+a(w)\la^q_w+b(w)\la^s_w}r^2_w\right),\\
  & \kappa Q^{\La}_v(v)=I^{\lambda_v, (p,s)}_{\kappa r_v}(t_v)=\left(t_v-\frac{\lambda^{2\mu}_v \kappa r_v^2}{\la^p_v+b(v)\la^s_v}, t_v+\frac{\lambda^{2\mu}_v \kappa  r_v^2}{\la^p_v+b(v)\la^s_v}\right).
  \end{align*}
  This situation is the same as situation \ref{v} with $\kappa \geq \left(8(1+K)K^{q+2\mu}+1\right)$.

Hence, combining all the above cases, it is enough to take \begin{align}\label{value of kappa}\kappa:=32(1+K)^2K^{s+2\mu}+1.\end{align} This completes the proof of 2, and hence the covering argument. We summarize the above content in the following lemma.
\begin{proposition}\label{vitali covering}
Let $\kappa$ be as  in \eqref{value of kappa} and $E_{\La, r_1}$ as in \eqref{defn_E_La}. There exists a collection $\{Q^{\Lambda}_{z_i}\}_{i\in \NN}$ of cylinders defined in \eqref{both intrinsic cylinders} that satisfies the following.
\begin{itemize}
\item [(i)] $\cup_{i\in\NN}\kappa Q^{\Lambda}_{z_i}=E_{\La, r_1}$.
\item[(ii)] $Q^{\La}_{z_i}\cap Q^{\La}_{z_j}= \emptyset$ for every $i,j\in \NN$ with $i\ne j$.
\end{itemize}
\end{proposition}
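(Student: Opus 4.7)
My plan is to build $\{Q^{\Lambda}_{z_i}\}$ by a dyadic Vitali selection tailored to the two geometries in \eqref{both intrinsic cylinders}. Starting from the tautological cover $\mathcal{F}=\{Q^{\Lambda}_w(w):w\in E_{\Lambda,r_1}\}$ supplied by the stopping time analysis in \cref{STA}, I would stratify by the radius,
\[
\mathcal{F}_j=\bigl\{Q^{\Lambda}_w(w)\in\mathcal{F}:\; R\,2^{-j}<r_w\le R\,2^{-j+1}\bigr\},\qquad R=\tfrac{r_2-r_1}{2\kappa},\ j\in\NN,
\]
and pick $\mathcal{G}_1\subset\mathcal{F}_1$ maximal among pairwise disjoint subfamilies; this collection is finite because $|E_{\Lambda,r_1}|<\infty$ by layer-cake integrability and \eqref{EQUATION4.9} gives a uniform upper bound on $\lambda_w$, hence a positive lower bound on the measure of each cylinder in $\mathcal{F}_1$. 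Recursively, I would take $\mathcal{G}_k$ to be a maximal disjoint subfamily of those $Q^{\Lambda}_w(w)\in\mathcal{F}_k$ that are disjoint from every cylinder in $\mathcal{G}_1\cup\cdots\cup\mathcal{G}_{k-1}$, and set $\mathcal{G}=\bigcup_j\mathcal{G}_j$. Property (ii) is then automatic from the construction.

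For property (i), if $Q^{\Lambda}_w(w)\in\mathcal{F}_j$, maximality of $\mathcal{G}_j$ produces some $Q^{\Lambda}_v(v)\in\mathcal{G}_1\cup\cdots\cup\mathcal{G}_j$ with $Q^{\Lambda}_w(w)\cap Q^{\Lambda}_v(v)\ne\emptyset$ and $r_w\le 2r_v$. The remaining task is to promote this overlap, at comparable radii, to the dilated inclusion $Q^{\Lambda}_w(w)\subset\kappa Q^{\Lambda}_v(v)$.

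This promotion is the main obstacle, because the intrinsic scalings $\lambda_w,\lambda_v$ depend nonlinearly on $a(\cdot)$ and each selected cylinder can independently be either $p$- or $(p,q)$-intrinsic. My first step would be to establish the comparability $K^{-1}\lambda_v\le\lambda_w\le K\lambda_v$, with $K$ as in \eqref{new_k}. In the $p$-intrinsic/$p$-intrinsic combination I would argue by contradiction: if $\lambda_v>K\lambda_w$, the H\"older modulus of $a$, combined with the radius bound from \eqref{EQQ6.5} and the sharp restriction \eqref{def_pq} on $q-p$, forces $|a(v)-a(w)|$ to be of order $K\lambda_w^{p-q}$, which is incompatible with the identity $\Lambda=\lambda_w^p+a(w)\lambda_w^q=\lambda_v^p+a(v)\lambda_v^q$. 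The pure $(p,q)$-intrinsic case uses instead the oscillation bound \eqref{EQUATION4.10} at any common point, and the two mixed combinations follow from these by comparing the widths of the two time-intervals. Once $\lambda$-comparability is available, the spatial and temporal inclusions reduce to triangle-inequality estimates in the parabolic metric of \cref{para metric}: each of the four geometric combinations contributes a factor of the form $K^{1-\mu}$, $K^{p-2\mu}$, $K^{2\mu}$ or $(1+K)K^{2\mu}$, and the choice of $\kappa$ in \eqref{the gamma} is simply the maximum of these, delivering $Q^{\Lambda}_w(w)\subset\kappa Q^{\Lambda}_v(v)$ and hence (i).
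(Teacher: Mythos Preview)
Your proof sketch is correct and follows essentially the same approach as the paper: a dyadic Vitali selection on the stratified family $\mathcal{F}_j$, followed by the comparability $K^{-1}\lambda_v\le\lambda_w\le K\lambda_v$ established case-by-case (with the $p$/$p$ case handled by contradiction via the H\"older bound on $a$ and the restriction \eqref{def_pq}, and the $(p,q)$/$(p,q)$ case via \eqref{EQUATION4.10}), and finally the space--time inclusions by triangle-inequality estimates yielding the four $K$-dependent constants whose maximum is $\kappa$ in \eqref{the gamma}.
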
 
\subsection{Proof of main theorem} In this section, we complete the proof of \cref{main_thm}. First, we derive the following lemma as a consequence of reverse H\"{o}lder inequalities.
\begin{lemma}\label{LEM6.1}
Let $u$ be a weak solution to \eqref{main_eqn}. Then there exist constants $c=c(\textnormal{\texttt{data}})$ and $\theta_0\in (0,1)$ such that for any $\theta \in (\theta_0, 1),$ we have
\begin{enumerate}[label=(\roman*),series=theoremconditions]
\item \label{1_LEM6.1}\begin{align*}
\iint_{\mcp^{\lambda}_{2\kappa\varrho}(z_0)}H(z, |\nabla u|)\, dz \leq c\Lambda^{1-\theta}\iint_{\mcp^{\lambda}_{2\varrho}(z_0) \cap E_{\La/c} }H(z, |\nabla u|)\, dz,
\end{align*}
\item \label{2_LEM6.1}\begin{align*}
\iint_{\mcq^{\lambda}_{2\kappa\varrho}(z_0)}H(z, |\nabla u|)\, dz \leq c\Lambda^{1-\theta}\iint_{\mcq^{\lambda}_{2\varrho}(z_0) \cap E_{\La/c}} H(z, |\nabla u|)\, dz,
\end{align*}
\item \label{3_LEM6.1}\begin{align*}
\iint_{\mathcal{S}^{\lambda}_{2\kappa\varrho}(z_0)}H(z, |\nabla u|)\, dz \leq c\Lambda^{1-\theta}\iint_{\mathcal{S}^{\lambda}_{2\varrho}(z_0) \cap E_{\La/c}} H(z, |\nabla u|)\, dz,
\end{align*}
\item \label{4_LEM6.1}\begin{align*}
\iint_{\mathcal{G}^{\lambda}_{2\kappa\varrho}(z_0)}H(z, |\nabla u|)\, dz \leq c\Lambda^{1-\theta}\iint_{\mathcal{G}^{\lambda}_{2\varrho}(z_0) \cap E_{\La/c}} H(z, |\nabla u|)\, dz,
\end{align*}
\end{enumerate}
whenever $\mcp^{\lambda}_{\rhob}(z_0),\, \mcq^{\lambda}_{\rhob}(z_0), \mathcal{S}^{\la}_{\rhob}(z_0), \mathcal{G}^{\la}_{\rhob}(z_0)  \subset \Omega_T.$
\end{lemma}
\begin{proof}
First we note that using \textbf{p2} of \cref{assmp1c}, we get
\begin{align*}
    \left(\fiint_{\mcp^{\lambda}_{2\varrho}(z_0)}H(z, |\nabla u|)^{\theta}\, dz\right)^{\frac{1-\theta}{\theta}} \leq \left(\fiint_{\mcp^{\lambda}_{2\varrho}(z_0)}H(z, |\nabla u|)\, dz\right)^{1-\theta}<\la^{p(1-\theta)}.
\end{align*}
Hence, we have
\begin{align}\label{EQQ6.32}
 \left(\fiint_{\mcp^{\lambda}_{2\varrho}(z_0)}H(z, |\nabla u|)^{\theta}\,dz\right)^{\frac{1}{\theta}}\leq \lambda^{p(1-\theta)}\fiint_{\mcp^{\lambda}_{2\varrho}(z_0)}H(z, |\nabla u|)^{\theta}\,dz.    
\end{align}
Now we write
\begin{align*}
\mcp^{\lambda}_{2\varrho}(z_0)= \left(\mcp^{\lambda}_{2\varrho} \cap E_{\frac{\lambda^p}{(2c)^{1/\theta}}}\right) \cup \left(\mcp^{\lambda}_{2\varrho} \setminus E_{\frac{\lambda^p}{(2c)^{1/\theta}}}\right).
\end{align*}
From the definition of $E_{\La}$ given in \eqref{defn_E_La}, the right hand side of \eqref{EQQ6.32} can be estimated as

\begin{align}\label{EQUATION8.11}
 \left(\fiint_{\mcp^{\lambda}_{2\varrho}(z_0)}H(z, |\nabla u|)^{\theta}\,dz\right)^{\frac{1}{\theta}}
 &\leq \lambda^{p(1-\theta)}\frac{\lambda^{p \theta}}{2c}\frac{\left|\mcp^{\lambda}_{2\varrho} \setminus E_{\frac{\lambda^p}{(2c)^{1/\theta}}}\right|}{\left|\mcp^{\lambda}_{2\varrho}\right|} + \frac{\lambda^{p(1-\theta)}}{\left|\mcp^{\lambda}_{2\varrho}\right|}\iint_{\mcp^{\lambda}_{2\varrho} \cap E_{\frac{\lambda^p}{(2c)^{1/\theta}}}} H(z, |\nabla u|)^\theta\, dz\nonumber\\
 &\leq \frac{\lambda^p}{2c} + \frac{\lambda^{p(1-\theta)}}{\left|\mcp^{\lambda}_{2\varrho}\right|}\iint_{\mcp^{\lambda}_{2\varrho} \cap E_{\frac{\lambda^p}{(2c)^{1/\theta}}}} H(z, |\nabla u|)^\theta\, dz.
\end{align}
Now from the reverse H\"{o}lder inequality (\cref{LEM5.4}) and \eqref{EQUATION8.11}, we obtain that
\begin{align}\label{EQUATION8.12}
    \fiint_{\mcp^{\lambda}_{\varrho}(z_0)} H(z, |\nabla u|)\, dz &\leq c\left(\fiint_{\mcp^{\lambda}_{2\varrho}(z_0)}H(z, |\nabla u|)^{\theta}\,dz\right)^{\frac{1}{\theta}}
    \leq \frac{\lambda^p}{2} + \frac{c\lambda^{p(1-\theta)}}{\left|\mcp^{\lambda}_{2\varrho}\right|}\iint_{\mcp^{\lambda}_{2\varrho} \cap E_{\frac{\lambda^p}{(2c)^{1/\theta}}}} H(z, |\nabla u|)^\theta\, dz.
\end{align}
Now we choose $\rhoa=\varrho,$ and from \textbf{p3} of \cref{assmp1c} we have
\begin{align*}
  \fiint_{\mcp^{\lambda}_{\varrho}(z_0)} H(z, |\nabla u|)\, dz=\la^p. 
\end{align*}
Therefore from \eqref{EQUATION8.12} and \cref{assmp1b} we get
\begin{align*}
   \frac{1}{2} \fiint_{\mcp^{\lambda}_{\rhob}(z_0)}H(z, |\nabla u|)\, dz <\frac{\lambda^p}{2} \leq \frac{c\lambda^{p(1-\theta)}}{\left|\mcp^{\lambda}_{2\varrho}\right|}\iint_{\mcp^{\lambda}_{2\varrho} \cap E_{\frac{\lambda^p}{(2c)^{1/\theta}}}} H(z, |\nabla u|)^\theta dz.
\end{align*}
This implies
\begin{align*}
    \fiint_{\mcp^{\lambda}_{\rhob}(z_0)}H(z, |\nabla u|)\, dz < \frac{2c\lambda^{p(1-\theta)}}{\left|\mcp^{\lambda}_{2\varrho}\right|}\iint_{\mcp^{\lambda}_{2\varrho} \cap E_{\frac{\lambda^p}{(2c)^{1/\theta}}}} H(z, |\nabla u|)^\theta\, dz.
\end{align*}
Now we choose $\rhob=2\kappa \varrho$ to get
\begin{align}\label{EQQ6.33}
    \iint_{\mcp^{\lambda}_{2\kappa \varrho}}H(z, |\nabla u|)\, dz \leq 2c\kappa^{n+2} \lambda^{p(1-\theta)}\iint_{\mcp^{\lambda}_{2\varrho}\cap E_{\frac{\lambda^p}{(2c)^{1/\theta}}}}H(z, |\nabla u|)^\theta dz.
\end{align}
Using $K \lambda^p \geq \frac{a(z_0)}{K}\lambda^q,$ we get $\lambda^p \geq \frac{\lambda^p+a(z_0)\lambda^q}{2K}=\frac{\Lambda}{2K}.$ Hence we have $\frac{\lambda^p}{(2c)^{1/\theta}}\geq \frac{\lambda^p}{(2c)^{1/ \theta_0}} \geq \frac{\Lambda}{2K(2c)^{1/ \theta_0}}.$ This gives that $E_{\frac{\lambda^p}{(2c)^{1/\theta}}} \subset E_{\frac{\Lambda}{2K(2c)^{1/\theta_0}}}.$ Now by setting $2K(2c)^{1/\theta_0}=c,$ from \eqref{EQQ6.33} we obtain
\begin{align*}
\iint_{\mcp^{\lambda}_{2\kappa \varrho}}H(z, |\nabla u|) \,dz \leq c\Lambda^{1-\theta} \iint_{\mcp^{\lambda}_{2\varrho}\cap E_{\Lambda/c}} H(z, |\nabla u|)^\theta dz.
\end{align*}
The same proof works for the $(p,q), (p, s)$ and $(p,q,s)$-intrinsic cylinders also. This completes the proof.
\end{proof}
\begin{remark}
Observe that in \cref{choice of radii}, we may consider $\rhob\in [2\varrho, 4\kappa \varrho],$ making the above selection of $\rhob$ admissible.
\end{remark}
Now we are ready to prove \cref{main_thm}. We present a detailed proof for the sake of completeness.
\begin{proof}[Proof of \cref{main_thm}]
Following \cref{vitali covering}, we have a countable pairwise disjoint collection $\mathcal{I}:=\left\{Q^{\La}_{z_j}\right\}_{j=1}^{\infty}$ for $z_j \in E_{\Lambda, r_1}.$ From the previous \cref{LEM6.1}, there exists constants $c$ and $\theta_0\in (0,1)$ we have
\begin{align*}
    \iint_{\kappa Q^{\La}_{z_j}} H(z, |\nabla u|)\, dz \leq c\Lambda^{1-\theta}\iint_{Q^{\La}_{z_j} \cap E_{\frac{\Lambda}{c}}}H(z, |\nabla u|)^{\theta}\, dz
\end{align*}
for every $j \in \NN$ and $\theta \in (\theta_0, 1).$ Since the cylinders in $\mathcal{I}$ are pairwise disjoint, we get
\begin{align}\label{EQQ6.34}
    \iint_{E_{\Lambda. r_1}} H(z, |\nabla u|)\, dz \leq \sum_{j=1}^{\infty} \iint_{\kappa Q^{\La}_{z_j}} H(z, |\nabla u|)\, dz 
    &\leq c\Lambda^{1-\theta} \sum_{j=1}^{\infty}\iint_{Q^{\La}_{z_j} \cap E_{\frac{\Lambda}{c}}}H(z, |\nabla u|)^{\theta} dz \nonumber\\
    &\leq c\Lambda^{1-\theta}\iint_{E_{\frac{\Lambda}{c}, r_2}}H(z, |\nabla u|)^{\theta} dz.
\end{align}
On the other hand, we have
\begin{align}\label{EQQ6.35}
    \iint_{E_{\frac{\Lambda}{c}, r_1}\setminus E_{\Lambda, r_1}}H(z, |\nabla u|)\, dz
    &= \iint_{E_{\frac{\Lambda}{c}, r_1}\setminus E_{\Lambda, r_1}}H(z, |\nabla u|)^\theta H(z, |\nabla u|)^{1-\theta}\, dz\nonumber\\
    &\leq \Lambda^{1-\theta}\iint_{E_{\frac{\Lambda}{c}}, r_2} H(z, |\nabla u|)^{\theta}\,dz.
\end{align}
Now combining \eqref{EQQ6.34} and \eqref{EQQ6.35} we conclude that 
\begin{align}\label{EQQ6.36}
    \iint_{E_{\frac{\Lambda}{c}, r_1}}H(z, |\nabla u|) dz&=\iint_{E_{\Lambda, r_1}}H(z, |\nabla u|) dz+\iint_{E_{\frac{\Lambda}{c}, r_1}\setminus E_{\Lambda, r_1}}H(z, |\nabla u|) dz\nonumber\\
    &\leq c\Lambda^{1-\theta}\iint_{E_{\frac{\Lambda}{c}, r_2}}H(z, |\nabla u|)^\theta dz.
\end{align}
For any $k\in \NN,$ we define the truncation operator as
\begin{align*}
    H(z, |\nabla u|)_k =\min \left\{H(z, |\nabla u|), k\right\}
\end{align*}
and the superlevel set as 
\begin{align*}
    E^k_{\Lambda, \uprho}:=\left\{z\in \mcp_{\uprho}(z_0)\,\,\Big|\,\, H(z, |\nabla u|)_k > \Lambda\right\}.
\end{align*}
From the above definition, we clearly have 
\begin{align}\label{EQQ6.37}
    E^k_{\Lambda, \uprho}=\begin{cases}
        E_{\Lambda, \uprho}\,\,\,\, &\text{if}\,\,\, \Lambda \leq k\\
        \emptyset\,\,\,\, &\text{if}\,\,\, \Lambda> k.
    \end{cases}
\end{align}
Therefore from \eqref{EQQ6.36} and \eqref{EQQ6.37}, we deduce that 
\begin{align*}
    \iint_{E^k_{\frac{\Lambda}{c}, r_1}}H(z, |\nabla u|)_k^{1-\theta}H(z, |\nabla u|)^\theta dz \leq \iint_{E_{\frac{\Lambda}{c}, r_1}}H(z, |\nabla u|) dz 
    &\leq c\Lambda^{1-\theta}\iint_{E_{\frac{\Lambda}{c}, r_2}}H(z, |\nabla u|)^\theta dz\\
    &\leq c\Lambda^{1-\theta}\iint_{E^k_{\frac{\Lambda}{c}, r_2}}H(z, |\nabla u|)^\theta dz.
\end{align*}
Recalling \eqref{EQQ6.1} we denote
\begin{align*}
\Lambda_1=\frac{1}{c}\left(\frac{4\kappa r}{r_2-r_1}\right)^{\frac{s(n+2)}{(n+2)\mu-n}}\Lambda_0.
\end{align*}
Then for any $\Lambda > \Lambda_1,$ we have
\begin{align}\label{EQQ6.38}
\iint_{E^k_{\Lambda, r_1}}H(z, |\nabla u|)_k^{1-\theta}H(z, |\nabla u|)^\theta dz \leq c\Lambda^{1-\theta}\iint_{E^k_{\Lambda, r_2}}H(z, |\nabla u|)^\theta dz.
\end{align}
Let $\varepsilon \in (0, 1)$ to be determined later. Multiplying the above inequality \eqref{EQQ6.38} with $\Lambda^{\varepsilon-1}$ and integrating over $(\Lambda_1, \infty),$ we obtain
\begin{align}\label{EQQ6.39}
\int_{\Lambda_1}^{\infty}\Lambda^{\varepsilon-1}\iint_{E^k_{\Lambda, r_1}}H(z, |\nabla u|)_k^{1-\theta}H(z, |\nabla u|)^\theta dz d\Lambda
\leq c\int_{\Lambda_1}^{\infty} \Lambda^{\varepsilon-\theta}\iint_{E^k_{\Lambda, r_2}}H(z, |\nabla u|)^\theta dz d\Lambda.
\end{align}
Now using Fubini's theorem on the left hand side of the above inequality \eqref{EQQ6.39}, we get

\begin{align*}
&\int_{\Lambda_1}^{\infty}\Lambda^{\varepsilon-1}\iint_{E^k_{\Lambda, r_1}}H(z, |\nabla u|)_k^{1-\theta}H(z, |\nabla u|)^\theta dz d\Lambda\\
&=\iint_{\mcp_{r_1}(z_0)}\int_{\Lambda_1}^{H(z, |\nabla u|)_k}\Lambda^{\varepsilon-1}H(z, |\nabla u|)_k^{1-\theta}H(z, |\nabla u|)^\theta d\Lambda dz\\
&=\frac{1}{\varepsilon}\iint_{\mcp_{r_1}(z_0)}H(z, |\nabla u|)_k^{1-\theta+\varepsilon}H(z, |\nabla u|)^\theta dz-\frac{\Lambda^{\varepsilon}_1}{\varepsilon}\iint_{\mcp_{r_1}(z_0)}H(z, |\nabla u|)_k^{1-\theta}H(z, |\nabla u|)^\theta dz\\
&\geq \frac{1}{\varepsilon}\iint_{\mcp_{r_1}(z_0)} H(z, |\nabla u|)_k^{1-\theta+\varepsilon}H(z, |\nabla u|)^\theta dz-\frac{\Lambda^{\varepsilon}_1}{\varepsilon}\iint_{\mcp_{2r}(z_0)}H(z, |\nabla u|)_k^{1-\theta}H(z, |\nabla u|)^\theta dz.
\end{align*}
Similarly applying Fubini's theorem on the right hand side of \eqref{EQQ6.39}, we get

\begin{align*}
&\int_{\Lambda_1}^{\infty} \Lambda^{\varepsilon-\theta}\iint_{E^k_{\Lambda, r_2}}H(z, |\nabla u|)^\theta dz d\Lambda = \iint_{\mcp_{r_2}(z_0)}\int_{\Lambda_1}^{H(z, |\nabla u|)_k}\Lambda^{\varepsilon-\theta}H(z, |\nabla u|)^\theta d \Lambda dz \\
&=\frac{1}{1+\varepsilon-\theta}\iint_{\mcp_{r_2}(z_0)}H(z, |\nabla u|)_k^{1-\theta+\varepsilon}H(z, |\nabla u|)^\theta dz-\frac{\Lambda^{1+\varepsilon-\theta}}{1+\varepsilon-\theta}\iint_{\mcp_{r_2}(z_0)}H(z, |\nabla u|)^\theta dz \\
&\leq \frac{1}{1+\varepsilon-\theta}\iint_{\mcp_{r_2}(z_0)}H(z, |\nabla u|)_k^{1-\theta+\varepsilon}H(z, |\nabla u|)^\theta dz.
\end{align*}
Plugging the above estimates in \eqref{EQQ6.39}, we obtain
\begin{align*}
    \iint_{\mcp_{r_1}(z_0)} H(z, |\nabla u|)_k^{1-\theta+\varepsilon}H(z, |\nabla u|)^\theta dz &\leq \Lambda^{\varepsilon}_1\iint_{\mcp_{2r}(z_0)}H(z, |\nabla u|)_k^{1-\theta}H(z, |\nabla u|)^\theta dz\nonumber \\
    &+ \frac{c\varepsilon}{1+\varepsilon-\theta} \iint_{\mcp_{r_2}(z_0)}H(z, |\nabla u|)_k^{1-\theta+\varepsilon}H(z, |\nabla u|)^\theta dz.
\end{align*}
Now we choose a small $\varepsilon < \varepsilon_0 < 1$ such that $\frac{c\varepsilon}{1+\varepsilon-\theta} \leq \frac{1}{2}.$ Now we aim to use \cref{iter_lemma}. For that purpose, we define
\begin{align*}
    h(r)=\iint_{\mcp_{r}(z_0)} H(z, |\nabla u|)_k^{1-\theta+\varepsilon}H(z, |\nabla u|)^\theta dz.
\end{align*}
Considering the choice of $\Lambda_1,$ we have

\begin{align*} 
&\Lambda^{\varepsilon}_1\iint_{\mcp_{2r}(z_0)}H(z, |\nabla u|)_k^{1-\theta}H(z, |\nabla u|)^\theta dz\\
&=\frac{1}{(r_2-r_1)^{\frac{\varepsilon s(n+2)}{(n+2)\mu-n}}}\frac{(4\kappa r)^{\frac{\varepsilon s(n+2)}{(n+2)\mu-n}}}{c^{\varepsilon}} \Lambda^{\varepsilon}_0\iint_{\mcp_{2r}(z_0)}H(z, |\nabla u|)_k^{1-\theta}H(z, |\nabla u|)^\theta dz\\
&:=\frac{A}{(r_2-r_1)^{\frac{\varepsilon s(n+2)}{(n+2)\mu-n}}}.
\end{align*}
Now applying \cref{iter_lemma} with $\varrho_1=r_1, \varrho_2=r_2, R=2r, \vartheta=\frac{c\varepsilon}{1+\varepsilon-\theta}, B=0$ and $\gamma_1=\frac{\varepsilon q(n+2)}{(n+2)\mu-n},$ we obtain
\begin{align*}
&\iint_{\mcp_{r}(z_0)} H(z, |\nabla u|)_k^{1-\theta+\varepsilon}H(z, |\nabla u|)^\theta dz \\
&\apprle_{(\vartheta,\gamma_1)}\frac{1}{r^{\frac{\varepsilon q(n+2)}{(n+2)\mu-n}}}\frac{(4\kappa r)^{\frac{\varepsilon q(n+2)}{(n+2)\mu-n}}}{c^{\varepsilon}} \Lambda^{\varepsilon}_0 \iint_{\mcp_{2r}(z_0)}H(z, |\nabla u|)_k^{1-\theta}H(z, |\nabla u|)^\theta dz.
\end{align*}
Letting $k \to \infty,$ we finally obtain

\begin{align*}
  \iint_{\mcp_{r}(z_0)} H(z, |\nabla u|)^{1+\varepsilon}\, dz \leq c \Lambda^{\varepsilon}_0 \iint_{\mcp_{2r}(z_0)}H(z, |\nabla u|)\,dz.
\end{align*}
Taking care of $\Lambda^{\varepsilon}_0$ we obtain

\begin{align*}
\fiint_{\mcp_{r}(z_0)} H(z, |\nabla u|)^{1+\varepsilon}\, dz \leq c\left(\fiint_{\mcp_{2r}(z_0)}H(z, |\nabla u|)\right)^{1+\frac{\varepsilon s}{(n+2)\mu-n}}.
\end{align*}
This completes the proof.
\end{proof}

\section{Remarks on general multi-phase problem}\label{general multiphase sec}
In this section, we present a brief discussion on the parabolic multi-phase problem with finitely many phases, omitting detailed proofs for brevity.  To be more precise, we wish to deal with the system:
\begin{align}\label{general multiphase}
    u_t-\operatorname*{div}\left(|\nabla u|^{p-2}\nabla u + \sum_{i=1}^m a_i(z)|\nabla u|^{p_i-2}\nabla u\right)=0
\end{align}
where $\frac{2n}{n+2}<p\leq p_1\leq \cdots \leq p_m <\infty$ and $0 \leq a_i(\cdot)\in C^{0,\alpha_i}(\Omega_T)$ with $\alpha_i \in (0,1]$ for all $i\in\{1,\cdots, m\}.$ As previous, we also assume

\begin{align*}
    \frac{2n}{n+2}<p\leq p_i\leq p+ \min\left\{\alpha_i\left(\frac{p}{2}-\frac{n}{n+2}\right), \frac{2\alpha_i}{n+2}\right\}<\infty\,\,\, \text{for all}\,\,\, i \in \{1, \cdots, m\}.
\end{align*}
Moreover, we denote
\begin{align}\label{New H}
    H(z, |\nabla u|):=|\nabla u|^p+\sum_{i=1}^m a_i(z)|\nabla u|^{p_i},
\end{align}
\begin{align}\label{new H tilde}
    \tilde{H}(z, |\nabla u|):=|\nabla u|^{p-1}+\sum_{i=1}^m a_i(z)|\nabla u|^{p_i-1},
\end{align}
With these notations in mind, we can derive a \textit{Caccioppoli inequality} following \cite[Lemma 3.1]{KO2024}.
\begin{lemma}\label{caccioppoli_multiphase}
  Let $u$ be a weak solution to \eqref{general multiphase}. Then there exists a constant $c=c(n, p, p_1, \cdots, p_m)$ such that

\begin{align*}
    &\sup_{t \in l_{S_2}(t_0)}\fint_{B_{R_2}(x_0)}\frac{\left|u-(u)_{Q_{R_2, S_2}}\right|^2}{S_2}\, dx+ \fiint_{Q_{R_2, S_2}(z_0)}H(z, |\nabla u|)\,dx\,dt\\
 &\apprle
 \fiint_{Q_{R_2, S_2}(z_0)} H\left(z,\left|\frac{u-(u)_{Q_{R_2, S_2}}}{R_2-R_1}\right|\right)\,dx\,dt
	 +  \fiint_{Q_{R_2, S_2}(z_0)}\frac{\left|u-(u)_{Q_{R_2, S_2}}\right|^2}{S_2-S_1}\,dx\,dt
\end{align*}
holds for every $Q_{R_2, S_2}(z_0)\subset \Omega_T,$ where $H(z, \cdot)$ is given by \eqref{New H}.  
\end{lemma}
Similarly, one can deduce the parabolic Poincar\'{e} inequality as in \cite[Lemma 3.3]{KO2024}.
\begin{lemma}
Let $u$ be a weak solution to \eqref{general multiphase}. Then there exists a constant $c=c(\textnormal{\texttt{data}})$ such that

    \begin{align*}
        \fiint_{Q_{r, \varrho}(z_0)}\left|\frac{u-(u)_{Q_{r, \varrho}(z_0)}}{r}\right|^{\theta \upgamma}\, dz \leq c \fiint_{Q_{r, \varrho}(z_0)}|\nabla u|^{\theta \upgamma } dz+ c\left(\frac{\varrho}{r^2}\fiint_{Q_{r, \varrho}(z_0)}\tilde{H}(z, |\nabla u|)\, dz\right)^{\theta \upgamma }
    \end{align*}
    for any $Q_{r, \varrho}(z_0)\subset \Omega_T$ defined in \cref{Notation} \ref{general cylinder} with $\upgamma \in (1, p_m]$ and $\theta \in \left(\frac{1}{\upgamma}, 1\right],$ where $\tilde{H}(z, \cdot)$ is given by \eqref{new H tilde}.    
\end{lemma}

\vspace{.5cm}
\subsection{Intrinsic geometry} In this section, we describe the intrinsic geometry governed by equation \eqref{general multiphase}. We will start by defining the intrinsic cylinders. 
\subsubsection{Intrinsic cylinders} In this context, we will have $m+2$ types of intrinsic cylinders, namely,
\begin{enumerate}[label=(\roman*),series=theoremconditions]
    \item $\mcp^{\lambda}_{\varrho}(z_0)$ is used to denote $p$-intrinsic cylinders at $z_0=(x_0, t_0),$

\begin{align*}
\mcp^{\lambda}_{\varrho}(z_0):=B_{\lambda^{-1+\mu}\varrho}(x_0)\times \left(t_0-\frac{\lambda^{2\mu}\varrho^2}{\lambda^p}, t_0+\frac{\lambda^{2\mu}\varrho^2}{\lambda^p}\right)
=:B^\la_{\varrho}(x_0)\times I^{\la, p}_{\varrho}(t_0).
\end{align*}
\item For all $i \in \{1, \cdots, m\},$ $\mcp^{i, \lambda}_{\varrho}(z_0)$ is used to denote $(p, p_i)$-intrinsic cylinders at $z_0=(x_0, t_0),$

\begin{align*}
\mcp^{i,\lambda}_{\varrho}(z_0):=B_{\lambda^{-1+\mu}\varrho}(x_0)\times \left(t_0-\frac{\lambda^{2\mu}\varrho^2}{\lambda^p+a_i(z_0)\la^{p_i}}, t_0+\frac{\lambda^{2\mu}\varrho^2}{\lambda^p+a_i(z_0)\la^{p_i}}\right)
=:B^\la_{\varrho}(x_0)\times I^{\la, (p, p_i)}_{\varrho}(t_0).
\end{align*}
\item $\mathcal{M}^{\la}_{\rhob}(z_0)$ is used to denote $(p,p_1, \cdots, p_m)$-intrinsic cylinders at $z_0=(x_0, t_0),$

\begin{align*}
    \mathcal{M}^{\la}_{\rhob}(z_0):&=B_{\lambda^{-1+\mu}\varrho}(x_0) \times \left(t_0-\frac{\lambda^{2\mu}\varrho^2}{\lambda^p+\sum_{i=1}^m a_i(z_0)\la^{p_i}}, t_0+\frac{\lambda^{2\mu}\varrho^2}{\lambda^p+\sum_{i=1}^m a_i(z_0)\la^{p_i}}\right)\\
    &=:B^\la_{\varrho}(x_0)\times I^{\la, (p, p_1, \cdots, p_m)}_{\varrho}(t_0).
\end{align*}
\end{enumerate}
\subsubsection{Separation of phases} Here we will describe the separation of the phases. First, we set 
\begin{align*}
    K:=2+ \sum_{i=1}^m \frac{\left(2[a_i]_{\alpha_i}\right)^{\frac{n+2}{\alpha_i}}}{|B_1|}\iint_{\Omega_T}H(z, |\nabla u|)\, dz.
\end{align*}
We shall start with the $p$-phase. We define the $p$-phase as:
\begin{align}\label{p case for general mp}
\begin{cases}  
{\bf p1.}\,\, K\la^p \geq a_i(z_0)\la^{p_i}\,\,\, \text{for every}\,\,\, i\in \{1, \cdots, m\}.\\
{\bf p2.} \fiint_{\mcp^{\la}_{\rho}(z_0)} H(z, |\nabla u|)\,dz < \la^p,\,\,\, \text{holds for every}\,\,\, \rho \in (\rhoa, \rhob].\\
{\bf p3.} \fiint_{\mcp^{\la}_{\rhoa}(z_0)} H(z, |\nabla u|)\,dz = \la^p.
\end{cases}  
\end{align}
Next, we define the $(p, p_i)$-phase as:
\begin{align}\label{ppi case for general mp}
\begin{cases}  
{\bf ppi1.}\,\, K\la^p < a_i(z_0)\la^{p_i},\,\, K\la^p\geq a_j(z_0)\la^{p_j}\,\,\, \text{for all}\,\,\, i, j\in \{1, \cdots, m\}\,\,\, \text{and}\,\,\, i\neq j.\\
{\bf ppi2.} \fiint_{\mcp^{i, \la}_{\rho}(z_0)} H(z, |\nabla u|)\,dz < \la^p+a_i(z_0)\la^{p_i},\,\,\, \text{holds for all}\,\,\, \rho \in (\rhoa, \rhob].\\
{\bf ppi3.} \fiint_{\mcp^{i, \la}_{\rhoa}(z_0)} H(z, |\nabla u|)\,dz = \la^p+a_i(z_0)\la^{p_i}.\\
{\bf ppi4.} \frac{a_i(z_0)}{2} \leq a_i(z)\leq 2a_i(z_0)\,\,\, \text{holds for every}\,\,\, z\in \mcp^{i, \la}_{\rhoa}(z_0).
\end{cases} 
\end{align}
Finally, we define the $(p, p_1, \cdots, p_m)$-phase as:
\begin{align}\label{ppm case for general mp}
\begin{cases}  
{\bf pm1.}\,\, K\la^p < a_i(z_0)\la^{p_i},\,\, \text{for every}\,\,\, i \in \{1, \cdots, m\}.\\
{\bf pm2.} \fiint_{\mathcal{M}^{\la}_{\rho}(z_0)} H(z, |\nabla u|)\,dz < \la^p+\sum_{i=1}^m a_i(z_0)\la^{p_i},\,\,\, \text{holds for all}\,\,\, \rho \in (\rhoa, \rhob].\\
{\bf pm3.} \fiint_{\mathcal{M}^{\la}_{\rhoa}(z_0)} H(z, |\nabla u|)\,dz = \la^p+\sum_{i=1}^m a_i(z_0)\la^{p_i}.\\
{\bf pm4.} \frac{a_i(z_0)}{2} \leq a_i(z)\leq 2a_i(z_0)\,\,\, \text{holds for every}\,\,\, z\in \mathcal{M}^{\la}_{\rhoa}(z_0)\,\,\, \text{and for every}\,\,\, i\in \{1, \cdots, m\}.
\end{cases} 
\end{align}
\subsection{Parabolic Sobolev-Poincar\'{e} inequalities} In this section, we state the parabolic Sobolev-Poincar\'{e} inequalities for each of the phases. The proofs can be completed applying the classical H\"{o}lder's inequality and Young's inequality as in \cref{sec3}.
\subsubsection{$p$-phase} We state here the parabolic Sobolev-Poincar\'{e} inequalities for the $p$-phase, analogous to \cref{p_intrinsic poincare 2}. The first step in proving these inequalities involves obtaining an analogue of \cref{p_intrinsic poincare 1}. Due to the similarity and length of the proofs, we omit them here.
\begin{lemma} \label{LEMMA_GEN_MULTIPHSE_1}
Let $u$ be a weak solution to \eqref{general multiphase} and let the assumption \eqref{p case for general mp} be in force. Then for any $\theta\in \left(\max\left\{\frac{p_m-1}{p}, \frac{1}{p}\right\}, 1\right]$ and $\varepsilon \in (0,1),$ there exists a constant $c=c(\textnormal{\texttt{data}})$ such that the following estimates
\begin{enumerate}[label=(\roman*),series=theoremconditions]
\item \label{1_lem_general5.1} 
\begin{align*}
\fiint_{\mcp^\la_{\rhob}(z_0)}\left|\frac{u-(u)_{\mcp^{\la}_{\rhob}(z_0)}}{\la^{-1+\mu}\rhob}\right|^{\theta p} dz \leq c \fiint_{\mcp^{\la}_{\rhob}(z_0)}\left|\nabla u\right|^{\theta p}dz +\varepsilon \la^{\theta p},
\end{align*}
\item \label{2_lem_general5.1} 
\begin{align*}
    \fiint_{\mcp^{\la}_{\rhob}(z_0)}&\left|\frac{u-(u)_{\mcp^{\la}_{\rhob}(z_0)}}{\la^{-1+\mu}\rhob}\right|^{\theta p_i} dz \leq c \fiint_{\mcp^{\la}_{\rhob}(z_0)}\left|\nabla u\right|^{\theta p_i}dz+ c\la^{(p_i-p)\theta}\left(\fiint_{\mcp^{\la}_{\rhob}(z_0)}a_i^{\theta}(z)|\nabla u|^{\theta p_i}dz\right)\\
    &+c\underset{i\neq j}{\sum_{{j=1}}^m} \la^{(2-p)\theta p_i}\la^{(p-p_j)\frac{\theta p_i}{p_j}}\left(\fiint_{\mcp^{\la}_{\rhob}(z_0)}\inf_{z\in \mcp^{\la}_{\rhob}(z_0)}a^{\theta}_j(z)|\nabla u|^{\theta p_j}\,dz\right)^{\frac{p_i(p_j-1)}{p_j}}+ \varepsilon \la^{\theta p_i},
\end{align*}
\end{enumerate}
for all $i \in \{1, \cdots, m\},$ hold whenever $\mcp^{\la}_{\rhob}(z_0)\subset \Omega_T.$
\end{lemma}
\subsubsection{$(p, p_i)$-phase for all $i\in\{1,\cdots, m\}$} Here we state the parabolic Sobolev-Poincar\'{e} for $(p, p_i)_{i=1}^m$-phases, analogous to \cref{pq_intrinsic poincare} and \cref{ps_intrinsic poincare}. The approach to proving these inequalities is the following: for a fixed $i \in \{1, \cdots, m\},$ we use bounds on the $i$-th coefficient, specifically,  $\frac{a_i(z_0)}{2}\leq a_i(z)\leq 2a_i(z_0),$ along with the H\"{o}lder regularity of $a_j(\cdot)$ for $i\neq j.$ 
\begin{lemma}\label{LEMMA_GEN_MULTIPHSE_2}
Let $u$ be a weak solution to \eqref{general multiphase} and let the assumption \eqref{ppi case for general mp} be in force. Then for any $\theta\in \left(\max\left\{\frac{p_m-1}{p}, \frac{1}{p}\right\}, 1\right]$ and $\varepsilon \in (0,1),$ there exists a constant $c=c(\textnormal{\texttt{data}})$ such that the following estimate
\begin{align*}
\fiint_{\mcp^{i, \la}_{\rhob}(z_0)}\left|\frac{u-(u)_{\mcp^{i, \la}_{\rhob}(z_0)}}{\la^{-1+\mu}\rhob}\right|^{\theta p_i} dz \leq c \fiint_{\mcp^{i, \la}_{\rhob}(z_0)}\left|\nabla u\right|^{\theta p_i}dz +\varepsilon \la^{\theta p_i},
\end{align*}
for all $i \in \{1, \cdots, m\},$ holds whenever $\mcp^{i, \la}_{\rhob}(z_0)\subset \Omega_T.$
\end{lemma}
\subsubsection{$(p, p_1,\cdots p_m)$-phase} Now we state the parabolic Solbole-Poincar\'{e} for $(p, p_1, \cdots, p_m)$-phase. In this case, we can use the bound $\frac{a_i(z_0)}{2}\leq a_i(z)\leq 2a_i(z_0)$ for every $i \in \{1, \cdots, m\}.$
\begin{lemma}\label{LEMMA_GEN_MULTIPHSE_3}
Let $u$ be a weak solution to \eqref{general multiphase} and the assumption \eqref{ppm case for general mp} is in force. Then for any $\theta\in \left(\max\left\{\frac{p_m-1}{p}, \frac{1}{p}\right\}, 1\right]$ and $\varepsilon \in (0,1),$ there exists a constant $c=c(\textnormal{\texttt{data}})$ such that the following estimate
\begin{align*}
\fiint_{\mathcal{M}^{\la}_{\rhob}(z_0)}\left|\frac{u-(u)_{\mathcal{M}^{\la}_{\rhob}(z_0)}}{\la^{-1+\mu}\rhob}\right|^{\theta p_i} dz \leq c \fiint_{\mathcal{M}^{\la}_{\rhob}(z_0)}\left|\nabla u\right|^{\theta p_i}dz +\varepsilon \la^{\theta p_i},
\end{align*}
for all $i \in \{1, \cdots, m\},$ holds whenever $\mathcal{M}^{\la}_{\rhob}(z_0)\subset \Omega_T.$
\end{lemma}
\subsubsection{Reverse H\"{o}lder inequalities} Now we briefly sketch the road map to achieve reverse H\"{o}lder inequalities for each phase, namely, $p, (p, p_i)_{i=1}^m, (p, p_1, \cdots, p_m)$-phases. We need to estimate the terms in the \textit{Caccioppoli inequality} (\cref{caccioppoli_multiphase}). For the $p$-phase, we will have
\begin{enumerate} [label=(\roman*),series=theoremconditions]
\item \label{section 5_1}
\begin{align*}
\sup_{I^{\la}_{2\varrho}(t_0)}\fint_{B^{\la}_{2\varrho}(x_0)}\left|\frac{u-(u)_{\mcp^{\la}_{2\varrho}(z_0)}}{\la^{-1+\mu}2\varrho}\right|^2\, dx \leq c\la^2,
\end{align*}
\item \label{section 5_2}
\begin{align*}
\fiint_{\mcp^{\la}_{2\varrho}(z_0)}H\left(z, \left|\frac{u-(u)_{\mcp^{\la}_{2\varrho}(z_0)}}{\la^{-1+\mu}2\varrho}\right|\right)\, dz \leq c \la^{(1-\theta )p}\fiint_{\mcp^{\la}_{2\varrho}(z_0)}H(z, |\nabla u|)\,dz+ \varepsilon \la^p.
\end{align*} 
\end{enumerate}
The above two estimates can be obtained by using \cref{g_n}, \cref{iter_lemma} and \cref{LEMMA_GEN_MULTIPHSE_1}. Once we have \ref{section 5_1} and \ref{section 5_2}, the reverse H\"{o}lder inequality for the $p$-phase follows.

For the $(p, p_i)_{i=1}^m$-phases, similarly we can have \ref{section 5_1} and 
\begin{align*}
\fiint_{\mcp^{i,\la}_{2\varrho}(z_0)}H\left(z, \left|\frac{u-(u)_{\mcp^{i,\la}_{2\varrho}(z_0)}}{\la^{-1+\mu}2\varrho}\right|\right)\, dz \leq c (\la+a_i(z_0)\la^{p_i})^{(1-\theta )}\fiint_{\mcp^{i, \la}_{2\varrho}(z_0)}H(z, |\nabla u|)\,dz+ \varepsilon (\la^p+a_i(z_0)\la^{p_i})    
\end{align*}
for each $i \in \{1, \cdots, m\},$
which is again an application of \cref{g_n} and \cref{LEMMA_GEN_MULTIPHSE_2}. These estimates give the reverse H\"{o}lder inequality for $(p, p_i)_{i=1}^m$-phase for each $i \in \{1,\cdots, m\}.$

For $(p, p_1\cdots, p_m)$-phase, we will have \ref{section 5_1} and
\begin{align*}
\fiint_{\mathcal{M}^{\la}_{2\varrho}(z_0)}H\left(z, \left|\frac{u-(u)_{\mathcal{M}^{\la}_{2\varrho}(z_0)}}{\la^{-1+\mu}2\varrho}\right|\right)\, dz \leq c \La^{(1-\theta )}\fiint_{\mcp^{i, \la}_{2\varrho}(z_0)}H(z, |\nabla u|)\,dz+ \varepsilon \La 
\end{align*} 
where $\La:=\la^p+\sum_{i=1}^m\la^{p_i}.$ Applying the above estimate and \ref{section 5_1}, we can deduce reverse H\"{o}lder inequality for $(p, p_1, \cdots, p_m)$-phase.
\subsection{Covering lemma} In this section, we briefly describe the completion of the higher integrability result for \eqref{general multiphase}. The first step is to show the existence of intrinsic cylinders on the superlevel set
\begin{align*}
    E_{\La, \varrho}:=\left\{z \in \mcp_\uprho(z_0)\,\,\Big|\,\,H\left(z, |\nabla u|\right)>\Lambda\right\},
\end{align*}
where $H(z, \cdot)$ is defined in \eqref{New H} and $\La=\la^p+\sum_{i=1}^m \la^{p_i}.$ Analogous to \eqref{EQUATION4.10}-\eqref{EQUATION4.21} in \cref{STA}, we may have
\begin{align*}
    &K \la^p_{\tz}\geq a_i(\tz)\la^{p_i}_{\tz},\\
    &K\la^p_{\tz}<a_i(\tz)\la^{p_i}_{\tz},\\
    &a_i(\tz)\geq 2[a_i]_{\alpha_i}\max\left\{\left(\la^{-1+\mu}_{\tz}\uprho_{\tz}\right)^{\alpha_i}, \left(\frac{\la^{\mu}_{\tz}\uprho_{\tz}}{\sqrt{\la^p_{\tz}+a_i(\tz)\la^{p_i}_{\tz}}}\right)^{\alpha_i}\right\},\\
    & a_i(\tz)< 2[a_i]_{\alpha_i}\max\left\{\left(\la^{-1+\mu}_{\tz}\uprho_{\tz}\right)^{\alpha_i}, \left(\frac{\la^{\mu}_{\tz}\uprho_{\tz}}{\sqrt{\la^p_{\tz}+a_i(\tz)\la^{p_i}_{\tz}}}\right)^{\alpha_i}\right\},\\
    & a_i(\tz)\geq 2[a_i]_{\alpha_i}\max\left\{\left(\la^{-1+\mu}_{\tz}\uprho_{\tz}\right)^{\alpha_i}, \left(\frac{\la^{\mu}_{\tz}\uprho_{\tz}}{\sqrt{\la^p_{\tz}+\sum_{i=1}^m a_i(\tz)\la^{p_i}_{\tz}}}\right)^{\alpha_i}\right\},\\
    &a_i(\tz)< 2[a_i]_{\alpha_i}\max\left\{\left(\la^{-1+\mu}_{\tz}\uprho_{\tz}\right)^{\alpha_i}, \left(\frac{\la^{\mu}_{\tz}\uprho_{\tz}}{\sqrt{\la^p_{\tz}+\sum_{i=1}^m a_i(\tz)\la^{p_i}_{\tz}}}\right)^{\alpha_i}\right\},
\end{align*}
for $i=1, 2, \cdots, m.$ Now following the similar analysis and considering several cases carefully as in \cref{STA}, one can complete the proof of existence of cylinders on the set $E_{\La, \uprho}.$

The next step involves carrying out a Vitali-type covering argument, considering $(m+2)^2$ distinct cylinders as described in \cref{vitali_cover}.

To complete the proof of gradient integrability, one needs to prove a result analogous to \cref{LEM6.1} on each of the $(m+2)$ cylinders. The rest of the proof follows from the proof of \cref{main_thm}. The final result of the gradient higher integrability for \eqref{general multiphase} can be stated as:
\begin{theorem}
Let $u$ be a weak solution of \eqref{general multiphase} and $\mu$ be the constant defined in \cref{Notation} \ref{not4}. Then there exist positive constants $\varepsilon_0 = \varepsilon_0(\textnormal{\texttt{data}})\in (0,1)$ and $c=c(\textnormal{\texttt{data}})\geq 1$ such that the following holds:
	\begin{align*}
		\fiint_{\mcp_{r}(z_0)} H(z, |\nabla u|)^{1+\varepsilon}\, dz \leq c\left(\fiint_{\mcp_{2r}(z_0)}H(z, |\nabla u|)\,dz+1\right)^{1+\frac{\varepsilon p_m}{(n+2)\mu-n}},  
	\end{align*}
	for every $\mcp_{2r}(z_0)\subset \Omega \times (0, T)$ and any $\varepsilon \in (0, \varepsilon_0)$ where $\mcp_{(\cdot)}(z_0)$ is defined in \cref{Notation} \ref{not5.5}, $H(z, \cdot)$ is defined in \eqref{New H} and $\textnormal{\texttt{data}}:=\left(n, N, p_1,\cdots, p_m, \alpha_i, [a_i]_{\alpha_i}, ||a_i||_{L^{\infty}}(\Omega_T), ||H(z, |\nabla u|)||_{L^1(\Omega_T)}\right).$   
\end{theorem}
\begin{appendices}
  \crefalias{section}{appsec}
  \section{Appendix}\label{appendix}
In this section, we prove \cref{ps_intrinsic poincare} and \cref{pqs_intrinsic poincare}. Since the proofs are similar to \cref{pq_intrinsic poincare}, we only provide the sketch.
\begin{proof}[Proof of \cref{ps_intrinsic poincare}] We only present the proof of \ref{2_lem7.10}, that is,
\begin{align}\label{appendix_1st_equation}
    \fiint_{\mathcal{S}^{\la}_{\rhob}(z_0)}\left|\frac{u-(u)_{\mathcal{S}^{\la}_{\rhob}(z_0)}}{\la^{-1+\mu}\rhob}\right|^{\theta q}\,dz \leq c\fiint_{\mathcal{S}^{\la}_{\rhob}(z_0)}|\nabla u|^{\theta q}\, dz+\varepsilon \la^{\theta q}.
\end{align}
The proof of \ref{1_lem7.10} and \ref{3_lem7.10} can be completed similarly. The proof of \eqref{appendix_1st_equation} is based on the bounds of $b(z)$ in the  $(p,s)$-intrinsic cylinders, i.e., $\frac{b(z_0)}{2}\leq b(z)\leq 2b(z_0)$ for any $z \in \mathcal{S}^{\la}_{\rhob}(z_0)$ and the estimate obtained in {\bf Step 1} of \cref{p_intrinsic poincare 1}, namely,
\begin{align}\label{appendix_2nd_equation}
    \fiint_{\mathcal{S}^{\la}_{\rhob}(z_0)}a(z)|\nabla u|^{q-1}\leq c\left(\fiint_{\mathcal{S}^{\la}_{\rhob}(z_0)}|\nabla u|^{\theta p}\, dz\right)^{\frac{p-1}{\theta p}}+K\la^{p-q}\fiint_{\mathcal{S}^{\la}_{\rhob}(z_0)}|\nabla u|^{q-1}\, dz.
\end{align}
First, we use \cref{parabolic poincare for double phase} with $m=q$ and $(p,s)$-intrinsic cylinders.
\begin{align*}
    \fiint_{\mathcal{S}^{\la}_{\rhob}(z_0)}\left|\frac{u-(u)_{\mathcal{S}^{\la}_{\rhob}}}{\la^{-1+\mu}\rhob}\right|^{\theta q} \leq c \fiint_{\mathcal{S}^{\la}_{\rhob}(z_0)}|\nabla u|^{\theta q} dz + c \underbrace{\left(\frac{\la^2}{\la^p+b(z_0)\la^s} \fiint_{\mathcal{S}^{\la}_{\rhob}(z_0)}\tilde{H}(z, \nabla u)\right)^{\theta q}}_{I}.
\end{align*}
We estimate $I$ from the above inequality by using \eqref{appendix_2nd_equation} and {\bf ps4} of \cref{assmps1d}.

\begin{align}\label{EQUATION_app3.31}
    I &\apprle \la^{(2-p)\theta q} \left(\fiint_{\mathcal{S}^{\la}_{\rhob}(z_0)}|\nabla u|^{\theta q} dz\right)^{p-1}+ \la^{(2-s)\theta q}\left(\fiint_{\mathcal{S}^{\la}_{\rhob}(z_0)}|\nabla u|^{\theta q} dz\right)^{s-1}\nonumber\\
    &\left(\frac{\la^2}{\la^p+b(z_0)\la^s}\right)^{\theta q}\la^{(p-q)\theta q}\left(\fiint_{\mathcal{S}^{\la}_{\rhob}(z_0)}|\nabla u|^{q-1}\, dz\right)^{\theta q}+ \left(\frac{\la^2}{\la^p+b(z_0)\la^s}\right)^{\theta q}\left(\fiint_{\mathcal{S}^{\la}_{\rhob}(z_0)}|\nabla u|^{\theta p}\, dz\right)^{\frac{(p-1)q}{p}}.
\end{align}
\noindent \textbf{Case $p \geq 2$ :} In this case, the first term of \eqref{EQUATION_app3.31} can be estimated as

\begin{align*}
 \la^{(2-p)\theta q} \left(\fiint_{\mathcal{S}^{\la}_{\rhob}(z_0)}|\nabla u|^{\theta q} dz\right)^{p-1}&=\la^{(2-p)\theta q} \left(\fiint_{\mathcal{S}^{\la}_{\rhob}(z_0)}|\nabla u|^{\theta q} dz\right)^{p-2} \left(\fiint_{\mathcal{S}^{\la}_{\rhob}(z_0)}|\nabla u|^{\theta q} dz\right)\\
 &\leq c \la^{(2-p)\theta q}\left(\fiint_{\mathcal{S}^{\la}_{\rhob}(z_0)}|\nabla u|^{s} dz\right)^{\frac{\theta(p-2)q}{s}}\left(\fiint_{\mathcal{S}^{\la}_{\rhob}(z_0)}|\nabla u|^{\theta q} dz\right)\\
 &\leq  c\la^{(2-p)\theta q} \la^{q\theta (p-2)}\left(\fiint_{\mathcal{S}^{\la}_{\rhob}(z_0)}|\nabla u|^{\theta q} dz\right).
\end{align*}
In the last inequality above, we used
\begin{align}\label{s-bound}
    \fiint_{\mathcal{S}^{\la}_{\rhob}(z_0)}|\nabla u|^s\,dz\apprle \la^s.
\end{align}
Similarly, the second term of \eqref{EQUATION_app3.31} can be estimated as

\begin{align*}
  \la^{(2-s)\theta q}\left(\fiint_{\mathcal{S}^{\la}_{\rhob}(z_0)}|\nabla u|^{\theta q} dz\right)^{s-1}&=\la^{(2-s)\theta q}\left(\fiint_{\mathcal{S}^{\la}_{\rhob}(z_0)}|\nabla u|^{\theta q} dz\right)^{s-2}\left(\fiint_{\mathcal{S}^{\la}_{\rhob}(z_0)}|\nabla u|^{\theta q} dz\right)\\
  &\leq c \la^{(2-s)\theta q}\left(\fiint_{\mathcal{S}^{\la}_{\rhob}(z_0)}|\nabla u|^{s} dz\right)^{\frac{\theta(s-2)q}{s}}\left(\fiint_{\mathcal{S}^{\la}_{\rhob}(z_0)}|\nabla u|^{\theta q} dz\right)\\
  &\overset{\eqref{s-bound}}{\leq} c \la^{(2-q)\theta q}\la^{(q-2)\theta q}\left(\fiint_{\mathcal{S}^{\la}_{\rhob}(z_0)}|\nabla u|^{\theta q} dz\right).
\end{align*}
Now we estimate the third term in \eqref{EQUATION_app3.31}.
\begin{align*}  
&\left(\frac{\la^2}{\la^p+b(z_0)\la^s}\right)^{\theta q}\la^{(p-q)\theta q}\left(\fiint_{\mathcal{S}^{\la}_{\rhob}(z_0)}|\nabla u|^{q-1}\, dz\right)^{\theta q}\\
&\leq \left(\frac{\la^2}{\la^p+b(z_0)\la^s}\right)^{\theta q}\la^{(p-q)\theta q} \left(\fiint_{\mathcal{S}^{\la}_{\rhob}(z_0)}|\nabla u|^{s}\,dz\right)^{\frac{\theta(q-2)q}{s}}\left(\fiint_{\mathcal{S}^{\la}_{\rhob}(z_0)}|\nabla u|^{\theta q}\,dz\right)\\
&\overset{\eqref{s-bound}}{\apprle} \left(\frac{\la^2}{\la^p+b(z_0)\la^s}\right)^{\theta q}\la^{(p-q)\theta q} \la^{\theta q(q-2)}\left(\fiint_{\mathcal{S}^{\la}_{\rhob}(z_0)}|\nabla u|^{\theta q}\,dz\right) \apprle \left(\fiint_{\mathcal{S}^{\la}_{\rhob}(z_0)}|\nabla u|^{\theta q}\,dz\right). 
\end{align*}
It remains to estimate the fourth term of \eqref{EQUATION_app3.31}. Indeed,
\begin{align*}
&\left(\frac{\la^2}{\la^p+b(z_0)\la^s}\right)^{\theta q}\left(\fiint_{\mathcal{S}^{\la}_{\rhob}(z_0)}|\nabla u|^{\theta p}\, dz\right)^{\frac{(p-1)q}{p}}\\
& \leq \left(\frac{\la^2}{\la^p+b(z_0)\la^s}\right)^{\theta q} \left(\fiint_{\mathcal{S}^{\la}_{\rhob}(z_0)}|\nabla u|^p\,dz\right)^{ \frac{\theta q(p-2)}{p}}\left(\fiint_{\mathcal{S}^{\la}_{\rhob}(z_0)}|\nabla u|^{\theta p}\right)^{\frac{q}{p}}\\
&\leq \left(\frac{\la^2}{\la^p+b(z_0)\la^s}\right)^{\theta q}(\la^p+b(z_0)\la^s)^{\frac{\theta q(p-2)}{p}}\left(\fiint_{\mathcal{S}^{\la}_{\rhob}(z_0)}|\nabla u|^{\theta q}\right)\\
&=\left(\frac{\la^2}{(\la^p+b(z_0)\la^s)^{\frac{2}{p}}}\right)^{\theta q}\left(\fiint_{\mathcal{S}^{\la}_{\rhob}(z_0)}|\nabla u|^{\theta q}\, dz\right)\leq \left(\fiint_{\mathcal{S}^{\la}_{\rhob}(z_0)}|\nabla u|^{\theta q}\, dz\right).
\end{align*}
Therefore, combining all the estimates above, we finally get
\begin{align*}
    I \apprle \left(\fiint_{\mcq^{\la}_{\rhob}(z_0)}|\nabla u|^{\theta q}\, dz\right).
\end{align*}
\noindent \textbf{Case $p<2$ :} We note that, using H\"{o}lder's inequality and $\la^p < \la^p +b(z_0)\la^s,$ \eqref{EQUATION_app3.31} can be expressed as
\begin{align*}
&\la^{(2-p)\theta q} \left(\fiint_{\mathcal{S}^{\la}_{\rhob}(z_0)}|\nabla u|^{\theta q} dz\right)^{p-1}+ \la^{(2-s)\theta q}\left(\fiint_{\mathcal{S}^{\la}_{\rhob}(z_0)}|\nabla u|^{\theta q} dz\right)^{s-1}\nonumber\\
    &\la^{(2-p)\theta q}\la^{(p-q)\theta q}\left(\fiint_{\mathcal{S}^{\la}_{\rhob}(z_0)}|\nabla u|^{\theta q}\, dz\right)^{q-1}+ \la^{(2-p)\theta q}\left(\fiint_{\mathcal{S}^{\la}_{\rhob}(z_0)}|\nabla u|^{\theta q}\, dz\right)^{p-1}.    
\end{align*}
Now, we use Young's inequality as in \cref{p_intrinsic poincare 2} to get \ref{2_lem7.10}. In the scenario, when $q\geq 2$ or $s\geq 2,$ we use the estimate of the case $p\geq 2.$ This completes the proof of \cref{appendix_1st_equation}.   
\end{proof}
\begin{proof}[Proof of \cref{pqs_intrinsic poincare}]
 In this case we have the advantage of having the bounds on $a(z)$ and $b(z)$ in the intrinsic cylinders, that is, $\frac{a(z_0)}{2}\leq a(z)\leq a(z_0)$ and $\frac{b(z_0)}{2}\leq b(z)\leq b(z_0)$ for every $z \in \mathcal{G}^{\la}_{\rhob}(z_0).$   Using these bounds and following similar calculations as in \cref{pq_intrinsic poincare}, we can complete the proof.
\end{proof}
\end{appendices}

\subsection*{Acknowledgement} This research is supported by the Alexander von Humboldt postdoctoral fellowship. The author also thanks Karthik Adimurthi from TIFR CAM, India, for introducing him to the subject. Part of this work has been done at TIFR CAM, India, where the author was a postdoctoral fellow. 


\vspace{.3cm}
\noindent\textbf{Data availability.} This manuscript has no associated data.

\end{document}